\theoremstyle{plain}
\newtheorem{thm}{Theorem}[section]
\newtheorem{lem}[thm]{Lemma}
\newtheorem{prop}[thm]{Proposition}
\theoremstyle{definition}
\newcommand{\N}{\ensuremath{\mathbb{N}}}
\newcommand{\sm}{\ensuremath{\smallsetminus}}
\newcommand{\ssim}{{\scriptscriptstyle \sim}}
\newcommand{\diam}{\textnormal{diam}}
\newcommand{\isom}{\ensuremath{\cong}}
\newcommand{\inv}{\ensuremath{^{-1}}}
\newcommand{\Aut}{\textnormal{Aut}}
\newcommand{\es}{\ensuremath{\emptyset}}
\newcommand{\sub}{\subseteq}
\newcommand{\perpp}{\mspace{1mu}{\perp}\mspace{1mu}}
\newcommand{\comment}[1]{}
\newcommand{\nat}{{\mathbb N}}
\newcommand{\ganz}{{\mathbb Z}}
\newcommand{\rat}{{\mathbb Q}}
\newcommand{\Hbb}{{\mathbb H}}
\newcommand{\AF}{\ensuremath{\mathcal A}}
\newcommand{\HF}{\ensuremath{\mathcal H}}
\newcommand{\IF}{\ensuremath{\mathcal I}}
\newcommand{\PF}{\ensuremath{\mathcal P}}
\newcommand{\RF}{\ensuremath{\mathcal R}}
\def\?#1{\vadjust{\vbox to 0pt{\vss\vskip-8pt\leftline{%
     \llap{\hbox{\vbox{\pretolerance=-1
     \doublehyphendemerits=0\finalhyphendemerits=0
     \hsize16truemm\tolerance=10000\small
     \lineskip=0pt\lineskiplimit=0pt
     \rightskip=0pt plus16truemm\baselineskip8pt\noindent
     \hskip0pt        
     #1\endgraf}\hskip7truemm}}}\vss}}}
\newenvironment{txteq}
  {
    \begin{equation}
    \begin{minipage}[t]{0.85\textwidth} 
    \em                                
  }
  {\end{minipage}\end{equation}\ignorespacesafterend}
\newenvironment{txteq*}
  {
    \begin{equation*}
    \begin{minipage}[t]{0.85\textwidth} 
    \em                                
  }
  {\end{minipage}\end{equation*}\ignorespacesafterend}
\begin{document}

\title{Countable connected-homogeneous digraphs}
\author{Matthias Hamann}
\address{Matthias Hamann, Department of Mathematics, University of Hamburg, Bundes\-stra\ss e~55, 20146 Hamburg, Germany}
\date{}
\maketitle

\begin{abstract}
A digraph is \emph{connected-homogeneous} if every isomorphism between two finite connected induced subdigraphs extends to an automorphism of the whole digraph.
In this paper, we completely classify the countable connected-homogeneous digraphs.
\end{abstract}

\section{Introduction}

A graph is \emph{homogeneous} if every isomorphism between two isomorphic finite induced subgraphs extends to an automorphism of the whole graph.
This restrictive property led to a complete classification of the countable homogeneous graphs \cite{Gard-HomogeneousGraphs,GK-kHomogeneous,LW-CountUltrahomGraphs,J-SmoothlyEmbeddable}.
Homogeneous graphs are in particular vertex-transitive.
Whereas vertex-transitive graphs are too rich to obtain a full classification, there are various ways to relax the notion of homogeneity naturally to obtain a larger set of graphs that still admits a full classification.
Examples of relaxations of homogeneity for graphs are \emph{distance-transitivity} where we require transitivity on pairs of vertices with the same distance, see~\cite{C-DistanceTransitive,HP-Transitivity,DistanceTransitive}, and \emph{set-homogeneity} where we require only that some isomorphism between every two isomorphic finite induced subgraph has to extend to an automorphism of the whole graph, see~\cite{DGMS-SetHomogeneous}.
In both these cases, there is no complete classification of the countable such graphs yet.
Another relaxation of homogeneity is the following: We call a graph \emph{connected-homogeneous}, or \emph{C-homogeneous} for short, if every isomorphism between two isomorphic finite induced connected subgraphs extends to an automorphism of the whole graph.
Countable C-homogeneous graphs have been classified in~\cite{Enomoto,Gard-HomConditions,GrayMacpherson,HP-Transitivity,HedmanPong}.

When it comes to digraphs, the analogous notions of \emph{homogeneity} and \emph{C-homo\-geneity} apply.
Countable homogeneous digraphs have been classified in~\cite{Cherlin-HomImprimitive,Cherlin-CountHomDigraphs,L-FiniteHomDigraphs,L-Tournaments}.
In this paper we will complete the classificaion of the countable C-homogeneous digraph, which was started by Gray and M\"oller~\cite{GM-CHomDigraphs} and continued in~\cite{FinConHomDigraphs, HH-ConHomDigraphs}.
So far, the connected C-homogeneous digraphs of finite degree and those with more than one end have been classified.
So the purpose of this paper is to classify the countable C-homogeneous digraphs that have precisely one end and thereby complete the classification of the countable C-homogeneous digraphs.

Another structure for which homogeneity and and C-homogeneity have been considered are partial orders.
Schmerl~\cite{Schmerl-HomogeneousPO} classified the countable homogeneous partial orders and Gray and Macpherson~\cite{GrayMacpherson} classified the countable C-homogeneous partial orders.
For more details on homogeneous structures we refer to Macpherson's survey~\cite{Macpherson-Survey}.

\medskip

On our way to the classification of the countable C-homogeneous digraphs, we shall use classification results of various other homogeneous structures: we shall use the classifications of the countable homogeneous digraphs~\cite{Cherlin-CountHomDigraphs}, of the countable homogeneous bipartite graphs~\cite{GGK}, and of the countable homogeneous $2$-partite digraphs~\cite{Hom2PartiteDi}.

\medskip

The paper is structured as follows:
After introducing in Section~\ref{sec_basics} all necessary notations for the remainder of the paper, we state the classification result of the countable C-homogeneous digraph (Theorem~\ref{thm_main}) and give brief descriptions of the involved digraphs in Section~\ref{sec_MainThm}.
In Section~\ref{sec_Overview}, we shall give a rough overview of the proof of Theorem~\ref{thm_main}.
Then we state the classification of the countable homogeneous bipartite graphs, of the countable homogeneous $2$-partite digraphs, and of the countable homogeneous digraphs in Section~\ref{sec_PreviousClassifications}, which will all be part of our proof of Theorem~\ref{thm_main}.
In Section~\ref{sec_reachability}, we will introduce and discuss the reachability relation, another tool for the proof of our main theorem.
Then, we have everything we need to tackle the proof of our main theorem, which will be done in Sections~\ref{sec_ProofPartI} and~\ref{sec_ProofPartII}.\looseness-1

\section{Basics}\label{sec_basics}

A \emph{digraph} $D$ is a pair of a non-empty set $VD$ of \emph{vertices} and an asymmetric (i.e.\ irreflexive and antisymmetric) binary relation $ED$ on $VD$, its \emph{edges}.
For a subset of vertices $X\sub VD$, let $D[X]:=(X,ED\cap X\times X)$ be the digraph \emph{induced by~$X$}.
Two vertices $x,y\in VD$ are \emph{adjacent} if either $xy\in ED$ or $yx\in ED$.
The \emph{out-neighbours} or \emph{successors} of~$x\in VD$ are the elements of the \emph{out-neighbourhood} $N^+(x):=\{y\in VD\mid xy\in ED\}$ and its \emph{in-neighbours} or \emph{predecessors} are the elements of the \emph{in-neighbourhood} $N^-(x):=\{y\in VD\mid yx\in ED\}$.
Furthermore, let $D^+(x):=D[N^+(x)]$ and $D^-(x):=D[N^-(x)]$.
If $D$ is \emph{vertex-transitive}, that is, if the automorphisms of~$D$ act transitively on~$VD$, then the digraphs $D^+(x)$ and~$D^+(y)$ (the digraphs $D^-(x)$ and $D^-(y)$) are isomorphic for any two vertices $x,y$ and we denote by~$D^+$ (by~$D^-$, respectively) one element of their isomorphism class.
For induced subdigraphs $A$ and~$B$ of~$D$ and $x\in VD$, let $A+B$ be the digraph $D[VA\cup VB]$, let $A+x=D[VA\cup\{x\}]$, and let $A-x=D[VA\sm\{x\}]$.
If $B\sub A$, let $A-B=D[VA\sm VB]$.
An \emph{independent} vertex set is a set whose elements are pairwise non-adjacent.
By $I_k$ we denote an independent vertex set of cardinality~$k$ and also a digraph whose vertex set is an independet set of cardinality~$k$.
It will always be obvious from the context, whether $I_n$ describes a vertex set or a digraph.
A \emph{tournament} is a digraph such that each two of its vertices are adjacent.

For $k\in\nat$, a \emph{$k$-arc} is a sequence $x_0\ldots x_k$ or $k+1$ vertices with $x_ix_{i+1}\in ED$ for all $i\leq k-1$.
A \emph{path} (of \emph{length} $\ell\in\nat$) is a sequence $x_0\ldots x_\ell$ of $\ell+1$ distinct vertices such that for all $i\leq\ell-1$ the vertices $x_i$ and $x_{i+1}$ are adjacent.
If we have $x_ix_{i+1}\in ED$ for all $i\leq\ell-1$ then we call the path \emph{directed}.
Hence, a directed path of length~$\ell$ is an $\ell$-arc all whose vertices are distinct.
A digraph is \emph{connected} if each two vertices are joined by a path.
A vertex, vertex set, or subdigraph \emph{separates} a digraph if its deletion leaves more than one component.
It \emph{separates} two vertices, vertex sets, or subgraphs if these lie in distinct components after the deletion.

A \emph{cycle} (of \emph{length} $\ell\geq 3$) is a path of length $\ell-1$ whose end vertices are joined by an edge.
A \emph{directed} cycle, denoted by $C_\ell$, is a cycle $x_1\ldots x_{\ell-1}$ either with $x_ix_{i+1}\in ED$ and $x_{\ell-1}x_1\in ED$ or with $x_{i+1}x_i\in ED$ and $x_1x_{\ell-1}\in ED$.
Triangles are cycles of length~$3$.
Up to isomorphism, there are two distinct kinds of triangles.
We call those triangles that are not directed \emph{transitive}.
We also denote graphs that are cycles of length~$\ell$ by $C_\ell$.
It will always be clear from the context whether $C_\ell$ is a graph or a digraph.

For an equivalence relation $\sim$ on~$VD$ let $D_\ssim$ be the digraph whose vertices are the equivalence classes of~$\sim$ and where $XY\in ED_\ssim$ if and only if there are $x\in X$ and $y\in Y$ with $xy\in ED$.
We call $D_\ssim$ a \emph{quotient digraph} of~$D$ (induced by~$\sim$).
In general, this is not a digraph since it may have loops as well as edges $XY$ and $YX$.
However, we only consider equivalence relations $\sim$ such that $ED_\ssim$ is an asymmetric relation.
But in each situation in which we consider quotient digraphs $D_\ssim$ we will prove that $ED_\ssim$ is asymmetric.

The \emph{underlying undirected graph} of a digraph $D=(V,E)$ is the graph $G=(V,\{\{x,y\}\mid xy\in E\})$.
A \emph{tournament} is a digraph whose underlying undirected graph is a complete graph.

For the remainder of the paper, let $\nat^\infty=\nat\cup\{\omega\}$.
The \emph{diameter} of~$D$ is defined by
\[
\diam(D)=\inf\{n\in\nat^\infty\mid d(x,y)\leq n\text{ for all }x,y\in VD\}.
\]

A \emph{ray} in a graph is a one-way infinite path and a \emph{double ray} is a two-way infinite path.
Two rays are \emph{equivalent} if for every finite vertex set~$S$ both rays lie eventually in the same component of $G-S$.
This is an equivalence relation whose classes are the \emph{ends} of the graph.
\emph{Rays}, \emph{double rays}, and \emph{ends} of a digraph are those of its underlying undirected graph.
For abbreviation, we denote by $C_\infty$ the directed double ray.

If the underlying undirected graph of a digraph~$D$ is bipartite then $D$ is \emph{$2$-partite}.
If in addition all edges are directed from the same partition set to the other then we call~$D$ \emph{bipartite}.

\section{The main result}\label{sec_MainThm}

In this section, we state our main theorem, the classification of the countable C-homogeneous digraphs (Theorem~\ref{thm_main}).
Afterwards, we describe all the digraphs that occur in the list and that need some explanations.

\begin{thm}\label{thm_main}
A countable digraph is C-homogeneous if and only if it is a disjoint union of countably many copies of one of the following digraphs:
\begin{enumerate}[\rm (i)]
\item a countable homogeneous digraph;
\item $H[I_n]$ for some $n\in\nat^\infty$ and with either $H=S(3)$ or $H=T^\wedge$ for some countable homogeneous tournament $T\neq S(2)$;
\item $X_{\lambda}(T)$ for some countable homogeneous tournament $T$ and $\lambda\in\nat^\infty$;
\item a regular tree;
\item $DL(\Delta)$, where $\Delta$ is a bipartite digraph such that $G(\Delta)$ is one of
\begin{enumerate}[\rm (a)]
\item $C_{2m}$ for some integer $m\geq 2$,
\item $CP_k$ for some $k\in\nat^\infty$ with $k\geq 3$,
\item $K_{k,l}$ for $k,l\in\nat^\infty$, $k,l\geq 2$, or
\item the countable generic bipartite graph;
\end{enumerate}
\item $M(k,m)$ for some $k\in\nat^\infty$ with $k\geq 3$ and some integer $m\geq 2$;
\item $M'(2m)$ for some integer $m\geq 2$;
\item $Y_k$ for some $k\in\nat^\infty$ with $k\geq 3$;
\item $C_m[I_k]$ for some $k,m\in\nat^\infty$ with $m\geq 3$;
\item $\RF_m$ for some $m\in\nat^\infty$ with $m\geq 3$;
\item $X_2(C_3)_\ssim$, where $\sim$ is a non-universal $Aut(X_2(C_3))$-invariant equivalence relation on~$VX_2(C_3)$; or
\item the generic orientation of the countable generic bipartite graph.
\end{enumerate}
\end{thm}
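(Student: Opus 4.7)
The plan is to reduce to the new case (one-ended, infinite-degree) and then perform a local-to-global analysis. For the sufficiency direction, each family in (i)--(xiii) is verified to be C-homogeneous directly; (i) is the definition, and the families arising in \cite{FinConHomDigraphs} and \cite{HH-ConHomDigraphs} have already been checked there, while the remaining explicit constructions admit back-and-forth arguments tailored to their definitions. For the necessity direction, let $D$ be a countable C-homogeneous digraph. Its components are pairwise isomorphic and each is C-homogeneous, so I restrict attention to a single component. If $D$ is homogeneous we are in case~(i); if $D$ has finite degree we invoke \cite{FinConHomDigraphs}; if $D$ has more than one end we invoke \cite{HH-ConHomDigraphs}. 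It remains to handle connected $D$ of infinite degree that is one-ended and not homogeneous.

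The main tool is the reachability relation~$\RF$ from Section~\ref{sec_reachability}. First, I would show that the local structures $D^+(x)$, $D^-(x)$, and the bipartite digraph induced between $N^+(x)$ and $N^-(x)$ are themselves homogeneous: the point stabiliser $\Aut(D)_x$ acts C-homogeneously on these sets, and since any finite subset of them, together with~$x$, induces a connected subdigraph in~$D$, C-homogeneity of~$D$ upgrades to full homogeneity on each neighbourhood. Applying the classifications of countable homogeneous digraphs \cite{Cherlin-CountHomDigraphs}, of countable homogeneous bipartite graphs \cite{GGK}, and of countable homogeneous $2$-partite digraphs \cite{Hom2PartiteDi} leaves only a finite list of possibilities for the local data at~$x$.

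For each compatible local configuration I would inspect the reachability relation. The $\RF$-classes partition the edge set of~$D$ coherently and the relation $\sim$ on~$VD$ generated by placing two vertices in the same class if they lie on a common $\RF$-class is $\Aut(D)$-invariant; after verifying that $ED_\ssim$ is asymmetric, the quotient $D_\ssim$ is itself C-homogeneous. The one-end hypothesis now does most of the work: either $\sim$ is universal, in which case $D$ \emph{is} a single $\RF$-class, hence a $2$-partite digraph whose type is pinned down by the classification of \cite{Hom2PartiteDi} and must match one of the families in (v)--(vii), (x), (xii), or~(xiii); or $\sim$ is non-trivial, in which case $D_\ssim$ is a C-homogeneous digraph with strictly simpler local data and again one-ended (or a single vertex), so induction on the local complexity identifies $D_\ssim$ within the list, after which $D$ is reconstructed from $D_\ssim$ together with the uniform $\RF$-class and matched against the remaining families (ii), (iii), (ix), and (xi).

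The main obstacle I expect is case~(xii), the quotient $X_2(C_3)_\ssim$ with a non-universal $\Aut(X_2(C_3))$-invariant equivalence relation~$\sim$: the local data alone do not determine~$\sim$, and one has to show first that such a~$\sim$ exists, and second that any $D$ with the relevant local data is forced by one-endedness to be such a quotient rather than $X_2(C_3)$ itself or a $DL(\Delta)$-like object. A secondary difficulty is to rule out, in every branch of the case analysis, the multi-ended alternatives in (iv) and (v), which requires tracking precisely when the induced reachability quotient is a regular tree versus a one-ended digraph; this amounts to checking that the induced bipartite digraph between $N^+(x)$ and $N^-(x)$ is rich enough to prevent separations by finite vertex sets.
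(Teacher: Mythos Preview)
Your proposal has two genuine gaps.

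First, the reachability relation does not do the work you assign to it when $D^+$ contains an edge. In the paper this is the entire content of Section~\ref{sec_ProofPartI}: one goes through Cherlin's list (Theorem~\ref{thm_CountHomDigraphs}) for $D^+$ case by case---generic $I_n$-free, generic $\HF$-free, (semi-)generic $n$-partite, $T^\wedge$, $S(3)$, $\PF(3)$, $\PF$, $T[I_n]$, $I_n[T]$---and in each case argues directly from the structure of $D^+$ and $D^-$ that $D$ is homogeneous or one of the listed exceptions. The arguments are quite different in each branch (e.g.\ for $\HF$-free one shows every finite $\HF$-free subdigraph lies in some $D^+(x)$ and builds a connecting vertex; for $T[I_n]$ one constructs an invariant equivalence relation with classes $I_n$). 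None of this is captured by ``inspect the reachability relation'': when $D^+$ has edges the reachability digraph $\Delta(D)$ is typically all of~$D$ and carries no extra information, and your proposed vertex-quotient $D_\ssim$ collapses to a point. The reachability machinery is only invoked in the paper once one already knows $D^+\cong I_n$ and $D^-\cong I_m$.

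Second, even in the independent-neighbourhood case your dichotomy is misstated. If $\AF$ is universal then $\Delta(D)\cong D$, but this does \emph{not} make $D$ a $2$-partite digraph; that is precisely the alternative that fails in Proposition~\ref{prop_CPW}. The paper handles universal~$\AF$ (Section~\ref{sec_D+IndepAFUniversal}) by finding a cycle witnessing universality, forcing a $C_4$, and then analysing the auxiliary $2$-partite digraph $\Gamma=D[X\cup Y]$ with $X=N^-(x)\setminus N^+(y)$ and $Y=N^+(y)\setminus N^-(x)$ for a fixed edge $xy$; it is $\Gamma$, not $D$, that is shown to be homogeneous $2$-partite and to which Theorem~\ref{thm_ClassHom2Partite} is applied. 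From the possibilities for~$\Gamma$ one then deduces either $\mathrm{diam}(D)=2$ (whence $D$ is homogeneous) or that $D$ itself is the generic orientation of the generic bipartite graph. Your inductive quotient scheme does not recover this argument, and the families you list as outcomes of the universal case ((v)--(vii), (x), (xii), (xiii)) are mostly wrong: (v)--(vii), (x) arise from the \emph{non}-universal branch, and (xii) is locally finite.
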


Those countable homogeneous digraphs that are not explicitely mentioned within Theorem~\ref{thm_main} will be described in Section~\ref{sec_HomDigraphs}.

For a tournament $T$, let $T^+$ be $T$ together with a new vertex~$x$ such that $xv\in ET^+$ for all $v\in VT$.
Then $T^\wedge$ is the disjoint union of two copies $T^+\varphi_1$, $T^+\varphi_2$ with isomorphisms $\varphi_1,\varphi_2$ and with $v\varphi_1 u\varphi_2\in ED$ if and only if $uv\in ET^+$ and $v\varphi_2 u\varphi_1\in ED$ if and only if $uv\in ED$.

Let $VS(2)$ be a dense subset of the unit circle such that the angle between any two points is rational.
A vertex $x$ is the successor of a vertex~$y$ if the angle between them is smaller than $\pi$ modulo $2\pi$ (counterclockwise).
The resulting tournament is~$S(2)$.
Similarly, let $VS(3)$ be a dense subset of the unit circle such that the angle between any two points is rational, too.
Two vertices in~$S(3)$ are adjacent if the angle between them is smaller than $3\pi/2$ modulo $2\pi$ (counterclockwise).

For two digraphs $D, D'$ let the \emph{lexicographic product} $D[D']$ be the digraph with vertex set $VD\times VD'$ and edge set
\[
\{(x,x')(y,y')\mid xy\in ED\text{ or }(x=y\text{ and }x'y'\in ED')\}.\footnote{Note that if $X\sub VD$, then $D[X]$ is a subdigraph of~$D$ (the restiction of~$D$ onto~$X$) and, if $D'$ is a digraph, $D[D']$ is a new digraph (the lexicographic product).}
\]

For a homogeneous tournament $T\neq I_1$ and a cardinal $\lambda$, let $X_{\lambda}(T)$ be the digraph such that every vertex is a cut vertex and lies in $\lambda$ distinct blocks each of which is isomorphic to~$T$.

For a bipartite edge-transitive digraph $\Delta$, let $DL(\Delta)$ be the digraph such that every vertex is a cut vertex and lies in precisely two blocks each of which is isomorphic to~$\Delta$ and such that the vertex has its successors in one of the two blocks and its predecessors in the other.

The complete bipartite graph with one side of size~$k$ and the other of size~$\ell$ is $K_{k,\ell}$.
The \emph{(bipartite) complement of a perfect matching} $CP_k$ is a complete bipartite graph $K_{k,k}$ where the edges of a perfect matching are removed.
A \emph{generic} bipartite graph is a bipartite graph with partition $\{X,Y\}$ such that for each two disjoint subsets $A,B$ of the same side we find a vertex in the other partition set with $A$ inside and $B$ outside its neighbourhood.

A digraph is a \emph{tree} if its underlying undirected graph is a tree.
It is \emph{regular} if all vertices have the same in-degree and all vertices have the same out-degree (but these two values need not coincide).

An undirected tree is \emph{semiregular} if for the canonical bipartition $\{X,Y\}$ of the vertices of the tree the vertices in~$X$ have the same degree and the vertices in~$Y$ have the same degree.
If the degree of the vertices in~$X$ is $k\in\nat^\infty$ and those in~$Y$ is $\ell\in\nat^\infty$, then we denote the semiregular tree by $T_{k,\ell}$.

Given $2\le m\in\N$ and a some $k\in\nat^\infty$ with $k \ge 3$ consider the tree $T_{k,m}$ and let $\{X,Y\}$ be its canonical bipartition such that the vertices in~$X$ have degree~$m$.
Subdivide each edge once and endow the neighbourhood of each $x\in X$ with a cyclic order.
For each new vertex $v$ let $x_v$ be its unique neighbour in~$X$ and denote by~$\sigma(v)$ the successor of~$v$ in $N(x_v)$.
Then for each $y\in Y$ and each $w\in N(y)$ we add an edge directed from $w$ to all $\sigma(u)$ with $u\in N(y)\sm\{w\}$.
Finally, we delete the vertices of the $T_{k,m}$ together with all edges incident with such a vertex to obtain the digraph $M(k,m)$.

For $2\leq m \in\N$ consider the tree $T_{2,2m}$ and let $\{X,Y\}$ be its canonical bipartition such that the vertices in~$X$ have degree~$2m$.
Subdivide every edge once and enumerate the neighbourhood of each $x\in X$ from $1$ to~$2m$ in a such way that the two neighbours of each $y\in Y$ have distinct parity.
For each new vertex $v$ let $x_v$ be its unique neighbour in~$X$ and define $\sigma(v)$ to be the successor of~$v$ in the cyclic order of $N(x_v)$.
For any $y\in Y$ we have a neighbour $a_y$ with even index, and a neighbour $b_y$ with odd index.
Then we add edges from both $a_y$ and $\sigma(a_y)$ to both $b_y$ and $\sigma(b_y)$.
Finally we delete the vertices of $T_{2,2m}$ together with all edges incident with such a vertex.
By $M'(2m)$ we denote the resulting digraph.

A tripartite digraph $D$ is a digraph whose vertex set can be partitioned into three sets $V_1,V_2,V_3$ such that
\[VE\sub (V_1\times V_2)\cup (V_2\times V_3)\cup (V_3\times V_1).\]
The \emph{directed tripartite complement} of~$D$ is the digraph
\[(VD,(\bigcup_{i=1,2,3}(V_i\times V_{i+1}))\sm ED),\]
where $V_4=V_1$.

For $k\in\nat^\infty$, let $Y_k$ be the digraph with vertex set $V_1 \cup V_2 \cup V_3$ where the~$V_i$ denote pairwise disjoint independent sets of the same cardinality~$k$ such that the induced subdigraphs $Y_k[V_i, V_{i+1}]$ with vertex sets $V_i \cup V_{i+1}$ (for $i=1,2,3$ with $V_4=V_1$) are complements of perfect matchings such that all edges are directed from $V_i$ to~$V_{i+1}$ and such that the directed tripartite complement of~$Y_k$ is the disjoint union of~$k$ copies of the directed triangle~$C_3$.

The digraph $\RF_m$ for $m\in\nat^\infty$ with $m\geq 3$ is constructed as follows:
take $m$ pairwise disjoint countably infinite sets $V_i$ for $i=1\ldots m$ if $m$ is finite and $i\in\ganz$ otherwise.
Then $\RF_m$ has vertex set $\bigcup V_i$ and edges only between $V_i$ and $V_{i+1}$ (with $V_{m+1}=V_1$) such that the digraph induced by $V_i$ and $V_{i+1}$ is a countable generic bipartite digraph such that the edges are directed from $V_i$ to~$V_{i+1}$.

We call a $2$-partite digraph $D$ with partition $\{X,Y\}$ a \emph{generic orientation of the countable generic bipartite graph} if for all finite $A,B,C\sub X$ (and all finite $A,B,C\sub Y$) there is a vertex $v\in X$ (a vertex $v\in Y$, respectively) with $A\sub N^+(v)$ and $B\sub N^-(v)$ and such that $v$ is not adjacent to any vertex of~$C$.
A back-and-forth argument shows that, up to isomorphism, there is a unique generic orientation of the countable generic bipartite graph.
It is easy to verify that the underlying undirected graph of~$D$ is the countable generic bipartite graph (see Section~\ref{sec_HomBipartiteGraphs} for the definition of a generic bipartite graph).

\medskip

For most of the digraphs in Theorem~\ref{thm_main}, we refer to their proof of the C-homo\-geneity to~\cite{FinConHomDigraphs,HH-ConHomDigraphs}.
In some cases this was only done for finite menbers of their class (e.g.\ in the case of~$Y_k$, this was done only for $k\in\nat$), but the proof for the infinite members of the classes is completely analogous.
The only digraphs of Theorem~\ref{thm_main} we have to consider here are the digraphs $S(3)[I_n]$, the digraphs $\RF_m$, and the generic orientation of the countable generic bipartite graph.
Whereas the latter is a direct consequence of the fact, that it is a homogeneous $2$-partite digraph that has an automorphism that switches its partition sets, we only have to consider the digraphs $\RF_m$ and $S(3)[I_n]$.
The fact that $S(3)[I_n]$ is C-homogeneous follows from the homogeneity of~$S(3)$ and in the case of $\RF_m$, it is an easy consequence of the fact that $\RF_m[V_i\cup V_{i+1}]$ is the countable homogeneous bipartite digraph and that two vertices in finite induced subdigraphs lie in the same set $V_i$ if and only if any path between them has the same number of forward and backward directed edges modulo~$m$.

\section{Overview of the proof of Theorem~\ref{thm_main}}\label{sec_Overview}

Let us give a very brief overview of the proof of Theorem~\ref{thm_main}.
The main lemma that we shall use throughout the proof is Lemma~\ref{lem_outNeighHomogeneous} which says that the out-neighbourhood of each vertex as well as the in-neighbourhood of each vertex induce a homogeneous digraph.
With this in mind, we consider Cherlin's classification of the countable homogeneous digraphs and investigate each of its cases one after another.
If the out-neighbourhood of some vertex is not an independent set (Section~\ref{sec_ProofPartI}), then we can prove the outcome of each case relatively easy.
Interestingly, some of the ideas of the proofs of the corresponding cases for undirected C-homogeneous graphs~\cite{GrayMacpherson} carries over but have to deal with the new situation of directed edges.
These cases are for example the generic $\HF$-free digraphs (versus generic $K_n$-free graphs), the generic $I_n$-free digraphs (versus generic $I_n$-free graphs), and the (semi-)generic $n$-partite digraphs (versus the complete $n$-partite graphs).

In Section~\ref{sec_ProofPartII}, we consider the case that the successors of each vertex form an independent set.
By considering the results of Section~\ref{sec_ProofPartI} for the digraph with all edges directed in the inverse way, also the predecessors of each vertex form an independent set.
In this situation, we can make use of the notion of the reachability relation by Cameron et al.~\cite{CPW}.
They showed (see Proposition~\ref{prop_CPW}) that either this equivalence relation is universal, or every equivalence class induces a bipartite digraph.
In the latter case (Section~\ref{sec_D+IndepAFBipartite}), we use the classification of the C-homogeneous bipartite (di-)graphs (Theorem~\ref{thm_HHThm6.4}) in analogy to the situation where $D$ is locally finite.
The case where the reachability relation is universal does not occur for locally finite digraphs; but for digraphs of infinite degree, there are such examples.
We treat this case in Section~\ref{sec_D+IndepAFUniversal}.
The main tool for that part is the classification of the homogeneous $2$-partite digraphs.

In the Sections~\ref{sec_ProofPartI} and~\ref{sec_ProofPartII} we prove that no other digraphs but those listed in Theorem~\ref{thm_main} are C-homogeneous.
The converse implication, that is, that all the digraphs in Theorem~\ref{thm_main} are indeed C-homogeneous, was already treated in Section~\ref{sec_MainThm}.

\section{Reachability relation}\label{sec_reachability}

Let $D$ be a digraph.
A \emph{walk} is a sequence $x_0\ldots x_k$ of vertices such that $x_i$ and $x_{i+1}$ are adjacent for all $0\leq i<k$.
If $x_{i-1}\in N^+(x_i)\Leftrightarrow x_{i+1}\in N^+(x_i)$ for all $0<i<k$ then the walk is called \emph{alternating}.
Two edges on a common alternating walk are \emph{reachable} from each other.
This defines an equivalence relation, the \emph{reachability relation}~$\AF$.
For an edge $e\in ED$, let $\AF(e)$ be the equivalence class of~$e$ and let $\langle\AF(e)\rangle$ be the \emph{reachability} digraph of~$D$ that contains~$e$, that is, the vertex set incident with some edge in~$\AF(e)$ and edge set $\AF(e)$.
If $D$ acts transitively on the edges of~$D$, that is, if $D$ is \emph{$1$-arc transitive}, then the digraphs $\langle\AF(e)\rangle$ are isomorphic for all $e\in ED$ and we denote by $\Delta(D)$ one digraph of their isomorphism class.\looseness-1

The following proposition is due to Cameron et al.

\begin{prop}\cite[Proposition 1.1]{CPW}\label{prop_CPW}
Let $D$ be a connected $1$-arc transitive digraph.
Then $\Delta(D)$ is $1$-arc transitive and connected.
Furthermore, either
\begin{enumerate}[{\em (a)}]
\item $\AF$ is the universal relation on $ED$ and $\Delta(D)\isom D$, or
\item $\Delta(D)$ is a bipartite reachability digraph.\qed
\end{enumerate}
\end{prop}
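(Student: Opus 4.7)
The plan is to establish the three assertions in order: connectedness of $\Delta(D)$, its $1$-arc transitivity, and the dichotomy. Connectedness is immediate from the definition of $\AF$, since any two edges of $\AF(e)$ are linked by an alternating walk, which is in particular a walk in $\langle\AF(e)\rangle$. For $1$-arc transitivity of $\Delta(D)$, observe first that any $\varphi\in\Aut(D)$ carries alternating walks to alternating walks and therefore permutes the $\AF$-classes setwise. Given edges $f,g\in\AF(e)$, the $1$-arc transitivity of $D$ yields $\varphi\in\Aut(D)$ with $\varphi(f)=g$, and then $\varphi(\AF(e))=\AF(\varphi(f))=\AF(g)=\AF(e)$, so $\varphi$ restricts to an automorphism of $\langle\AF(e)\rangle$ mapping $f$ to $g$.

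For the dichotomy I would write $V^+$ (respectively $V^-$) for the set of vertices of $\Delta(D)$ that occur as the tail (respectively head) of at least one edge of $\AF(e)$, so that $V(\Delta(D))=V^+\cup V^-$. The $1$-arc transitivity just established makes each of $V^+,V^-$ a single $\Aut(\Delta(D))$-orbit, so these two orbits either coincide or are disjoint. In the disjoint case every edge of $\Delta(D)$ runs from $V^+$ to $V^-$, hence $\{V^+,V^-\}$ is a bipartition and we are in case~(b). In the coinciding case every vertex $v\in V(\Delta(D))$ is simultaneously a tail and a head of edges of $\AF(e)$. Here the decisive observation is that the alternation condition permits extending any walk ending at a head $v$ by an arbitrary in-edge at $v$, and any walk ending at a tail $v$ by an arbitrary out-edge at $v$; hence every edge of $D$ incident with a vertex of $V(\Delta(D))$ already belongs to $\AF(e)$. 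Connectedness of $D$ then forces $V(\Delta(D))=VD$ and $\AF(e)=ED$, which is case~(a).

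The only genuine content lies in the closure step in the coinciding case: the direction of edges must be tracked carefully when extending alternating walks, so that in-edges at a head and out-edges at a tail are both absorbed into $\AF(e)$. Everything else is a direct unwinding of the definitions once $\Aut(D)$ is seen to permute the $\AF$-classes.
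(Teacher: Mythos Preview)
The paper does not supply its own proof of this proposition: it is quoted from Cameron, Praeger and Wormald~\cite{CPW} and closed with a \qed, so there is nothing in the paper to compare your argument against directly.

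That said, your argument is correct and is essentially the standard one. A couple of minor remarks on presentation. For connectedness, you might make explicit that every edge on an alternating walk containing $f\in\AF(e)$ lies in $\AF(e)$ (since any sub-walk is again alternating), so the walk really lives inside $\langle\AF(e)\rangle$. In the coinciding case, the ``extending walks'' phrasing is slightly roundabout: it is cleaner to say that if $v\in V^+=V^-$ then there exist $g=vw'\in\AF(e)$ and $h=u'v\in\AF(e)$, and for any out-edge $vw$ of~$D$ the two-step walk $w',v,w$ is alternating, placing $vw\in\AF(g)=\AF(e)$; similarly for in-edges via~$h$. This is exactly what you are doing, just without the detour through longer walks. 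The closure-and-connectedness conclusion is then immediate, as you wrote.
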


We say that a cycle $C$ \emph{witnesses that $\AF$ is universal} if $C$ contains an induced $2$-arc and if there is an edge $e$ on~$C$ such that $C$ without the edge $e$ is an alternating walk.\looseness-1

\begin{lem}\label{lem_CycleWitnesses}
Let $D$ be a non-empty vertex-transitive and $1$-arc transitive digraph whose reachability relation $\AF$ is universal.
Then $D$ contains a cycle that witnesses that $\AF$ is universal.
\end{lem}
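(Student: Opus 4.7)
The plan is to build a witness cycle by combining a $2$-arc with an alternating walk joining its two edges. First, vertex-transitivity of $D$ together with the existence of at least one edge (implicit in $\AF$ being meaningfully universal) ensures that every vertex has both predecessors and successors, so $2$-arcs exist; fix one $x_0 x_1 x_2$ with $x_0 x_1, x_1 x_2 \in ED$, and set $e_1 := x_0 x_1$ and $e_2 := x_1 x_2$.

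Since $\AF$ is universal, $e_1$ and $e_2$ lie on a common alternating walk in $D$. Taking the sub-walk between them and, if necessary, prepending $e_1$ or appending $e_2$ traversed from $x_1$ to the other endpoint (operations that preserve the alternating property), we obtain an alternating walk $W = y_0 y_1 \ldots y_n$ with $y_0 = y_n = x_1$, $y_1 = x_0$, and $y_{n-1} = x_2$. We choose $W$ to have shortest possible length among such walks.

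Closing $W$ at $x_1$ yields a closed walk $C^*$ in $D$. The crucial observation is that the alternation of $W$ at each of its interior vertices immediately translates into alternation of $C^*$ at every vertex other than $x_1$; at $x_1$ alternation fails precisely because $x_0 \in N^-(x_1)$ while $x_2 \in N^+(x_1)$. Consequently, removing the cycle-edge $e_1$ (or equivalently $e_2$) from $C^*$ leaves an alternating walk, and the $2$-arc $x_0 x_1 x_2$ sits on three consecutive vertices of $C^*$ around the break. When $C^*$ has length at least $4$, these three vertices induce in $C^*$ exactly a $2$-arc, since $\{x_0, x_2\}$ is then not a cycle-edge. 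Simplicity of $C^*$ as a genuine cycle will be obtained from the minimality of $W$ via a parity-based shortcut argument: any repeated vertex in $W$ between two positions of matching role permits a shorter alternating walk of the same form, contradicting minimality.

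The main obstacle will be the degenerate case in which the shortest such $W$ has length $3$: then $W = x_1 x_0 x_2 x_1$ forces $x_0 x_2 \in ED$, and $C^*$ is a transitive triangle whose $2$-arc is not induced. Here the plan is to invoke the richness of $D$ (vertex-transitive, $1$-arc transitive, $\AF$ universal, and in particular having vertices outside $\{x_0, x_1, x_2\}$) to lengthen $W$ by an alternating detour through such an outside vertex, producing an alternating walk of odd length at least $5$ with the required endpoint-edge structure; the resulting closed walk then yields a witness cycle of length at least $4$ by the analysis of the previous paragraph.
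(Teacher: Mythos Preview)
Your approach is essentially the paper's: pick a $2$-arc, take a minimal alternating walk connecting its two edges, and read off a witnessing cycle. The paper, however, phrases this as an \emph{induction} on the length of the alternating walk, allowing the underlying $2$-arc to change from one step to the next, and this is precisely where your version has a real gap.

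You fix the $2$-arc $x_0x_1x_2$ once and for all and take $W$ minimal for \emph{that} $2$-arc. Your ``parity-based shortcut'' only handles a repeat $y_i=y_j$ when $i$ and $j$ have the same parity (both positions are sources, or both are sinks in the alternating walk): then the spliced walk $y_0\ldots y_i y_{j+1}\ldots y_n$ is still alternating with the same first and last edges, contradicting minimality. But when $i$ and $j$ have opposite parity, no splice of $W$ is simultaneously alternating and of the required form: at the repeated vertex $v=y_i=y_j$ the two visits contribute edges in opposite directions, so any shortcut breaks alternation at $v$. What you \emph{do} get in this case is a $2$-arc through $v$ using edges of $W$ (say $y_{j-1}\,v\,y_{i+1}$), and the subwalk $y_i\ldots y_j$ is a strictly shorter alternating walk joining the two edges of \emph{that} $2$-arc --- but this does not contradict your minimality, which was only over walks for the fixed $2$-arc $x_0x_1x_2$. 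This is exactly the situation the paper's proof handles by recursing on the new $2$-arc. The clean fix is to minimise over all $2$-arcs at once (equivalently, run the induction), after which both parities of repeat yield a contradiction.

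Your treatment of the length-$3$ case is also incoherent with your setup: having chosen $W$ minimal, you cannot then propose to ``lengthen $W$ by an alternating detour'' and still invoke minimality to control repeats in the longer walk. Once you switch to minimising over all $2$-arcs, this case is absorbed into the same mechanism rather than handled by an ad hoc detour.
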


\begin{proof}
As $D$ is non-empty, it contains some edge $xy$ and, since $D$ is vertex-transitive, it also has some edge $yz$.
Hence, $D$ contains a (not necessarily induced) $2$-arc $xyz$.
By universality of~$\AF$, there must be a minimal alternating walk $P$ in~$D$ whose first edge is~$xy$ and whose last edge is~$yz$.
Either this walk is a cycle or there is a vertex incident with at least three edges of that walk.
If the walk is a cycle, then it obviously witnesses that $\AF$ is universal.
If the walk contains a vertex $v$ incident with three edges of the walk, then one edge incident with~$v$ is directed towards~$v$ and one is directed away from~$v$, as otherwise we have a contradiction to the minimality of the alternating walk.
So $v$ is the middle vertex of two $2$-arcs $uvw$ and either $u'vw$ or $uvw'$ in the digraph $(VP,EP)$, say $u'vw$.
Then we find a shorter alternating walk -- a proper subwalk of~$P$ -- either between $uv$ and~$vw$ or between $u'v$ and~$vw$ and we are done by induction.
(Note that this is not necessarily a contradiction since, e.g., $xyz$ might be an induced $2$-arc but $uvw$ induces a triangle.)
\end{proof}

Lemma~\ref{lem_CycleWitnesses} just tells us that we find some cycle witnessing that $\AF$ is universal.
Next, we show that we can even find an induced cycle with the same property.

\begin{lem}\label{lem_UniversalInducedCycle}
Let $D$ be a non-empty vertex-transitive and $1$-arc transitive digraph whose reachability relation $\AF$ is universal.
If $D$ contains some cycle witnessing that $\AF$ is universal, then it contains an induced such cycle of at most the same length.
\end{lem}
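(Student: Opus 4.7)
The plan is to show the sharper claim that every \emph{minimum-length} witnessing cycle $C$ of $D$ must already be induced, which implies the lemma at once. Write $C = v_0 v_1 \cdots v_{n-1} v_0$ and let $e$ be the edge of $C$ such that $P := C - e$ is alternating. First I would observe that $P$'s alternation forces the two walk-edges at each interior vertex of $P$ to point the same way at that vertex, so no consecutive triple of $P$ is a $2$-arc. Hence the induced $2$-arc required by $C$ being a witness must involve $e$, and after relabeling I may assume $e = v_{n-1}v_0$ (directed $v_{n-1} \to v_0$) and that the induced $2$-arc is $v_{n-1}v_0v_1$; in particular $v_1$ and $v_{n-1}$ are non-adjacent in $D$.

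Next, assuming for contradiction that $C$ has a chord, I would pick one, $v_iv_j$ with $0 \le i < j \le n-1$, of minimum span $j - i \ge 2$. Minimality of the span forces the sub-cycle $C_1 := v_i v_{i+1} \cdots v_j v_i$ (obtained by closing the sub-walk of $P$ with the chord) to be induced in $D$; let $C_2$ be the complementary sub-cycle, which carries $e$. Both $|C_1|$ and $|C_2|$ are strictly less than $n$. The heart of the argument is a direction-chasing dichotomy at $v_i$ and $v_j$: because $P$ alternates at $v_i$, the walk-edges $v_{i-1}v_i$ and $v_iv_{i+1}$ share a common direction $d_i$ at $v_i$, and a short check gives that the chord's direction at $v_i$ equals $d_i$ if and only if $C_1$ has no $2$-arc at $v_i$, which in turn holds if and only if $C_2 - e$, viewed as a walk, alternates at $v_i$. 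The analogous equivalence holds at $v_j$.

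From the dichotomy the contradiction drops out: either $C_1$ has a $2$-arc at $v_i$ or $v_j$---which, since $C_1$ is induced, is an induced $2$-arc of $D$, so $C_1$ is a witnessing cycle of length $< n$---or $C_2 - e$ is alternating throughout, in which case $C_2$, which contains the consecutive triple $v_{n-1}v_0v_1$ inherited from $C$ as an induced $2$-arc, is a witnessing cycle of length $< n$. Either outcome contradicts the minimality of $|C|$, so $C$ is induced. The side cases where the chord touches $v_0$ or $v_{n-1}$ follow the same template, using the non-adjacency of $v_1$ and $v_{n-1}$ to rule out the degenerate chord $v_1v_{n-1}$ and, if necessary, passing to a second chord to secure an induced $2$-arc (such as $v_{n-1}v_0v_j$) on the complementary cycle. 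The hard part will be the direction-chasing bookkeeping underpinning the dichotomy; this is tamed by the observation that $P$'s alternation identifies the walk-direction on either side of each chord endpoint as a single common direction, so both the ``no $2$-arc in $C_1$'' and ``$C_2 - e$ alternates'' conditions collapse to the same single comparison with the chord.
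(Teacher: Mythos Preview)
Your approach mirrors the paper's: take a minimum-length witnessing cycle $C$, assume a chord exists, and exhibit a shorter witnessing cycle from one of the two sub-cycles cut off by the chord. The paper argues via the bipartition of the alternating walk $P=C-e$, while you instead pick a chord of \emph{minimum span} so that $C_1$ is an induced subdigraph of~$D$, and then run a direction-chasing dichotomy at the chord endpoints. The dichotomy is correct at interior vertices of~$P$, and for $|C_1|\ge 4$ your argument is cleaner than the paper's about why the $2$-arc on $C_1$ is induced: the two outer vertices are non-adjacent in $C_1$, hence in~$D$ since $C_1$ is induced.

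There is, however, a genuine gap when the minimum span equals~$2$, so that $C_1$ is a triangle $v_iv_{i+1}v_{i+2}$. Your inference ``since $C_1$ is induced, the $2$-arc is an induced $2$-arc of~$D$'' fails here: on a triangle the outer vertices of any $2$-arc are adjacent. Worse, a triangle is never fully alternating, so it always carries a $2$-arc; your dichotomy then lands in the $C_1$-branch precisely when $C_1$ cannot witness, and by the same dichotomy $C_2-e$ fails to alternate at that chord endpoint, so $C_2$ does not witness via removal of~$e$ either. In fact no edge of $C_2$ can be removed to fix this in general, because $C_2$ then fails to alternate at two non-adjacent vertices ($v_0$ and one chord endpoint), which cannot both be made endpoints of a single walk; take for instance $n=7$ with a lone chord $v_2\!\to\! v_4$. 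The paper's proof is equally brief at this spot---its same-bipartition-class case hands you the very same non-witnessing triangle---so this is a shared delicate point rather than a deviation from the paper; but since you state the ``induced because $C_1$ is induced'' step explicitly, the span-$2$ case (and, relatedly, your hand-waved boundary cases $i=0$, $j=n-1$) need their own treatment.
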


\begin{proof}
Let us suppose that none of the minimal cycles witnessing the universality of~$\AF$ is induced.
Let $C$ be such a cycle of minimal length.
This exists by Lemma~\ref{lem_CycleWitnesses}.
Let $xy\in EC$ such that $C$ without the edge $xy$ is an alternating walk~$P$.
Since $C$ is not induced, it has a chord~$uv$.
If $u$ and~$v$ lie in the same set of the canonical bipartition of~$VP$, then the subwalk $uPv$ together with the edge $uv$ is a smaller cycle witnessing that $\AF$ is universal.
By minimality of~$C$, this cannot be.
So $u$ and $v$ lie in distinct sets of the canonical bipartition of~$P$.
But then we also find a smaller cycle in~$C$ together with the edge $uv$: if the out-degree of~$v$ in~$P$ is~$0$, then we take $uv$ together with the subwalk of~$C$ that contains $xy$, and otherwise we take $uv$ together with~$uPv$.
This contradiction to the minimality of~$C$ shows the lemma.\looseness-1
\end{proof}

\section{Some classification results of homogeneous structures}\label{sec_PreviousClassifications}

\subsection{(C-)Homogeneous bipartite graphs and digraphs}\label{sec_HomBipartiteGraphs}

In this section, we cite the classifications of the countable (C-)homogeneous bipartite graphs.
For countable (C-)homogeneous bipartite digraphs, then the analogous theorems hold.

A bipartite graph $G$ (with bipartition $\{X,Y\}$) is {\em homogeneous bipartite} if every isomorphism between two isomorphic finite induced subgraphs $A$ and~$B$ of~$G$ that preserves the bipartition (that means that $VA\cap X$ is mapped onto $VB\cap X$ and $VA\cap Y$ is mapped onto $VB\cap Y$) extends to an automorphism of~$G$ that preserves the bipartition.
We call $G$ {\em connected-homogeneous bipartite}, or simply {\em C-homogeneous bipartite}, if every isomorphism between two isomorphic finite induced connected subgraphs $A$ and~$B$ of~$G$ that preserves the bipartition extends to an automorphism of~$G$ that preserves the bipartition.
The same notions apply to bipartite and $2$-partite digraphs.

We begin with the classification of the homogeneous bipartite graphs.

\begin{thm}\label{thm_GGK}\cite[Remark 1.3]{GGK}
A countable bipartite graph is homogeneous if and only if it is isomorphic to one of the following graphs:
\begin{enumerate}[{\rm (i)}]
\item a complete bipartite graph;
\item an empty bipartite graph;
\item a perfect matching;
\item the bipartite complement of a perfect matching; or
\item the countable generic bipartite graph.\qed
\end{enumerate}
\end{thm}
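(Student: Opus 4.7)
The reverse implication is a routine back-and-forth verification done once per candidate. For the complete and empty bipartite graphs every bipartition-preserving bijection is already an isomorphism. For a perfect matching and its bipartite complement, any bipartition-preserving partial isomorphism is forced to respect the underlying matching, so extensions are immediate. For the countable generic bipartite graph the defining extension property supplies the back-and-forth step on demand. For the forward direction, let $G$ be a countable homogeneous bipartite graph with bipartition $\{X,Y\}$. By Fra\"iss\'e's theorem $G$ is determined by its age $\mathcal{A}(G)$, so it suffices to classify $\mathcal{A}(G)$. Homogeneity forces $\Aut_{bip}(G)$ to be transitive on $X$, on $Y$, and on ordered pairs of distinct same-side vertices (two such pairs carry no induced edges and are thus isomorphic as finite bipartite subgraphs). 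Hence for distinct $x_1,x_2\in X$ the triple $(k,d,|Y|)$ with $k=|N(x_1)\cap N(x_2)|$ and $d=|N(x_1)|$ is a single invariant of $G$, and the same holds on the $Y$-side.

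The plan is a case split on $(k,d)$. If $d=0$ or $d=|Y|$ we immediately obtain the empty or complete bipartite graph, cases (ii) and (i). If $d\ge 1$ and $k=0$, a pair-type comparison on the $Y$-side --- the pair $(y,y')$ with $y,y'\in N(x)$ has $|N(y)\cap N(y')|\ge 1$, whereas a pair with $y\in N(x),\,y'\in N(x')$ for $x\ne x'$ has $|N(y)\cap N(y')|=0$ --- forces $d=1$, and $G$ is a perfect matching, case (iii); dually, the case $k=d$ forces either complete or empty. The remaining sub-case is $0<k<d$, and it splits into two: if $|N(x_1)\cup N(x_2)|=|Y|$ (a finite situation) the dualised matching argument identifies $G$ with the bipartite complement of a perfect matching, case (iv); otherwise I would show that $\mathcal{A}(G)$ contains every finite bipartite graph and $G$ is the countable generic bipartite graph, case (v).

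The main obstacle is this last step: deriving the full extension property from the non-degenerate pair-type. The strategy I would follow is an inductive amalgamation inside $\mathcal{A}(G)$: to realise a prescribed neighbour/non-neighbour pattern on a finite $S\sub Y$ by a new $X$-vertex, amalgamate the pairwise witnesses supplied by the non-trivial pair-type along the previously constructed part. The delicate point is to ensure that the amalgamation base can be taken in $\mathcal{A}(G)$ with compatible types on both sides; the hypothesis that non-degeneracy holds \emph{symmetrically} on both partition classes is what keeps the induction alive, and dropping it collapses to one of the degenerate sub-cases. A robust fallback is to invoke the Lachlan--Woodrow-type classification for homogeneous two-sorted relational structures, which, once the degenerate sub-cases have been stripped off, directly produces the generic bipartite graph.
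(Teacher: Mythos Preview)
The paper does not prove this theorem: it is stated with a \qed\ immediately after the enumeration and attributed to \cite[Remark~1.3]{GGK}. So there is no in-paper argument to compare against; your sketch is being measured against the cited result itself.

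Your outline has the right architecture --- reverse direction routine, forward direction by a case split on the $2$-type of a same-side pair --- but there are two genuine problems.

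First, the invariants $k=|N(x_1)\cap N(x_2)|$ and $d=|N(x_1)|$ are cardinals, and your case split silently treats them as if they were finite. When $d=\omega$ you may well have $d=|Y|$ without $N(x)=Y$, and $k=d=\omega$ without $N(x_1)=N(x_2)$; so the clauses ``$d=|Y|$ forces complete'' and ``$k=d$ forces complete or empty'' are simply false as written. The correct bookkeeping is not cardinal arithmetic but emptiness of the four Boolean regions
\[
N(x_1)\cap N(x_2),\quad N(x_1)\sm N(x_2),\quad N(x_2)\sm N(x_1),\quad Y\sm\big(N(x_1)\cup N(x_2)\big),
\]
which by homogeneity does not depend on the pair. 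The five outcomes in the theorem then line up with the five symmetric emptiness patterns. Your perfect-matching and co-matching deductions survive this reformulation; the top-level split does not.

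Second, and more seriously, the decisive case --- all four regions nonempty implies generic --- is not proved. You say you ``would'' run an inductive amalgamation and that the ``delicate point'' is getting compatible two-sided types, but you do not actually perform the induction, and the amalgamation property alone does not obviously prevent unwanted identifications (e.g.\ when amalgamating two $Y$-vertices with the \emph{same} prescribed neighbourhood over a finite $A\sub X$). The concrete obligation is: from the four nonempty regions, show that for every finite $A\sub X$ and every $T\sub A$ there exists $y\in Y$ with $N(y)\cap A=T$ (and dually). One clean way in is to note that, by homogeneity, the set of realised $T\sub A$ is invariant under $\mathrm{Sym}(A)$ and hence a union of layers $\binom{A}{k}$, and then to rule out any missing layer by an explicit induction on~$|A|$; but this still needs to be written down, and your fallback to a ``Lachlan--Woodrow-type classification'' is not a proof of the present statement.
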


The \emph{generic} bipartite graph is the bipartite graph $G$ with bipartition $\{X,Y\}$ such that for every two finite subsets $U_X,W_X\sub X$ and every two finite subsets $U_Y,V_Y\sub Y$ there exists $x\in X$ and $y\in Y$ with $U_X\sub N(y)$ and $V_X\cap N(y)=\es$ and with $U_Y\sub N(x)$ and $V_Y\cap N(x)=\es$.

The following theorem is the classification result of the countable C-homogeneous bipartite graphs.
Its proof is uses the just stated classification of the countable homogeneous bipartite graphs, Theorem~\ref{thm_GGK}.

\begin{thm}\label{thm_HHThm6.4}{\em \cite[Theorem 6.4]{HH-ConHomDigraphs}}
Let $G$ be a countable connected graph.
Then $G$ is C-homoge\-neous bipartite if and only if it is isomorphic to one of the following graphs:
\begin{enumerate}[{\em (i)}]
\item a cycle $C_{2m}$ for some $m\in\nat$ with $m\geq 2$;
\item an infinite semiregular tree $T_{k,\ell}$ for some $k,\ell\in\nat^\infty$ with $k,\ell\geq 2$;
\item a complete bipartite graph $K_{m,n}$ for some $m,n\in\nat^\infty$ with $m,n\geq 1$;
\item a complement of a perfect matching $CP_k$ for some $k\in\nat^\infty$ with $k\geq 3$; or
\item the countable generic bipartite graph.\qed
\end{enumerate}
\end{thm}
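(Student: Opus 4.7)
For the forward direction, I would verify each of the five listed graphs directly. Cases~(iii), (iv), (v) are already homogeneous bipartite by Theorem~\ref{thm_GGK}, hence \emph{a fortiori} C-homogeneous bipartite. For case~(i), the dihedral group of $C_{2m}$ extends every partition-preserving isomorphism between its finite connected induced subgraphs (which are subpaths). For case~(ii), the semiregular tree $T_{k,\ell}$ admits a standard back-and-forth extension of any partition-preserving isomorphism between finite induced subtrees along the bipartition.

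For the reverse direction, let $G$ be a countable connected C-homogeneous bipartite graph with bipartition $\{X,Y\}$. Applying C-homogeneity to singletons inside $X$ and inside $Y$ gives vertex-transitivity on each side, so $G$ is biregular of some degrees $k,\ell\in\nat^\infty$. Applying C-homogeneity to an induced $2$-path $v_1 u v_2$ shows further that for each side $S\in\{X,Y\}$ the number $\mu_S\in\nat^\infty$ of common neighbours of two vertices of $S$ that have at least one common neighbour is a well-defined invariant. The plan is to case-split according to whether $G$ contains an induced $C_4$, equivalently whether $\max(\mu_X,\mu_Y)\ge 2$.

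If $G$ is $C_4$-free and acyclic, then $G$ is a connected biregular bipartite tree, hence $G\isom T_{k,\ell}$: this is output~(ii), with the degenerate cases $k=1$ or $\ell=1$ subsumed as stars $K_{1,\ell}$ or $K_{k,1}$ into case~(iii). If $G$ is $C_4$-free but contains a cycle, let $2m\ge 6$ be the girth and fix an induced $2m$-cycle $C=v_0 v_1\cdots v_{2m-1}$. I would argue $V(G)=V(C)$ by contradiction: if $w\notin V(C)$ is adjacent to $v_0$, then C-homogeneity applied to the induced path $v_{2m-1}v_0 v_1\cdots v_{m-1}$ produces an automorphism fixing $v_0,\ldots,v_{m-1}$ and sending $v_{2m-1}$ to $w$; tracing where $v_m,\ldots,v_{2m-2}$ must map under this automorphism, and repeatedly using $\mu_X=\mu_Y=1$, should produce a cycle strictly shorter than $2m$, contradicting the girth. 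This yields $G\isom C_{2m}$, that is, case~(i).

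If $G$ contains an induced $C_4$, I would upgrade C-homogeneity to full homogeneity bipartite and read off the remaining outputs from Theorem~\ref{thm_GGK}: the connected $C_4$-containing entries of that list are precisely $K_{k,\ell}$ with $k,\ell\ge 2$, $CP_k$ with $k\ge 4$, and the countable generic bipartite graph, corresponding to (iii), (iv), (v). The upgrade is the expected main obstacle: given a partition-preserving isomorphism $\varphi\colon A\to A'$ between possibly disconnected finite induced subgraphs, the plan is to adjoin a ``connector vertex'' $z$ to $A$ and $z'$ to $A'$ of matching neighbourhood type so that $A+z$ and $A'+z'$ are connected and $\varphi\cup\{z\mapsto z'\}$ is still an isomorphism, then apply C-homogeneity and restrict. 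The existence of compatible connectors, to be established by induction on the number of connected components of $A$, should follow from biregularity together with the constancy of the $\mu_S$, which between them guarantee enough freedom in the neighbourhood intersection patterns; the precise argument will bifurcate according to whether $\mu_S$ equals $\ell$ (resp.~$k$), forcing $G$ complete bipartite, $\ell-1$ (resp.~$k-1$), forcing $G\isom CP_k$, or is strictly smaller, leaving only the generic bipartite graph.
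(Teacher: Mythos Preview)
The paper does not give its own proof of this theorem: it is quoted from \cite{HH-ConHomDigraphs}, as signalled by the citation in the heading and the terminal \verb|\qed|. The only information the paper offers is the preceding sentence, ``Its proof uses the just stated classification of the countable homogeneous bipartite graphs, Theorem~\ref{thm_GGK}.'' So there is no in-paper argument to compare your proposal against; at most one can say that your plan to reduce the $C_4$-containing case to Theorem~\ref{thm_GGK} is consistent with that hint.

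Since no comparison with the paper is possible, let me flag the one genuine gap in your sketch. In the $C_4$-free cyclic case you fix a girth cycle $C=v_0\cdots v_{2m-1}$, a vertex $w\notin V(C)$ adjacent to~$v_0$, and an automorphism~$\alpha$ fixing $v_0,\ldots,v_{m-1}$ and sending $v_{2m-1}$ to~$w$. You then assert that ``tracing where $v_m,\ldots,v_{2m-2}$ must map \ldots\ should produce a cycle strictly shorter than~$2m$.'' But $\alpha$ gives you no control over $\alpha(v_m),\ldots,\alpha(v_{2m-2})$ beyond local adjacency; the image $\alpha(C)$ is simply another $2m$-cycle through $v_0,\ldots,v_{m-1}$ and~$w$, and the condition $\mu_X=\mu_Y=1$ (which is automatic from $C_4$-freeness) does not by itself force any shorter cycle to appear. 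One way to close this gap is to argue instead with the induced path $v_0v_1\cdots v_{2m-2}$: its endpoints share the common neighbour $v_{2m-1}$, so by C-homogeneity every induced $(2m-2)$-path has endpoints with a common neighbour; applying this to $wv_0v_1\cdots v_{2m-3}$ yields a common neighbour~$z$ of $w$ and $v_{2m-3}$, and now one has to chase the resulting configuration (eventually producing either a short cycle or a $C_4$). This works but is more delicate than your sketch suggests.

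Your connector-vertex strategy for upgrading to full homogeneity in the $C_4$-containing case is the standard line and should go through, though note that a single connector cannot reach components lying entirely on its own side of the bipartition, so the induction needs to be set up with a little care (e.g.\ first reduce to the case where every component of~$A$ meets both sides, or allow two connectors). Your remark that $CP_3\cong C_6$ falls into case~(i) rather than~(iv) is correct; the overlap is already present in the theorem's list.
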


Note that Theorems~\ref{thm_GGK} and~\ref{thm_HHThm6.4} also apply to homogeneous and C-homo\-geneous bipartite digraphs, but not to $2$-partite digraphs.
The $2$-partite digraphs are in the case of homogeneity subject of the next section.

\subsection{Homogeneous $2$-partite digraphs}\label{sec_Hom2PartiteDigraphs}

As mentioned before, a $2$-partite digraph $D$ with partition $\{X,Y\}$ is \emph{homogeneous} if every isomorphism $\varphi$ between finite induced subdigraphs $A$ and~$B$ with $(VA\cap X)\varphi\sub X$ as well as $(VA\cap Y)\varphi\sub Y$ extends to an automorphism $\alpha$ of~$D$ with $X\alpha=X$ and $Y\alpha=Y$.
Let us state the classification of the countable $2$-partite digraphs:

\begin{thm}\label{thm_ClassHom2Partite}
\cite[Theorem~3.1]{Hom2PartiteDi}
Let $D$ be a countable $2$-partite digraph with partition $\{X,Y\}$.
Then $D$ is homogeneous if and only if one of the following cases holds:
\begin{enumerate}[\rm (i)]
\item $D$ is a homogeneous bipartite digraph;
\item $D\isom CP_k'$ for some $k\in\nat^\infty$ with $k\geq 2$;
\item $D$ is the countable generic $2$-partite digraph; or
\item $D$ is the generic orientation of the countable generic bipartite graph.\qed
\end{enumerate}
\end{thm}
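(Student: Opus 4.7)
The plan is Fra\"iss\'e-theoretic. View a countable $2$-partite digraph $D$ with partition $\{X,Y\}$ as a relational structure on $X\cup Y$ with a unary predicate distinguishing the two parts and a binary relation encoding the directed edges across the partition. Partition-preserving-homogeneous such structures correspond bijectively to Fra\"iss\'e amalgamation classes of finite $2$-partite digraphs, so the classification reduces to enumerating the possible ages.

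First I would dispose of the case where edges appear only in one direction across the partition. If every edge is directed from $X$ to $Y$ (or, symmetrically, all from $Y$ to $X$), then $D$ is a bipartite digraph, and the bipartite digraph analog of Theorem~\ref{thm_GGK} (whose proof parallels the undirected case in~\cite{GGK}) yields case~(i).

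Now assume both orientations of edges occur. For a fixed vertex $x \in X$, I would analyze the three-coloring of $Y$ given by the out-neighbours, in-neighbours, and non-neighbours of~$x$, and the symmetric three-coloring of $X$ from a fixed $y \in Y$. Partition-preserving homogeneity constrains which finite patterns in these colorings are realized, and this determines the age. The maximally free age, in which every consistent finite pattern appears, produces the countable generic $2$-partite digraph of case~(iii); a constrained age in which each vertex has a unique ``mirror'' partner on the opposite side produces $CP_k'$ of case~(ii); and an intermediate age in which the underlying bipartite graph is generic but orientations are realized with one-point extension freedom at the oriented level yields the generic orientation of case~(iv). For each candidate age I would verify the amalgamation property via a one-point extension argument, then invoke Fra\"iss\'e's theorem together with a back-and-forth construction to establish existence, uniqueness, and homogeneity.

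The main obstacle is completeness: ensuring no amalgamation class has been overlooked. For this I would analyze the underlying bipartite graph $G(D)$, in particular arguing that a suitable symmetrization of $\Aut(D)$ yields enough transitivity on $G(D)$ to identify it among the candidates of Theorem~\ref{thm_GGK}. Then, for each possibility for $G(D)$ (empty bipartite graph, perfect matching, $K_{k,\ell}$, $CP_k$, or generic bipartite), I would determine which orientations are compatible with amalgamation. Perfect matchings and empty bipartite graphs force case~(i); $K_{k,\ell}$ and $CP_k$ with both orientations present lead into case~(ii) or~(iii); and the generic bipartite graph with both orientations present leads to case~(iii) or~(iv), according to which finite three-coloured patterns are realized. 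The genuinely delicate step is ruling out strictly intermediate ages in the last scenario, which would follow from a careful extension argument exploiting genericity of the underlying bipartite graph combined with the asymmetry of the orientation relation.
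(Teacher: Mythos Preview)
The paper does not prove this theorem: it is quoted from~\cite{Hom2PartiteDi} and closed with a \qed, so there is no proof here to compare against. Your Fra\"iss\'e-theoretic strategy is the natural one and is presumably close in spirit to what the cited reference does, but two steps in your outline are not under control.

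First, the reduction to the underlying bipartite graph $G(D)$ is not justified. You assert that ``a suitable symmetrization of $\Aut(D)$ yields enough transitivity on $G(D)$'' to invoke Theorem~\ref{thm_GGK}, but reducts of homogeneous structures need not be homogeneous: a bipartition-preserving graph isomorphism between finite pieces of $G(D)$ need not lift to a digraph isomorphism of the corresponding pieces of~$D$, since the edge orientations may fail to match. So homogeneity of~$D$ does not directly give homogeneity of~$G(D)$, and this step needs a real argument (or the proof must proceed without passing through $G(D)$ at all).

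Second, your case attribution is wrong in places. The digraph $CP_k'$ has underlying graph $K_{k,k}$ (the paper says so explicitly), and the countable generic $2$-partite digraph also has underlying graph $K_{\omega,\omega}$, since its defining extension property has no non-adjacency clause. Hence the case ``$G(D)\cong CP_k$ with both orientations present'' cannot lead to (ii) or~(iii) as you claim; you would instead have to show that this case is impossible. More generally, the division of the complete-bipartite and generic-bipartite cases among (ii), (iii), and~(iv) according to ``which finite three-coloured patterns are realized'' is asserted rather than argued, and ruling out intermediate amalgamation classes is precisely the substance of the classification --- your final paragraph acknowledges this but does not supply the mechanism.
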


For $k\in\nat^\infty$ with $k\geq 2$, let $CP_k'$ be the $2$-partite digraph with partition $\{X,Y\}$ such that $ECP_k'\cap (X\times Y)$ induces a $CP_k$ on~$VCP_k'$ and $ECP_k'\cap (Y\times X)$ induces a perfect matching on~$VCP_k'$.
Note that its underlying undirected graph is a complete bipartite graph.

We call a $2$-partite digraph $D$ with partition $\{X,Y\}$ \emph{generic} if for every finite $A,B\sub X$ (for every finite $A,B\sub Y$) there is a vertex $v\in Y$ (a vertex $v\in X$, respectively) with $A\sub N^+(v)$ and $B\sub N^-(v)$.
A back-and-forth argument shows that there is a unique countable generic $2$-partite digraph (up to isomorphism).
Similarly, we call a $2$-partite digraph $D$ with partition $\{X,Y\}$ a \emph{generic orientation of a generic bipartite graph} if for all pairwise disjoint finite subsets $A_X,B_X,C_X\sub X$ and $A_Y,B_Y,C_Y\sub Y$ there are vertices $y\in Y$ and $x\in X$ with $A_X\sub N^+(y)$, $B_X\sub N^-(y)$ and $C_X\sub y^\perp$ as well as with $A_Y\sub N^+(x)$, $B_Y\sub N^-(x)$ and $C_Y\sub x^\perp$.
It is easy to verify that its underlying undirected graph is a generic bipartite graph.

\subsection{Homogeneous digraphs}\label{sec_HomDigraphs}

In this section, we state Cherlin's classification of the countable homogeneous digraphs.

\begin{thm}\label{thm_CountHomDigraphs}\cite[5.1]{Cherlin-CountHomDigraphs}
A countable digraph is homogeneous if and only if it is isomorphic to one of the following digraphs:
\begin{enumerate}[{\rm (i)}]
\item $I_n$ for some $n\in\nat^\infty$;
\item $T[I_n]$ for some homogeneous tournament $T\neq I_1$ and some $n\in\nat^\infty$;
\item $I_n[T]$ for some homogeneous tournament $T\neq I_1$ and some $n\in\nat^\infty$;
\item the countable generic $\HF$-free digraphs for some set $\HF$ of finite tournaments;
\item the countable generic $I_n$-free digraphs for some integer $n\geq 3$;
\item $T^{\wedge}$ for some tournament $T\in \{I_1,C_3,\rat,T^\infty\}$;
\item the countable generic $n$-partite digraph for some $n\in\nat^\infty$ with $n\geq 2$;
\item the countable semi-generic $\omega$-partite digraph;
\item $S(3)$;
\item the countable generic partial order $\PF$; or
\item $\PF(3)$.\qed
\end{enumerate}
\end{thm}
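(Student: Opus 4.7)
The plan is to apply Fra\"iss\'e's theorem to replace the classification problem with that of classifying amalgamation classes of finite digraphs: each countable homogeneous digraph $D$ is determined up to isomorphism by its \emph{age} (the class of finite digraphs embeddable in~$D$), which forms an amalgamation class, and conversely every amalgamation class of finite digraphs is the age of a unique countable homogeneous digraph, its Fra\"iss\'e limit. This reduces the infinite classification to a combinatorial problem about finite objects and their amalgamations.

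The first dichotomy I would apply is whether $D$ admits a non-trivial $\Aut(D)$-invariant equivalence relation on $VD$; if so, $D$ is \emph{imprimitive} and can be analyzed via its quotient digraph and the induced subdigraphs on equivalence classes, both of which inherit homogeneity. The imprimitive case produces exactly the lexicographic products $T[I_n]$ and $I_n[T]$, the constructions $S(3)$ and $T^\wedge$, and the $n$-partite and semi-generic $\omega$-partite digraphs; the key inputs here are the classification of the countable homogeneous tournaments into $I_1$, $C_3$, $\rat$, and the generic tournament $T^\infty$ (Lachlan--Woodrow), together with induction on the quotient structure. In the primitive case, one tests each finite digraph for membership in the age and shows by amalgamation arguments that omitting a single tournament forces omitting a whole family $\HF$ of tournaments (yielding the generic $\HF$-free digraphs), and that omitting an independent set $I_n$ forces the generic $I_n$-free digraphs; the remaining examples $\PF$ and $\PF(3)$ are then identified by direct amalgamation checks.

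The main obstacle---and the reason this theorem is Cherlin's monumental contribution in \cite{Cherlin-CountHomDigraphs} rather than a short argument---is proving \emph{exhaustiveness}: that no exotic amalgamation class has been overlooked. This requires Ramsey-theoretic indiscernibility arguments to propagate small forbidden configurations to arbitrarily large ones, together with a long case analysis to pin down which sets $\HF$ in case~(iv) actually give rise to amalgamation classes. Since the complete proof is of book length, in the present paper the theorem is quoted from~\cite{Cherlin-CountHomDigraphs} and applied as a black box in the eventual classification of the countable C-homogeneous digraphs.
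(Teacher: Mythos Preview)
Your bottom line is correct and matches the paper exactly: Theorem~\ref{thm_CountHomDigraphs} is not proved here but quoted from Cherlin's monograph~\cite{Cherlin-CountHomDigraphs} (note the citation and the terminal \qed\ in the statement), and the paper uses it purely as a black box via Lemma~\ref{lem_outNeighHomogeneous}. Your high-level sketch of Cherlin's strategy (Fra\"iss\'e reduction to amalgamation classes, followed by an imprimitive/primitive split and a long case analysis) is broadly accurate as an outline, though two small slips are worth flagging: the countable homogeneous tournaments also include $S(2)$ in addition to $I_1$, $C_3$, $\rat$, and $T^\infty$, and their classification is due to Lachlan~\cite{L-Tournaments} rather than Lachlan--Woodrow.
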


The homogeneous tournaments are the already defined tournaments $I_1$, $C_3$, and $S(2)$ together with two more (see \cite[Theorem~3.6]{L-Tournaments}):
one is the generic tournament~$T^\infty$ that is the Fra\"iss\'e limit (see~\cite{Macpherson-Survey} for more on these limits) of all finite tournaments, so the unique homogeneous tournament that embeds all finite tournaments.
The remaining tournament is the tournament $\rat$ with vertex set $\rat$ and edges $xy$ if and only if $x<y$.

For a set $\HF$ of finite tournaments, the countable \emph{generic} $\HF$-free digraph is the Fra\"iss\'e limit of the class of all finite $\HF$-free digraph.
Similarly, for $n\in\nat$, the countable \emph{generic} $I_n$-free digraph is the Fra\"iss\'e limit of the class of all finite $I_n$-free digraphs and the countable \emph{generic} $n$-partite digraph is the Fra\"iss\'e limit of all orientations of finite complete $n$-partite graphs (where some partition classes may have no element).

The countable \emph{semi-generic} $\omega$-partite digraph is the Fra\"iss\'e limit of those finite complete $\omega$-partite digraphs that have the additional property that
\begin{txteq}\label{eq_omegaPartite}for each two pairs $(x_1,x_2),(y_1,y_2)$ from distinct classes, the number of edges from $\{x_1,x_2\}$ to~$\{y_1,y_2\}$ is even.
\end{txteq}

By $\PF$ we denote the countable \emph{generic} partial order, the Fra\"iss\'e limit of all finite partial orders.
Every partial order~$P$ is in a canonical way a digraph: for two elements $x,y$ of~$P$ we have $xy\in EP$ if and only if $x<y$.
We call digraphs that are obtained from partial orders in this way also partial orders.
Note that no partial order contains an induced $2$-arc.

It remains to define the variant $\PF(3)$ of~$\PF$. This digraph was first described in~\cite{Cherlin-CountHomDigraphs}.
A subset $X$ of~$V\PF$ is \emph{dense} if for all $a,b\in V\PF$ with $ab\in E\PF$ there is a vertex $c\in X$ with $ac,cb\in E\PF$.
Let $\{P_0,P_1,P_2\}$ be a partition of~$V\PF$ into three dense sets.
For this definition, let $x\perpp y$ if $x$ and~$y$ are not adjacent.
Let $\Hbb=(P_0,P_1,P_2)$ be the digraph on $V\PF$ such that for all $x,y\in P_i$ we have
\begin{alignat*}{2}
xy\in E\Hbb &\text{ if and only if } xy\in E\PF \\
\intertext{and such that for all $x\in P_i$ and $y\in P_{i+1}$ we have}
xy\in E\Hbb &\text{ if and only if } yx\in E\PF,\\
yx\in E\Hbb &\text{ if and only if } x\perpp y\in E\PF,\text{ and}\\
x\perpp y\in E\Hbb &\text{ if and only if } xy\in E\PF.
\end{alignat*}
Let $p$ be an element not in~$V\PF$.
Then $\PF(3)$ is the digraph on the vertex set $V\PF\cup\{p\}$ such that $(p^\perp,p^\rightarrow, p^\leftarrow)=\Hbb$, where
\begin{align*}
p^\perp&:=V\PF\sm N(p),\\
p^{\rightarrow}&:=N^+(p)\text{, and}\\
p^{\leftarrow}&:=N^-(p).
\end{align*}

\section{The case: $D^+\not\isom I_n\not\isom D^-$}\label{sec_ProofPartI}


In this section we will investigate the situation that $D^+$ contains some edge.
Before we tackle this situation, we first show some general lemmas.
The following is our key lemma, which underlines our interest in the homogeneous digraphs:

\begin{lem}\label{lem_outNeighHomogeneous}\cite[Lemma~4.1]{FinConHomDigraphs}
For every C-homogeneous digraph $D$, the two digraphs $D^+$ and $D^-$ are homogeneous digraphs.\qed
\end{lem}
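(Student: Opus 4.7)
The plan is to prove the statement for $D^+$; the case of $D^-$ is symmetric. I would reduce the question to a single application of C-homogeneity of~$D$ after augmenting the subdigraphs involved with the vertex whose out-neighbourhood is under consideration. Before starting, note that $D$ is vertex-transitive: a single vertex forms a connected induced subdigraph, so C-homogeneity extends any isomorphism $\{x\}\to\{y\}$ to an automorphism of~$D$. In particular $D^+$ is well-defined up to isomorphism.

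Fix $x\in VD$ and let $\varphi\colon A\to B$ be an isomorphism between two finite induced subdigraphs of $D^+(x)=D[N^+(x)]$. Since $D^+(x)$ is itself an induced subdigraph of~$D$, so are $A$ and~$B$. Extend $\varphi$ to $\tilde\varphi\colon D[VA\cup\{x\}]\to D[VB\cup\{x\}]$ by setting $\tilde\varphi(x)=x$. The central step is to verify that $\tilde\varphi$ is an isomorphism between connected induced subdigraphs of~$D$. Connectedness of both sides is immediate because $x$ is adjacent to every other vertex of the subdigraph: by assumption every vertex of~$A$ (respectively $B$) is a successor of~$x$, so a spanning star with centre~$x$ is present. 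That $\tilde\varphi$ preserves edges is equally routine: on $A$ it agrees with~$\varphi$, the edges incident with $x$ in $D[VA\cup\{x\}]$ are precisely $\{xa : a\in VA\}$, with no edges in the opposite direction by asymmetry of~$ED$, and the analogous description holds for $D[VB\cup\{x\}]$. Hence $\tilde\varphi$ bijectively matches these edges.

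Now invoke C-homogeneity of~$D$ on $\tilde\varphi$ to obtain an automorphism~$\alpha$ of~$D$ extending~$\tilde\varphi$. Since $\alpha$ fixes~$x$, it maps $N^+(x)$ bijectively to itself, so the restriction $\alpha|_{N^+(x)}$ is an automorphism of $D^+(x)$ extending~$\varphi$. The in-neighbourhood case is identical, with predecessors replacing successors (so $x$ becomes the unique sink rather than source of the augmented subdigraphs). I do not anticipate a real obstacle: the only point requiring care is checking that $\tilde\varphi$ lies within the scope of C-homogeneity, and this follows directly from the definitions together with the asymmetry of~$ED$.
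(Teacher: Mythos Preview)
Your argument is correct and is exactly the standard one: adjoin the base vertex~$x$ to both subdigraphs to make them connected, apply C-homogeneity of~$D$, and restrict the resulting automorphism (which fixes~$x$) to~$N^+(x)$. The paper does not supply its own proof here but cites \cite[Lemma~4.1]{FinConHomDigraphs}; the argument there is precisely the one you give.
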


By this lemma, we are able to go through the list of countable homogeneous digraphs and look at each of them one by one, which is the general strategy for the proof of our main theorem.

\begin{lem}\label{lem_OutNeighFinite}
Let $D$ be a countable connected C-homogeneous digraph with infinite out-degree.
Then either the in-degree is also infinite or $D^+$ is isomorphic to either $I_\omega$ or~$I_\omega[C_3]$.
\end{lem}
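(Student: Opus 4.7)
The plan is to combine Lemma~\ref{lem_outNeighHomogeneous} with Cherlin's classification (Theorem~\ref{thm_CountHomDigraphs}) to reduce the problem to a list-checking exercise. By the lemma, $D^+$ is a countable homogeneous digraph; since the out-degree of~$D$ is infinite, $D^+$ is infinite, and since non-empty homogeneous digraphs are vertex-transitive, $D^+$ has a well-defined constant in-degree.

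The first step I would carry out is the key reduction: if $D$ has finite in-degree~$d$, then $D^+$ has in-degree at most $d-1$. Indeed, for any $x\in VD$ and any $y\in N^+(x)$, every predecessor of~$y$ inside $D^+(x)\cong D^+$ lies in $N^-(y)\sm\{x\}$, because $x$ itself is a predecessor of~$y$ in~$D$ but $x\notin N^+(x)$. Under the hypothesis that the in-degree of~$D$ is not infinite, this reduces the statement to the purely structural question: the only infinite countable homogeneous digraphs of finite in-degree are $I_\omega$ and $I_\omega[C_3]$.

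I would then walk through the items of Theorem~\ref{thm_CountHomDigraphs} in turn. The two survivors are cases~(i) and~(iii): case~(i) gives $I_\omega$, and in case~(iii) the in-degree of $I_n[T]$ equals that of~$T$, so $T$ must be a finite homogeneous tournament distinct from~$I_1$, i.e.\ $T=C_3$, and infiniteness then forces $n=\omega$, producing $I_\omega[C_3]$. Case~(ii) is excluded because $T[I_n]$ with $T\neq I_1$ has in-degree $n$ times the in-degree of~$T$; finiteness of this product forces both $T$ and $n$ to be finite, making the digraph finite. Case~(vi) is excluded because an infinite $T^\wedge$ arises only for $T\in\{\rat,T^\infty\}$, and the infinite in-degree of such a $T$ is already present inside the copy $T^+\varphi_1\sub T^\wedge$. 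Case~(ix) is excluded because each vertex of~$S(3)$ is preceded by the dense set of points in a suitable arc of the unit circle. For each of the remaining Fra\"iss\'e-limit cases~(iv), (v), (vii), (viii), (x), (xi), the plan is to exhibit, for every $k\in\nat$, a finite configuration $\{u,w_1,\dots,w_k\}$ lying in the relevant amalgamation class in which every $w_iu$ is an edge; by universality the Fra\"iss\'e limit then contains $k$ predecessors of any given vertex, forcing infinite in-degree. The witness configuration is an in-star into~$u$ with $\{w_1,\dots,w_k\}$ independent for~(iv) (and if the $2$-tournament lies in~$\HF$, then the generic $\HF$-free digraph degenerates to $I_\omega$ and we are already in case~(i)); an in-star into~$u$ with $\{w_1,\dots,w_k\}$ forming a tournament for~(v); and the obvious choice of additional partition classes or order-comparabilities for~(vii), (viii), (x) and~(xi).

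The main obstacle will be this final case-by-case verification: although each individual check is short, one has to identify an appropriate witness configuration inside each of the six Fra\"iss\'e amalgamation classes, in particular respecting the parity condition~\eqref{eq_omegaPartite} in the semi-generic $\omega$-partite case and the $\perpp$-pattern built into the construction of $\PF(3)$.
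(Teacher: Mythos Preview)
Your proposal is correct and follows precisely the route the paper takes: the paper's own proof is the single sentence ``The claim follows directly from Theorem~\ref{thm_CountHomDigraphs} and Lemma~\ref{lem_outNeighHomogeneous}'', and what you have written is exactly the case-check that this sentence suppresses. Your key reduction (finite in-degree~$d$ in~$D$ forces in-degree at most~$d-1$ in~$D^+$) and your subsequent walk through Cherlin's list are sound, including the handling of the degenerate $\HF$-free case and the parity constraint for the semi-generic $\omega$-partite digraph.
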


\begin{proof}
The claim follows directly from Theorem~\ref{thm_CountHomDigraphs} and Lemma~\ref{lem_outNeighHomogeneous}.
\end{proof}

As the locally finite C-homogeneous digraphs have already been classified~\cite{FinConHomDigraphs}, the previous lemma allows to concentrate (mostly) on digraphs with infinite $D^+$.

\begin{lem}\label{lem_Pentagon}
Let $D$ be a C-homogeneous digraphs such that it contains isomorphic copies of every orientation of~$C_5$. Then the diameter of~$D$ is~$2$.
\end{lem}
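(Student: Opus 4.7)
The plan is to argue by contradiction. Assume that $\diam(D)\geq 3$, so there are vertices at distance exactly~$3$: if $d(x,y)=k\geq 3$ along a geodesic $y_0,y_1,\ldots,y_k$, then $y_0$ and $y_3$ satisfy $d(y_0,y_3)=3$, because any shorter path would yield a shortcut on the original geodesic. Fix such a geodesic $P=x_0x_1x_2x_3$. Since $P$ is a shortest path, no two of its non-consecutive vertices are adjacent, so $P$ is a finite connected induced subdigraph of $D$.

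Next, I would read off the orientation pattern $\sigma\in\{+,-\}^3$ of $P$ (recording the direction of each of its three edges) and extend $\sigma$ to an orientation $\tilde\sigma$ of $C_5$ by choosing arbitrary directions for the two additional edges that close $P$ into a $5$-cycle. By hypothesis, $D$ contains an induced copy $C$ of the oriented cycle $(C_5,\tilde\sigma)$. Label the vertices of $C$ as $x_0',x_1',x_2',x_3',v'$ so that $x_0'x_1'x_2'x_3'$ is the $3$-arc corresponding to $P$ and $v'$ is the fifth vertex, adjacent in $C$ to both $x_0'$ and~$x_3'$.

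Finally, I would apply C-homogeneity: the map $\varphi\colon x_i'\mapsto x_i$ is an isomorphism between the finite connected induced subdigraphs $C-v'$ and $P$ of $D$, so it extends to an automorphism $\alpha\in\Aut(D)$. Then $w:=v'\alpha$ is a vertex of $D$ adjacent (in~$D$) to both $x_0$ and~$x_3$. Consequently $x_0wx_3$ is a path of length~$2$, whence $d(x_0,x_3)\leq 2$, contradicting the choice of $P$. This shows $\diam(D)\leq 2$; combined with the fact that $D$ contains an induced $C_5$ (of diameter~$2$), we conclude $\diam(D)=2$.

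The only delicate point I foresee is a bookkeeping one: making sure that on both sides of the C-homogeneity application we are dealing with genuine induced subdigraphs. For $P$ this follows from the geodesic property, and for $C-v'$ it follows from the fact that the copy of $(C_5,\tilde\sigma)$ produced by the hypothesis is induced in~$D$. Once these two observations are in place, no further work is required; the argument is essentially a one-step extension dictated directly by the definition of C-homogeneity.
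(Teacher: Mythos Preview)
Your proof is correct and follows essentially the same approach as the paper's: pick a geodesic of length~$3$, realize its orientation pattern as four consecutive vertices on an induced oriented $C_5$ in~$D$, and use C-homogeneity to transport the fifth vertex into a common neighbour of the geodesic's endpoints. The only cosmetic difference is the direction of the automorphism (the paper maps $P$ into the cycle and pulls back, you map the cycle minus a vertex onto $P$ and push forward), which is immaterial.
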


\begin{proof}
Since $D$ contains some orientation of~$C_5$, it contains two non-adjacent vertices.
Hence, the diameter of~$D$ is at least~$2$.
Let us suppose that $D$ does not have diameter~$2$.
Let $x$ and~$y$ be vertices of distance $3$ in~$D$ and $P$ be a shortest path between them.
Then there is an injection from~$P$ into one of the orientations of~$C_5$.
Let $C$ be a copy of this orientation in~$D$.
By C-homogeneity, we find an automorphism $\alpha$ of~$D$ that maps $P$ into~$C$.
Let $z$ be the vertex on~$C$ that is adjacent to the end vertices of~$P\alpha$.
Then $z\alpha\inv$ is adjacent to~$x$ and~$y$, which is a contradiction to the choice of these two vertices.
Thus, $D$ has diameter~$2$.
\end{proof}

\begin{lem}\label{lem_FourCycle}
Let $D$ be a C-homogeneous digraph such that it contains isomorphic copies of all orientations of~$C_4$.
Then each two non-adjacent vertices of~$D$ have a common successor and a common predecessor.
\end{lem}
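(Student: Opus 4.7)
The plan is to prove that every non-adjacent pair of vertices admits a common successor; the existence of a common predecessor then follows by applying exactly the same argument to the digraph $D'$ obtained from $D$ by reversing every arc. Indeed, $D'$ is again C-homogeneous and the set of orientations of $C_4$ is closed under arc-reversal, so $D'$ also contains all orientations of $C_4$; a common predecessor of $x,y$ in $D$ is the same thing as a common successor of $x,y$ in $D'$.

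So let $x,y$ be non-adjacent and fix a common neighbour $z$. The induced subdigraph on $\set{x,z,y}$ is connected and, up to interchanging $x$ and $y$, its edges form one of three patterns: (a) $z\to x$ and $z\to y$; (b) $x\to z$ and $y\to z$; or (c) $x\to z$ and $z\to y$. In case (b) the vertex $z$ itself is already a common successor of $x$ and $y$. For cases (a) and (c) the plan is to fix an orientation of $C_4$ on vertices $q_1q_2q_3q_4$ whose induced subdigraph on $\set{q_1,q_2,q_3}$ matches the pattern of $\set{x,z,y}$ via $q_1\mapsto x$, $q_2\mapsto z$, $q_3\mapsto y$, and in which the fourth vertex satisfies $q_1\to q_4$ and $q_3\to q_4$. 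Concretely, for (a) this is the orientation with arcs $q_2\to q_1,\ q_2\to q_3,\ q_1\to q_4,\ q_3\to q_4$, and for (c) it is $q_1\to q_2,\ q_2\to q_3,\ q_3\to q_4,\ q_1\to q_4$. Both are valid orientations of $C_4$, so by hypothesis each one is realised as an induced subdigraph of $D$.

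Since the induced subdigraphs on $\set{q_1,q_2,q_3}$ and on $\set{x,z,y}$ are connected and isomorphic (the absence of an arc between $q_1$ and $q_3$ being automatic from the fact that $q_1q_3$ is not an edge of $C_4$), C-homogeneity extends this isomorphism to an automorphism $\alpha$ of $D$, and $q_4\alpha$ is then a common successor of $x$ and $y$ in $D$.

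There is no genuinely hard step here; the computations are routine case analysis combined with C-homogeneity. The one point worth flagging is that the statement of the lemma is only meaningful for non-adjacent pairs that actually admit a common neighbour, since a pair at distance $\geq 3$ can have no common successor at all. In the subsequent uses of this lemma this will be guaranteed either by the ambient structural hypotheses or by a diameter bound in the spirit of Lemma~\ref{lem_Pentagon}.
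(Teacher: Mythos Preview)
Your argument is correct and follows essentially the same approach as the paper: find a common neighbour of the two non-adjacent vertices, match the induced $3$-vertex configuration to a suitable orientation of~$C_4$, and use C-homogeneity to transport the fourth vertex back. The paper does not split into your three cases explicitly but handles them uniformly in one sentence; the content is the same.

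One point worth noting: where you flag the need for a common neighbour and defer its justification to the ambient context, the paper simply invokes Lemma~\ref{lem_Pentagon} at that spot. Strictly speaking, that lemma requires all orientations of~$C_5$, not just of~$C_4$, so the paper's proof tacitly relies on the fact that in every application both hypotheses hold simultaneously (the relevant $D^+$ embeds all orientations of $C_4$ and of $C_5$). Your explicit remark about this dependence is therefore a fair reading of the situation, and arguably cleaner than silently citing a lemma with a nominally different hypothesis.
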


\begin{proof}
Let $a$ and~$b$ be two non-adjacent vertices of~$D$.
By Lemma~\ref{lem_Pentagon} there is a vertex $x$ that is adjacent to~$a$ and~$b$.
Since every orientation of~$C_4$ embeds into~$D$, there is one such copy that has an isomorphic image of $D[a,x,b]$ in~$D$ such that the images of $a$ and~$b$ are the predecessors of the fourth vertex~$y$, and there is one such image such that the images of~$a$ and~$b$ are the successors of the fourth vertex~$y'$.
By C-homogeneity, we can map $D[a,x,b]$ onto such copies by automorphisms $\alpha, \beta$ of~$D$.
Then $y\alpha\inv$ and $y'\beta\inv$ verify the assertion.
\end{proof}

\subsection{\boldmath Generic $I_n$-free digraphs as~$D^+$}\label{sec_GenericInFree}

Throughout this section, let $D$ be a countable connected C-homogeneous digraph such that $D^+$ is isomorphic to the countable generic $I_n$-free digraph for some integer $n\geq 3$.
(Note that $n=2$ implies that $D^+$ is a tournament.
We consider this case in a later section.)
Our first step is to show that $D^+$ and $D^-$ are isomorphic.

\begin{lem}\label{lem_I_nFree+=-}
We have $D^+\isom D^-$.
\end{lem}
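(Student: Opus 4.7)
The plan is as follows. Fix any vertex $v\in VD$ and any successor $u\in N^+(v)$; such $u$ exists since $D^+\isom G$ (the generic $I_n$-free digraph) is non-empty. Then $v\in VD^-(u)$ and, directly from the definitions,
\[
(D^-(u))^+(v) \;=\; N^+(v)\cap N^-(u) \;=\; (D^+(v))^-(u)
\]
as induced subdigraphs of~$D$. The idea is to identify this common substructure with~$G$, and then use Cherlin's classification to force $D^-(u)\isom G$.

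Since $D^+(v)\isom G$ by hypothesis and $G$ is homogeneous, $(D^+(v))^-(u)\isom G^-(x)$ for any $x\in VG$. I would show $G^-(x)\isom G$ as follows. First, $G^-(x)$ is itself homogeneous: any isomorphism between finite induced subdigraphs $A,B$ of~$G^-(x)$ extends to an isomorphism $A\cup\{x\}\to B\cup\{x\}$ of finite induced (connected) subdigraphs of~$G$ fixing~$x$, and by homogeneity of~$G$ this extends to an automorphism of~$G$ fixing~$x$, which restricts to an automorphism of~$G^-(x)$. Second, $G^-(x)$ is countable and $I_n$-free, and embeds every finite $I_n$-free digraph~$H$: the digraph $H'$ obtained by adjoining a common successor vertex to~$H$ is again $I_n$-free (any independent set of $H'$ is either the singleton consisting of the new vertex or is contained in $V(H)$), so $H'$ embeds into~$G$, placing $H$ inside an in-neighbourhood of~$G$, i.e.\ inside a copy of~$G^-(x)$. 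By Cherlin's classification (or equivalently by Fra\"iss\'e uniqueness), $G^-(x)\isom G$.

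By Lemma~\ref{lem_outNeighHomogeneous} the digraph $D^-\isom D^-(u)$ is a countable homogeneous digraph, and by Lemma~\ref{lem_OutNeighFinite} it is infinite (since $D^+\isom G$ is neither $I_\omega$ nor $I_\omega[C_3]$). By vertex-transitivity of the homogeneous digraph $D^-(u)$, every out-neighbourhood of $D^-(u)$ is isomorphic to~$G$. It then remains to verify, by inspection of the eleven families in Cherlin's list (Theorem~\ref{thm_CountHomDigraphs}), that the only countable infinite homogeneous digraph whose every out-neighbourhood is isomorphic to~$G$ is the generic $I_n$-free digraph itself. Case by case one computes each family's out-neighbourhood and checks that it either fails to be $I_n$-free (e.g.\ (iv) with $\HF=\es$), or fails to contain an induced $2$-arc (which $G$ does for $n\geq 3$; this rules out (x) and (xi) which are transitive), or is a (sub)tournament or an independent set (ruling out (i)--(iii), (ix), and the finite members of (vi)), or inherits a definable equivalence or partition absent from~$G$ (ruling out (vii) and (viii) and the blown-up cases of (ii), (iii)), or belongs to a Fra\"iss\'e class of finite structures different from that of finite $I_n$-free digraphs (ruling out the remaining cases of (iv) and (vi)); only (v) with parameter $m=n$ survives, giving $D^-(u)\isom G\isom D^+$. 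The main obstacle is this final case analysis through the eleven families of Cherlin's classification, which requires recalling the out-neighbourhood structure of each.
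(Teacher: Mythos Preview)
Your argument is correct, but it takes a longer route than the paper's. Both proofs ultimately rest on Cherlin's classification, but the paper applies it to a simpler invariant.

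The paper works directly with the \emph{age} of $D^-$: for any finite $I_n$-free digraph $F$, one finds a copy of $F$ inside $D^+(x)$ together with a common successor $z\in D^+(x)$, so $F$ embeds into $D^-(z)\isom D^-$. Thus $D^-$ is a countable homogeneous digraph whose age contains all finite $I_n$-free digraphs, and Cherlin's list then leaves only the generic $I_m$-free digraphs for $m\ge n$ and the random digraph. Both of the unwanted options are killed by a single observation: each contains a vertex with $n$ independent successors, which would place $I_n$ inside $D^+$.

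Your route instead identifies $(D^-)^+\isom G$ via the pleasant fact $G^-(x)\isom G$, and then runs a case analysis on Cherlin's list asking which homogeneous digraphs have out-neighbourhood $\isom G$. This is a strictly stronger invariant to match against, and the resulting case-check is heavier (you must compute, or at least bound, the out-neighbourhood structure of each family). In fact you could shortcut your own endgame: from $(D^-)^+\isom G$ you get immediately that $D^-$ embeds every finite $I_n$-free digraph (since $G$ sits inside $D^-$), which puts you back on the paper's track; and the paper's ``$n$ independent successors'' observation is then exactly the statement that $(D^-)^+$ would contain $I_n$, contradicting $(D^-)^+\isom G$. So your detour through $(D^-)^+$ is sound but dispensable --- the embedding trick applied once to $D^-$ itself is enough.
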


\begin{proof}
Let $F$ be any finite $I_n$-free digraph.
Then we find an isomorphic copy of~$F$ in~$D^+$ and, in addition, we find a vertex $x\in VD^+$ with $yx\in ED$ for all $y\in VF$.
Hence, $D^-$ contains an isomorphic copy of~$F$.

Since $D^-$ contains every finite $I_n$-free digraph, it is a direct consequence of Theorem~\ref{thm_CountHomDigraphs} that $D^-$ is either a generic $I_m$-free digraph for some $m\geq n$ or a generic $\HF$-free digraph with $\HF=\es$.
The latter or the first with $m>n$ is impossible since they contain a vertex with $n$ independent successors.
So $D^-$ is also the countable generic $I_n$-free digraph.
\end{proof}

Our next aim is to show that every finite induced $I_n$-free subdigraph of~$D$ lies in $D^+(x)$ for some $x\in VD$.
We do this in two steps and begin with the case that the subdigraph is some $I_m$ with $m<n$.

\begin{lem}\label{lem_GM13}
If $H\sub D$ is an isomorphic copy of~$I_m$ for some $m<n$, then there exist vertices $x,y\in VD$ with $H\sub D^+(x)$ and with $H\sub D^-(y)$.
\end{lem}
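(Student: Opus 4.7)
The plan is to argue by induction on $m$, establishing only the statement about common predecessors; the common-successor statement then follows by applying the same argument to the opposite digraph, which satisfies the same hypotheses thanks to Lemma~\ref{lem_I_nFree+=-}.

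The base cases are easy. For $m=1$, vertex-transitivity of $D$ together with the non-emptiness of $D^+$ (since the generic $I_n$-free digraph is non-empty) gives a predecessor of every vertex. For $m=2$: every orientation of $C_4$ has independence number~$2$, hence is $I_3$-free and therefore $I_n$-free, hence embeds into $D^+$ (and so into $D$) by the genericity of $D^+$; Lemma~\ref{lem_FourCycle} then delivers a common predecessor for any two non-adjacent vertices of $D$.

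For the inductive step, assume the assertion for $m-1$ and let $H=\{v_1,\dots,v_m\}$ be an $I_m$ in $D$ with $m<n$. Apply the inductive hypothesis to $\{v_1,\dots,v_{m-1}\}$ to obtain a common predecessor $x$. By Lemma~\ref{lem_outNeighHomogeneous} together with Lemma~\ref{lem_I_nFree+=-}, the digraph $D^+(x)$ is itself isomorphic to the countable generic $I_n$-free digraph. Since $m<n$, the independent set $\{v_1,\dots,v_{m-1}\}\subseteq D^+(x)$ extends inside $D^+(x)$ to an $I_m$: choose $w\in D^+(x)$ non-adjacent to every $v_i$ with $i<m$. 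Thus the $I_m$ given by $\{v_1,\dots,v_{m-1},w\}$ has $x$ as a common predecessor.

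It remains to exhibit $\varphi\in\Aut(D)$ that fixes $v_1,\dots,v_{m-1}$ pointwise and satisfies $w\varphi=v_m$; then $x\varphi$ is a common predecessor of~$H$, as required. The main obstacle is that $I_m$ is disconnected, so C-homogeneity cannot be applied directly to the bare isomorphism between the two $I_m$'s $\{v_1,\dots,v_{m-1},w\}$ and $\{v_1,\dots,v_{m-1},v_m\}$. The plan to circumvent this is to embed both $I_m$'s into isomorphic finite connected induced subdigraphs of~$D$. Around $\{v_1,\dots,v_{m-1},w\}$ there is already an obvious connected out-star $\{x,v_1,\dots,v_{m-1},w\}$. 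To produce a matching connected configuration around $H$, I would apply the inductive hypothesis to further $I_{m-1}$-subsets of $H$, such as $\{v_1,\dots,v_{m-2},v_m\}$, to obtain additional common predecessors and assemble them into a connected finite induced subdigraph of $D$ containing $H$. The homogeneity of $D^+(x)$ (as the generic $I_n$-free digraph) then allows me to adjust~$w$ inside $D^+(x)$ so that the two connected configurations match as isomorphic subdigraphs of~$D$, at which point C-homogeneity of~$D$ yields the required automorphism. Making this matching precise is the technical heart of the argument.
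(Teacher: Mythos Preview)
Your base cases and the overall inductive scheme are correct and match the paper. The gap is exactly where you flag it: the ``matching'' step. As you set it up, the connected configuration you have around $\{v_1,\dots,v_{m-1},w\}$ is the out-star with centre~$x$; an \emph{isomorphic} configuration around $H$ would have to be an out-star with a single centre, i.e.\ a common predecessor of all of~$H$ --- precisely what you are trying to prove. Your proposed fix (build a different connected hull around~$H$ from several inductive predecessors, then re-choose $w$ so the two hulls are isomorphic) is possible in principle, but you have not said which invariants of the hull you can actually control, and the hand-off to ``homogeneity of $D^+(x)$'' does not obviously give you control over how a new $w$ relates to vertices \emph{outside} $D^+(x)$ (such as the extra predecessors you introduced around~$H$).

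The paper closes this gap with one clean move that you are already circling. Use the inductive hypothesis \emph{twice}, to get $a$ with $\{v_1,\dots,v_{m-1}\}\subseteq N^+(a)$ and $b$ with $\{v_2,\dots,v_m\}\subseteq N^+(b)$; for $m\ge 3$ the digraph $F:=H+a+b$ is then connected (both $a$ and $b$ see $v_2$) and $I_n$-free (any independent set containing $a$ or $b$ has size at most~$2$, and any other independent set lies in $H$). Since $D^+$ is the generic $I_n$-free digraph, some $D^+(c)$ contains an isomorphic copy $F'$ of~$F$; by C-homogeneity an automorphism $\alpha$ sends $F'$ to~$F$, and then $H\subseteq F\subseteq D^+(c\alpha)$. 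This replaces your ``adjust $w$ and match configurations'' step entirely: rather than engineering two isomorphic hulls by hand, you build a single connected $I_n$-free hull of~$H$ and let genericity of $D^+$ supply the matching copy automatically.
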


\begin{proof}
Note that $d^-\neq 0$.
So for $m=1$ the assertion is obvious and for $m=2$ it follows from Lemma~\ref{lem_FourCycle}.
Let $m\geq 3$ and let $H\isom I_m$ be a subdigraph of~$D$ with $VH=\{x_1,\ldots,x_m\}$.
By induction, we find $a,b\in VD$ with $\{x_1,\ldots,x_{m-1}\}\sub N^+(a)$ and $\{x_2,\ldots,x_m\}\sub N^+(b)$.
The digraph $F:=H+a+b$ is connected because of $m\geq 3$.
Since $F$ is $I_n$-free, $D^+(a)$ contains an isomorphic copy $F'$ of~$F$.
Applying C-homogeneity, we find an automorphism $\alpha$ of~$D$ that maps $F'$ to~$F$.
So we have $H\sub F'\sub D^+(a\alpha)$.

By an analogous argument, we find $y\in VD$ with $H\sub D^-(y)$.
\end{proof}

\begin{lem}\label{lem_GM14}
If $H\sub D$ is a finite induced $I_n$-free digraph, then there exist vertices $x,y\in VD$ with $H\sub D^+(x)$ and $H\sub D^-(y)$.
\end{lem}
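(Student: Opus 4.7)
The plan is to adapt the strategy of Lemma~\ref{lem_GM13}: construct a finite connected induced $I_n$-free subdigraph $F\sub D$ with $H\sub F$, embed $F$ into $D^+(a)$ for a suitable $a\in VD$ using the genericity of~$D^+$, and then apply C-homogeneity to transport the embedded copy back onto~$F$. The image of~$a$ under the resulting automorphism will be the desired common predecessor of~$VH$; the common successor~$y$ follows by the dual argument, using that $D^-$ is also the generic $I_n$-free digraph by Lemma~\ref{lem_I_nFree+=-}.

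I proceed by strong induction on~$|VH|$. If $H$ is connected, I take $F=H$ itself: for any $b\in VD$, the digraph $D^+(b)\isom D^+$ is the generic $I_n$-free digraph and so contains an induced copy~$H'$ of~$H$. Since $H$ and~$H'$ are both finite connected induced subdigraphs of~$D$, C-homogeneity yields an automorphism $\alpha$ of~$D$ with $H'\alpha=H$, and since $b$ is a predecessor of every vertex of~$H'$, the vertex $b\alpha$ is a common predecessor of~$VH$.

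If $H$ is disconnected with $|VH|=2$, then $H\isom I_2$ and the claim follows directly from Lemma~\ref{lem_GM13} (applicable because $n\geq 3$). If $H$ is disconnected with $|VH|\geq 3$, I pick distinct $v_1,v_2\in VH$ and apply the induction hypothesis to the smaller $I_n$-free induced subdigraphs $H-v_1$ and $H-v_2$, obtaining $a_1,a_2\in VD$ with $VH\sm\{v_1\}\sub N^+(a_1)$ and $VH\sm\{v_2\}\sub N^+(a_2)$. I set $F:=D[VH\cup\{a_1,a_2\}]$. Any $v_3\in VH\sm\{v_1,v_2\}$ (which exists since $|VH|\geq 3$) is a common successor of $a_1$ and~$a_2$ in~$F$, and every vertex of~$VH$ is adjacent to at least one of $a_1,a_2$; hence $F$ is connected. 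A short case analysis on whether an independent set of~$F$ lies in~$VH$, contains exactly one of $a_1,a_2$, or contains both of them shows that $F$ is $I_n$-free, using that the non-neighbours of~$a_i$ in~$VH$ are contained in~$\{v_i\}$ and that $n\geq 3$. Finally, I embed $F$ as an induced copy $F'\sub D^+(a_1)$ via the genericity of~$D^+$; as both $F$ and~$F'$ are finite connected induced subdigraphs of~$D$ that are isomorphic, C-homogeneity yields an automorphism $\alpha$ of~$D$ with $F'\alpha=F$, and then $a_1\alpha$ is the desired common predecessor of~$VF\supseteq VH$.

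The main subtlety is the verification that~$F$ is $I_n$-free, which uses $n\geq 3$ in an essential way; for $n=2$ (when $D^+$ would be a tournament) the hypothesis of this section fails and a separate argument is needed.
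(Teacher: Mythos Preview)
Your proof is correct and follows essentially the same strategy as the paper: build a finite connected $I_n$-free induced subdigraph $F\supseteq H$, embed it into $D^+$ via genericity, and transport back by C-homogeneity. The only difference is cosmetic: the paper's case split is on whether $H$ has an edge (if so, it removes a single vertex $a$ with an out-neighbour in~$H$ and adjoins one common predecessor $u$ of $H-a$, so that $F=H+u$), whereas you split on connectedness and adjoin two auxiliary predecessors $a_1,a_2$; both constructions yield a connected $I_n$-free extension, and the verification that your $F$ is $I_n$-free is sound.
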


\begin{proof}
If $H\isom I_m$ for some $m<n$, then the assertion follows from Lemma~\ref{lem_GM13}.
So we may assume that $H$ has a vertex~$a$ with $N^+(a)\cap VH\neq\emptyset$.
By induction, there is a vertex $u$ in~$D$ with $H-a\sub D^+(u)$.
Thus, $H+u$ is connected and $I_n$-free.
Applying an analogous argument as in the proof of Lemma~\ref{lem_GM13}, we find a vertex~$x$ in~$D$ with $H\sub H+u\sub D^+(x)$.

The existence of~$y$ follows analogously.
\end{proof}

Our next aim is to show that, for any two disjoint finite induced $I_n$-free digraphs $A$ and~$B$, we find a vertex~$x$ in~$D$ with $A\sub D^+(x)$ and $B\sub D^-(x)$.
We do not know whether this is true, even if we assume that $A$ is maximal $I_n$-free in $A+B$.
In particular, we need more structure on $D[N(x)]$ than we have till now.
But if we make the additional assumption that we find an isomorphic situation \emph{somewhere} in~$D$, that is, if we find subdigraphs $A'$ and $B'$ such that there exists an isomorphism $\varphi\colon A+B\to A'+B'$ with $A\varphi=A'$ and $B\varphi=B'$ and if $A'+B'$ has the claimed property, then we find such a vertex~$x$ without any further knowledge on the structure of $D[N(x)]$.

\begin{lem}\label{lem_I_nFreeSubdigraphPlusAdjacentVertex}
Let $A$ be a finite induced $I_n$-free subdigraph of~$D$ and let $z\in VD$ such that $z$ has a predecessor in~$A$.
Then there exists a vertex $x\in VD$ with $A\sub D^-(x)$ and $z\in N^+(x)$.
\end{lem}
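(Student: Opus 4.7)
My plan is to apply Lemma~\ref{lem_GM14} to bring $VA\cup\{z\}$ into a common in-neighborhood, and then exploit the Fra\"iss\'e extension property of the generic $I_n$-free digraph that appears there in order to realize the middle vertex~$x$.

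First, by Lemma~\ref{lem_GM14} there exists $y\in VD$ with $A\subseteq N^-(y)$. If $y\to z$, I simply take $x:=y$. If $z\to y$, then $VA\cup\{z\}\subseteq N^-(y)$ and I proceed as below. Otherwise $y\perp z$; in this case I would replace $y$ by another common successor of $A$ that has a directed edge to $z$, thereby reducing to one of the previous two cases. I expect this replacement to be the main technical obstacle of the proof, likely achieved by combining Lemma~\ref{lem_FourCycle} applied to the non-adjacent pair $\{y,z\}$ (which applies because every orientation of $C_4$ embeds into $D^+\isom$ the generic $I_n$-free digraph, each such orientation being $I_n$-free) with C-homogeneity applied to the connected subdigraph induced on $VA\cup\{z\}$ together with $y$ and the distinguished predecessor $a\in VA$ of $z$.

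Having reduced to the situation $VA\cup\{z\}\subseteq N^-(y)$, Lemma~\ref{lem_outNeighHomogeneous}, Lemma~\ref{lem_I_nFree+=-} and Theorem~\ref{thm_CountHomDigraphs} together identify $D^-(y)$ with the countable generic $I_n$-free digraph, which enjoys the Fra\"iss\'e extension property; in particular, the induced subdigraph $D[VA\cup\{z\}]=D^-(y)[VA\cup\{z\}]$ is $I_n$-free. Form the abstract one-vertex extension $H$ of $D[VA\cup\{z\}]$ by adjoining a fresh vertex~$x$ with edges $a\to x$ for every $a\in VA$ and $x\to z$. Since $x$ is adjacent in $H$ to every other vertex, no $n$-element independent subset of $H$ can contain $x$, so every such set lies in $D[VA\cup\{z\}]$; the latter being $I_n$-free, it follows that $H$ is $I_n$-free as well. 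By the Fra\"iss\'e extension property of $D^-(y)$, the identity embedding $D[VA\cup\{z\}]\hookrightarrow D^-(y)$ extends to an embedding of $H$ into $D^-(y)$, and the image of~$x$ is a vertex of $VD$ with $A\to x$ and $x\to z$, completing the argument.
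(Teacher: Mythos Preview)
Your reduction in the cases $y\to z$ and $z\to y$ is sound, and your Fra\"iss\'e-extension step at the end is correct and clean: once $VA\cup\{z\}$ sits inside some $D^-(y)$, adjoining a vertex adjacent to everything produces an $I_n$-free extension, and genericity of $D^-(y)$ realises it.

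The genuine gap is the case $y\perp z$. Your suggestion of invoking Lemma~\ref{lem_FourCycle} together with C-homogeneity does not do the job. Lemma~\ref{lem_FourCycle} only hands you a common successor (or predecessor) $w$ of $y$ and $z$; there is no reason for $A\sub N^-(w)$, so $w$ cannot serve as a replacement for~$y$. And C-homogeneity cannot be used to manufacture a copy of $y$ that \emph{is} adjacent to~$z$: any automorphism fixing the connected digraph $D[VA\cup\{y,z\}]$ preserves the non-adjacency of~$y$ and~$z$, so you would need an isomorphism to a \emph{different} configuration, which you have not exhibited.

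What actually resolves this case is a pigeonhole argument on $I_n$-freeness. The paper first places $A$ in some $D^+(v)$ (not $D^-$), and then, inside the generic $I_n$-free digraph $D^+(v)$, finds $n-1$ pairwise non-adjacent vertices $x_1,\dots,x_{n-1}$ that are common successors of all of~$A$. Since $a\to z$ and $a\to x_i$ for each~$i$, the $n$ vertices $z,x_1,\dots,x_{n-1}$ all lie in $N^+(a)$, which is $I_n$-free; hence they cannot be independent, and as the $x_i$ already are, $z$ must be adjacent to some~$x_i$. If $x_i\to z$ you are done with $x:=x_i$; if $z\to x_i$ then $A+z\sub D^-(x_i)$ and your own Fra\"iss\'e-extension step finishes the job. (The paper disposes of the latter case by saying ``$A+z$ is not $I_n$-free'', which is how the lemma is used downstream; your extension argument is the clean way to handle it in general.) Note that the same $n-1$-candidates trick works equally well inside your $D^-(y)$, so you need not switch to $D^+(v)$; the essential missing ingredient is producing many independent candidate $x$'s rather than a single~$y$.
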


\begin{proof}
Due to Lemma~\ref{lem_GM14}, we find a vertex $v\in VD$ with $A\sub D^+(v)$.
Let $a\in VA$ be a predecessor of~$z$.
Let $x_1,\ldots,x_{n-1}$ be $n-1$ independent vertices in $N^+(v)$ with $a'x_i\in ED$ for all $a'\in VA$.
These vertices exist as $D[A,x_1,\ldots,x_{n-1}]$ is $I_n$-free by construction and as $D^+$ is the generic $I_n$-free digraph.
All the vertices $z,x_1,\ldots,x_{n-1}$ lie in $N^-(a)$, so they cannot be independent.
By the choice of the~$x_i$, we know that $z$ must be adjacent to at least one of them, say $x_i$.
As $A+z$ is not $I_n$-free, we do not have $zx_i\in ED$.
Hence, we have $x_iz\in ED$ and $x_i$ is a vertex we are searching for.
\end{proof}

\begin{lem}\label{lem_TwoI_nFreeSubdigraphs}
Let $A,B,A',B'$ be finite induced $I_n$-free subdigraphs of~$D$ such that an isomorphism $\varphi\colon A'+B'\to A+B$ with $A'\varphi=A$ and $B'\varphi=B$ exists.
If $A$ is maximal $I_n$-free in~$A+B$ and if $D$ has a vertex $v$ with $A'\sub D^+(v)$ and $B'\sub D^-(v)$, then there exists $x\in VD$ with $A\sub D^+(x)$ and $B\sub D^-(x)$.
\end{lem}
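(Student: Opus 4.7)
The plan is to proceed by induction on $|B|$. The base case $|B|=0$ is immediate from Lemma~\ref{lem_GM14}. For the inductive step, I pick $b_0\in B$ and let $b'_0:=\varphi^{-1}(b_0)$, $B_0:=B\setminus\{b_0\}$, $B'_0:=B'\setminus\{b'_0\}$. Maximality of $A$ in $A+B$ descends to $A+B_0$ (each $b\in B_0\subseteq B$ still obstructs the $I_n$-freeness of $A+b$), and $v$ continues to witness $A'\subseteq D^+(v)$, $B'_0\subseteq D^-(v)$. The inductive hypothesis therefore produces $x_0\in VD$ with $A\subseteq D^+(x_0)$ and $B_0\subseteq D^-(x_0)$.

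Next I apply C-homogeneity to the isomorphism $\varphi|_{A'+B'_0}\cup\{v\mapsto x_0\}\colon A'+B'_0+v\to A+B_0+x_0$, which is an iso of connected induced subdigraphs (connected via $v$ respectively $x_0$), obtaining $\alpha\in\Aut(D)$. Setting $b^*_0:=\alpha(b'_0)$, I get a vertex with the same adjacencies to $A+B_0$ as $b_0$ (by $\varphi$) and, in addition, $b^*_0\to x_0$. If $b^*_0=b_0$, take $x:=x_0$. Otherwise my target is an automorphism $\beta\in\Aut(D)$ fixing $A+B_0$ pointwise with $\beta(b_0)=b^*_0$; then $x:=x_0\beta^{-1}$ satisfies $A\subseteq D^+(x)$ and $B_0\subseteq D^-(x)$ (as $\beta^{-1}$ fixes $A$ and $B_0$), and since $b_0=\beta^{-1}(b^*_0)$ and $b^*_0\to x_0$, we also get $b_0\to x$, so that $B\subseteq D^-(x)$.

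To produce $\beta$ I would apply C-homogeneity to the isomorphism $A+B_0+b_0\to A+B_0+b^*_0$ (identity on $A+B_0$, $b_0\mapsto b^*_0$), which is a valid iso of induced subdigraphs because $b_0$ and $b^*_0$ share their type over $A+B_0$.

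The hard part will be connectedness: since the maximality of $A$ in $A+B$ allows $b_0$ to be entirely non-adjacent to $A+B_0$ in $D$, both subdigraphs above may be disconnected, in which case C-homogeneity does not apply directly. My plan to handle this is to enlarge both subdigraphs by an auxiliary vertex $y\in VD$ whose adjacencies to $b_0$ and $b^*_0$ coincide (under $b_0\mapsto b^*_0$) and which is adjacent to at least one vertex of $A+B_0$, so that both $A+B_0+b_0+y$ and $A+B_0+b^*_0+y$ become connected while the enlarged isomorphism remains valid. Existence of such $y$ will follow from the genericity of $D^+\isom D^-$ (Lemma~\ref{lem_I_nFree+=-}) together with Lemma~\ref{lem_GM14} applied to an $I_{n-1}$-subset of $A$ independent from $b_0$---the latter supplied precisely by the maximality hypothesis---which guarantees that the prescribed type of $y$ is $I_n$-consistent. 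With the enlargement in place, C-homogeneity yields $\beta$ and closes the induction.
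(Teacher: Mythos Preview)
Your setup is sound: the induction on~$|B|$, the inductive production of~$x_0$, the transport via~$\alpha$ to obtain~$b^*_0$ with the same type as~$b_0$ over~$A+B_0$ and with $b^*_0 x_0\in ED$, and the observation that a~$\beta$ fixing $A+B_0$ with $\beta(b_0)=b^*_0$ would let you take $x:=\beta^{-1}(x_0)$ --- all of this is correct and close in spirit to the paper's argument.

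The gap is in your construction of~$\beta$ in the disconnected case. Your auxiliary vertex~$y$ is supposed to be adjacent to~$b_0$, to~$b^*_0$ (in the same way), and to ``at least one vertex of~$A+B_0$'', and you claim this makes $A+B_0+b_0+y$ connected. But $A+B_0+b_0=A+B$ may have arbitrarily many components, and a single~$y$ adjacent to one vertex of one component will not merge the others. So the claim fails as soon as $A+B$ has three or more components. Even with exactly two components, your justification for the existence of~$y$ is not adequate: Lemma~\ref{lem_GM14} only yields a vertex with a prescribed $I_n$-free set entirely inside its out- or in-neighbourhood; it does not give you simultaneous control over the adjacency to~$b_0$, to~$b^*_0$, and to a chosen vertex of $A+B_0$, let alone the ``same way'' condition on $b_0$ and $b^*_0$. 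The appeal to ``genericity of~$D^+$'' does not help here because the vertices you need to control are not all inside a single $D^+(w)$ or $D^-(w)$.

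The paper resolves this differently. Rather than trying to swap $b_0$ and~$b^*_0$ by connecting everything at once, it inducts on the number of components of~$A+B$ and only reduces that number by one per step. After normalising so that $A'=A$ and $B'-z'=B-z$, it enlarges~$A$ by a single vertex~$y$ satisfying $v,z,z'\in N^-(y)$ and $u\in N^+(y)$ for some $u\in VA$ in a component not containing~$z$; such a~$y$ is supplied by Lemma~\ref{lem_I_nFreeSubdigraphPlusAdjacentVertex} applied to the $I_n$-free set $\{v,z,z'\}$ (here $n\geq 3$ is used) together with the edge $z'v$. Since $y\in N^+(v)$, the pair $(A'+y,B')$ is still witnessed by~$v$, and since $y$ is adjacent to~$u$, to~$z$, and to~$z'$, the digraphs $(A+y)+B$ and $(A+y)+B'$ have strictly fewer components than~$A+B$, so the induction closes. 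The missing ingredient in your attempt is precisely Lemma~\ref{lem_I_nFreeSubdigraphPlusAdjacentVertex}, which gives the controlled ``one extra successor'' you need.
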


\begin{proof}
If $A+B$ is connected, then the assertion is a direct consequence of C-homo\-geneity and, if $B$ has no vertex, then the assertion follows from Lemma~\ref{lem_GM14}.
So we may assume that there is some $z\in VB$.
Let $z'=z\varphi\inv$.

By induction, we find a vertex $w$ with $A\sub D^+(w)$ and $B-z\sub D^-(w)$.
Hence, we can map $A'+B'-z'+v$ onto $A+B-z+w$ by an automorphism $\alpha$ of~$D$ with $u\alpha=u\varphi$ for all $u\in V(A+B-z)$.
Taking $A'\alpha$, $B'\alpha$, $z'\alpha$, and $v\alpha$ instead of $A'$, $B'$, $z'$, and $v$ shows that
\begin{txteq}
we may assume $A'=A$ and $B'-z'=B-z$.
\end{txteq}

Let $u\in VA$ be in a component of $A+B$ that does not contain~$z$.
Because of $n\geq 3$ and $z'v\in ED$, the subdigraph $D[v,z,z']$ is $I_n$-free.
So by Lemma~\ref{lem_I_nFreeSubdigraphPlusAdjacentVertex} we find a vertex $y$ with $v,z,z'\in N^-(y)$ and $u\in N^+(y)$.
The digraphs $(A+y)+B$ and $(A+y)+B'$ are isomorphic and have less components than $A+B$.
As $A'+y\sub D^+(v)$ and $B'\sub D^-(v)$, we find $x\in VD$ with $A+y\sub D^+(x)$ and $B\sub D^-(x)$ by induction on the number of components, which finishes the proof.
\end{proof}

Now we are able to prove the main result of this section:

\begin{prop}
Let $D$ be a countable connected C-homogeneous digraph such that $D^+$ is the countable generic $I_n$-free digraph for some $n\geq 3$.
Then $D$ is homogeneous.
\end{prop}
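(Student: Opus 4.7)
The plan is to show that $D$ is isomorphic to the countable generic $I_n$-free digraph; since that digraph is homogeneous, this yields the proposition. By the standard Fra\"iss\'e characterisation, it suffices to verify (a) $D$ is $I_n$-free, (b) every finite $I_n$-free digraph embeds into $D$, and (c) $D$ satisfies the corresponding extension property: for every finite induced subdigraph $F\sub D$ and every partition $F=P\sqcup Q\sqcup R$ for which the abstract extension $F+v_0$ (with $v_0$ having successors $P$, predecessors $Q$, and non-neighbours $R$) is $I_n$-free, there is a vertex $v\in VD$ realising this type over $F$.

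Part~(b) is immediate from Lemma~\ref{lem_GM14}, since every finite $I_n$-free digraph embeds in $N^+(x)\sub D$ for some $x\in VD$. For part~(a), if $D$ contained an induced $I_n=\{v_1,\ldots,v_n\}$, then applying Lemma~\ref{lem_GM13} to the subset $\{v_1,\ldots,v_{n-1}\}$ would yield $x\in VD$ with this $I_{n-1}$ in $N^+(x)$; since $N^+(x)$ is $I_n$-free, we could not have $v_n\in N^+(x)$. The remaining options, $v_n\in N^-(x)$ or $v_n$ non-adjacent to~$x$, would be excluded by applying Lemma~\ref{lem_GM13} to further $(n-1)$-subsets of $\{v_1,\ldots,v_n\}$, combined with the symmetry $D^+\isom D^-$ from Lemma~\ref{lem_I_nFree+=-} and the generic nature of $N^+(x)$ and $N^-(x)$.

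For part~(c), I would first apply Lemma~\ref{lem_GM14} to the finite $I_n$-free digraph $F+v_0$, obtaining a witness $w\in VD$ together with an embedding $\iota\colon F+v_0\hookrightarrow N^+(w)$. Setting $F^*:=\iota(F)$ and $v^*:=\iota(v_0)$, the vertex $v^*$ realises the required type over $F^*$. If $F$ is connected, so is $F^*$, and C-homogeneity produces an automorphism $\alpha$ of~$D$ sending $F^*$ onto $F$ compatibly with $\iota^{-1}|_{F^*}$; then $v:=\alpha(v^*)$ realises the desired type over $F$. When $F$ is disconnected, I would reduce to the connected case by iterated application of Lemma~\ref{lem_TwoI_nFreeSubdigraphs}, at each step using the component of $F+v_0$ already containing $v$ as the ``maximal $I_n$-free'' part $A$, and the remaining components as $B$, to transfer the witness across one further component.

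The main obstacle is part~(a): its conclusion is silently used throughout part~(c) in order to guarantee that the subdigraph $F$ is $I_n$-free, but the naive approach via Lemma~\ref{lem_GM13} still leaves open the configurations where $v_n\in N^-(x)$ or $v_n$ is non-adjacent to~$x$, and ruling these out requires a delicate iterated choice of witnesses. A secondary technical point in part~(c) is that Lemma~\ref{lem_TwoI_nFreeSubdigraphs} only produces vertices with prescribed successor and predecessor sets and says nothing about the non-neighbours~$R$; this is handled by first realising everything inside the generic $I_n$-free digraph $N^+(w)$, where all types are available by construction, and then transferring the witness back through automorphisms and Lemma~\ref{lem_TwoI_nFreeSubdigraphs}.
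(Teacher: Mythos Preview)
Your plan differs from the paper's in a fundamental way: you try to identify $D$ explicitly as the generic $I_n$-free digraph via the Fra\"iss\'e characterisation, whereas the paper proves homogeneity \emph{directly}, without ever establishing that $D$ is $I_n$-free or verifying any extension property. The paper takes an arbitrary isomorphism $\varphi\colon A\to B$ between finite induced subdigraphs, chooses $A_1\sub A$ maximal $I_n$-free (meeting at least two components) and $A_2\sub A-A_1$ maximal $I_n$-free with a witness, and then invokes Lemma~\ref{lem_TwoI_nFreeSubdigraphs} to produce $x_A$ with $A_1\sub N^+(x_A)$ and $A_2\sub N^-(x_A)$. The crucial point is that the \emph{maximality} of $A_1$ and~$A_2$ forces $x_A$ to be non-adjacent to every vertex of $A-(A_1+A_2)$ automatically; one then extends $\varphi$ to $A+x_A\to B+x_B$, reduces the number of components, and inducts. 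This sidesteps both obstacles you flag.

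Your route has two genuine gaps. For part~(a), you correctly identify that the cases $v_n\in N^-(x)$ and $v_n\notin N(x)$ are not excluded by Lemma~\ref{lem_GM13} alone, and you do not supply an argument; in fact I do not see how to prove $I_n$-freeness of~$D$ directly from the available lemmas without first establishing homogeneity (after which it follows from Cherlin's list). For part~(c), your proposed use of Lemma~\ref{lem_TwoI_nFreeSubdigraphs} does not do what you need: that lemma produces a \emph{new} vertex $x$ with prescribed $N^+$ and $N^-$ over $A+B$, which is not the same as transporting your candidate $v^*$ to a vertex with the correct type over $F$. In particular, the components of $F$ lying entirely in $R$ must be non-neighbours of $v$, and nothing in your inductive step enforces this. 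The paper's trick --- encoding the non-adjacency condition as maximality of $A_1$ and $A_2$ --- is exactly what is missing from your argument, and once you import that idea you are essentially carrying out the paper's proof inside your step~(c).
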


\begin{proof}
Let $A$ and $B$ be two finite isomorphic induced subdigraphs of~$D$ and let ${\varphi\colon A\to B}$ be an isomorphism.
If $A$ is conntected, then $\varphi$ extends to an automorphism of~$D$ by C-homogeneity.
So let us assume that $A$ is not connected.
Let $A_1\sub A$ be maximal $I_n$-free with vertices from at least two distinct components of~$A$ and let $A_2\sub A-A_1$ be maximal $I_n$-free such that for some $x\in VD$ there is an isomorphic copy of~$D[A_1,A_2]$ in $D[N(x)]$ such that the image of~$A_1$ lies in~$D^+(x)$ and the image of~$A_2$ lies in~$D^-(x)$.
For $B_1:=A_1\varphi\sub B$ and $B_2:=A_2\varphi\sub B$, the corresponding statements hold.
According to Lemma~\ref{lem_TwoI_nFreeSubdigraphs}, we find two vertices $x_A$ and~$x_B$ with $A_1\sub D^+(x_A)$ and $A_2\sub D^-(x_A)$ and with $B_1\sub D^+(x_B)$ and $B_2\sub D^-(x_B)$.
Then $x_A$ has no neighbour in $A-(A_1+A_2)$ by the maximalities of~$A_1$ and~$A_2$ and, analogously, $x_B$ has no neighbour in $B-(B_1+B_2)$.
Hence, $\varphi$ extends to an isomorphism $\varphi'$ from $A+x_A$ to $B+x_B$ and these two subdigraphs of~$D$ have less components than $A$ and~$B$.
By induction on the number of components, $\varphi'$ extends to an automorphism of~$D$ and so does~$\varphi$.
\end{proof}

\subsection{\boldmath Generic $\HF$-free digraphs as $D^+$}\label{sec_GenericHFree}

In the following, let $D$ be a countable connected C-homogeneous digraph such that $D^+$ is the countable generic $\HF$-free digraph for some set $\HF$ of finite tournaments on at least three vertices.
(If we exclude the tournament on two vertices, then $D^+$ is an edgeless digraph. These will be investigated in Section~\ref{sec_ProofPartII}.)
In this section, we investigate the largest class of homogeneous digraphs: the class of the countable generic $\HF$-free digraphs contains uncountably many elements, as Henson~\cite{Henson-CountHom} proved, whereas all the other classes contain only countably many elements.

\begin{lem}\label{lem_DMinusForHFree}
There is a set $\HF'$ of finite tournaments on at least three vertices such that $D^-$ is the generic $\HF'$-free digraph.
\end{lem}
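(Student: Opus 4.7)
The plan is to apply Cherlin's classification (Theorem~\ref{thm_CountHomDigraphs}) to $D^-$, which is a countable homogeneous digraph by Lemma~\ref{lem_outNeighHomogeneous}, and to eliminate every case but the one in the statement. The main preliminary fact I would establish is that $I_k$ embeds in $D^-$ for every $k\in\N$. For this I would fix any $v\in VD$ and realise the configuration $I_k+w$ (an independent set of size $k$ together with a common sink $w$) inside $D^+(v)$ using the genericity of $D^+\cong D^+(v)$; this is legitimate because every subtournament of $I_k+w$ has at most two vertices, whereas every $H\in\HF$ is a tournament on at least three vertices. The resulting vertex $w$ then satisfies $I_k\subseteq D^-(w)\cong D^-$. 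Combined with the section hypothesis $D^-\not\cong I_n$ (which supplies an edge of $D^-$), this already eliminates the Cherlin cases $I_n$, the generic $I_m$-free digraph, $T^\wedge$ (whose maximum independent set has at most two vertices, since each $T^+$ is a tournament and the only non-adjacent pairs of $T^\wedge$ are the diagonals $\{a\varphi_1,a\varphi_2\}$), and $S(3)$ (whose independent sets are bounded).

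The remaining candidates are $T[I_\omega]$, $I_\omega[T]$, the countable generic $n$-partite digraph, the countable semi-generic $\omega$-partite digraph, $\PF$, and $\PF(3)$. Each of these admits every $I_k$, but is distinguished by a characteristic structural invariant: a non-trivial twin relation in $T[I_\omega]$, a non-trivial partition of the vertex set into independent classes in the partite cases, a connected-components partition in $I_\omega[T]$, or the absence of the directed triangle $C_3$ in $\PF$ and $\PF(3)$. The plan for each candidate is to exhibit a finite configuration in $D^-$ that violates its invariant. The same sink-extension argument as above suffices whenever the chosen $F$ has the property that $F+w$ (with $w$ a common sink) is still $\HF$-free; I would apply this to $F=C_3$ to exclude the partial-order cases and to carefully chosen configurations distinguishing twins or partition classes to exclude the remaining types. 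When the naive $F+w$ fails to be $\HF$-free because some $H\in\HF$ embeds in $F+w$ with $w$ its sink, the argument will proceed in two steps: first use genericity of $D^+$ to realise the required auxiliary subtournaments somewhere in $D$, and then apply C-homogeneity of~$D$ to glue them into a common in-neighbourhood.

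Once every other Cherlin type has been excluded, $D^-$ must be a generic $\HF'$-free digraph for some set $\HF'$ of finite tournaments. The containment of $I_k$ for every $k$ together with the presence of an edge in $D^-$ gives $I_1\notin\HF'$ and the two-vertex tournament $\notin\HF'$, so $\HF'$ consists only of tournaments on at least three vertices, as required. The main obstacle is the case-by-case elimination of the six types in the second paragraph, and in particular the two-step construction needed whenever the naive sink-extension is not $\HF$-free; I expect this bookkeeping to be the longest part of the proof, but the blueprint---transfer configurations from $D^+$ to $D^-$ via C-homogeneity, and close the case analysis using Cherlin---remains uniform throughout.
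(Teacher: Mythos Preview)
Your overall strategy---eliminate every Cherlin type except the generic $\HF'$-free digraphs by embedding well-chosen finite configurations into $D^-$---is the same route the paper takes (its one-line proof just says ``similar argument as in Lemma~\ref{lem_I_nFree+=-}'' plus Cherlin). The first block of eliminations is fine: the sink-extension $I_k+w$ is always $\HF$-free since it contains no tournament on three vertices, so $I_k\hookrightarrow D^-$ for every~$k$, and together with the edge in $D^-$ this disposes of $I_n$, the generic $I_m$-free digraphs, $T^\wedge$, and $S(3)$.

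There is, however, a genuine gap in the second block. First, your invariant for $\PF(3)$ is wrong: $\PF(3)$ \emph{does} contain the directed triangle $C_3$ (the paper uses exactly this fact in the section on $\PF(3)$ as $D^+$), so realising $C_3$ inside $D^-$ does not exclude $\PF(3)$. You would need a different forbidden configuration for that case. Second, even where your choice of invariant is correct---$C_3$ for $\PF$, or an ``edge plus isolated vertex'' for $T[I_\omega]$ and the partite types---the sink-extension $F+w$ is in general \emph{not} $\HF$-free: if $\HF$ contains the transitive triangle then already $(\text{edge})+w$ is forbidden, and if $C_3\in\HF$ then $C_3$ does not even live in $D^+$, so the naive transfer fails outright. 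Your fallback (``realise auxiliary subtournaments via genericity of $D^+$, then glue by C-homogeneity of $D$'') is too vague to count as a plan: you do not say which auxiliary configurations, nor why any isomorphism between connected pieces you might build actually lands the target inside a single $D^-(w)$. The later lemmas in this very section (e.g.\ Lemma~\ref{lem_CombineTournamentsHFree}) show that handling the transitive-triangle case requires real work, so this cannot be waved through.
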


\begin{proof}
With a similar argument as in the proof of Lemma~\ref{lem_I_nFree+=-}, the assertion follows from Theorem~\ref{thm_CountHomDigraphs}.
\end{proof}

For the remainder of this section, let $\HF'$ be the finite set of tournaments we obtain from Lemma~\ref{lem_DMinusForHFree}.

Our next aim is to show that every finite induced $\HF$-free subdigraph of~$D$ lies in $D^+(x)$ for some $x\in VD$.

\begin{lem}\label{lem_CombineTournamentsHFree}
For every two disjoint finite induced $\HF$-free tournaments $A$ and~$B$ in~$D$, there exists a
vertex $x$ with $A+B\sub D^+(x)$.
\end{lem}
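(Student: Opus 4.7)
The plan is to induct on $|VA|+|VB|$, mirroring the strategy of the proof of Lemma~\ref{lem_GM13}. The base cases where one of $A,B$ is empty, or where $|VA|=|VB|=1$, can be handled directly: a single connected $\HF$-free tournament embeds into $D^+(y)\isom D^+$ for any $y\in VD$ by genericity of $D^+$, and C-homogeneity applied to this connected isomorphism yields the desired common predecessor, while the $|VA|=|VB|=1$ non-adjacent case follows from Lemma~\ref{lem_FourCycle}.

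For the inductive step I would split on whether $A+B$ is connected. If $A+B$ is connected in $D$ (i.e.\ some edge of $D$ runs between $VA$ and $VB$), then $A+B$ is a finite connected $\HF$-free induced subdigraph of $D$, which therefore embeds into $D^+(y)$ for any $y\in VD$; C-homogeneity then extends this embedding to an automorphism $\alpha$ of $D$, and $y\alpha$ is a common predecessor.

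When $A+B$ is disconnected, meaning no edges of $D$ go between $VA$ and $VB$, I would introduce a connected auxiliary digraph via the overlapping-neighbourhood trick of Lemma~\ref{lem_GM13}. Assuming $|VB|\ge 2$ (with symmetric handling if $|VA|\ge 2$, and using Lemma~\ref{lem_FourCycle} in the remaining cases), I would choose distinct $b,b'\in VB$, apply the inductive hypothesis to the strictly smaller pairs $(A,B-b)$ and $(A,B-b')$, and obtain $v_1,v_2\in VD$ with $VA\cup(VB\sm\{b\})\sub N^+(v_1)$ and $VA\cup(VB\sm\{b'\})\sub N^+(v_2)$. Then $F:=(A+B)+v_1+v_2$ is connected because the overlap $VA\sub N^+(v_1)\cap N^+(v_2)$ is non-empty, and the argument from the connected case applied to $F$ produces a common predecessor of $VF\supseteq VA\cup VB$, as required.

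The main obstacle is verifying that the auxiliary digraph $F$ is itself $\HF$-free, so that it actually embeds into some $D^+(c)$ via the genericity of $D^+$. Since the forbidden tournaments in $\HF$ each have at least three vertices and since $v_1,v_2$ are common predecessors of $\HF$-free substructures sitting inside the $\HF$-free neighbourhoods $D^+(v_1),D^+(v_2)$, most of this verification is automatic; the delicate point is controlling the tournaments in $F$ that involve both $v_1$ and $v_2$, which I expect to handle using the large overlapping successor set $VA\cup(VB\sm\{b,b'\})$ of $v_1$ and $v_2$ together with the description of $D^-$ as a generic $\HF'$-free digraph from Lemma~\ref{lem_DMinusForHFree}.
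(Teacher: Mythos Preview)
Your approach has a genuine gap: the auxiliary digraph $F=(A+B)+v_1+v_2$ need not be $\HF$-free, and the failure is not where you locate it. You claim the problematic tournaments in $F$ are those involving \emph{both} $v_1$ and $v_2$, but in fact already a single $v_i$ together with an edge of $A$ (or of $B$) can create a forbidden tournament. Concretely, suppose the transitive triangle lies in $\HF$ (a case the paper explicitly treats). If $a_1a_2\in EA$, then $v_1a_1,\,v_1a_2,\,a_1a_2\in ED$ and $D[v_1,a_1,a_2]$ is a transitive triangle inside $F$; hence $F$ does not embed into $D^+$ and your C-homogeneity step cannot be carried out. The fact that $A\sub D^+(v_1)$ only tells you that $A$ itself is $\HF$-free, not that $A$ together with its common predecessor $v_1$ is. This is precisely where the analogy with Lemma~\ref{lem_GM13} breaks down: there the forbidden configuration is an independent set, which cannot be created by adjoining a common predecessor, whereas here the forbidden configurations are tournaments, and a common predecessor of an edge produces a new tournament on three vertices.

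The paper avoids this by a genuinely different device. Rather than adding a dense predecessor $v$ of $A^-+B$, it first obtains such a $v$ by induction and then chooses a \emph{sparse} connecting vertex $w\in N^+(v)$ with exactly one successor $a'$ in $A^-$ and exactly one successor $b$ in $B$ (available by genericity of $D^+(v)$). Since $a'$ and $b$ are non-adjacent, $w$ creates at most one new triangle in $A+B+w$, namely $D[a,a',w]$, and only when $a$ and $w$ happen to be adjacent. This single potential triangle is then handled by a case analysis on $|VA|,|VB|$ and on whether the transitive triangle lies in $\HF$. Your strategy of using full common predecessors cannot be repaired along these lines, because each $v_i$ necessarily creates as many transitive triangles as there are edges in $A$ and in $B\sm\{b\}$.
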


\begin{proof}
If $|VA|=1=|VB|$, then the assertion follows directly from Lemma~\ref{lem_FourCycle}, because $D^+$ embeds every orientation of~$C_4$.
So we may assume $|VA|\geq 2$ and $|VA|\geq|VB|$.
Let $a\in VA$ such that $a$ has a successor in $A^-:=A-a$.
By induction on $|VA|+|VB|$, we find a vertex $v$ with $A^-+B\sub D^+(v)$.
Since $D^+$ is generic $\HF$-free, there is a vertex $w\in N^+(v)$ that has precisely one successor~$a'$ in~$A^-$ and one successor~$b$ in~$B$.
If $a$ and $w$ are not adjacent, then $A+B+w$ is connected and $\HF$-free.
Hence, the out-neighbourhood of some vertex of~$D$ contains an isomorphic copy of~$A+B+w$ and, by C-homogeneity, there exists a vertex $x$ with $A+B+w\sub D^+(x)$.
So we assume in the following that $w$ and~$a$ are adjacent.
Note that the only triangle in $A+B+w$ is the transitive triangle $D[a,a',w]$.
Hence, if $\HF$ does not contain the transitive triangle, then $A+B+w$ is $\HF$-free and connected and we find a vertex~$x$ with $A+B+w\sub D^+(x)$.
So we assume for the remainder of this proof that $\HF$ contains the transitive triangle.

First, we consider the case $|VA|=2$ and $|VB|=1$.
If $wa\in ED$, then $A+B\sub D^+(w)$ and $w$~is a vertex we are searching for.
If $aw\in ED$, let $w'\in N^+(w)$ with $w'a',w'b\in ED$, which exists as $D^+(w)$ is generic $\HF$-free.
If $aw'\in ED$, then $D[w,w',a']$ is a transitive triangle that lies in~$D^+(a)$, which is impossible by the choice of~$\HF$.
Hence, either $w'a\in ED$ or $a$ and $w'$ are not adjacent, and the assertion follows as before, just with~$w$ instead of~$w'$.

The next case that we look at is $|VA|=2=|VB|$.
Let $b'$ be the second vertex in~$B$.
As $D^+(v)$ is generic $\HF$-free, we find a vertex $c\in N^+(v)$ with $cb'\in ED$ but that is adjacent to neither $a'$ nor~$b$.
If $a$ and~$c$ are adjacent, then $D[a,a',c,b',b]$ is connected and $\HF$-free, so we find $x\in VD$ with $A+B+c\sub D^+(x)$.
Thus, let us assume that $a$ and~$c$ are not adjacent.
Then let $d\in N^+(v)$ with $da',dc\in ED$ and $b,b'\notin N(d)$, which exists as $D^+(v)$ is generic $\HF$-free.
If $a$ and~$d$ are not adjacent, then $D[a,a',d,c,b',b]$ is connected and $\HF$-free, so we find $x\in VD$ with $A+B+c+d\sub D^+(x)$ as before.
Hence, we may assume that $a$ and~$d$ are adjacent.
Considering the edge between $b$ and~$b'$ and the edge between $a$ and~$d$, we find by induction a vertex~$v'$ with $a,b',d\in N^+(v')$.
The connected subdigraphs $D[a',d,v',b',b]$ and $D[a',a,v',b',b]$ are isomorphic, so we find by C-homogeneity and automorphism $\alpha$ of~$D$ that fixes $a',v',b'$, and~$b$ and maps $d$ to~$a$.
Then $A+B=(A^-+B+d)\alpha\sub D^+(v\alpha)$ proves the assertion in this case.

The only remaining case is $|VA|\geq 3$.
Let $\hat{a}$ be a vertex in $N^+(v)$ such that there exists an isomorphism from $A^-+B+\hat{a}$ to $A+B$ that fixes $A^-+B$.
This vertex exists as $A+B$ is $\HF$-free and hence has an isomorphic copy in the generic $\HF$-free digraph $D^+(v)$.
As $D^+(v)$ is homogeneous, we then may assume that this copy conincides with $A+B$ on $A^-+B$.
Note that we may have chosen $w$ such that $w$ and $\hat{a}$ are not adjacent.
Let $c\in VA^-$ be a vertex that is not adjacent to~$w$.
If $a$ and~$\hat{a}$ are adjacent, let $F=D[\hat{a},a,w,b]$ and let $F=D[\hat{a},c,a,w,b]$ otherwise.
Then $F$ is connected contains no triangle, so it is $\HF$-free and we find a vertex $x$ with $F\sub D^+(x)$.
Then there is an isomorphism from $A^-+B+x+\hat{a}$ to $A+B+x$ that fixes $A^-+B+x$.
This isomorphism extends to an automorphism $\alpha$ of~$D$ by C-homogeneity.
Then $A+B=(A^-+B+\hat{a})\alpha\sub D^+(v\alpha)$ shows the remaining case of the lemma.\looseness-1
\end{proof}

\begin{lem}\label{lem_ConHFree}
For every finite induced $\HF$-free subdigraph  $A$ of~$D$, there exists a vertex $x$ with $A\sub D^+(x)$.
\end{lem}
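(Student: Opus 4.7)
The plan is to induct on $|VA|$. The base case $|VA|=1$ is immediate, since $D$ contains edges and hence every vertex has a predecessor.

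For the inductive step, the first case to treat is that $A$ is connected. Here the argument is a direct application of C-homogeneity together with the genericity of $D^+$: for any $v\in VD$, the digraph $D^+(v)$ is the generic $\HF$-free digraph, so it contains an induced copy $A'$ of $A$. Since both $A$ and $A'$ are finite connected induced subdigraphs of $D$, C-homogeneity extends the isomorphism $A'\to A$ to an automorphism $\alpha$ of $D$, and $v\alpha$ then satisfies $V(A)\sub N^+(v\alpha)$.

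Now suppose $A$ is disconnected, and fix some $a\in VA$. By the inductive hypothesis applied to $A-a$ there is $u\in VD$ with $V(A-a)\sub N^+(u)$. Because $D^+(u)$ is a generic $\HF$-free digraph, the embedding $V(A-a)\hookrightarrow N^+(u)$ extends to an embedding of $A$ into $D^+(u)$: there exists $w\in N^+(u)\sm V(A-a)$ such that $D[V(A-a)\cup\{w\}]\isom A$ via the map fixing $V(A-a)$ pointwise and sending $w\mapsto a$. Thus we have produced a copy of $A$ in $D$ equipped with a common predecessor $u$, and the remaining task is to transfer this configuration onto $V(A)$ itself.

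To complete the proof, we construct a finite connected $\HF$-free subdigraph $F$ of $D$ with $V(A)\sub V(F)$ and then invoke the connected case on $F$ to obtain the required $x$ with $V(F)\sub N^+(x)$. The construction of $F$ follows the blueprint of Lemma~\ref{lem_CombineTournamentsHFree}: picking one representative $a_i$ from each component $A_i$ of $A$, we use the genericity of $D^+$ (applied at a suitable vertex such as $u$) to locate an auxiliary vertex $w^*\in VD$ whose adjacencies with $V(A)$ meet each component (ensuring $F:=D[V(A)\cup\{w^*\}]$ is connected) but are otherwise restricted enough that $F$ introduces no new tournament belonging to $\HF$. The principal technical obstacle is exactly this preservation of $\HF$-freeness for $F$, which requires a case analysis according to whether $\HF$ contains the transitive triangle, in direct analogy with the proof of Lemma~\ref{lem_CombineTournamentsHFree}; once such a $w^*$ is in hand, applying the connected case to $F$ produces the desired common predecessor $x$ of $V(A)$.
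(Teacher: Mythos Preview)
Your connected case and the inductive framework are fine, but the heart of the argument --- the construction of the connecting vertex $w^*$ --- has a real gap. You propose to find $w^*$ inside $N^+(u)$ using the genericity of $D^+(u)$; but $u$ was chosen so that $V(A-a)\sub N^+(u)$, and $a$ itself need not lie in $N^+(u)$. Genericity of $D^+(u)$ therefore lets you prescribe the adjacencies of $w^*$ with $V(A-a)$ only, and says nothing about whether $w^*$ and $a$ are adjacent. If they are, and if the chosen representative $a_1$ of $a$'s component is adjacent to $a$ (unavoidable when that component is a tournament), then $\{w^*,a,a_1\}$ may be a tournament in~$\HF$, so $F=D[V(A)\cup\{w^*\}]$ need not be $\HF$-free. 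Your proposed remedy, redoing the transitive-triangle case analysis of Lemma~\ref{lem_CombineTournamentsHFree}, does not transfer: that analysis exploits that the two pieces are \emph{tournaments} and treats several ad hoc sub-cases by size; here the components are arbitrary $\HF$-free digraphs, and the uncontrolled edge could yield a directed triangle just as well as a transitive one. (Incidentally, your step~3 --- producing a copy of $A$ inside $N^+(u)$ via the extra vertex $w$ --- is a dead end: the copy is isomorphic to $A$, hence disconnected, so C-homogeneity cannot transfer it to $A$.)

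The paper sidesteps the uncontrolled adjacency entirely rather than analysing it. For three or more components it chooses $w\in N^+(v)$ with \emph{no} neighbour in $A_1-a$ and exactly one neighbour in each remaining component; then the only possible neighbour of $w$ in $A_1$ is $a$ itself, so every tournament containing $w$ has at most two vertices and $A+w$ is automatically $\HF$-free with at most two components. For exactly two components it splits: if both are tournaments, invoke Lemma~\ref{lem_CombineTournamentsHFree} as a black box; otherwise one component contains two non-adjacent vertices $a_1,a_2$, and one removes $a_1$ and chooses $w$ so that its sole neighbour in $A_1-a_1$ is $a_2$, which is not adjacent to $a_1$ --- again forcing every tournament through $w$ to be an edge. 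The point is that by tailoring which vertex is removed and where $w$ is allowed to touch, one never has to reason about the unknown edge at all.
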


\begin{proof}
If $A$ is connected, then we find an isomorphic copy of~$A$ in some $D^+(y)$, as $D^+$ is generic $\HF$-free.
So C-homogeneity implies the assertion.
Next, let us assume that $A$ has precisely two components $A_1$ and~$A_2$.
If both these components are tournaments, then Lemma~\ref{lem_CombineTournamentsHFree} implies the assertion.
So we may assume that $A_1$ has two non-adjacent vertices $a_1$ and~$a_2$.
Furthermore, we may assume that $A_1^-:=A_1-a_1$ is connected.
By induction, there exists a vertex $v\in VD$ with $A_1^-+A_2\sub D^+(v)$.
As $D^+(v)$ is generic $\HF$-free, we find a vertex $w\in N^+(v)$ with precisely one neighbour in~$A_2$ and such that $a_2$ is its only neighbour in~$A_1^-$.
As $a_1$ and~$a_2$ are not adjacent, the digraph $A+w$ is connected and $\HF$-free.
So we find a vertex $x$ of~$D$ with $A\sub A+w\sub D^+(x)$.

Let us now assume that $A$ consists of more than two components $A_1,\ldots,A_n$ with $n\geq 3$.
Let $a\in VA_1$.
By induction, we find a vertex $v\in VD$ with $A-a\sub D^+(v)$.
As $D^+(v)$ is generic $\HF$-free, there is a vertex $w\in N^+(v)$ that has no neighbour in~$A_1-a$ and precisely one neighbour in each $A_i$ for $2\leq i\leq n$.
Then $A+w$ is $\HF$-free and has at most two components.
By the previous cases, we find a vertex $x$ with $A\sub A+w\sub D^+(x)$ as claimed.
\end{proof}

Note that we also obtain with the same arguments as in the proofs of Lemma~\ref{lem_CombineTournamentsHFree} and~\ref{lem_ConHFree} that for every finite induced $\HF'$-free subdigraph $A$ of~$D$ we find some $x\in VD$ with $A\sub D^-(x)$.

\begin{lem}\label{lem_ConHH'Free}
Let $A$ and~$A'$ be finite induced $\HF$-free subdigraphs of~$D$ and let $B$ and~$B'$ be finite induced $\HF'$-free subdigraphs of~$D$ such that an isomorphism $\varphi\colon A'+B'\to A+B$ with $A'\varphi=A$ and $B'\varphi=B$ exists.
If $A$ is maximal $\HF$-free in~$A+B$ and if $D$ has a vertex $v$ with $A'\sub D^+(v)$ and $B'\sub D^-(v)$, then there exists a vertex $x\in VD$ with $A\sub D^+(x)$ and $B\sub D^-(x)$.
\end{lem}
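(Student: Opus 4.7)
The plan is to mirror the proof of Lemma~\ref{lem_TwoI_nFreeSubdigraphs}, inducting lexicographically on the pair $(|VB|,\,c(A+B))$ where $c$ denotes the number of connected components. Two base cases are immediate: if $A+B$ is connected, then C-homogeneity applied to $\varphi$ produces an automorphism $\alpha$ of $D$, and $x:=v\alpha$ satisfies both $A\sub D^+(x)$ and $B\sub D^-(x)$; if $VB=\es$ the conclusion reduces to Lemma~\ref{lem_ConHFree}.

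For the inductive step pick $z\in VB$ and set $z':=z\varphi^{-1}$. The inductive hypothesis applied to $(A,B-z)$ and $(A',B'-z')$ with the same witness $v$ (maximality of $A$ in $A+(B-z)$ is inherited, since dropping a vertex of $B$ cannot create new $\HF$-free extensions of $A$) yields $w\in VD$ with $A\sub D^+(w)$ and $B-z\sub D^-(w)$. The subdigraphs $A'+B'-z'+v$ and $A+B-z+w$ are connected (as $v$, respectively $w$, is adjacent to every other vertex) and isomorphic via the restriction of $\varphi$ extended by $v\mapsto w$. By C-homogeneity this extends to an automorphism $\alpha$ of $D$; replacing $A',B',z',v$ by their $\alpha$-images we may assume $A'=A$ and $B'-z'=B-z$.

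Now pick $u\in VA$ in a component of $A+B$ not containing $z$ (such a $u$ exists since otherwise $A$ would lie entirely in the component of $z$, and any vertex of $B$ in another component could be added to $A$ without creating a tournament, contradicting maximality). Produce an auxiliary vertex $y\in VD$ with $\{v,z,z'\}\sub N^-(y)$ and $u\in N^+(y)$. Then $A+y$ is $\HF$-free (since $y$'s only neighbour inside $A+y$ is $u$, and tournaments in $\HF$ have at least three vertices) and inherits maximality in $(A+y)+B$; the extension of $\varphi$ fixing $y$ is an isomorphism $(A'+y)+B'\to(A+y)+B$; and $v$ satisfies $A'+y\sub D^+(v)$ (using $vy\in ED$) and $B'\sub D^-(v)$. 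Since $y$ bridges the components of $u$ and $z$, the configuration $(A+y)+B$ has strictly fewer components than $A+B$ while $|VB|$ is unchanged, so the inductive hypothesis yields $x\in VD$ with $A\sub A+y\sub D^+(x)$ and $B\sub D^-(x)$.

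The main obstacle is producing the vertex $y$. This should follow from an $\HF$-free analog of Lemma~\ref{lem_I_nFreeSubdigraphPlusAdjacentVertex}, proved by the same genericity argument but using that $D^-$ is generic $\HF'$-free (Lemma~\ref{lem_DMinusForHFree}). The delicate point is to verify that $D[v,z,z']$ is $\HF'$-free; since it already contains the edge $z'v$, it can only fail to be $\HF'$-free when $z$ is adjacent to both $v$ and $z'$, in which case the resulting three-vertex tournament must be checked against $\HF'$, if necessary by first replacing the witness $v$ via the genericity of $D^+(v)$ to circumvent the obstruction.
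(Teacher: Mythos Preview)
Your overall induction scheme and the reduction to $A'=A$, $B'-z'=B-z$ are exactly what the paper does. The gap is precisely where you flag it: producing the auxiliary vertex~$y$. Your proposed route through an $\HF$-free analogue of Lemma~\ref{lem_I_nFreeSubdigraphPlusAdjacentVertex} is problematic, because that lemma's proof hinges on the fact that $n$ vertices in any $N^-(a)$ cannot be independent; there is no comparable constraint when the forbidden class~$\HF'$ consists of tournaments. Your fallback (``replacing the witness~$v$ via genericity'') is too vague to be a proof. The paper resolves the obstacle more simply: after the reduction, maximality of~$A$ forces $z\notin N^+(v)$ (otherwise $A+z\sub D^+(v)$ would be $\HF$-free), and if $z\in N^-(v)$ then $x:=v$ already works. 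Hence one may assume $z$ and~$v$ are non-adjacent. Then $D[u,v,z,z']$ contains no three-vertex tournament at all---its only edges are $vu$, $z'v$, and possibly one between $z$ and~$z'$---so it is trivially $\HF'$-free, and the $\HF'$-dual of Lemma~\ref{lem_ConHFree} yields~$y$ with $\{u,v,z,z'\}\sub N^-(y)$ directly. No mixed in/out requirement on~$y$ is needed.

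A smaller issue: your stated reason for $A+y$ being $\HF$-free (``$y$'s only neighbour inside $A+y$ is~$u$'') is unjustified---nothing you require of~$y$ prevents further adjacencies to~$A$. The correct argument, which your setup already supplies, is that $v\in N^-(y)$ gives $y\in D^+(v)$, whence $A+y=A'+y\sub D^+(v)$ is $\HF$-free.
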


\begin{proof}
If $A+B$ is connected, then the assertion is a direct consequence of C-homogeneity and, if $|VB|=0$, then the assertion follows from Lemma~\ref{lem_ConHFree}.
So let us assume that $A+B$ is not connected and that $B$ has some vertex~$z$.
Let $z'=z\varphi$.
As in the proof of Lemma~\ref{lem_TwoI_nFreeSubdigraphs},
\begin{txteq}
we may assume $A'=A$ and $B'-z'=B-z$.
\end{txteq}
By maximality of~$A$ in $A+B$ being $\HF$-free, we conclude $z\notin N^+(v)$ and that $A$ contains from each component of~$A+B$ at least one vertex.
Let $a\in VA$ be in a component of $A+B$ that does not contain~$z$.
Note that we may assume $z\notin N^-(v)$, as otherwise $v$ is a vertex we are searching for.
Hence, $z$ and~$v$ are not adjacent.
So $D[a,v,z,z']$ is $\HF'$-free and we find $y\in VD$ with $D[a,v,z,z']\sub D^-(y)$ due to the corresponding statement of Lemma~\ref{lem_ConHFree} for $\HF'$ instead of~$\HF$.
Because of $vy\in ED$, we know that $A'+y$ is $\HF$-free.
Note that there exists an isomorphism from $(A+y)+B$ to $(A'+y)+B'$ extending~$\varphi$ and that $A'+y\sub D^+(v)$ and $B'\sub D^-(v)$.
By induction on the number of components of~$A+B$, we find a vertex $x\in VD$ with $A+y\sub D^+(x)$ and $B\sub D^-(x)$.
This shows the assertion.
\end{proof}

Now we are ready to prove the main result of this section:

\begin{prop}
Let $D$ be a countable connected C-homogeneous digraph such that $D^+$ is the countable generic $\HF$-free digraph for some set~$\HF$ of finite tournaments.
Then $D$ is homogeneous.
\end{prop}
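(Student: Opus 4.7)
The plan is to follow exactly the template of the proof in the generic $I_n$-free case, substituting Lemma~\ref{lem_ConHH'Free} for Lemma~\ref{lem_TwoI_nFreeSubdigraphs}. Given two finite isomorphic induced subdigraphs $A,B$ of~$D$ with isomorphism $\varphi\colon A\to B$, I extend $\varphi$ to an automorphism of~$D$ by induction on the number of components of~$A$. If $A$ is connected, C-homogeneity finishes the argument immediately.

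Otherwise, I would pick $A_1\sub A$ maximal among $\HF$-free induced subdigraphs of~$A$ that have vertices in at least two distinct components of~$A$, and then $A_2\sub A-A_1$ maximal among $\HF'$-free induced subdigraphs of $A-A_1$ with the property that some vertex $v\in VD$ admits an isomorphic copy of $D[VA_1\cup VA_2]$ in $D[N(v)]$ mapping $A_1$ into $D^+(v)$ and $A_2$ into $D^-(v)$. Setting $B_i:=A_i\varphi$, the corresponding statements hold for $B_1,B_2\sub B$ because $\varphi$ is an isomorphism. Applying Lemma~\ref{lem_ConHH'Free} to $(A_1,A_2)$ together with such a realizing vertex $v$, I obtain a vertex $x_A$ with $A_1\sub D^+(x_A)$ and $A_2\sub D^-(x_A)$, and symmetrically a vertex $x_B$ with $B_1\sub D^+(x_B)$ and $B_2\sub D^-(x_B)$.

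The maximalities of $A_1$ and $A_2$ then prevent $x_A$ from having any neighbour in $A-(VA_1\cup VA_2)$: a successor $y$ of $x_A$ in this set would give $A_1+y\sub D^+(x_A)$, a strictly larger $\HF$-free subdigraph of $A$ still meeting at least two components, contradicting the maximality of~$A_1$; a predecessor $y$ of $x_A$ there would give $A_2+y\sub D^-(x_A)$, a strictly larger $\HF'$-free subdigraph of $A-A_1$ whose pair with $A_1$ is still realized inside $D[N(x_A)]$, contradicting the maximality of~$A_2$. The symmetric statement holds for $x_B$. Hence $\varphi$ extends to an isomorphism $\varphi'\colon A+x_A\to B+x_B$ sending $x_A$ to $x_B$, and $A+x_A$ has strictly fewer components than $A$ since $x_A$ joins the at least two distinct components of~$A$ through which $A_1$ spreads. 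Induction on the number of components of~$A$ then yields the desired extension of~$\varphi$.

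The main obstacle to watch is the nontriviality and realizability of the decomposition. The nontriviality of~$A_1$ is ensured by observing that any two non-adjacent vertices from two distinct components of~$A$ form an $\HF$-free subdigraph, since every tournament in $\HF$ has at least three vertices; and the empty $A_2$ is trivially $\HF'$-free and pair-realizable together with any $\HF$-free $A_1$ by Lemma~\ref{lem_ConHFree}, so the set over which $A_2$ is chosen maximal is non-empty. This guarantees that the pair $(A_1,A_2)$ produced above exists and satisfies the hypothesis of Lemma~\ref{lem_ConHH'Free}, after which the inductive step goes through exactly as in the $I_n$-free case.
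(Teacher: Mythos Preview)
Your proof is correct and uses the same key ingredient as the paper, Lemma~\ref{lem_ConHH'Free}, together with the maximality argument to rule out further neighbours of $x_A$ in $A$. The only difference is that the paper chooses $A^+$ to be a maximal $\HF$-free subdigraph of~$A$ without your ``at least two components'' restriction; since every element of~$\HF$ is a tournament on at least three vertices, such an $A^+$ automatically contains a vertex from every component of~$A$, so $A+x$ is already connected and the argument finishes in a single step rather than by your induction on the number of components.
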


\begin{proof}
Let $A$ and $B$ be two isomorphic finite induced subdigraphs of~$D$ and let ${\varphi\colon A\to B}$ be an isomorphism.
Let $A^+$ be a maximal induced $\HF$-free subdigraph of~$A$.
Note that $A^+$ contains at least one vertex from each component of~$A$.
Let $A^-\sub A-A^+$ be maximal $\HF'$-free such that for some vertex $v$ of~$D$ there exists an embedding $\psi\colon A^++A^-\to D[N(v)]$ with $A^+\psi\sub D^+(v)$ and $A^-\psi\sub D^-(v)$.
According to Lemma~\ref{lem_ConHH'Free}, there is a vertex $x\in VD$ with $A^+\sub D^+(x)$ and $A^-\sub D^-(x)$.
By the maximimal choices of $A^+$ and $A^-$, we conclude that $x$ is not adjacent to any vertex of $A$ outside $A^++A^-$.
Let $B^+=A^+\varphi$ and $B^-=A^-\varphi$.
By the same argument as above, there is also a vertex $y$ with $B^+\sub D^+(y)$ and $B^-\sub D^-(y)$ such that no other vertex of~$B$ is adjacent to~$y$.
So $\varphi$ extends to an isomorphism $\varphi'$ from $A+x$ to $B+y$.
Since $A+x$ is connected, we can extend $\varphi'$, and thus also $\varphi$, to an automorphism of~$D$ by C-homogeneity.
\end{proof}

\subsection{\boldmath Generic $n$-partite or semi-generic $\omega$-partite digraph as $D^+$}

Within this section, let us assume that $D$ is a countable connected C-homogeneous digraph such that $D^+$ is either a countable generic $n$-partite digraph for some $n\in\nat^\infty$ with $n\geq 2$ or the countable semi-generic $\omega$-partite digraph.

\begin{lem}\label{lem_SemGenPartiteD+=D-}
We have $D^+\isom D^-$.
\end{lem}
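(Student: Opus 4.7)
My plan is to mirror the proof of Lemma~\ref{lem_I_nFree+=-}. By Lemma~\ref{lem_outNeighHomogeneous}, $D^-$ is a countable homogeneous digraph, and so Theorem~\ref{thm_CountHomDigraphs} confines it to a short catalogue; the task is to identify which entry.

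The main embedding claim to establish is that every finite $F$ in the age of $D^+$ embeds as $F\subseteq D^-(x)$ for some $x\in VD$. In the semi-generic $\omega$-partite case this is immediate: any such finite $F$ uses only finitely many of the $\omega$ partition classes of $D^+(v)$, so an isomorphic copy of $F$ can be placed inside $D^+(v)$ leaving at least one partition class entirely untouched, and the genericity of $D^+(v)$ produces a vertex $x$ in that free class with $F\to x$ in $D^+(v)$. The parity condition~\eqref{eq_omegaPartite} is preserved trivially, since $x$ is alone in its class, so $F\subseteq D^-(x)$. In the generic $n$-partite case with $n$ finite the same trick applies whenever $F$ is $(n-1)$-partite; for $F$ genuinely using all $n$ classes I would argue by induction on $|VF|$, peeling off a vertex $u$ from $F$ to obtain $F-u$ using at most $n-1$ classes, applying the inductive hypothesis to $F-u$ to find $x\in VD$ with $F-u\subseteq D^-(x)$, and splicing $u$ back by C-homogeneity using the fact that a ``source-form'' copy of $F$ (namely $F\subseteq D^+(v)$ for some $v\in VD$) always exists.

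Once the inclusion $\mathrm{age}(D^-)\supseteq\mathrm{age}(D^+)$ is established, I would go through Theorem~\ref{thm_CountHomDigraphs} to rule out every candidate except $D^+$ itself. Most entries are eliminated by direct age comparison: for instance, $D^-\cong T[I_n]$ or $I_n[T]$ for a nontrivial homogeneous tournament $T$ would contain directed triangles but not all bipartite orientations, so its age cannot contain $\mathrm{age}(D^+)$ when $n=2$. The richer candidates such as the generic $\HF$-free digraph with $\HF=\es$, the generic $I_m$-free digraphs, $T^\wedge$, $S(3)$, $\PF$ or $\PF(3)$ are eliminated via the secondary constraint that for any $w\in VD^-$ the out-neighbourhood $(D^-)^+(w)$ is an induced subdigraph of $D^+(w)\cong D^+$, and therefore $\mathrm{age}((D^-)^+)\subseteq \mathrm{age}(D^+)$; this forces any $(n+1)$-chromatic configuration out of $(D^-)^+$, contradicting the genericity of the excluded candidates. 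The only survivor is $D^+$ itself, and by Fra\"iss\'e uniqueness $D^-\cong D^+$.

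The main obstacle is the inductive splicing step in the finite $n$-partite case: adding a universal sink to an $n$-partite digraph need not preserve $n$-partiteness, in stark contrast to the $I_n$-free setting of Lemma~\ref{lem_I_nFree+=-}. To overcome this I would lean on C-homogeneity of $D$ to transport a partial configuration into the target one, using the existence of the source-form copy $F\subseteq D^+(v)$ to seed the construction.
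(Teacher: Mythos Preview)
Your proposal has a genuine gap at precisely the point you flag: the ``inductive splicing'' step is not a proof, and the obstacle you name is real. Peeling off a vertex $u$ from an $n$-partite $F$ only drops the partite number when $u$ is alone in its class, so the induction does not reduce to the $(n-1)$-partite base case in general; and the vague appeal to C-homogeneity to ``splice $u$ back'' does not say which connected configurations you are mapping to which. Without this step your inclusion $\mathrm{age}(D^+)\subseteq\mathrm{age}(D^-)$ is unproved, and then several of your eliminations collapse. For instance, in the $n=2$ case your ``secondary constraint'' $(D^-)^+(w)\hookrightarrow D^+$ does \emph{not} exclude $D^-\cong C_3[I_\omega]$: there $(D^-)^+(w)\cong I_\omega$, which embeds happily into a $2$-partite digraph. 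So $C_3[I_\omega]$ survives both of your filters.

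The paper sidesteps your obstacle entirely. Rather than trying to embed an arbitrary $n$-partite $F$ into some $D^-(x)$, it observes that for any edge $xy$ the intersection $D^+(x)\cap D^-(y)$ is (generic) $(n-1)$-partite, so any complete $(n-1)$-partite $A$ embeds there; then $A+x\subseteq D^-(y)$ is automatically complete $(|A|{+}1)$-partite, since $x$ is a common predecessor of all of $A$ and hence forms its own class. This single observation replaces your entire induction. The paper also exploits the ordering of the case analysis: the $\HF$-free and $I_n$-free cases were handled in earlier sections and shown there to force $D^+\cong D^-$, so by reversing all edges those possibilities for $D^-$ are excluded here without any age argument. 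Finally, the paper treats $n=2$ separately, using a direct C-homogeneity contradiction to eliminate $D^-\cong C_3[I_\omega]$ --- an argument of a different flavour that your sketch does not anticipate.
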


\begin{proof}
First, let us assume that $D^+$ is either generic $n$-partite for some $n\geq 3$ or semi-generic $\omega$-partite.
Since for every $k<n$ every finite complete $k$-partite digraph (with the property~(\ref{eq_omegaPartite}) if $D^+$ is semi-generic $\omega$-partite) lies in~$D^-(y)\cap D^+(x)$ for some edge $xy\in ED$, we conclude from Theorem~\ref{thm_CountHomDigraphs} that $D^-$ is either a countable generic $\HF$-free digraph for some set $\HF$ of finite tournaments or a countable generic $m$-partite digraph for some $m\geq n-1$ or the countable semi-generic $\omega$-partite digraph.
The first digraph is excluded by Section~\ref{sec_GenericHFree}.

If $D^+$ is generic $n$-partite, then we can also exclude the countable semi-generic $\omega$-partite digraph for~$D^-$, since $D^-$ contains \emph{every} finite complete $k$-partite digraph.
For $xy\in ED$, we find some $(k+1)$-partite digraph in $D^-(y)$: the digraph $A+x$ where $A$ is an arbitrary complete $k$-partite digraph in $D^+(x)\cap D^-(y)$.
Hence, we have $m\geq n$ and by symmetry we also have $n\geq m$, so $D^+\isom D^-$.

If $D^+$ is semi-generic $\omega$-partite, then we find for every $k<\omega$ some complete $k$-partite digraph in~$D^-$, so $D^-$ is either generic or semi-generi $\omega$-partite.
We exclude the first possibility by our previous situation.
Thus, we have also $D^+\isom D^-$ in this case.

Now we consider the remaining situation, that is, that $D^+$ is the countable generic $2$-partite digraph.
Then, for every edge $xy\in ED$, the digraph $D^-(y)$ contains the complete $2$-partite digraph with $x$ on one side and with infinitely many successors of~$x$ on the other side.
Due to Theorem~\ref{thm_CountHomDigraphs}, we conclude that the only possibilities for $D^-$ are $\PF$, $\PF(3)$, $T[I_\omega]$ for some homogeneous tournament $T\neq I_1$, the generic $\HF$-free digraphs, which are excluded by Section~\ref{sec_GenericHFree}, or the (semi-)generic $n$-partite digraph, which must be the generic $2$-partite digraph due to our previous situations.
If $D^-$ is either~$\PF$ or~$\PF(3)$, then $D^-$ has a vertex with three successors in~$D^-$ that induce an edge with an isolated vertex.
Since this digraph does not lie in the countable generic $2$-partite digraph, $D^-$ is neither $\PF$ nor~$\PF(3)$.\looseness-1

If $D^-\isom T[I_\omega]$ for an infinite homogeneous tournament~$T$, then $D^+$ contains an arbitrarily large tournament, which cannot lie in any $2$-partite digraph.
Let us suppose $D^-\isom C_3[I_\omega]$.
Let $x\in VD$ and $D[v_1,v_2,v_3]$ be a directed triangle in~$D^-(x)$.
Considering $D^+(v_i)$, we know that $v_i$ has successors in precisely one set of the $2$-partition of~$D^+(x)$.
Hence for two $v_i$, these sets coincide.
Applying C-homogeneity to fix $x$ and rotate $D[v_1,v_2,v_3]$ by an automorphism of~$D$, we conclude that these sets coincide for all~$v_i$ and, applying C-homogeneity once more, we know that the same holds for all directed triangles in~$D^-(x)$.
Thus, all vertices in $N^-(x)$ have their successors in~$N^+(x)$ in the same partition set of $D^+(x)$, which contradicts C-homogeneity, as we can fix $x$ and map one vertex of~$N^+(x)\cap N^+(v_1)$ onto one of its successors in~$D^+(x)$ by an automorphism of~$D$ since $D$ is C-homogeneous.
So we have $D^-\not\isom C_3[I_\infty]$.
Hence, we have shown the assertion in this case, too.
\end{proof}

Now we are able to prove the main result of this section:

\begin{prop}
Let $D$ be a countable connected C-homogeneous digraph such that $D^+$ is either the countable generic $n$-partite digraph for some $n\in\nat^\infty$ with $n\geq 2$ or the countable semi-generic $\omega$-partite digraph.
Then $D$ is homogeneous.
\end{prop}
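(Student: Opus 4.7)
The plan is to follow the two-step template of Sections~\ref{sec_GenericInFree} and~\ref{sec_GenericHFree}: first show that any finite \emph{admissible} subdigraph of~$D$ (by which I mean: the edges of~$A$ are compatible with a globally defined $n$-partition, with the parity condition~(\ref{eq_omegaPartite}) additionally imposed in the semi-generic case) is contained in $D^+(x)$ for some $x\in VD$; then prove a simultaneous combining property realising an admissible configuration as $(D^+(x),D^-(x))$; and finally extract an automorphism extending any given partial isomorphism.

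As a preparatory step I would establish a global partition of~$VD$. Define $x\sim y$ if and only if $x=y$ or $x,y$ are non-adjacent. Within $D^+(v)$, and, by Lemma~\ref{lem_SemGenPartiteD+=D-}, within $D^-(v)$, this relation is precisely the defining equivalence of the generic $n$-partite or semi-generic $\omega$-partite digraph. Global transitivity is obtained by joining two pairs $x,y$ and $y,z$ of non-adjacent vertices via a common successor or predecessor of~$y$ and transporting the local ``same class'' relation by C-homogeneity; the semi-generic parity condition~(\ref{eq_omegaPartite}) for pairs drawn from two distinct $\sim$-classes is verified by the same transport argument.

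Next I would prove the analog of Lemma~\ref{lem_ConHFree}: every finite induced admissible subdigraph~$A$ of~$D$ is contained in some $D^+(x)$. When $A$ is connected, this is immediate from C-homogeneity applied to an isomorphic copy of~$A$ inside some $D^+(y)$ (which exists by genericity). When $A$ is disconnected, pick $a\in VA$, apply induction to obtain $v$ with $A-a\sub D^+(v)$, use the genericity (respectively semi-genericity) of $D^+(v)$ to choose $w\in N^+(v)$ whose adjacencies in $A-a$ are precisely those making $A+w$ connected and admissible, then apply C-homogeneity to $A+w$. The analog of Lemma~\ref{lem_ConHH'Free} is then proved in the same spirit: given an isomorphic witness $A'+B'$ inside $D^+(v)\cup D^-(v)\cup\{v\}$, induct on the number of components of~$A+B$, at each step gluing on an auxiliary vertex~$y$ that is simultaneously incident with~$v$ in the correct direction and attaches to a chosen component of~$A+B$ in a way compatible with the $\sim$-partition.

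With these two lemmas in hand, the proposition follows exactly the pattern of the preceding sections. Given an isomorphism $\varphi\colon A\to B$ of finite induced subdigraphs, choose a maximal admissible $A^+\sub A$ meeting every component, then a maximal admissible $A^-\sub A-A^+$ such that $(A^+,A^-)$ can be simultaneously witnessed as $(D^+(x),D^-(x))$ for some $x$; apply the combining lemma to realise both $(A^+,A^-)$ and $(A^+\varphi,A^-\varphi)$, extend $\varphi$ by sending the $A$-witness to the $B$-witness, and iterate on the (now strictly smaller) number of components until the remaining isomorphism has connected domain, where C-homogeneity applies directly. The main technical obstacle will be the semi-generic $\omega$-partite case: the parity condition~(\ref{eq_omegaPartite}) must be preserved each time an auxiliary vertex $w$ or $y$ is adjoined, which forces extra bookkeeping to ensure that the new vertex contributes an even number of edges to each pair of $\sim$-classes already present in the construction.
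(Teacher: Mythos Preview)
Your ``preparatory step'' is actually the entire content of the proof, and you have glossed over it. Showing that $\sim$ (non-adjacency) is transitive on~$VD$ is exactly the statement that the underlying undirected graph of~$D$ is complete multipartite, and this is where the paper spends almost all its effort. Your sentence ``Global transitivity is obtained by joining two pairs $x,y$ and $y,z$ of non-adjacent vertices via a common successor or predecessor of~$y$ and transporting the local `same class' relation by C-homogeneity'' does not work as stated: given non-adjacent $x,y$ and non-adjacent $y,z$, you have no a~priori control over whether $x$ and~$z$ are adjacent, and there is no obvious connected configuration containing all three on which C-homogeneity can act. The paper instead proves, for $a,b\in N^+(x)$ with $ab\in ED$, the inclusions $N^-(b)\sm N(x)\sub N(a)$ and $N^+(b)\sm N(x)\sub N(a)$ (the second of these, (\ref{eq_nPartiteMain2}), requires a delicate argument with auxiliary vertices $z,z',w_i$ and several case distinctions), then cycles these around an induced directed $C_4$ or~$C_5$ in~$N^+(x)$ to get equality $N(a)\sm N(x)=N(b)\sm N(x)$ for all $a,b\in N^+(x)$, and only then deduces that $A:=N(a)\sm N(x)$ is independent and that $VD=A\cup N(x)$. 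None of this is routine.

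Once $D$ is known to be complete $m$-partite, your amalgamation template is unnecessary and in fact problematic: for finite~$n$ a subdigraph meeting $n+1$ classes cannot embed in~$D^+$, so ``every admissible $A$ lies in some $D^+(x)$'' is false as you have phrased it. The paper's endgame is much simpler: any disconnected finite induced subdigraph of a complete multipartite digraph is an independent set, hence lies in a single $\sim$-class, and one can attach a common neighbour from another class to make it connected.
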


\begin{proof}
Let $x\in VD$ and $a,b\in N^+(x)$ with $ab\in ED$.
As $D^-\isom D^+$ holds by Lemma~\ref{lem_SemGenPartiteD+=D-}, we have
\begin{equation}\label{eq_nPartiteMain1}
N^-(b)\sm N(x)\sub N(a).
\end{equation}
Note that all partition sets of~$D^-(b)$ except for the one containing~$x$ have elements in~$N^+(x)$.
A direct consequence is the following:
\goodbreak
\begin{txteq}\label{eq_nPartiteMain1Concl}
For every maximal tournament in $D^+(x)$ that contains~$b$ and has no edge directed away from~$b$, this tournament has vertices of each partition set of~$D^-(b)$ except for the one containing~$x$.
\end{txteq}

Let us show that also
\begin{equation}\label{eq_nPartiteMain2}
N^+(b)\sm N(x)\sub N(a)
\end{equation}
holds.%
Let us suppose that (\ref{eq_nPartiteMain2}) does not hold.
Then we find $y\in N^+(b)$ that is adjacent to neither $a$ nor~$x$.
As an induced directed cycle of length~$4$ embeds into~$D^+$, C-homogeneity implies the existence of a vertex $c\in N^+(y)\cap N^-(a)$ such that $b$ and~$c$ are not adjacent and, furthermore, we find a vertex $z\in VD$ with $D[a,b,y,c]\sub D^+(z)$ by C-homogeneity.
The structure of~$D^-(a)$ implies that $x$ is adjacent to either $c$ or~$z$.
First, let us assume that $x$ and~$z$ are adjacent.
Since $D[a,x,y]$ does not embed into~$D^+(z)$, we have $xz\in ED$ and, as $D[a,b,z]$ is a triangle in~$D^+(x)$, we have $n\geq 3$ if $D^+$ is generic $n$-partite.
Let $\{v_i\mid i\in I\}$ be a maximal set in~$N^+(z)$ such that $X:=\{a,b,v_i\mid i\in I\}$ induces a tournament and such that $D[a,b,c,y]\sub D^+(v_i)$ for all $i\in I$.
By its maximality and due to the structure of~$D^+$, the set $X$ contains vertices from each maximal independent set in~$N^+(z)$.
Due an analogue of~(\ref{eq_nPartiteMain1Concl}) for $z$ instead of~$x$, we know that $X$ meets every maximal independent set of $N^-(b)$ but the one that contains~$z$.
So $x$ must be non-adjacent to some~$v_i$.
As $D[v_i,c,x]\sub D^-(a)$, we conclude that $x$ and~$c$ are adjacent.
So if we replace $z$ by~$v_i$ if necessary, we may assume that $x$ and~$c$ are adjacent but $x$ and~$z$ are not.

Because $D[x,y,z]$, a digraph on three vertices with precisely one edge, cannot lie in~$D^-(c)$, we have $xc\notin ED$.
So $cx\in ED$ and $D[x,b,y,c]$ is an induced directed cycle.
As $C_4$ embeds into~$D^+$, we find $z'\in VD$ with $D[x,b,y,c]\sub D^+(z')$ by C-homogeneity.
Considering $D^-(b)$, we conclude that $z$ and~$z'$ are adjacent.
The corresponding edge is not $z'z$, as $D[x,y,z]$ cannot lie in~$D^+(z')$.
Hence, we have $zz'\in ED$.
Because $D[a,y,z']$ lies in~$D^+(z)$, we know that $a$ and~$z'$ are adjacent and, as $D[a,x,y]$ cannot lie in~$D^+(z')$, the corresponding edge must be~$az'$.
Since $D^+(z)$ contains the triangle $D[b,y,z']$, we have $n\geq 3$ if $D^+$ is generic $n$-partite.
Similarly as above, we choose a maximal set $\{w_i\mid i\in I\}$ in~$N^+(z')$ such that the set $X=\{b,y,w_i\mid i\in I\}$ induces a tournament and such that $D[b,y,c,x]\sub D^+(w_i)$ for all $i\in I$.
By its maximality, the set~$X$ contains vertices from each maximal independent set in~$N^+(z')$.
Then an analogue of~(\ref{eq_nPartiteMain1Concl}) for~$z'$ instead of~$x$ implies that $X$ meets every maximal independent set of $N^-(b)$ but the one that contains~$z'$.
So $a$ must be non-adjacent to some~$w_i$ and $z$ is adjacent to every~$w_j$, in particular to~$w_i$.
But $zw_i\in ED$ is impossible, as $D[a,w_i,y]$ does not embed into~$D^+(z)$, and $w_iz\in ED$ is impossible, as $D[x,y,z]$ does not embed into~$D^+(w_i)$.
This contradiction proves~(\ref{eq_nPartiteMain2}).

Now we have shown $N(b)\sm N(x)\sub N(a)$.
For an induced directed cycle $x_1x_2\ldots x_m$ (with $m\leq 5$) in~$N^+(x)$ with $x_{m-1}=a$ and $x_1=b=x_m$, we use C-homogeneity to find an automorphism that fixes~$x$ and rotates the cycle backwards so that we can conclude inductively
\[N(x_m)\sm N(x)\sub N(x_{m-1})\sm N(x)\sub\ldots\sub N(x_2)\sm N(x)\sub N(x_1)\sm N(x).\]
Because of $x_1=x_m$, all inclusions are equalities of the involved sets.
In particular, we have $N(a)\sm N(x)=N(b)\sm N(x)$.
Note that any two vertices in~$N^+(x)$ lie on an induced directed cycle of length at most~$4$.
Hence, we can apply the above argument and obtain
\begin{equation}\label{eq_nPartiteMain3}
N(u)\sm N(x)=N(v)\sm N(x)\text{ for all }u,v\in N^+(x).
\end{equation}
By symmetry and as $D^+\isom D^-$ due to Lemma~\ref{lem_SemGenPartiteD+=D-}, we have
\begin{equation}\label{eq_nPartiteMain3'}
N(u)\sm N(x)=N(v)\sm N(x)\text{ for all }u,v\in N^-(x).
\end{equation}

Let us show for $A:=N(a)\sm N(x)$ the following:
\begin{txteq}\label{eq_nPartiteMain4}
$A$ is an independent set.
\end{txteq}
Let us suppose that there are two vertices $u,v\in A$ with $uv\in ED$.
Note that $b$ is adajcent to~$u$ and~$v$ by~(\ref{eq_nPartiteMain3}).
We find $w\in N^+(u)\cap N^+(v)$.
The analogue of~(\ref{eq_nPartiteMain3}) for~$u$ instead of~$x$ gives us $N(v)\sm N(u)=N(w)\sm N(u)$, which shows that $w$ is not adjacent to~$x$.
If $av\in ED$, then we obtain a contradiction to an analogue of~(\ref{eq_nPartiteMain3'}) 
as $x$ lies in $N(a)\sm N(v)$ but not in $N(u)\sm N(v)$.
Thus, we have $va\in ED$ and we conclude $vb\in ED$ analogously.
Due to the structure of $D^+(v)$ we know that $w$ has to be adjacent to either~$a$ or~$b$.
First, let us assume that $a$ and~$w$ are adjacent.
If $aw\in ED$, then we conclude $x\in N(a)\sm N(w)=N(v)\sm N(w)$ by an analogue of~(\ref{eq_nPartiteMain3'})
, which contradicts $v\in A$, and if $wa\in ED$, then $x$ is not adjacent to both end vertices of~$vw$, which is impossible in $D^-(a)$.
We obtain analogous contradictions if~$w$ and~$b$ are adjacent.
Hence, we have shown~(\ref{eq_nPartiteMain4}).

Let us show
\begin{equation}\label{eq_nPartiteMain5}
VD=A\cup N(x).
\end{equation}
First, let $y\in N(x)$ and let $u$ be a neighbour of~$y$.
If $u$ lies outside $N(x)$, then we find a vertex $v$ with $D[x,y,u]\sub D^-(v)$ due to C-homogeneity and as $D^-$ contains an isomorphic copy of $D[x,y,u]$.
So we conclude $u\in A$ due to~(\ref{eq_nPartiteMain3}).
Now let $y\in A$ and let $u$ be a neighbour of~$y$.
If $u$ is adjacent to~$a$, then $u\in A\cup N(x)$.
So let us assume that $a$ and~$u$ are not adjacent.
Then we find by C-homogeneity a vertex~$v$ with $D[a,y,u]\sub D^-(v)$.
As $v$ is adjacent to~$a$, it lies in $N(x)\cup A$ and as it is adjacent to~$y$, it cannot lie in~$A$ due to~(\ref{eq_nPartiteMain4}).
So $v$ lies in~$N(x)$ and by the first case we conclude that $u$ lies in $A\cup N(x)$.
This shows~(\ref{eq_nPartiteMain5}).

Our last step, before we show the homogeneity of~$D$, is to show that
\begin{txteq}\label{eq_nPartiteMain7}
$D$ is complete $m$-partite for some $m\in\nat^\infty$.
\end{txteq}
Let $\IF$ be the set of maximal independent sets in~$N^+(x)$.
Let $A'=A\cup\{x\}$ and, for every $I\in \IF$, let $I'$ be a maximal independent set in~$D$ that contains~$I$.
Due to~(\ref{eq_nPartiteMain3}), every vertex of~$A'$ is adjacent to all vertices of~$N^+(x)$.
As $D[x,a,a']$ with $a'\in A$ embeds into $D^-(x)$, we find by C-homogeneity a vertex~$v$ with $D[x,a,a']\sub D^-(v)$.
So every vertex of~$A'$ is adjacent to some vertex of $N^-(x)$ and hence by~(\ref{eq_nPartiteMain3'}) to every vertex of $N^-(x)$.
So by~(\ref{eq_nPartiteMain5}), every vertex of~$A'$ is adjacent to every vertex outside~$A'$.
As $D$ is vertex-transitive, the same holds for every maximal independent vertex set of~$D$.
Thus, (\ref{eq_nPartiteMain7}) holds.

To show that $D$ is homogeneous, let $F$ and~$H$ be two isomorphic induced subdigraphs of~$D$.
If they are connected, then C-homogeneity implies that every isomorphism from~$F$ to~$H$ extends to an automorphism of~$D$.
So we may assume that they are not connected.
As $D$ is complete $m$-partite, we conclude that $VF$ is an independent set and the same is true for~$VH$.
Then we find $u_F$ and $u_H$ with $VF\sub N(u_F)$ and $VH\sub N(u_H)$.
Note that due to the structure of~$D^+(x)$, we find subdigraphs $F'$ and $H'$ of~$D^+(x)$ that are isomorphic to~$F+u_F$ and $H+u_H$, respectively.
By C-homogeneity, we find an automorphism $\varphi_F$ of~$D$ that maps $F+u_F$ to~$F'$ and an automorphism $\varphi_H$ that maps $H+u_H$ to~$H'$.
Then $F+x\varphi_F\inv$ and $H+x\varphi_H\inv$ are connected and every isomorphism from~$F$ to~$H$ extends to an isomorphism from $F+x\varphi_F\inv$ to $H+x\varphi_H\inv$, so C-homogeneity implies the assertion.
\end{proof}

\subsection{\boldmath The digraphs $T^\wedge$ as $D^+$}

In this section, we investigate countable connected C-homogeneous digraphs $D$ with $D^+\isom T^\wedge$ for some $T\in\{I_1,C_3,\rat,T^\infty\}$.
If $T$ is either $I_1$ or~$C_3$, then we obtain from Lemma~\ref{lem_OutNeighFinite} that $D$ is locally finite and due to Lemmas~4.2 and 4.3 of~\cite{FinConHomDigraphs} we obtain that no such C-homogeneous digraph exists.
Hence, it suffices to consider only the cases $T\isom \rat$ and $T\isom T^\infty$ in the proof of Proposition~\ref{prop_TWedge}.

\begin{prop}\label{prop_TWedge}
No countable connected C-homogeneous digraph $D$ with $D^+\isom T^\wedge$ for any $T\in\{I_1,C_3,\rat,T^\infty\}$ exists.
\end{prop}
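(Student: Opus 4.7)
The cases $T \in \{I_1, C_3\}$ have already been dispatched in the text just before the proposition: Lemma~\ref{lem_OutNeighFinite} forces $D$ to be locally finite in those cases, and no such C-homogeneous digraph exists by the classification of locally finite C-homogeneous digraphs. So my plan treats only $T \in \{\mathbb{Q}, T^\infty\}$, where $T$ (and hence $T^+$) is countably infinite, so in particular $T^\wedge$ contains every finite linear order, and in the case $T=T^\infty$ every finite tournament, as a sub-tournament.

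Assume for contradiction that such $D$ exists. The first step is to pin down $D^-$. By Lemma~\ref{lem_outNeighHomogeneous} it is a countable homogeneous digraph, and I would run through Theorem~\ref{thm_CountHomDigraphs}. For each edge $uv$, any finite sub-tournament of $D^+(u)\cap D^-(v)$ furnishes a finite sub-tournament of $D^-$, so $D^-$ embeds every finite sub-tournament of $T^+$. This eliminates the digraphs $I_n$, $T'[I_n]$, $I_n[T']$, $S(3)$, and the partial orders $\PF$ and $\PF(3)$ (all of which miss some such tournament), while the generic $\HF$-free and $I_n$-free outcomes are already excluded by Sections~\ref{sec_GenericInFree} and~\ref{sec_GenericHFree}. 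The (semi-)generic $n$-partite cases are ruled out similarly, leaving only $D^-\cong (T')^\wedge$ for some homogeneous $T'$; comparing which tournaments are embedded forces $T'=T$.

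Next, I would exploit the canonical $\mathrm{Aut}(T^\wedge)$-invariant bipartition of $T^\wedge$ into its two copies of $T^+$. This yields, for each $v\in VD$, an unordered pair $\{A_v^+,B_v^+\}$ partitioning $N^+(v)$, each part being a copy of $T^+$ with a distinguished source, and dually a pair $\{A_v^-,B_v^-\}$ in $N^-(v)$. Any automorphism of $D$ fixing $v$ must preserve these pairs. The central step is to track how these local bipartitions propagate along edges: given an edge $uv$, I would compare $\{A_u^+,B_u^+\}$ with $\{A_v^+,B_v^+\}$ and $\{A_v^-,B_v^-\}$, exploiting that each edge of $T^\wedge$ is of one of a few explicit local types (source-to-source, source-to-non-source, same copy or crossing copies), and that C-homogeneity transports any edge of a given abstract type in $D$ to any other of the same type.

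The main obstacle, and where I expect the contradiction to appear, is isolating the right small configuration. Because $T^\wedge$ contains induced $2$-arcs of several distinct local types distinguished by the source/non-source role of the middle vertex and by whether the arc stays inside one copy of $T^+$ or crosses between them, C-homogeneity forces automorphisms of $D$ between corresponding $2$-arcs; once source-vs-non-source labels are tracked through such an automorphism, I expect to be driven to a $w\in VD$ in which a source of one copy of $T^+\subset D^+(w)$ is sent to a non-source of the same copy. This is impossible because the source is the unique vertex of a copy of $T^+$ that dominates the rest, contradicting Lemma~\ref{lem_outNeighHomogeneous}. The delicate part is the bookkeeping: exhibiting a minimal connected obstructing subdigraph on a handful of vertices whose two abstract embeddings into $D$ cannot both be realised by automorphisms. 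Once such a configuration is exhibited the contradiction is immediate, ruling out $D$ and finishing the proof.
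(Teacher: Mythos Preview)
Your elimination of the candidates for $D^-$ does not work. You claim that $I_n[T']$, $T'[I_n]$, $S(3)$, $\PF$ and $\PF(3)$ can all be ruled out because each ``misses some finite sub-tournament of $T^+$''. But for $T=\rat$ every finite sub-tournament of $T^+$ is a linear order, and all of the digraphs just listed embed every finite linear order; for $T=T^\infty$ the digraphs $T^\infty[I_n]$ and $I_n[T^\infty]$ embed every finite tournament. So your argument eliminates almost nothing beyond $I_n$ itself. The paper does not attempt to pin $D^-$ down at this stage: after the genuinely available exclusions it is left with $I_n[T_0]$, $T_0[I_n]$, $S(3)$, $T_0^\wedge$, $\PF$, $\PF(3)$, and only the last two are removed (via the observation that they contain $K_{1,3}$, impossible since $D^+\cong T^\wedge$ has no three independent vertices). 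The remaining four possibilities are handled simultaneously by a conditional statement: a specific five-vertex digraph $D'$ embeds into $D^-$ only if $D^-\cong T_0^\wedge$.

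Your main step is also only a hope rather than an argument. Tracking the two copies of $T^+$ and the source/non-source distinction is the right instinct, but you never exhibit the configuration that yields the contradiction. The paper's route is concrete and rather different from what you sketch: fixing an edge $xy$, it proves that among the successors of $y$ there is a \emph{unique} non-adjacent pair $\{v,\mathring v\}$ both of whose members are non-adjacent to $x$ (this is where the conditional about $D'$ is used, to kill the alternative). It then shows that $N:=N^+(x)\cap N^+(y)$ lies entirely in $N^+(v)$ or entirely in $N^-(v)$, using a transitive triangle inside $N$ and C-homogeneity. Finally, an automorphism fixing $x,y$ and swapping $v$ with $\mathring v$ forces $N\sub N^+(\mathring v)$ and $N\sub N^-(\mathring v)$ simultaneously, the contradiction. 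None of this bookkeeping about the unique non-adjacent pair appears in your outline, and without it the argument does not close.
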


\begin{proof}
Let us suppose that some countable connected C-homogeneous digraph $D$ with $D^+\isom T^\wedge$ exists for some $T\in\{\rat, T^\infty\}$.
Note that it was already proven in~\cite{FinConHomDigraphs} that no such digraph exists if $T\in\{I_1,C_3\}$, as we have already mentioned earlier.
Due to Theorem~\ref{thm_CountHomDigraphs} and the previous sections, the only possibilities for $D^-$ are $I_n[T_0]$, $T_0[I_n]$, $S(3)$, $T_0^\wedge$, $\PF$, or $\PF(3)$, where $n\in\nat^\infty$ and $T_0$ is some homogeneous tournament.
Because the latter two digraphs contain the complete bipartite digraph $K_{1,3}$, but $T^\wedge$ contains no three independent vertices, we know that $D^-$ is one of the first four digraphs.
Since the first three digraphs in that list do not contain the digraph $D'$ depicted in Figure~\ref{pic_DPrime}, we have the following:
\begin{txteq}\label{eq_TWedge1}
if $D'$ embeds into $D^-$, then $D^-\isom T_0^\wedge$ for some infinite homogeneous tournament~$T_0$.
\end{txteq}
\begin{figure}[h]
\begin{center}
\includegraphics[width=.3\textwidth]{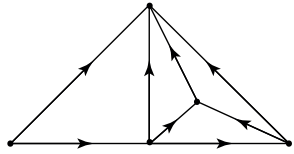}
\caption[Figure 1]{The digraph $D'$}\label{pic_DPrime}
\end{center}\end{figure}

Let $xy\in ED$.
Note that $D^+(x)\cap D^+(y)\isom T$.
The first statement that we shall show is the following:
\begin{txteq}\label{eq_TWedge2}
There is a unique pair of vertices $v,\mathring{v}$ in $D^+(y)$ that are not adjacent and each of which is not adjacent to~$x$.
\end{txteq}
For each $z\in N^+(y)$, let $\mathring{z}$ denote the unique vertex in~$D^+(y)$ that is not adjacent to~$z$.
For every $z\in N^+(x)\cap N^+(y)$, either $\mathring{z}\in N^-(x)$ or $\mathring{z}$ is not adjacent to~$x$, since $D^+(x)\cap D^+(y)$ is a tournament.
Let us suppose that $\mathring{z}$ is not adjacent to~$x$.
By C-homogeneity, the same holds for every $\mathring{u}$ with $u\in N^+(x)\cap N^+(y)$.
Let $u_1,u_2,u_3\in N^+(x)\cap N^+(y)$ with $u_iu_j\in ED$ for $i<j\leq 3$ and with $u_iz\in ED$ for all $i\leq 3$.
These vertices exist as every vertex of~$\rat$ and~$T^\infty$ contains the directed triangle in its in-neighbourhood, so the same holds for~$z$ in $D^+(x)\cap D^+(y)$.
The digraph $D[x,y,u_1,\mathring{z},\mathring{u_3}]$ is isomorphic to~$D'$ and lies in~$D^-(u_2)$.
Due to~(\ref{eq_TWedge1}), we have $D^-\isom T_0^\wedge$ for some infinite homogeneous tournament~$T_0$.
Hence, $D^-(u_2)$ contains a unique vertex that is not adjacent to~$x$ which contradicts the fact that $\mathring{z}$ and $\mathring{u}_3$ are not adjacent to~$x$ even though they lie in~$N^-(u_2)$.
This contradiction shows $\mathring{z}x\in ED$.
By C-homogeneity, we conclude that for any $w\in N^+(y)$ that is not adjacent to~$x$ also the vertex $\mathring{w}$ is not adjacent to~$x$.
Indeed, if not, then we have $\mathring{w}x\in ED$ by the previous situation.
Hence, some automorphism of~$D$ fixes $x$ and~$y$ and maps $\mathring{z}$ to~$\mathring{w}$ and we obtain $xw\in ED$, contrary to the choice of~$w$.
Since $D^+$ contains an induced $2$-arc, there is a vertex in~$N^+(y)$ that is not adjacent to~$x$, which shows the existence of a pair of vertices as described in~(\ref{eq_TWedge2}).
It remains to show that this pair is unique.

Let us suppose that $N^+(y)\sm N(x)$ contains two vertices $v,w$  with $vw\in ED$.
Among the vertices $v,\mathring{v},w$, and~$\mathring{w}$, we find two adjacent ones, say $v$ and~$w$ with ${vw\in ED}$ such that there are two vertices $u_1,u_2\in N^+(x)\cap N^+(y)$ with $u_1,u_2\in N^-(v)\cap N^-(w)$ and with $u_1u_2\in ED$.
The digraph $D[x,y,u_1,\mathring{v},\mathring{w}]$ is isomorphic to~$D'$ and lies in~$D^-(u_2)$.
Due to~(\ref{eq_TWedge1}), we have $D^-\isom T_0^\wedge$ for some infinite homogeneous tournament~$T_0$.
Note that $T_0^\wedge$ does not contain a subdigraph on three vertices with precisely one edge.
But $D[\mathring{v},\mathring{w},x]$ is such a digraph, which lies in $D^-(u_2)\isom T_0^\wedge$.
This contradiction shows the uniqueness of the vertex pair in~(\ref{eq_TWedge2}), as every maximal independent vertex set in~$D^+(y)$ has precisely two vertices.

Let $N=N^+(x)\cap N^+(y)$.
In the following, let $v$ and $\mathring{v}$ be the vertices of~(\ref{eq_TWedge2}).
Our next step is to show
\begin{equation}\label{eq_TWedge3}
N\sub N^+(v)\quad\text{or}\quad N\sub N^-(v).
\end{equation}

Let us suppose that we find vertices $a\in N^+(v)\cap N$ and $b\in N^-(v)\cap N$.
Note that $a$ and~$b$ are adjacent, since both lie in the tournament $D^+(x)\cap D^+(y)$.
Since $T$ contains a transitive triangle, let $c\in N$ such that $D[a,b,c]$ is a transitive triangle.
Then either $c\in N^+(v)$ or $c\in N^-(v)$.
If $c\in N^+(v)$, then we find an automorphism of~$D$ that fixes $x$ and $y$ and maps the edge between $a$ and~$b$ to the edge between $a$ and~$c$ by C-homogeneity.
If $c\in N^-(v)$, then we find an automorphism of~$D$ that fixes $x$ and~$y$ and maps the edge between $a$ and~$b$ to the edge between $b$ and~$c$.
Any of these automorphisms can neither fix $v$ nor map it to~$\mathring{v}$ even though its image must lie in $\{v,\mathring{v}\}$ by~(\ref{eq_TWedge2}).
This contradiction shows~(\ref{eq_TWedge3}).

By symmetry, we may assume $N\sub N^+(v)$ and hence $N\sub N^-(\mathring{v})$.
Since $D$ is C-homogeneous, we find an automorphism $\alpha$ of~$D$ that fixes $x$ and~$y$ and maps $v$ to~$\mathring{v}$.
Since $\alpha$ fixes $x$ and~$y$, we have $N\alpha=N$ and hence
\[
N\alpha=N\sub N^+(v)=(N^+(\mathring{v}))\alpha.
\]
Thus, we have $N\sub N^+(\mathring{v})$.
This is a contradiction to $N\sub N^-(\mathring{v})$, which shows the assertion.
\end{proof}

\subsection{\boldmath The digraph $S(3)$ as $D^+$}

In this section, we show that no countable connected C-homogeneous digraphs $D$ has the property $D^+\isom S(3)$.
Our strategy in the proof is to exclude all countable homogeneous digraphs for~$D^-$.

\begin{prop}\label{prop_D+S3}
No countable connected C-homogeneous digraph $D$ with $D^+\isom S(3)$ exists.
\end{prop}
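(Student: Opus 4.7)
The plan is to apply Lemma~\ref{lem_outNeighHomogeneous} to conclude that $D^-$ is a countable homogeneous digraph, and then run through the possibilities from Theorem~\ref{thm_CountHomDigraphs}, eliminating each in turn. The bulk of the candidates should fall by a duality argument: reversing all arcs of $D$ produces another countable connected C-homogeneous digraph in which the roles of $D^+$ and $D^-$ are swapped. Consequently, if $D^-$ were a countable generic $I_n$-free digraph, a countable generic $\HF$-free digraph for some non-empty set $\HF$ of tournaments, a generic or semi-generic $n$-partite digraph, then Sections~\ref{sec_GenericInFree}--\ref{sec_ProofPartI} applied to $D^{\text{op}}$ would force $D^{\text{op}}$ (hence $D$) to be homogeneous. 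But then Theorem~\ref{thm_CountHomDigraphs} would force $D$ itself to be of that same type, whereas none of those types have $S(3)$ as the out-neighbourhood of a vertex. The case $D^-\isom T^\wedge$ is knocked out similarly by applying Proposition~\ref{prop_TWedge} to $D^{\text{op}}$, which yields no such digraph at all.

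The remaining candidates for $D^-$ are $I_n$, $T[I_n]$, $I_n[T]$ (for a homogeneous tournament $T\neq I_1$), $S(3)$, $\PF$, and $\PF(3)$. The case $D^-\isom I_n$ is immediate: since $S(3)$ contains an edge, there exist $u,v\in N^+(x)$ with $uv\in ED$; but then $x$ and $u$ are adjacent vertices of $N^-(v)$, contradicting the independence of $D^-(v)$. For the blow-up cases $T[I_n]$ and $I_n[T]$, I would exploit two features of $S(3)$ simultaneously: it is $I_3$-free (a direct check on the circular model) yet contains both adjacent and non-adjacent pairs in every neighbourhood. The rigid fibre structure of a lexicographic product forces specific patterns of adjacency among $x$, $u$, $v$, together with additional successors of $x$ chosen adjacent or non-adjacent to $v$, producing a configuration on four or five vertices that cannot sit simultaneously inside $D^+(x)\isom S(3)$ and inside $D^-(v)$ of the prescribed blown-up form.

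For $D^-\isom\PF$ or $\PF(3)$ the crucial point is that in-neighbourhoods of a partial-order-type digraph contain no induced $2$-arcs, whereas $S(3)$ does; picking an induced $2$-arc inside $D^+(x)$ and combining with Lemma~\ref{lem_FourCycle} and C-homogeneity should let one transport such a configuration into some $D^-(w)$, yielding the contradiction. The case $D^-\isom S(3)$ itself I expect to be the real obstacle: the local structure of $S(3)$ is so rich that many configurations in both neighbourhoods simply persist, so the argument cannot be a generic dimension count. Here the natural strategy is to imitate the proof of Proposition~\ref{prop_TWedge}: for a fixed edge $xy$, analyse the intersections $N^+(x)\cap N^+(y)$, $N^-(x)\cap N^+(y)$ and $N^+(x)\cap N^-(y)$, identify a small distinguished pair of vertices (analogous to the pair $v,\mathring v$ in Proposition~\ref{prop_TWedge}) picked out by the geometry of $S(3)$, and show that C-homogeneity forces an automorphism which cannot be compatible with the inherited $S(3)$-structure on the other side.

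The main obstacle will be precisely this last case $D^-\isom S(3)$. Both $D^+(x)$ and $D^-(x)$ have the same rich homogeneous structure, so naive parity or forbidden-substructure arguments do not bite, and one must locate a very specific finite configuration, perhaps an induced cycle witnessing universality of the reachability relation in the sense of Lemma~\ref{lem_UniversalInducedCycle}, whose two forced completions on the two sides are genuinely incompatible. Pinning down that configuration — and verifying, using the full force of C-homogeneity, that no automorphism of $D$ can reconcile the $S(3)$-structure on $D^+(x)$ with the $S(3)$-structure on $D^-(x)$ along an edge $xy$ — is where the real work lies.
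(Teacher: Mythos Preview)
Your plan is structurally similar to the paper's, but the place you flag as ``the real obstacle'' --- the case $D^-\isom S(3)$ --- is precisely where your proposal has a genuine gap, and the paper's actual argument shows it is not an obstacle at all.

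The key observation you are missing is that $S(3)$ contains no directed triangle. The paper's proof works by explicitly constructing a directed triangle (indeed, a directed triangle together with a common out-neighbour) inside some $D^-(v)$. Concretely: fix $xy\in ED$, note $D^+(x)\cap D^+(y)\isom\rat$, and use C-homogeneity plus the structure of $S(3)$ on $N^+(y)$ to locate $w\in N^+(y)\setminus N^+(x)$ and $v\in N^+(x)\cap N^+(y)$ with $wv\in ED$ and $wx\in ED$. Then $D[x,y,w]$ is a directed $C_3$ sitting in $D^-(v)$, and one can further find $w'\in N^+(x)\cap N^+(y)\cap N^-(v)$ with $ww'\in ED$, so that $D^-(v)$ contains the four-vertex digraph consisting of a directed triangle with a common successor. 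This single configuration, together with the fact that $D^-$ contains a copy of~$\rat$, cuts the list of homogeneous candidates for $D^-$ down (after the previous sections) to $\PF(3)$, $I_n[T^\infty]$, and $T^\infty[I_n]$; in particular $D^-\isom S(3)$ is already impossible because $S(3)$ has no $C_3$. The three survivors are then dispatched in one line each: $\PF(3)$ has a vertex with three independent successors (impossible since $D^+\isom S(3)$ is $I_3$-free), and $I_n[T^\infty]$, $T^\infty[I_n]$ each contain a vertex with a directed triangle in its out-neighbourhood (again impossible since $S(3)$ has no directed triangle).

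A smaller issue: your proposed elimination of $\PF(3)$ via ``in-neighbourhoods contain no induced $2$-arcs'' is not correct as stated; $\PF(3)$ is not a partial order and does contain induced $2$-arcs. The paper's argument for this case uses the $I_3$-freeness of $S(3)$ instead. Your vague plan for the blow-up cases $T[I_n]$, $I_n[T]$ is also superseded by the same configuration argument, which simultaneously forces $T=T^\infty$.
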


\begin{proof}
Let us suppose that some countable connected C-homogeneous digraph $D$ with $D^+\isom S(3)$ exists.
Since $D^+\isom S(3)$, we have $D^+(x)\cap D^+(y)\isom\rat$ for every edge $xy\in ED$.
Let $v\in N^+(x)\cap N^+(y)$.
As $D^+$ contains a transitive triangle, C-homogeneity implies the existence of some $z\in VD$ with $D[x,y,v]\sub D^+(z)$.
In $D^+(z)$ we find a vertex $u$ with $u\in N^+(y)\cap N^+(v)$ that is not adjacent to~$x$.
By C-homogeneity, we can map $xyu$ onto any other induced $2$-arc $xya$ and obtain
\begin{txteq}\label{eq_D+S3_1}
$N^-(a)\cap N^+(x)\cap N^+(y)\neq\es$ for every $a\in N^+(y)\sm N(x)$.
\end{txteq}
As $D^+(x)\cap D^+(y)\isom\rat$ is a proper subdigraph of~$D^+(y)\isom S(3)$, we find a predecessor $w$ of~$v$ in~$N^+(y)$ that lies outside $N^+(x)$ and has only successors in $N^+(x)\cap N^+(y)$.
The vertices $x$ and~$w$ are adjacent due to~(\ref{eq_D+S3_1}).
As $w\notin N^+(x)$, we have $wx\in ED$.
Thus, $D^-(v)$ contains the directed triangle $D[x,y,w]$.

Note that $v$ has some predecessor $w'$ in $N^+(x)\cap N^+(y)$.
This vertex must be adjacent to~$w$ as each two predecessors of~$v$ are adjacent by the structure of~$S(3)$.
As $N^-(w)$ contains no vertex of $D^+(x)\cap D^+(y)$, we have $w'\in N^+(w)$.
Note that we also have $D[x,y,w,w']\sub D^-(v)$.

Since $D^-$ contains a copy of $D[x,y,w,w']$ and a copy of~$\rat$, Theorem~\ref{thm_CountHomDigraphs} implies that the only possibilities for $D^-$ are either $\PF(3)$, $I_n[T^\infty]$, or $T^\infty[I_n]$ for some $n\in\nat^\infty$ by the previous sections.
We cannot have $D^-\isom\PF(3)$, since $\PF(3)$ contains a vertex with three independent successors, but $D^+$ contains no independent set of three vertices.
So we have $D^-\isom I_n[T^\infty]$ or $D^-\isom T^\infty[I_n]$.
But then $D^-$ contains a vertex with a directed triangle in its out-neighbourhood.
This is impossible, since $S(3)$ contains no directed triangle.%
As no possibility is left for $D^-$, we have shown the assertion.
\end{proof}

\subsection{\boldmath The digraph $\PF(3)$ as $D^+$}

In this section, we show that no countable connected C-homogeneous digraph $D$ has the property $D^+\isom \PF(3)$.

\begin{prop}
No countable connected C-homogeneous digraph $D$ with $D^+\isom\PF(3)$ exists.
\end{prop}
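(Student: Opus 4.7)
The plan is to mirror Proposition~\ref{prop_D+S3}: assume such a $D$ exists, and systematically eliminate every possibility for $D^-$ from Theorem~\ref{thm_CountHomDigraphs}, which is available since $D^-$ is a countable homogeneous digraph by Lemma~\ref{lem_outNeighHomogeneous}.

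First I would extract some forced small configurations inside $D^-$. The digraph $\PF(3)$ contains an induced $2$-arc $abc$, an independent set $I_3$, transitive triangles, and arbitrarily large transitive tournaments. Fixing $x\in VD$ with $\{a,b,c\}\sub N^+(x)$ and noting $xa, xb, ab\in ED$, the vertex $c$ satisfies $\{x,a,b\}\sub N^-(c)$, producing a transitive triangle inside $D^-$. Analogous extractions combined with C-homogeneity, which allows the $2$-arc $abc$ to be rotated by any automorphism that fixes the surrounding configuration, show that $D^-$ must also contain an independent set $I_3$. This eliminates $I_n$ and $T^\wedge$ from the classification. Moreover, neither $T[I_n]$ nor $I_n[T]$ contains a transitive triangle together with an independent pair realised as the in-neighbourhood of a vertex in $D$, eliminating those cases too.

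Next I would apply the previous subsections of Section~\ref{sec_ProofPartI} under edge reversal: the digraph $D'$ obtained from $D$ by reversing every edge has $(D')^+\cong D^-$, so if $D^-$ were a generic $\HF$-free digraph (Section~\ref{sec_GenericHFree}), a generic $I_n$-free digraph (Section~\ref{sec_GenericInFree}), a (semi-)generic $n$-partite digraph, $T^\wedge$, or $S(3)$, then the corresponding subsection applies. In the $T^\wedge$ and $S(3)$ cases this directly rules out $D'$, hence $D$. In the remaining three families the conclusion is that $D'$ (and hence $D$) is homogeneous, whereupon Theorem~\ref{thm_CountHomDigraphs} classifies $D$; but a direct computation from the definition of $\PF(3)$ yields $(\PF(3))^+\cong\PF$, and case-checking Theorem~\ref{thm_CountHomDigraphs} shows that no countable homogeneous digraph has out-neighbourhood isomorphic to $\PF(3)$, contradicting $D^+\cong\PF(3)$.

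This leaves only $D^-\cong\PF$ and $D^-\cong\PF(3)$. To rule out $D^-\cong\PF$ I would exhibit a vertex $v$ whose in-neighbourhood contains an induced $2$-arc, contradicting that $\PF$ is a partial order: combining the $2$-arc $abc$ inside $D^+(x)$ with a further common-successor vertex supplied by C-homogeneity applied to $abc$ and a suitable extension produces such a configuration. The main obstacle is precisely the remaining case $D^-\cong\PF(3)$: here both $D^+(x)$ and $D^-(y)$ are copies of $\PF(3)$, and for every edge $xy$ the intersection $N^+(x)\cap N^+(y)$ is isomorphic to $\PF$, being the out-neighbourhood of a vertex in $D^+(x)\cong\PF(3)$. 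In the spirit of the $D[x,y,w,w']$ configuration of Proposition~\ref{prop_D+S3}, I would transport a carefully chosen finite induced subdigraph via C-homogeneity through $D^+(x)$ and observe that the resulting placement inside $D^-(y)\cong\PF(3)$ forces an edge incompatible with the partial-order rigidity inherited from the intersection structure, thereby completing the contradiction.
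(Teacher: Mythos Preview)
Your outline correctly identifies that the task reduces to excluding $D^-\cong\PF$ and $D^-\cong\PF(3)$, but the actual contradiction in these two cases is precisely where the content of the proof lies, and your proposal does not supply it. For $D^-\cong\PF$ you say you would ``exhibit a vertex $v$ whose in-neighbourhood contains an induced $2$-arc'' by combining the $2$-arc $abc$ in $D^+(x)$ with ``a further common-successor vertex''; but if $abc$ is an induced $2$-arc in $D^+(x)$ then $a\notin N^-(c)$, so $N^-(c)$ only picks up $x$ and~$b$, an edge, and no induced $2$-arc is in sight. For $D^-\cong\PF(3)$ you explicitly call it ``the main obstacle'' and say only that you would ``transport a carefully chosen finite induced subdigraph'' that ``forces an edge incompatible with the partial-order rigidity''; no such subdigraph is named and no mechanism is described. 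This is the entire difficulty, and leaving it unspecified means the proof is not there.

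The paper's argument is short and handles both cases $D^-\in\{\PF,\PF(3)\}$ at once. First, the narrowing to these two cases is one line: for any edge $xy$, the set $N^+(x)\cap N^-(y)$ is the in-neighbourhood of~$y$ inside $D^+(x)\cong\PF(3)$, hence isomorphic to~$\PF$; so $D^-$ contains every finite partial order, and Theorem~\ref{thm_CountHomDigraphs} (together with the earlier subsections) leaves only $\PF$ and $\PF(3)$. The key step is then: pick $a\in N^+(y)$ with $D[x,y,a]$ a directed triangle (possible since $\PF(3)$ contains one), and use the canonical decomposition $(a^\perp,a^\rightarrow,a^\leftarrow)\cong\mathbb{H}$ of $N^+(y)\setminus\{a\}$. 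Since $D^+(x)\cap D^+(y)\cong\PF$ contains a transitive tournament on four vertices, pigeonhole gives an edge of $D^+(x)$ inside one of the three parts; a short reduction using the $\mathbb{H}$-structure then produces adjacent $b,c\in a^\rightarrow\cap N^+(x)$. Now $a,x,y\in N^-(b)\cap N^-(c)$, and $D[a,x,y]$ is a directed triangle sitting inside $D^-(b)\cap D^-(c)$, which is isomorphic to~$\PF$ regardless of whether $D^-$ is $\PF$ or $\PF(3)$. That is the contradiction. Your proposal contains neither the $\mathbb{H}$-partition idea nor the pigeonhole step nor the observation that the \emph{intersection} $D^-(b)\cap D^-(c)$ is always $\PF$, and these are exactly what make the argument work.
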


\begin{proof}
Let us suppose that there is a countable connected C-homogeneous digraph~$D$ with $D^+\isom \PF(3)$.
Since the in-neighbourhood of any vertex contains every finite partial order, we have $D^-\isom \PF$ or $D^-\isom \PF(3)$.
Furthermore, we have $D^-(y)\cap D^-(x)\isom \PF$ for every edge $xy\in ED$.
As $D^+$ contains a directed triangle, C-homogeneity implies the existence of a vertex $a\in N^+(y)$ such that $D[x,y,a]$ is a directed triangle.
Let
\begin{align*}
a^\perp&:=\{b\in N^+(y)\mid a\text{ not adjacent to }b\},\\
a^{\rightarrow}&:=N^+(a)\cap N^+(y)\text{, and}\\
a^{\leftarrow}&:=N^-(a)\cap N^+(y).
\end{align*}
So we have $\Hbb(a):=(a^\perp,a^{\rightarrow},a^{\leftarrow})\isom\mathbb{H}$.
Note that $D^+(x)$ has an edge with both its incident vertices in the same set $a^\perp$, $a^\rightarrow$, or $a^\leftarrow$, as $D^+(x)\cap D^+(y)$ contains a tournament on four vertices.
If either $a^\perp$ or $a^\leftarrow$ contains an edge $uv$ of~$D^+(x)$, then we find an edge $u'v'$ in $D^-(u)\cap D^-(v)$ with $u',v'\in a^\rightarrow$ due to the structure of~$\PF(3)$.
If either $u'$ or~$v'$ does not lie in $N^+(x)$, then $xy$ together with this vertex induce either a $2$-arc or a directed triangle in $D^-(u)\cap D^-(v)\isom \PF$, which is impossible.
So we may assume that there are two adjacent vertices $b$ and~$c$ of~$N^+(x)$ in~$a^\rightarrow$.
Then $D[a,x,y]$ is a directed triangle in $D^-(b)\cap D^-(c)$, which is impossible.
\end{proof}

\subsection{\boldmath Generic partial order $\PF$ as $D^+$}

Within this section, let $D$ be a countable connected C-homogeneous digraph with $D^+\isom\PF$.
Before we are able to prove that $D$ is homogeneous in this situation, we will prove several lemmas.
Our first one determines~$D^-$.

\begin{lem}\label{lem_D+=D-=PF}
We have $D^-\isom\PF$.
\end{lem}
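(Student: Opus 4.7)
My plan is to apply Cherlin's classification (Theorem~\ref{thm_CountHomDigraphs}) to~$D^-$, which is homogeneous by Lemma~\ref{lem_outNeighHomogeneous}, and to rule out every entry in the list other than~$\PF$.

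The starting observation will be that $D^-$ contains every finite partial order as an induced subdigraph. Given a finite partial order~$P$, I would adjoin a new element~$\hat z$ above every vertex of~$P$, producing a finite partial order that embeds in $D^+(x)\isom\PF$ for any $x\in VD$; the image of~$\hat z$ is then a vertex~$z\in VD$ whose in-neighbourhood contains an induced copy of~$P$, so $D^-(z)$ contains~$P$. Several entries of the classification then drop out immediately: the independent digraphs~$I_n$ have no edges; the lexicographic products $T[I_n]$ and $I_n[T]$ (with $T$ a non-trivial homogeneous tournament) fail to embed the four-element zigzag partial order~$N$ with relations $a<b$, $c<b$, $c<d$, because any two non-adjacent vertices in such a product are ``twins'' with respect to adjacencies to the remaining vertices, which forbids the pattern of~$N$; and the generic $I_n$-free digraphs (for $n\geq 3$) contain no~$I_n$, even though~$\PF$ has arbitrarily large antichains.

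For the remaining entries---$T^{\wedge}$, $S(3)$, $\PF(3)$, generic $\HF$-free digraphs, and the generic $n$-partite and semi-generic $\omega$-partite digraphs---I would pass to the reverse digraph $D^{\mathrm{rev}}$, which is again a countable connected C-homogeneous digraph and satisfies $(D^{\mathrm{rev}})^+ = D^-$. For $T^{\wedge}$ and $S(3)$, the non-existence results of Propositions~\ref{prop_TWedge} and~\ref{prop_D+S3} applied to $D^{\mathrm{rev}}$ would yield an immediate contradiction, and the non-existence result for $\PF(3)$ of the preceding subsection would do the same when $D^-\isom\PF(3)$. For the three generic families, the corresponding propositions in Section~\ref{sec_ProofPartI} would force $D^{\mathrm{rev}}$, and hence~$D$, to be homogeneous; an inspection of Theorem~\ref{thm_CountHomDigraphs} then shows that the only countable homogeneous digraphs with $D^+\isom\PF$ are~$\PF$ and~$\PF(3)$ (for the latter one checks $\PF(3)[p^{\rightarrow}]\isom\PF[P_1]\isom\PF$ using that~$P_1$ is a dense subset of~$\PF$), and both satisfy $D^-\isom\PF$, contradicting the assumed form of~$D^-$.

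The main obstacle I anticipate is bookkeeping: matching each entry of Cherlin's list to the correct exclusion argument, and carrying out the short auxiliary check that no countable homogeneous digraph other than~$\PF$ and~$\PF(3)$ has $D^+\isom\PF$, which is what closes the reverse-digraph step.
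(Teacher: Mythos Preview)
Your proposal is correct and follows essentially the same approach as the paper's proof. The paper compresses the entire argument into a single sentence---``Since, for every edge $xy\in ED$, the digraph $D^+(x)\cap D^-(y)$ contains every finite partial order, the assertion follows from Theorem~\ref{thm_CountHomDigraphs} together with the previous sections''---and you have spelled out exactly what that sentence entails: the observation that $D^-$ embeds every finite partial order, followed by a case-by-case elimination through Cherlin's list using either direct embedding obstructions (for $I_n$, $T[I_n]$, $I_n[T]$, and the generic $I_n$-free digraphs) or the results of Sections~\ref{sec_GenericInFree}--\ref{sec_GenericHFree} and the $T^\wedge$, $S(3)$, $\PF(3)$ subsections applied to the reversed digraph.
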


\begin{proof}
Since, for every edge $xy\in ED$, the digraph $D^+(x)\cap D^-(y)$ contains every finite partial order, the assertion follows from Theorem~\ref{thm_CountHomDigraphs} together with the previous sections.
\end{proof}

Our general strategy to prove that $D$ is homogeneous is similar to those of the Sections~\ref{sec_GenericInFree} and~\ref{sec_GenericHFree}.
In particular, one step is to show that every finite partial order in~$D$ lies in $D^+(x)$ for some $x\in VD$ (Lemma~\ref{lem_FinPOinD+}).
As in the other two cases, we prove it by induction.
In this situation, the base case (Lemma~\ref{lem_GenPO2Vertices}) turns out to be the most complicated part of the proof.

\begin{lem}\label{lem_GenPO2Vertices}
Any two vertices in~$D$ have a common predecessor.
\end{lem}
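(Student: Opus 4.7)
The plan is to split on the adjacency and neighbour structure of $u$ and $v$.

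If $u,v$ are adjacent (say $uv\in ED$, the other case being symmetric), then $u$ is a vertex of $D^-(v)\isom \PF$ by Lemma~\ref{lem_D+=D-=PF}, and since $\PF$ has no minimal element, $u$ admits a predecessor $w$ inside $D^-(v)$, so $wu,wv\in ED$. If $u,v$ are non-adjacent but share a common successor $s\in VD$, then $u,v$ are non-adjacent and hence incomparable elements of $D^-(s)\isom \PF$; the genericity of $\PF$ then furnishes a common lower bound $w$ inside $D^-(s)$, giving $wu,wv\in ED$. Thus the task reduces to producing, for every non-adjacent pair $u,v$, either a common predecessor directly or a common successor.

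The remaining situation is that $u,v$ are non-adjacent. If they share a common neighbour $w$, the orientations $wu,wv\in ED$ and $uw,vw\in ED$ at $w$ are handled by the two cases above. The only remaining configuration is a directed $2$-arc, say $u\to w\to v$ (the reverse being symmetric). Here I would exhibit somewhere in $D$ a reference directed $2$-arc $u'\to w'\to v'$ with $u',v'$ non-adjacent and admitting a known common predecessor $s'\in VD$; by C-homogeneity the isomorphism $D[u,w,v]\to D[u',w',v']$ extends to an automorphism $\alpha$ of $D$, and then $s'\alpha\inv$ is a common predecessor of $u$ and $v$. The case in which $u$ and $v$ are at underlying-graph distance greater than $2$ is reduced to the distance-$2$ case by iterating along a shortest path, applying the previously treated cases at each step.

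The main obstacle is the construction of the reference $2$-arc. The natural starting point is to pick any $s'\in VD$ and two incomparable elements $u',v'\in V(D^+(s'))\isom V\PF$: then $u',v'$ are non-adjacent in $D$ and $s'$ is already their common predecessor, so the remaining task is to produce a bridge $w'\in N^+(u')\cap N^-(v')$. Since $\PF$ is transitive, $w'$ cannot lie inside $D^+(s')$, so one has to combine the $\PF$-genericity of $D^+(u')$ and $D^-(v')$ with the freedom in the choice of $u',v'$ inside $D^+(s')$ in order to force $N^+(u')\cap N^-(v')\neq\es$; this is the crux of the proof and will occupy most of its length.
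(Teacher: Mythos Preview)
Your outline correctly isolates the two easy cases (adjacent vertices; non-adjacent vertices with a common successor), and your reduction of the general distance-$2$ case to the induced $2$-arc case via $C$-homogeneity is sound. However, both remaining steps contain genuine gaps.

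\textbf{The reference $2$-arc.} You correctly identify that the crux is to exhibit \emph{one} induced $2$-arc $u'\to w'\to v'$ whose endpoints already have a common predecessor, and you propose to do this by fixing $s'$, choosing incomparable $u',v'\in N^+(s')$, and then locating $w'\in N^+(u')\cap N^-(v')$. But you give no argument that this intersection is nonempty, and there is no obvious one: as you note, $w'$ cannot lie in $N^+(s')$ by transitivity of~$\PF$, and nothing you have established forces $N^+(u')$ and $N^-(v')$ to meet outside $N^+(s')$. Appealing to ``genericity of $D^+(u')$ and $D^-(v')$'' does not help, since these are genericity statements \emph{internal} to two different neighbourhoods and say nothing about their intersection in~$D$. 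The paper does not attempt this construction at all; instead it fixes an edge $xy$, partitions $N^+(y)$ into $x^\rightarrow$, $x^\leftarrow$, $x^\perp$, and carries out a fairly delicate case analysis (using repeatedly that $D^+(y)\isom\PF$ contains no induced $2$-arc and that any two elements have common bounds) to show that every vertex of $x^\perp$ eventually acquires a predecessor in $x^\leftarrow$ or an analogous link, yielding that the endpoints of every induced $2$-arc have a common predecessor or successor. This structural analysis is the real content of the lemma and is not bypassed by your approach.

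\textbf{Distance $>2$.} Your plan to ``iterate along a shortest path'' does not work: if $u=a_0,a_1,a_2,a_3=v$ and $p$ is a common predecessor of $a_0,a_2$, then knowing that $p$ and $v$ have a common predecessor $q$ gives $qp,qv\in ED$ but says nothing about $qu$. Common predecessors do not compose. The paper instead proves $\diam(D)=2$ directly, by checking each orientation of a length-$3$ path and showing its endpoints have distance~$2$ (using the already-established distance-$2$ case and the fact that alternating paths are partial orders and hence embed in some $D^+(z)$).
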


\begin{proof}
If $D$ contains no induced $2$-arc, then any induced path is an alternating walk and lies in the out-neighbourhood of some vertex by C-homogeneity.
Hence, any two vertices have a common predecessor.

Thus, we assume that $D$ contains induced $2$-arcs.
Our first aim is to show that
\begin{txteq}\label{eq_GenPO2Vertices1}
the end vertices of any induced $2$-arc have a common predecessor or a common successor.
\end{txteq}
In order to prove (\ref{eq_GenPO2Vertices1}) we investigate for $xy\in ED$ the three sets:
\begin{align*}
x^\perp&:=\{z\in N^+(y)\mid x\text{ not adjacent to }z\},\\
x^\rightarrow&:=N^+(y)\cap N^+(x),\text{ and}\\
x^\leftarrow&:=N^+(y)\cap N^-(x).
\end{align*}
If $ba\in ED$ for some $a\in x^\rightarrow$ and some $b\in x^\perp$, then $xyb$ is an induced $2$-arc in $D^-(a)$.
As $D^-\isom \PF$ by Lemma~\ref{lem_D+=D-=PF} and $\PF$ contains no induced $2$-arc, we have shown:
\begin{txteq}\label{eq_GenPO2Vertices1a}
no vertex in~$x^\perp$ has successors in~$x^\rightarrow$.
\end{txteq}

If $ba\in ED$ for some $a\in x^\rightarrow$ and some $b\in x^\leftarrow$, then the directed triangle $D[x,y,b]$ lies in $D^-(a)\isom$, which is not possible.
Thus, we have
\begin{txteq}\label{eq_GenPO2Vertices1aa}
no vertex in~$x^\leftarrow$ has successors in~$x^\rightarrow$.
\end{txteq}

Let us suppose that no $a\in x^\rightarrow$ and $b\in x^\perp$ are adjacent.
In $D^+(y)$, we find a common predecessor~$c$ and a common successor~$c'$ of~$a$ and~$b$.
Since neither of them can lie in $x^\perp$ or in $x^\rightarrow$ by assumption, both lie in~$x^\leftarrow$.
Any predecessor of~$c$ in $D^+(y)$ is also a predecessor of~$a$ and~$b$ and thus must lie in~$x^\leftarrow$.
By C-homogeneity, we find an automorphism $\alpha$ of~$D$ that fixes $x$ and~$y$ and maps $c$ to~$c'$.
This is impossible, as $c'=c\alpha$ has predecessors in~$D^+(y)$ that lie outside $x^\leftarrow=(x^\leftarrow)\alpha$.
Thus, we have shown that some vertex of~$x^\rightarrow$ has a neighbour in $x^\perp$.
By C-homogeneity and due to~(\ref{eq_GenPO2Vertices1a}), we have
\begin{txteq}\label{eq_GenPO2Vertices1b}
every vertex in~$x^\perp$ has a predecessor in~$x^\rightarrow$ and every vertex in~$x^\rightarrow$ has a successor in~$x^\perp$.
\end{txteq}

If any vertex $a$ in~$x^\perp$ has a predecessor in~$x^\leftarrow$, then the end vertices of the induced $2$-arc $xya$ have a common predecessor.
Thus, we have shown:
\begin{txteq}\label{eq_GenPO2Vertices2}
if~{\rm (\ref{eq_GenPO2Vertices1})} does not hold, then no vertex of~$x^\perp$ has a predecessor in~$x^\leftarrow$.
\end{txteq}

Let us assume that we have $ab\in ED$ for all $a\in x^\rightarrow$ and all $b\in x^\perp$.
Because of $D^+\isom\PF$, we find a vertex $z\in N^+(y)$ that is adjacent to neither~$a$ nor~$b$.
Hence, $z$ lies neither in~$x^\perp$ nor in~$x^\rightarrow$.
Thus, we have $z\in x^\leftarrow$.
Let $u$ be a common successor of~$z$ and~$b$ in~$D^+(y)$.
We have $u\notin x^\rightarrow$ by~(\ref{eq_GenPO2Vertices1a}) because of $bu\in ED$.
By~(\ref{eq_GenPO2Vertices2}), the edge $zu$ implies that either (\ref{eq_GenPO2Vertices1}) holds or $u\notin x^\perp$.
So we may assume $u\in x^\leftarrow$.
Then~(\ref{eq_GenPO2Vertices2}) implies~(\ref{eq_GenPO2Vertices1}) as $b\in x^\perp$ has the predecessor $u\in x^\leftarrow$.
Due to~(\ref{eq_GenPO2Vertices1a}), we have shown
\begin{txteq}\label{eq_GenPO2Vertices2a}
if {\rm (\ref{eq_GenPO2Vertices1})} does not hold, then for every vertex in~$x^\perp$ there is some vertex in~$x^\rightarrow$ such that these two vertices are not adjacent.
\end{txteq}

Since every two vertices in~$N^+(y)$ have a common predecessor, the existence of a vertex $z_1$ in~$x^\leftarrow$ and a vertex $z_2$ in~$x^\rightarrow$ that are not adjacent implies that the end vertices of the induced $2$-arc $z_1xz_2$ have a common predecessor.
Together with~(\ref{eq_GenPO2Vertices1aa}), this implies that
\begin{txteq}\label{eq_GenPO2Vertices2b}
if~{\rm (\ref{eq_GenPO2Vertices1})} does not hold, then every vertex of~$x^\rightarrow$ is a predecessor of every vertex of~$x^\leftarrow$.
\end{txteq}

Let $ab\in ED$ with $a\in x^\rightarrow$ and $b\in x^\perp$.
This edge exists due to~(\ref{eq_GenPO2Vertices1b}).
By~(\ref{eq_GenPO2Vertices2a}), we may assume that there is some vertex $c\in x^\rightarrow$ with $cb\notin ED$.
Then~(\ref{eq_GenPO2Vertices1a}) implies that $c$ and~$b$ are not adjacent.

If $a$ and~$c$ are adjacent, then $ca\notin ED$ because we have $cb\notin ED$ and $D^+(y)$ contains no induced $2$-arc.
So let us assume $ac\in ED$.
In $D^+(y)$, we find a vertex $c'\in N^-(c)$ that is adjacent to neither $a$ nor~$b$.
We have $c'\notin x^\perp$ due to~(\ref{eq_GenPO2Vertices1a}) because of $c\in x^\rightarrow$.
By~(\ref{eq_GenPO2Vertices2b}), either (\ref{eq_GenPO2Vertices1}) holds or $c'\notin x^\leftarrow$.
Thus, we may assume $c'\in x^\rightarrow$.
Taking $c'$ instead of~$c$, we may assume that $a$ and~$c$ are not adjacent.
Thus, the end vertices of~$D[c,x,a,b]$ lie in~$D^+(y)$ and hence have a common predecessor.
By a symmetric argument, we obtain that
\begin{txteq}\label{eq_GenPO2Vertices3}
if~{\rm (\ref{eq_GenPO2Vertices1})} does not hold, then the end vertices of any induced path isomorphic to either $D[c,x,a,b]$ or the digraphs obtained from $D[c,x,a,b]$ by reversing the directions of all its edges have a common predecessor.
\end{txteq}

Let $\alpha$ be an automorphism of~$D$ that fixes $x$ and~$y$ and interchanges $a$ to~$c$.
For $b':=b\alpha$ we have $b\neq b'\in (x^\perp)\alpha=x^\perp$.
Since $ab'\notin ED$ and $D^+(y)\isom\PF$, we have $bb'\notin ED$ and, symmetrically, we have $b'b\notin ED$.
Hence, $b$ and~$b'$ are not adjacent.
Let $u\in N^+(y)$ with $a,b,c\in N^-(u)$ and such that $u$ and~$b'$ are not adjacent.
If $u\in x^\perp$, then (\ref{eq_GenPO2Vertices3}) applied to $D[x,c,u,b]$ implies~(\ref{eq_GenPO2Vertices1}), since $x$ and~$b$ are the end vertices of the induced $2$-arc $xyb$.
Due to~(\ref{eq_GenPO2Vertices1a}), the vertex $u$ does not lie in~$x^\rightarrow$.
Hence, we may assume $u\in x^\leftarrow$.
Let $v$ be a predecessor of~$b'$ in $D^+(y)$ that has no neighbour in $\{a,b,c,u\}$.
Since $v$ and $u$ are not adjacent, (\ref{eq_GenPO2Vertices2b}) implies either (\ref{eq_GenPO2Vertices1}) or $v\notin x^\rightarrow$.
By~(\ref{eq_GenPO2Vertices2}) and as $vb'\in ED$, either~(\ref{eq_GenPO2Vertices1}) holds or $v\notin x^\leftarrow$.
Thus, we may assume $v\in x^\perp$.
Then (\ref{eq_GenPO2Vertices3}) applied to $D[x,c,b',v]$ shows that the end vertices of the induced $2$-arc $xyv$ have a common predecessor.
This shows~(\ref{eq_GenPO2Vertices1}).

\medskip

Due to~(\ref{eq_GenPO2Vertices1}), every two vertices of distance~$2$ have a common successor or a common predecessor.
If they have a common successor, then these three vertices induce a connected finite partial order and, by C-homogeneity, we find a common predecessor of all three vertices.
Hence, we have shown
\begin{txteq}
any two vertices of distance~$2$ have a common predecessor.
\end{txteq}

To show the lemma, it thus suffices to show
\begin{equation}\label{eq_GenPO2Vertices4}
\diam(D)=2.
\end{equation}
We consider all possible induced paths $P$ of length~$3$, not necessarily directed, one by one and show that the end vertices of such a path have distance~$2$.
If $P$ is an alternating walk, then it is a partial order and, for every $x\in VD$, the subdigraph $D^+(x)$ contains an isomorphic copy of~$P$.
By C-homogeneity, we find a vertex $z$ with $P\sub D^+(z)$ and the claim follows directly.

Let $a_1,a_2,a_3,a_4$ be the vertices of~$P$.
Let us assume that $a_1a_2$, $a_2a_3$, and~$a_4a_3$ are the edges on~$P$.
Since $D[a_2,a_3,a_4]$ is a connected partial order, we find a vertex~$x$ with $a_2,a_3,a_4\in N^+(x)$.
If $a_1$ and~$x$ are adjacent, then we have $d(a_1,a_4)=2$.
If $a_1$ and~$x$ are not adjacent, then $D[a_1,a_2,x,a_4]$ is a connected partial order that lies in~$D^+(z)$ for some $z\in VD$ by C-homogeneity.
Thus, also in this case, $a_1$ and~$a_4$ have a common neighbour.
Similar orientations like in this case (e.g., with edges $a_2a_1$, $a_2a_3$, and $a_3a_4$) follow by symmetric arguments.

The only remaining case is that $P$ is an induced $3$-arc.
Then we find a common predecessor of the first and the third vertex on~$P$ and obtain -- either directly or by the previous case -- that the end vertices of~$P$ have distance~$2$.
This shows~(\ref{eq_GenPO2Vertices4}) and, as previously mentioned, the lemma.
\end{proof}

\begin{lem}\label{lem_FinPOinD+}
For every finite partial order $A$ in~$D$, there exists some $x\in VD$ with $A\sub D^+(x)$.
\end{lem}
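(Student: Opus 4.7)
The plan is to proceed by induction on $|VA|$, following the template set by Lemmas~\ref{lem_GM14} and~\ref{lem_ConHFree}. The base cases are immediate: $|VA|\le 1$ is trivial since $D^-\cong\PF$ is non-empty by Lemma~\ref{lem_D+=D-=PF}, and $|VA|=2$ is precisely Lemma~\ref{lem_GenPO2Vertices} (regardless of whether the two vertices are adjacent). Assume $|VA|\ge 3$ and split into the connected and disconnected cases.

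If $A$ is connected, the argument is short and mirrors the opening step of the proof of Lemma~\ref{lem_ConHFree}: since $D^+\cong\PF$ is the generic countable partial order, it realizes every finite partial order, so some $D^+(y)$ contains an induced copy $A'$ of $A$. The isomorphism $A'\to A$ is then an isomorphism between two connected finite induced subdigraphs of $D$, so by C-homogeneity it extends to an automorphism $\Phi$ of~$D$, and $\Phi(y)$ is a common predecessor of $A$.

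If $A$ is disconnected, write $A=A_1\sqcup\dots\sqcup A_n$ with $n\ge 2$. I would mimic the reduction-by-one-vertex strategy of Lemma~\ref{lem_ConHFree}: pick a vertex $a\in VA$ (chosen carefully depending on the structure of $A$), apply induction to $A-a$ to produce a common predecessor $v$, so that $A-a\sub D^+(v)\cong\PF$, and then use the extension property of the generic partial order to find $w\in D^+(v)$ whose order relations to $A-a$ are prescribed to bridge the components. The goal is to arrange that $F:=A+w$ is a connected partial order. Once this is secured, $F$ embeds into $D^+$ by genericity, so there exist $x\in VD$ and a connected copy $F'\sub D^+(x)$ isomorphic to~$F$; C-homogeneity extends the isomorphism $F'\to F$ to an automorphism whose image of $x$ is a common predecessor of $F\supseteq A$.

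The delicate point, and the main obstacle compared with the $\HF$-free setting, is that the extension property of $D^+(v)$ fully controls $w$'s order relations with $A-a$ but leaves $w$'s relation to $a$ outside our direct reach, and these relations may well create a directed cycle or an induced $2$-arc in $F=A+w$. The workaround is the same dichotomy used in Lemma~\ref{lem_ConHFree}: if $A$ has two non-adjacent vertices $a_1,a_2$ lying in the same component (with $A_1-a_1$ connected), choose $a=a_1$ and require $w$ to be adjacent to $a_2$ but incomparable to the remaining vertices of $A-a$ consistent with transitivity inside $D^+(v)$, so that the unknown relation between $w$ and $a_1$ is harmless because $a_1,a_2$ are non-adjacent. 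In the degenerate case where every component of $A$ is a chain, one first applies Lemma~\ref{lem_GenPO2Vertices} to $\{v,a\}$ to replace $v$ by a common predecessor $v'$ of both, bringing $a$ into $D^+(v')$, after which the extension property controls $w$'s relation to $a$ as well. Iterating decreases the number of components of $A+w+\cdots$ until the connected case of the previous paragraph applies.
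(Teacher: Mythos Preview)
Your strategy mirrors Lemma~\ref{lem_ConHFree} too closely: you look for an auxiliary $w\in D^+(v)$ to bridge the components and then wrestle with the uncontrolled relation between $w$ and the removed vertex~$a$. The paper avoids $w$ altogether and uses the common predecessor $x$ of $A-a$ \emph{itself} as the bridging vertex. The choice of~$a$ does the real work: take $a$ to be a source of~$A$ (one with a successor if $A$ has an edge, otherwise any vertex). Then, in the case $xa\notin ED$, the digraph $A+x$ is again a partial order --- the only candidate for a new induced $2$-arc would be $x\to b\to a$ for some $b\in A-a$, which is impossible since $a$ has no predecessor in~$A$ --- and $A+x$ has strictly fewer components than~$A$ (all of $A-a$ is joined through~$x$, and if $a$ has a successor it lies in $A-a$). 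Induction on the number of components then finishes.

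Your concern about the uncontrolled relation to~$a$ is well placed, but the workarounds you sketch do not resolve it. In your non-degenerate case you require $w$ to be adjacent only to~$a_2$ (plus whatever transitivity forces inside $A_1-a_1$); but then $w$ meets none of the components $A_2,\ldots,A_n$, so $A+w$ still has $n$ components and nothing has been gained toward connectedness. In your chain case, passing to a common predecessor $v'$ of $\{v,a\}$ via Lemma~\ref{lem_GenPO2Vertices} does not give $A-a\subseteq D^+(v')$: you only know $v'\to v\to b$ for $b\in A-a$, and since $D$ may well contain induced $2$-arcs (the proof of Lemma~\ref{lem_GenPO2Vertices} treats precisely that possibility at length), $v'\to b$ does not follow. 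Without $A-a$ inside $D^+(v')$ the extension property is unavailable there, and the argument stalls.
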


\begin{proof}
If $A$ is connected, then the assertion is a direct consequence of $C$-homo\-geneity, as for every $x\in VD$ the subdigraph $D^+(x)$ contains an isomorphic copy of~$A$.
So let us assume that $A$ is not connected.
If $|VA|=2$, then the assertion follows from Lemma~\ref{lem_GenPO2Vertices}.
So we may assume $|VA|\geq 3$.
If $VA$ is an independent set, let $a$ be an arbitrary vertex of~$A$.
If $A$ has an edge, let $a\in VA$ such that $a$ has a successor in~$A$ but no predecessor.
By induction on~$|A|$, we find $x\in VD$ with $A-a\sub N^+(x)$.
If $xa\in ED$, then $x$ is the vertex we are searching for.
So let us assume either that $ax\in ED$ or that $a$ and~$x$ are not adjacent.
In each case, $A+x$ is a partial order and it has less components than~$A$.
Thus, the assertion holds by induction on the number of components of~$A$.
\end{proof}

\begin{lem}\label{lem_2FinPOinN}
Let $A,A',B,B'$ be finite induced partial orders in~$D$ such that an isomorphism $\varphi\colon A'+B'\to A+B$ with $A'\varphi=A$ and $B'\varphi=B$ exists.
If $A$ is a maximal partial order in~$A+B$ and if $D$ has a vertex $v$ with $A'\sub D^+(v)$ and $B'\sub D^-(v)$, then there exists $x\in VD$ with $A\sub D^+(x)$ and $B\sub D^-(x)$.
\end{lem}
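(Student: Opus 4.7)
The plan is to mimic the proofs of Lemma~\ref{lem_TwoI_nFreeSubdigraphs} and Lemma~\ref{lem_ConHH'Free}, substituting the partial-order condition for the $I_n$-free or $\HF$-free ones. The two base cases are immediate: if $A+B$ is connected, then C-homogeneity extends $\varphi\inv\colon A+B\to A'+B'$ to an automorphism $\alpha$ of~$D$, and $x:=v\alpha\inv$ satisfies $A\sub D^+(x)$ and $B\sub D^-(x)$; if $B=\es$, then Lemma~\ref{lem_FinPOinD+} gives $x$ directly.

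In the inductive step, performed as a double induction on $|V(A+B)|$ and on the number of components of $A+B$, I would pick $z\in VB$, set $z':=z\varphi\inv$, and apply the inductive hypothesis to $A$ and $B-z$ (with the restriction of $\varphi$, the same witness $v$, and $A$ still maximal as a partial order in $A+(B-z)$) to obtain $w$ with $A\sub D^+(w)$ and $B-z\sub D^-(w)$. C-homogeneity applied to the connected isomorphic subdigraphs $A'+(B'-z')+v$ and $A+(B-z)+w$ then lets me assume, after renaming, that $A'=A$, $B'-z'=B-z$ and $v=w$, so in particular $z'v\in ED$. Since maximality of $A$ in $A+B$ forces $A$ to meet every component, I may pick $u\in VA$ in a component of $A+B$ distinct from that of~$z$; moreover, since any maximal element of the sub-partial-order of $A$ on that component is already maximal in all of~$A$ (edges out of $u$ stay within its component), I may arrange that $u$ is a maximal element of the partial order $A$. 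The goal is then to produce $y\in VD$ with $\{v,z,z'\}\sub N^-(y)$, $u\in N^+(y)$, and $y$ non-adjacent to every vertex of $A-u$. Given such $y$, the maximality of $u$ in $A$ together with the avoidance condition ensures that $A+y$ is a partial order; the digraph $(A+y)+B$ has strictly fewer components than $A+B$ and is isomorphic to $(A'+y)+B'$ via $\varphi$ extended by the identity on $y$; and $A'+y\sub D^+(v)$, $B'\sub D^-(v)$, so the inductive hypothesis delivers the desired $x$.

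The principal obstacle is the construction of $y$. Unlike in the $I_n$-free case, the three-vertex subdigraph $D[v,z,z']$ need not be a partial order: for instance, if $v\to z$, then $z'vz$ is an induced $2$-arc. Hence one cannot directly invoke any analog of Lemma~\ref{lem_I_nFreeSubdigraphPlusAdjacentVertex}. I would address this in two stages. First, I would establish the partial-order analog: for any finite induced partial order $P\sub D$, any $z^*\in VD$ having a predecessor in $P$, and any finite $C\sub VD$, there exists $y\in VD$ with $P\sub D^-(y)$, $z^*\in N^+(y)$ and $y$ non-adjacent to every vertex of $C$; this follows by locating the relevant configuration via Lemma~\ref{lem_FinPOinD+} and then exploiting the genericness of $D^+\isom\PF$, together with Lemma~\ref{lem_GenPO2Vertices}, to realise the type of $y$ over $P\cup\{z^*\}\cup C$. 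Second, when $D[v,z,z']$ fails to be a partial order, I would replace $v$ by another vertex $\tilde v$ still satisfying $A\sub D^+(\tilde v)$ and $B'\sub D^-(\tilde v)$ but with $D[\tilde v,z,z']$ a partial order, using a homogeneity-plus-genericness argument for the configuration around $z$ and $z'$. Applying the analog to $P=D[\tilde v,z,z']$, $z^*=u$ and $C=VA\sm\{u\}$ then produces the required $y$, closing the proof.
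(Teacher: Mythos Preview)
Your base cases and the reduction to $A'=A$, $B'-z'=B-z$ are fine, and your identification of the obstacle is exactly right: $D[v,z,z']$ need not be a partial order, so you cannot invoke anything like Lemma~\ref{lem_I_nFreeSubdigraphPlusAdjacentVertex}. The gap is in your two proposed workarounds. For part~(a), the claim that one can realise the type of~$y$ over $P\cup\{z^*\}\cup C$ with a prescribed non-adjacency set $C$ does not follow from the tools available: knowing $D^+\isom\PF$ constrains only the out-neighbourhood of a single vertex, not the global structure of~$D$, and at this point in the argument we have \emph{not} yet shown that $D$ is homogeneous (that is precisely the goal of this section). Neither Lemma~\ref{lem_FinPOinD+} nor Lemma~\ref{lem_GenPO2Vertices} yields avoidance of an arbitrary finite set. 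For part~(b), the existence of~$\tilde v$ with $A\sub D^+(\tilde v)$, $B'\sub D^-(\tilde v)$ and $D[\tilde v,z,z']$ a partial order is asserted without proof; note that $z'\tilde v\in ED$ is forced, so if for instance $zz'\in ED$ and $\tilde v,z$ are non-adjacent you already have the induced $2$-arc $zz'\tilde v$, and there is no evident mechanism to control the edge between $\tilde v$ and~$z$.

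The paper sidesteps the obstacle entirely by not involving $v$ in the construction of~$y$ at all. After reducing to $A'=A$ and $B'-z'=B-z$, choose $a_1,\ldots,a_k\in VA$ so that $\{a_1,\ldots,a_k,z\}$ meets every component of $A+B$ exactly once. Since the $a_i$ lie in pairwise distinct components and $z'$ has the same adjacencies as~$z$ to $A\cup(B-z)$, the set $\{a_1,\ldots,a_k,z,z'\}$ carries at most one edge (between $z$ and~$z'$) and is therefore a partial order. Lemma~\ref{lem_FinPOinD+} now gives a vertex~$y$ with this whole set in $N^+(y)$. Then both $A+B+y$ and $A'+B'+y$ are connected, and the map fixing $y$, $A$, $B-z$ and sending $z\mapsto z'$ is an isomorphism between them; C-homogeneity extends it to $\alpha\in\Aut(D)$, and $x:=v\alpha^{-1}$ satisfies $A\sub D^+(x)$ and $B\sub D^-(x)$. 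No avoidance condition and no replacement of~$v$ is needed, and the induction on the number of components collapses to a single step.
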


\begin{proof}
If $A+B$ is connected or if $B$ is empty, then the assertion follows either by C-homogeneity or by Lemma~\ref{lem_FinPOinD+}.
So let us assume that $A+B$ has at least two components and that $B$ is not empty.
By induction and similar to the proof of Lemma~\ref{lem_TwoI_nFreeSubdigraphs}, we may assume that there are $z\in VB$ and $z'\in VB'$ such that $A=A'$ and $B-z=B'-z'$.
Furthermore, we may assume that $z$ does not lie in~$N^-(v)$, because the assertion follows directly in that case.
Since $A$ is a maximal partial order in~$A+B$, we know that $A$ contains vertices from each component of~$A+B$.
Let $a_1,\ldots,a_n\in VA$ such that $\{a_1,\ldots,a_n,z\}$ has precisely one vertex from each component of~$A+B$.
By Lemma~\ref{lem_FinPOinD+}, we find a vertex $y$ with $\{a_1,\ldots,a_n,z,z'\}\sub N^+(y)$.
The digraphs $A+B+y$ and $A'+B'+y$ are connected and isomorphic to each other.
By C-homogeneity, there is an automorphism $\alpha$ of~$D$ that fixes $y$ and all vertices of $A$ and $B-z$ and maps $z$ to~$z'$.
Hence, $v\alpha$ is a vertex we are searching for.\looseness-1
\end{proof}

\begin{prop}
Let $D$ be a countable connected C-homogeneous digraph with $D^+\isom\PF$.
Then $D$ is homogeneous.
\end{prop}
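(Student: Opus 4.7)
The plan is to mirror the strategy used for the generic $I_n$-free and generic $\HF$-free cases. Given an isomorphism $\varphi\colon A\to B$ between two finite induced subdigraphs of~$D$, the goal is to extend $\varphi$ to an automorphism of~$D$. I will induct on the number of connected components of~$A$; the case ``$A$ connected'' is handled directly by C-homogeneity, which is the base of the induction.

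Suppose therefore that $A$ has at least two components. Choose a maximal induced partial order $A^+\sub A$ whose vertex set meets at least two distinct components of~$A$; such an $A^+$ exists because any two vertices from distinct components of~$A$ form an independent set, hence a partial order. Next choose $A^-\sub A-A^+$ maximal among induced partial orders for which there exist a vertex $v\in VD$ and an embedding $\psi\colon D[VA^+\cup VA^-]\to D[N(v)]$ with $VA^+\psi\sub N^+(v)$ and $VA^-\psi\sub N^-(v)$. The case $A^-=\es$ is allowed, and in that case the existence of~$v$ is supplied by Lemma~\ref{lem_FinPOinD+}. Transport these choices to~$B$ via $B^+:=A^+\varphi$ and $B^-:=A^-\varphi$.

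Lemma~\ref{lem_2FinPOinN}, applied with the roles of $(A,B,A',B')$ there being played by $(A^+,A^-,A^+\psi,A^-\psi)$, then yields a vertex $x_A\in VD$ with $A^+\sub D^+(x_A)$ and $A^-\sub D^-(x_A)$, and symmetrically a vertex $x_B$ for~$B$. Since $D^+(x_A)\isom D^-(x_A)\isom\PF$ is itself a partial order (by Lemmas~\ref{lem_outNeighHomogeneous} and~\ref{lem_D+=D-=PF}), no vertex $u\in A-(A^++A^-)$ can be adjacent to~$x_A$: if $x_Au\in ED$, then $A^++u\sub D^+(x_A)$ is still a partial order whose vertex set meets at least two components of~$A$, contradicting the maximality of~$A^+$; if instead $ux_A\in ED$, then $A^-+u\sub D^-(x_A)$ is a partial order and, taking $v:=x_A$ in the witnessing condition, contradicts the maximality of~$A^-$. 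The analogous statement holds for $x_B$ and~$B$. Note that this also forces $x_A\notin VA$, since otherwise $x_A$ would connect the two components of~$A$ that $A^+$ meets.

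Consequently $\varphi$ extends to an isomorphism $\varphi'\colon A+x_A\to B+x_B$, and because $A^+\sub N^+(x_A)$ already meets at least two distinct components of~$A$, the vertex $x_A$ merges those components; hence $A+x_A$ has strictly fewer components than~$A$. By the induction hypothesis, $\varphi'$ extends to an automorphism of~$D$, which in particular extends~$\varphi$. The genuinely hard work has already been absorbed into Lemma~\ref{lem_2FinPOinN} (and ultimately Lemma~\ref{lem_GenPO2Vertices}); the only delicate point of the present step is checking that the two maximality conditions really do exclude further neighbours of~$x_A$ inside~$A$, which is what we carried out above.
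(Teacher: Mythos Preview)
Your proof is correct and follows essentially the same strategy as the paper's: pick a maximal partial order $A^+$ in~$A$, then a maximal $A^-\sub A-A^+$ admitting a witnessing vertex, invoke Lemma~\ref{lem_2FinPOinN} to realise the configuration at some $x_A$, check via the two maximality conditions that $x_A$ has no further neighbours in~$A$, and extend~$\varphi$.

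The one cosmetic difference is that you frame the argument as an induction on the number of components, merging at least two of them at each step, whereas the paper observes that a maximal partial order in~$A$ necessarily meets \emph{every} component (any vertex from a missed component is non-adjacent to all of~$A^+$ and can be adjoined), so that $A+x_A$ is already connected and C-homogeneity applies in a single step. Your induction is therefore harmless but unnecessary: your own $A^+$, being maximal, in fact meets every component of~$A$, and the recursion terminates after one pass.
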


\begin{proof}
Let $A$ and $B$ be isomorphic finite induced subdigraphs of~$D$ and $\varphi\colon A\to B$ be an isomorphism.
Let $A_1$ be a maximal partial order of~$A$ and $A_2$ be a maximal partial order of $A\sm A_1$ such that for some vertex $x\in VD$ there is an embedding~$\tau$ from $A_1+A_2$ to $D^+(x)+D^-(x)$ such that $A_1\tau\sub D^+(x)$ and $A_2\tau\sub D^-(x)$.
Note that $A_1$ contains vertices from each component of~$A$ by its maximality.
Let $B_1=A_1\varphi$ and $B_2=A_2\varphi$.
Due to Lemma~\ref{lem_2FinPOinN}, we find a vertex $y$ with $A_1\sub D^+(y)$ and $A_2\sub D^-(y)$ and a vertex $z$ with $B_1\sub D^+(z)$ and $B_2\sub D^-(z)$.
By maximalities of $A_1$ and~$A_2$, we know that no vertex of $A\sm (A_1+A_2)$ is adjacent to~$y$ and, similarly, no vertex of $B\sm (B_1+B_2)$ is adjacent to~$z$.
The isomorphism $\varphi$ extends canonically to an isomorphism $\varphi'\colon A+y\to B+z$.
Since $A+y$ and $B+z$ are connected, we can extend $\varphi'$, and hence also $\varphi$, to an automorphism $\alpha$ of~$D$ by C-homogeneity.
\end{proof}

\subsection{\boldmath The digraphs $T[I_n]$ as $D^+$}\label{sec_TI_n}

In this section, let $D$ be a countable connected C-homogeneous digraph with $D^+\isom T[I_n]$ for some countable homogeneous tournament $T\neq I_1$ and some $n\in\nat^\infty$.
Our first aim in this section is to determine~$D^-$.

\begin{lem}\label{lem_D+TInThenD-TInPreInfo}
If $n\geq 2$, then $D^-\isom T'[I_m]$ for some countable homogeneous tournament $T'\neq I_1$ and some $m\in\nat^\infty$.
\end{lem}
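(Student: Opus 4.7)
The plan is to invoke Lemma~\ref{lem_outNeighHomogeneous} to conclude that $D^-$ is a countable homogeneous digraph and then to walk through Cherlin's classification (Theorem~\ref{thm_CountHomDigraphs}), ruling out every case except~(ii). First, to dispose of case~(i), note that since $T\neq I_1$ there is an edge $ab\in ED$ with $a,b\in N^+(x)$ for some $x\in VD$; then $x,a\in N^-(b)$ with $xa\in ED$, so $D^-$ is not an independent set.

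To rule out cases~(iv)--(xi) I would use a reverse-digraph trick. Let $D^*$ be the digraph on~$VD$ obtained from $D$ by reversing every edge; then $D^*$ is again countable, connected, and C-homogeneous, with $(D^*)^+=D^-$ and $(D^*)^-=D^+$. If $D^-$ lay in one of the cases~(iv)--(xi), then the corresponding result of the earlier subsections of Section~\ref{sec_ProofPartI}, applied to~$D^*$, would give either that $D^*$ does not exist (the cases $T^\wedge$, $S(3)$, $\PF(3)$), an immediate contradiction, or that $D^*$ is itself homogeneous. In the latter case $D^*$ must appear in Theorem~\ref{thm_CountHomDigraphs} with in-neighbourhood $(D^*)^-=D^+\isom T[I_n]$, and since two vertices in a common $I_n$-block of $T[I_n]$ are nontrivial twins (they share all neighbours in~$T[I_n]$), inspection of the list shows that with $n\geq 2$ this forces $D^*\isom T''[I_n]$ for some homogeneous tournament $T''\neq I_1$; but then $D^-=(D^*)^+$ is itself of tournament-blow-up type, contradicting the assumption that it sits in (iv)--(xi).

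The main obstacle is case~(iii), i.e.\ $D^-\isom I_m[T']$ with $m\geq 2$, and this is where the hypothesis $n\geq 2$ is used essentially. Since $n\geq 2$ I pick two non-adjacent vertices $a,b$ inside a common $I_n$-block of $D^+(x)\isom T[I_n]$; because $T\neq I_1$ that block has a successor block in~$T$, so some $c\in N^+(x)$ is a common successor of~$a$ and~$b$. Then $x,a,b\in N^-(c)$, and in the induced subdigraph we have $xa,xb\in ED$ while $a$ and~$b$ stay non-adjacent. Under any isomorphism $N^-(c)\to I_m[T']$, adjacency forces $x$ and $a$ to lie in a common tournament-component $W\isom T'$, and likewise $x$ and~$b$ to lie in~$W$; hence $a$ and $b$ lie in the same tournament~$W$, so they are adjacent there, contradicting the choice of $a,b$. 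Thus $D^-\not\isom I_m[T']$ for any $m\geq 2$, while the sub-case $m=1$ of~(iii) already coincides with~(ii) with a single-vertex~$I_m$.

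Combining these steps, the only possibility of Theorem~\ref{thm_CountHomDigraphs} left open for $D^-$ is case~(ii), which is exactly the desired conclusion $D^-\isom T'[I_m]$ with $T'\neq I_1$ a homogeneous tournament and $m\in\nat^\infty$. The technical heart is the three-vertex configuration $\{x,a,b\}\subseteq N^-(c)$: once it is produced from the $I_n$-block structure of $D^+$, the incompatibility with a disconnected tournament-decomposition of $D^-$ is immediate.
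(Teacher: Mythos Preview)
Your proposal is correct and follows essentially the same route as the paper. The paper's proof is extremely terse: it produces exactly your three-vertex configuration (in the paper's notation $x,y_1,y_2\in N^-(z)$ with $xy_1,xy_2\in ED$ and $y_1,y_2$ non-adjacent) to exclude $I_k[T']$, and then simply writes ``Theorem~\ref{thm_CountHomDigraphs} together with the previous sections imply the assertion'' for the rest. Your write-up unpacks that last clause via the reverse digraph $D^*$, which is precisely the implicit mechanism.

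One small looseness worth noting: your twin remark (``two vertices in a common $I_n$-block are nontrivial twins in $T[I_n]$'') does not by itself force $D^*\isom T''[I_n]$, since being twins inside $(D^*)^-(v)$ does not immediately yield global twins in the homogeneous digraph~$D^*$. The conclusion you want is nonetheless correct and falls out of a direct case-check of the Cherlin list (observe, for instance, that $T[I_n]$ with $n\geq 2$ has non-adjacency an equivalence relation with classes of size~$n$, which none of the digraphs in (iii)--(xi) satisfy for their in-neighbourhood), so this is a matter of phrasing rather than a genuine gap.
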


\begin{proof}
Let $xz\in ED$.
Note that $VD^-$ is not an independent set, since $z$ has a predecessor in $D^+(x)$.
As $n\geq 2$, there are two non-adjacent vertices $y_1,y_2\in N^+(x)\cap N^-(z)$.
Since the digraph $D[x,y_1,y_2]\sub D^-(z)$ cannot be embedded into $I_k[T']$ for any countable homogeneous tournament $T'\neq I_1$ and any $k\in\nat^\infty$, Theorem~\ref{thm_CountHomDigraphs} together with the previous sections imply the assertion.
\end{proof}

\begin{lem}\label{lem_D+TInThenD-TIn}
If $D^-\isom T'[I_m]$ for some countable homogeneous tournament $T'\neq I_1$ and some $m\in\nat^\infty$, then $D^+\isom D^-$.
\end{lem}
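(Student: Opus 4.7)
The plan is to fix an edge $xy \in ED$ and analyse the common-neighbour set $A := N^+(x) \cap N^-(y)$ from both local perspectives. Writing $T^-$ (resp.\ $T^+$) for the induced subdigraph on the in-neighbourhood (resp.\ out-neighbourhood) of any vertex of the vertex-transitive tournament $T$, I would first describe $A$ as an induced subdigraph of $D^+(x) \isom T[I_n]$: it is the preimage of the in-neighbourhood of the blob containing $y$, so $A \isom T^-[I_n]$. Dually, inside $D^-(y) \isom T'[I_m]$ the same set $A$ is the preimage of the out-neighbourhood of the blob containing $x$, so $A \isom T'^+[I_m]$. Hence $T^-[I_n] \isom T'^+[I_m]$. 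The non-adjacency relation on any blow-up $S[I_k]$ of a nontrivial tournament is an equivalence relation whose classes are exactly the blobs, of uniform size $k$; this is an isomorphism invariant of the digraph, so $n = m$ and $T^- \isom T'^+$ after passing to the blob quotient.

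Next I would invoke the classification of countable homogeneous tournaments: apart from $I_1$, they are $C_3$, $\rat$, $S(2)$, and $T^\infty$, whose in- and out-neighbourhoods of any vertex are $I_1$, $\rat$, $\rat$, and $T^\infty$ respectively. The relation $T^- \isom T'^+$ therefore forces one of three situations: $T = T' = C_3$, $T = T' = T^\infty$, or $\{T, T'\} \sub \{\rat, S(2)\}$. In the first two situations we immediately obtain $T[I_n] \isom T'[I_m]$, and hence $D^+ \isom D^-$.

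The hard part will be the remaining case, where one must exclude $\{T, T'\} = \{\rat, S(2)\}$. The distinguishing invariant is triangle-freeness: $\rat[I_n]$ has no directed triangle (by transitivity of $\rat$ and independence of blobs), while $S(2)[I_n]$ does. Assuming for contradiction $T = \rat$ and $T' = S(2)$, the asymmetry is delicate, because one cannot simply transport a directed triangle from some $D^-(y)$ into any $D^+(v)$ by C-homogeneity: the hypothesis $T = \rat$ already rules out triangles in every $D^+(v)$, and C-homogeneity preserves isomorphism types. My plan instead is to compare finer invariants of $D^+(x)$ and $D^-(y)$ linked across the edge $xy$, for example by studying how a 2-arc $y \to z \to w$ in $N^+(x)$ (necessarily a transitive triangle under $T = \rat$) relates to the predecessor-side configuration: pick a directed triangle $abc$ in $D^-(y)$, consider the connected configuration $\{a, b, c, y, z\}$ for $z$ a well-chosen common successor of $x$ and $y$, and use C-homogeneity on an isomorphism between two such configurations (obtained by rotating $abc$ within $D^-(y)$) to force a linear-order violation on the $N^+(x)$ side. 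Showing that such a witness configuration can be engineered, and that the resulting forced edge actually contradicts transitivity of $D^+(x) \isom \rat[I_n]$, is the technical heart of the obstacle.
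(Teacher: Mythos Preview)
Your reduction via the common neighbourhood $A = N^+(x)\cap N^-(y)$ is correct and elegant: reading $A$ inside $D^+(x)\isom T[I_n]$ and inside $D^-(y)\isom T'[I_m]$ does give $T^-[I_n]\isom T'^+[I_m]$, and the blob/non-adjacency argument recovers $n=m$ and $T^-\isom T'^+$. This packages the paper's separate arguments for $m=n$ and for the easy cases ($C_3$ and $T^\infty$) into a single clean step, which is a genuine improvement in presentation.

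Where your plan falls short is the mixed case $\{T,T'\}=\{\rat,S(2)\}$. You anticipate an elaborate C-homogeneity argument involving rotating a directed triangle and forcing a ``linear-order violation'', and you explicitly flag this as unfinished. In fact no such machinery is needed, and the paper's resolution is almost a one-liner once you see the right obstruction. Assume $T=\rat$ and $T'=S(2)$. The key observation is that in $S(2)[I_m]$ the in-neighbourhood of any vertex is isomorphic to $\rat[I_m]$, which is triangle-free; hence \emph{no directed triangle in $S(2)[I_m]$ can have a common successor}. Now build such a forbidden configuration inside $D^-(y)$: pick $a\in N^-(x)\cap N^-(y)$, extend $a$ to a directed triangle $a\to b\to c\to a$ inside $D^-(x)\isom S(2)[I_m]$ (using homogeneity of $D^-(x)$), and then use transitivity of $D^+(a)\isom\rat[I_n]$ on the chain $b\to x\to y$ to force $by\in ED$, and similarly (via $D^+(b)$) $cy\in ED$. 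Now $a,b,c,x$ all lie in $N^-(y)$, with $a,b,c$ a directed triangle and $x$ their common successor---contradicting the structure of $D^-(y)\isom S(2)[I_m]$.

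So the missing idea is simply: the obstruction is ``directed triangle with a common successor'', and transitivity of $\rat[I_n]$ is exactly what lets you push the triangle from $N^-(x)$ into $N^-(y)$. Your proposed rotation argument is not wrong in spirit, but it is the long way around a problem that admits a direct embedding obstruction.
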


\begin{proof}
To show $m=n$, let $x\in VD$.
As $T\neq I_1$, any vertex in $D^+(x)$ has $n$ independent predecessors in~$D^+(x)$.
Hence, we conclude $m\geq n$.
By a symmetric argument we also have $n\geq m$.
To show $D^+\isom D^-$ it thus suffices to show $T=T'$.

Note that $T=C_3$ implies $T'=C_3$ and vice versa because in any countable infinite homogeneous tournament, we have arbitrarily large finite tournaments in the out- and in the in-neighbourhood of every vertex.

Let us now show $T=T'$ in the case $T=T^\infty$.
Let $x\in VD$ and let $F$ be a finite tournament in $D^+(x)$.
As $T^\infty$ is homogeneous and embeds every finite tournament, we find a vertex $y\in N^+(x)$ with $F\sub D^-(y)$.
Thus, $T'$ contains every finite tournament.
So we have $T'=T^\infty=T$.

Next, we assume $T=\rat$.
Let us suppose $T\neq T'$.
Then we obtain from the previous cases $T'=S(2)$.
Let $xy\in ED$.
As $x$ has a predecessor in~$D^-(y)$, let $a\in N^-(x)\cap N^-(y)$.
Since $D^-(x)$ contains a directed triangle and is homogeneous, we find $b,c\in N^-(x)$ with $ab,bc,ca\in ED$.
Since $D^+(a)\isom \rat[I_n]$, we have $by\in ED$.
Similarly, we conclude $cy\in ED$.
The digraph $D[x,a,b,c]$ cannot be embedded into $S(2)[I_m]$ even though it lies in~$D^-(y)$.
This contradiction shows $T=T'$ if $T=\rat$ and finishes the proof of the lemma.
\end{proof}

We remark that we will see in Section~\ref{sec_InT}, that the assumption $D^-\isom T'[I_m]$ in Lemma~\ref{lem_D+TInThenD-TIn} is not only satisfied if $n\geq 2$ (due to Lemma~\ref{lem_D+TInThenD-TInPreInfo}) but also if $n=1$ (due to Lemma~\ref{lem_InT_m=n}).

If either $n\geq 2$ or $D^+\isom T\isom D^-$, then the next lemma will exclude the possibility $T=S(2)$:

\begin{lem}\label{lem_TIn0}
If $D^+\isom D^-$, then $T\neq S(2)$.
\end{lem}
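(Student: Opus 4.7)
The plan is to prove the lemma by contradiction, assuming $T = S(2)$ so that $D^+ \cong D^- \cong S(2)[I_n]$. The argument rests on the distinguishing property of $S(2)$ among the countable homogeneous tournaments: \emph{no directed triangle in $S(2)$ has a common predecessor or common successor inside $S(2)$}. This is checked in the circular-arc model of $S(2)$, where three points forming a directed triangle span more than a semicircle so that the intersection of their in-neighbourhoods (each an open arc of length $\pi$) is empty; the property then lifts to $S(2)[I_n]$ since any common predecessor or successor in $S(2)[I_n]$ would project to one in $S(2)$.

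Under our assumption, this yields two consequences that I would record first. Every directed triangle in $D$ has a common predecessor in $D$, because by C-homogeneity every directed triangle is the image of a directed triangle inside some $D^+(v) \cong S(2)[I_n]$, which has $v$ as a common predecessor; dually, via $D^-(v) \cong S(2)[I_n]$, every directed triangle has a common successor. Secondly, for a directed triangle $\{a,b,c\}$ with common predecessor $v$ and common successor $w$, the edge $v \to w$ cannot exist: otherwise $w$ would be a common successor of $\{a,b,c\}$ inside $D^+(v) \cong S(2)[I_n]$, contradicting the distinguishing property. The same reasoning forces two common predecessors of a directed triangle to be non-adjacent (an edge between them would place one as a common predecessor inside the other's $D^+$-neighbourhood), and symmetrically for two common successors.

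Finally, I would fix an edge $xy \in ED$ and, using homogeneity of $D^-(y) \cong S(2)[I_n]$, pick a directed triangle $\{x,b,c\}$ inside $D^-(y)$ containing $x$, so that $y$ is a common successor of $\{x,b,c\}$. Let $v$ be a common predecessor of $\{x,b,c\}$ in $D$; by the second consequence $v \not\to y$. In the case $y \to v$, the triangle $\{v,x,y\}$ is directed and admits a common predecessor $p$ in $D$ with $p \to v, x, y$, placing $p$ inside $D^-(y)$; after a careful selection of the initial triangle $\{x,b,c\}$ via an automorphism of $D^-(y)$ extended to $D$ by C-homogeneity, the vertex $p$ is forced to be a common predecessor of $\{x,b,c\}$ inside $D^-(y) \cong S(2)[I_n]$, contradicting the distinguishing property. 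The case that $v$ and $y$ are non-adjacent is handled dually by locating, inside $D^+(v) \cong S(2)[I_n]$, a common successor of $\{x,b,c\}$. The hard part is precisely this careful selection of $\{x,b,c\}$: arranging that the iteratively produced common predecessor $p$ becomes adjacent to $b$ and $c$ in the required directions, which uses the combinatorial richness of $S(2)[I_n]$ and the flexibility afforded by C-homogeneity on connected subconfigurations.
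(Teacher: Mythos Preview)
Your opening observations are sound and indeed underpin the paper's argument as well: the key property of $S(2)$ is that a directed triangle has no common predecessor and no common successor inside $S(2)$ (equivalently inside $S(2)[I_n]$), and from this you correctly deduce that in $D$ every directed triangle does have a common predecessor and a common successor, and that a common predecessor $v$ never points to a common successor $w$.

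The gap is in the endgame. In the case $y\to v$ you form the directed triangle $\{v,x,y\}$ and take a common predecessor $p$, landing $p$ in $D^-(y)$; you then claim that ``after a careful selection'' of the original triangle $\{x,b,c\}$ one can force $p\to b$ and $p\to c$. But $p$ is determined only after $\{x,b,c\}$ and $v$ have been chosen, and nothing you have set up pins down the adjacencies between $p$ and $b,c$: in $D^-(y)\cong S(2)[I_n]$ the point $p$ could lie on any arc relative to $b$ and $c$, and C-homogeneity only lets you move connected configurations, so you cannot freely prescribe the position of $p$ after the fact. You yourself flag this as the hard part and give no mechanism; I do not see one. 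The non-adjacent case is likewise not settled: you need a common successor of $\{x,b,c\}$ that lies in $D^+(v)$, but the only common successor you have in hand is $y$, which is not adjacent to $v$, and there is no argument produced for why another one would land in $N^+(v)$.

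The paper's proof avoids this by never trying to control such a $p$. Instead it fixes a directed triangle $a,b,c$ in $D^+(x)$, finds $y$ with $x,a,b\in N^+(y)$, argues (via your key property) that the remaining vertex $c$ must satisfy $cy\in ED$, and then locates in $D^-(b)$ a vertex $z\in N^+(a)\cap N^+(x)\cap N^-(y)$. The decisive step is an automorphism $\alpha$ fixing $x,y$ and sending the edge $ca$ to $zb$; tracking $b\alpha$ through $D^+(x)$ forces $b\alpha a\in ED$, whence $D[a,b,b\alpha,x]$ sits inside $D^+(y)$ but cannot embed in $S(2)[I_n]$. This is a concrete four-vertex obstruction produced by a single, explicitly justified automorphism, rather than an attempt to align an auxiliary vertex with a pre-chosen triangle.
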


\begin{proof}
Let us suppose $T=S(2)$.
Let $x\in VD$ and let $a,b,c\in N^+(x)$ with $ab,bc,ca\in ED$.
Since $D[x,a,b]$ can be embedded into~$D^+$, we find a vertex $y\in VD$ with $D[x,a,b]\sub D^+(y)$ by C-homogeneity.
Since $D^-(a)\isom S(2)[I_n]$ and $c$ and~$y$ do not both lie either in~$D^-(x)$ or in $D^+(x)$, these two vertices must be adjacent.
Because $D[x,a,b,c]$ does not embed into~$D^+$, this edge cannot be $yc$, so it is $cy$.
In $D^-(b)$ we find a vertex $z$ with $z\in N^+(a)\cap N^+(x)\cap N^-(y)$.

Since $D$ is C-homogeneous, we find an automorphism $\alpha$ of~$D$ that fixes $x$ and~$y$ and maps $ca$ to~$zb$.
Since $b$ lies in $N^+(a)\cap N^-(c)$, its image $b\alpha$ lies in $N^+(b)\cap N^-(z)$.
Considering $D^+(x)$, we know that $b\alpha$ cannot lie in $N^+(a)$ as $D^+(x)\cap D^+(a)$ contains no directed triangle $D[b,b\alpha,z]$ but $b\alpha$ must be adjacent to~$a$.
So we have $b\alpha a\in ED$.
But then $D[a,b,b\alpha,x]$ is a digraph which lies in $D^+(y)$ even though it cannot be embedded into $S(2)[I_n]$.
This contradiction shows the assertion.
\end{proof}

The following lemma shows that we can restrict ourselves to the situation $n=1$ in the remainder of this section: all the other C-homogeneous digraphs that satisfy the assumptions of this section and that have the property $n\geq 2$ arise from those with $n=1$ in a canonical way.

\begin{lem}\label{lem_TIn}
If $D^+\isom D^-$, then there is a countable connected C-homogeneous digraph $D'$ with $D'^+\isom T\isom D'^-$ and with $D'[I_n]\isom D$.
\end{lem}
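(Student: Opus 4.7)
The plan is to exhibit $D'$ as a quotient of $D$ by a natural equivalence relation. Define $\sim$ on $VD$ by $u \sim v$ iff $u = v$, or $u$ and $v$ are non-adjacent with $N^+(u) = N^+(v)$ and $N^-(u) = N^-(v)$. This is transparently an equivalence relation, and distinct equivalent vertices are automatically non-adjacent. For each $u \in VD$ and each $x \in N^-(u)$, the $\sim$-class $[u]$ is contained in the $I_n$-block $B_u$ of $u$ in $D^+(x) \isom T[I_n]$: any $v \sim u$ with $v \neq u$ satisfies $x \in N^-(v)$ so $v \in N^+(x)$, and $u, v$ being non-adjacent with the same out-neighbourhood in $N^+(x)$ places them in a common $I_n$-block, since in $T[I_n]$ non-adjacency is equivalent to belonging to a common block (as $T$ is a tournament). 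Thus $|[u]| \leq n$, and the central technical claim in the reverse direction is that any two vertices $u, v$ in the same $I_n$-block of $D^+(x)$ are already $\sim$-equivalent.

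Granted the key claim, set $D' = D_\sim$: vertices are $\sim$-classes, and $[u][v] \in ED'$ iff $uv \in ED$. This is well-defined because equivalent vertices share neighbourhoods; $ED'$ is asymmetric and loopless by the same property combined with the asymmetry of $ED$ and the non-adjacency of distinct equivalent vertices. The isomorphism $D'[I_n] \isom D$ is then immediate: each $\sim$-class is an independent $n$-set, and edges between two distinct classes are complete in one direction or absent, exactly matching the lexicographic product. Countability and connectedness of $D'$ descend from $D$. For C-homogeneity of $D'$, any isomorphism $\varphi'\colon A' \to B'$ between finite connected induced subdigraphs of $D'$ lifts, by choosing one representative per class, to an isomorphism $\varphi$ between finite connected subdigraphs of $D$; C-homogeneity of $D$ extends $\varphi$ to $\alpha \in \Aut(D)$, which preserves $\sim$ and descends to an automorphism of $D'$ extending $\varphi'$. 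Finally, $D'^+ \isom T$ and $D'^- \isom T$ because $N^+_{D'}([x])$ (resp.\ $N^-_{D'}([x])$) is the quotient of $N^+_D(x) \isom T[I_n]$ (resp.\ $N^-_D(x) \isom T[I_n]$, using the hypothesis $D^+ \isom D^-$) by its $I_n$-blocks.

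The main obstacle is the key claim that every $v$ in the same $I_n$-block as $u$ in $D^+(x)$ satisfies $u \sim v$. By C-homogeneity applied to the connected subdigraph $D[x, u, v]$ there exists $\alpha \in \Aut(D)$ swapping $u, v$ and fixing $x$, so it suffices to show that for every $w \in VD$ the adjacency of $w$ to $u$ matches that of $w$ to $v$. The case $w \in N^+(x) \cup \{x\}$ is immediate from the $T[I_n]$-structure of $D^+(x)$, since same-block vertices share adjacencies to every other vertex of $N^+(x)$. For $w \in N^-(x)$, one examines $D^-(u) \isom T[I_n]$ (using $D^+ \isom D^-$), in which $x$ and $w$ lie in different blocks because $wx \in ED$; tracking $\alpha(w) \in N^-(v) \cap N^-(x)$ and exploiting once more that non-adjacency in a tournament blow-up forces a common block should force $w$ to have the same adjacency to $u$ and to $v$. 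For $w$ not adjacent to $x$, one reduces to the previous cases via a common neighbour $y$ of $\{x, u, v, w\}$, whose existence follows from the richness of $D^{\pm}(y)$ together with C-homogeneity. The delicate step is the $w \in N^-(x)$ case, requiring a careful reconciliation between the bipartite structure on $N^-(x) \cup N^+(x)$ and the block structures of $D^{\pm}(x)$.
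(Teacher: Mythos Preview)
Your overall strategy---quotienting $D$ by the relation ``same neighbourhoods''---is exactly the paper's, and your deduction of the properties of $D'$ from the key claim is fine. The gap you yourself flag in the $w\in N^-(x)$ case is real, and your sketch there does not close it: knowing that $\alpha(w)\in N^-(v)$ tells you nothing about the adjacency between $w$ itself and~$v$, since $\alpha$ need not fix~$w$. This is precisely the step where the paper does non-trivial work.

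The paper handles this case by a case split on whether $T$ contains a directed triangle. If it does, extend the edge $wx$ inside $D^-(u)\isom T[I_n]$ to a directed triangle $D[w,x,z]$ (so $xz,zw\in ED$ and $z\in N^-(u)$); writing $u^\perp$ for the set of vertices not adjacent to~$u$, examining the block of~$u$ in each of $D^+(w)$, $D^+(x)$, $D^+(z)$ yields the cycle of inclusions
\[
N^+(w)\cap u^\perp\ \subseteq\ N^+(x)\cap u^\perp\ \subseteq\ N^+(z)\cap u^\perp\ \subseteq\ N^+(w)\cap u^\perp,
\]
forcing equality, whence $v\in N^+(x)\cap u^\perp$ gives $wv\in ED$. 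If $T$ has no directed triangle (so $T=\rat$, since $S(2)$ is already excluded by Lemma~\ref{lem_TIn0}), then $D^-$ contains transitive triangles; C-homogeneity applied to the transitive triangle $D[w,x,u]$ produces $z$ with $w,x,u\in N^-(z)$, the block structure of $D^+(x)$ then gives $vz\in ED$, and the block structure of $D^-(z)$ gives $wv\in ED$.

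Your reduction for $w\notin N(x)$ also needs a concrete witness rather than an appeal to ``richness'': since $x,w$ are non-adjacent vertices of $N^-(u)$ they lie in the same block of $D^-(u)\isom T[I_n]$, so any $z$ in a predecessor block is a common predecessor of $x,w,u$; the case just established (applied with $z$ in the role of~$w$) gives $zv\in ED$, and then the block structure of $D^+(z)$ yields $wv\in ED$. A further simplification the paper makes is to prove only the one-sided equality $N^-(u)=N^-(v)$ and then derive the complete-bipartite structure between classes separately via a common predecessor, rather than verifying both $N^+$ and $N^-$ at once as you propose.
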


\begin{proof}
Let $x\in VD$.
Let us first show that
\begin{txteq}\label{eq_TIn1}
$N^-(a)=N^-(b)$ for each two non-adjacent vertices $a,b\in N^+(x)$.
\end{txteq}
Let $y\in N^-(a)$.
First, let us assume that $x$ and~$y$ are adjacent.
If $y\in N^+(x)$, then it is an immediate consequence of $D^+(x)\isom T[I_n]$ that $y$ lies in~$N^-(b)$.
So let us assume $yx\in ED$.
If $D$ contains no directed triangle, then it contains a transitive triangle and, by C-homogeneity, we find a vertex $z\in VD$ with $D[x,y,a]\sub D^-(z)$.
Then $D^+(x)$ shows $bz\in ED$ and $D^-(z)$ shows $yb\in ED$.
If $T$ contains a directed triangle, let $z\in N^-(a)$ such that $D[x,y,z]$ is a directed triangle.
Let $a^\perp$ be the set of vertices in~$D$ that are not adjacent to~$a$.
Due to the structure of~$D^+(y)$, we observe $N^+(y)\cap a^\perp\sub N^+(x)\cap a^\perp$ and conclude
\[
N^+(y)\cap a^\perp\sub N^+(x)\cap a^\perp\sub N^+(z)\cap a^\perp\sub N^+(y)\cap a^\perp.
\]
So all inclusions are equalities, which shows $yb\in ED$.

Now we assume that $x$ and~$y$ are not adjacent.
Then we find $z\in N^-(a)$ with $x,y\in N^+(z)$.
So we have due to the previous situation that $z$ lies in $N^-(b)$ and hence that $y$ lies in~$N^-(b)$.
This shows~(\ref{eq_TIn1}).

Let us define a relation $\sim$ on~$VD$ via
\begin{txteq}\label{eq_TIn2}
$u\sim v\ :\Longleftrightarrow\ N^-(u)=N^-(v)$ for all $u,v\in VD$.
\end{txteq}
Then $\sim$ is obviously an $\Aut(D)$-invariant equivalence relation with no two adjacent vertices in the same equivalence class.
Let $A,B$ be two equivalence classes and let $a_1,a_2\in A$ and $b_1,b_2\in B$ with $a_1b_1\in ED$.
By definition, we know $a_1b_2\in ED$.
Let $c\in N^-(a_1)\cap N^-(b_1)$.
By definition of~$\sim$, we conclude $ca_2,cb_1\in ED$.
So we have $D[a_1,a_2,b_1,b_2]\sub D^+(c)$.
Due to the structure of~$D^+(c)$ and as $a_1$ and~$a_2$ are not adjacent, $a_2$ is a predecessor of~$b_1$ and of~$b_2$.
Thus, we have shown that
\begin{txteq}\label{eq_TIn3}
each two equivalence classes induce either a complete or an empty bipartite digraph.
\end{txteq}
Thus, $D_\ssim$ is a digraph.
Note that (\ref{eq_TIn3}) implies that $D_\ssim$ inherits C-homogeneity from~$D$.
By~(\ref{eq_TIn1}), we conclude $D\isom D_\ssim[I_n]$ and $D^+\isom T$.
\end{proof}

Now we are able to complete the investigation for $D$ if $D^+\isom C_3[I_n]\isom D^-$.

\begin{lem}\label{lem_TIn1}
If $D^+\isom C_3[I_n]\isom D^-$, then $D\isom C_3^\wedge[I_n]$.
\end{lem}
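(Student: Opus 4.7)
By Lemma~\ref{lem_TIn} applied with $T=C_3$, there exists a countable connected C-homogeneous digraph $D'$ with $D'^+\isom C_3\isom D'^-$ such that $D\isom D'[I_n]$. It therefore suffices to prove $D'\isom C_3^{\wedge}$, after which $D\isom C_3^{\wedge}[I_n]$ follows immediately.

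Since every vertex of $D'$ has both in-degree and out-degree equal to $3$, the digraph $D'$ is locally finite. The countable connected locally finite C-homogeneous digraphs are classified in~\cite{FinConHomDigraphs}, and a direct inspection of that list reveals that $C_3^{\wedge}$ is the unique digraph in it whose out-neighbourhood and in-neighbourhood are both isomorphic to $C_3$. Hence $D'\isom C_3^{\wedge}$, which completes the proof.

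The only substantive step is the appeal to the locally finite classification, so no serious obstacle arises. For completeness, a purely structural argument is also feasible: fix $x\in VD'$, write $N^+(x)=\{a_1,a_2,a_3\}$ and $N^-(x)=\{b_1,b_2,b_3\}$ (disjoint by asymmetry, each a directed triangle), and use the constraints $D'^+(a_i)\isom C_3\isom D'^-(a_i)$ together with C-homogeneity to determine the remaining adjacencies. One finds that each $a_i$ has one out-neighbour among the $b_j$ and one out-neighbour equal to a single common vertex $y\notin\{x,a_1,a_2,a_3,b_1,b_2,b_3\}$; the corresponding analysis at the $b_j$ then forces $|VD'|=8$ and matches exactly the adjacencies of $C_3^{\wedge}$. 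This hands-on verification is, however, made redundant by~\cite{FinConHomDigraphs}.
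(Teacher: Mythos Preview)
Your proof is correct and follows the same strategy as the paper: reduce via Lemma~\ref{lem_TIn} to the case $n=1$, observe that the resulting digraph is locally finite, and then appeal to~\cite{FinConHomDigraphs}. The only difference is cosmetic: the paper cites the specific result \cite[Lemma~4.5]{FinConHomDigraphs} directly, whereas you invoke the full classification theorem from that paper and inspect the list; your additional hands-on sketch is not needed but is a nice sanity check.
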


\begin{proof}
By Lemma~\ref{lem_TIn}, it suffices to show $D\isom C_3^\wedge$ if $n=1$.
Note that $D$ is locally finite, if $n=1$.
So we obtain the assertion from~\cite[Lemma~4.5]{FinConHomDigraphs}.
\end{proof}

In the following we only have to look closer at the cases $T=T^\infty$ and $T=\rat$.
So we assume for the remainder of this section that $T$ is one of those two tournaments.
In both cases we obtain (among others) digraphs that are similar to those that we obtain in the case of $T=C_3$: the digraphs $T^\wedge[I_n]$.
The situation in which they occur (in the case $n=1$) is that every edge lies on precisely two induced $2$-arcs, once as the first edge and once as the last edge:

\begin{lem}\label{lem_TInOne2Arc}
If $n=1$, if $D^+\isom D^-$, and if every edge of~$D$ is on precisely one induced $2$-arc the first edge and on precisely one induced $2$-arc the last edge, then $D\isom T^\wedge$.
\end{lem}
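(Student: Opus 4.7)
The plan is to show every vertex $v$ of $D$ has a unique non-neighbour $v^*$, and then recover the construction of $T^\wedge$ from the resulting fixed-point-free involution.

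Fix $x \in VD$. For each $y \in N^+(x)$ the first-edge hypothesis gives a unique $\sigma^+(xy) \in N^+(y) \cap x^\perp$; dually $\sigma^-(wx) \in N^-(w) \cap x^\perp$ for $w \in N^-(x)$. First I would show $\sigma^+(xy)$ does not depend on $y$. Given distinct $y, y' \in N^+(x)$, which are adjacent in the tournament $N^+(x) \isom T$, say $y \to y'$, write $z = \sigma^+(xy)$; then $z$ and $y'$ lie together in the tournament $N^+(y)$ and are adjacent, but $z \to y'$ would put $x, z \in N^-(y')$ and force them adjacent, impossible, so $y' \to z$ and $z = \sigma^+(xy')$ by uniqueness. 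Call this common value $x^*$, and dually obtain $x^{**}$. A parallel case analysis yields $N^-(x^*) = N^+(x)$ and $N^+(x^{**}) = N^-(x)$: for $\sub$, a $u \in N^-(x^*) \setminus N^+(x)$ fails, since $u \in \{x\} \cup N^-(x)$ forces $x, x^*$ to sit in a common tournament neighbourhood (contradiction), while $u \in x^\perp \setminus \{x^*\}$ forces a tournament $\{u, y\} \sub N^-(x^*)$ for any $y \in N^+(x)$ that leads either to $x \sim u$ or to $u \in N^+(y) \cap x^\perp = \{x^*\}$.

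The main obstacle is to deduce $x^* = x^{**}$ and then $x^\perp = \{x^*\}$. Assume $x^* \neq x^{**}$. The constancy of $\sigma^+$ forces $y \not\sim x^{**}$ for every $y \in N^+(x)$: $y \to x^{**}$ puts $x^{**}$ into the singleton $N^+(y) \cap x^\perp = \{x^*\}$, while $x^{**} \to y$ puts $y$ into $N^+(x^{**}) = N^-(x)$. I then eliminate every candidate $u \in N^-(x^{**})$: $u \in \{x\} \cup N^-(x)$ by tournament contradictions (using $x, x^{**} \in N^+(u)$); $u \in N^+(x)$ by the non-adjacency just shown; $u = x^*$ because the last-edge hypothesis applied to the edge $x^* x^{**}$ would demand $|N^-(x^*) \cap (x^{**})^\perp| = 1$, yet $N^-(x^*) = N^+(x) \sub (x^{**})^\perp$ has $|T|$ elements; and $u \in x^\perp \setminus \{x^*, x^{**}\}$ because any such $u$ is non-adjacent to all of $N(x)$ (by the same tournament-plus-uniqueness arguments), so the first-edge hypothesis on $(u, x^{**})$ gives $|N^+(x^{**}) \cap u^\perp| = 1$ whereas $N^+(x^{**}) = N^-(x) \sub u^\perp$. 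So $N^-(x^{**}) = \es$, contradicting $N^-(x^{**}) \isom T$; hence $x^* = x^{**}$. Writing $x^* = x^{**}$, one has $N(x) = N(x^*)$. Any $v \in x^\perp \setminus \{x^*\}$ is non-adjacent to all of $N(x)$, so on a shortest path $x, v_1, v_2, \ldots, v$ in the connected digraph $D$, which must then have length at least three, the vertex $v_2$ is forced to equal $x^*$ (the unique distance-two non-neighbour by the four-case analysis of induced $2$-arcs), whence $v_3 \in N(x^*) = N(x)$ is at distance at most one from $x$, contradicting the shortest path. Thus $x^\perp = \{x^*\}$.

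The map $v \mapsto v^*$ is now a fixed-point-free involution of $VD$ with $N^+(v) = N^-(v^*)$ and $N^-(v) = N^+(v^*)$. To finish I would verify that $A := \{x\} \cup N^+(x)$ and $B := \{x^*\} \cup N^-(x)$ are disjoint and partition $VD$ (using $x^\perp = \{x^*\}$), that each is isomorphic to $T^+$ (with $x$, resp.\ $x^*$, as the distinguished source vertex, arising from the edge from this vertex to every other element of its block), that the involution $v \mapsto v^*$ interchanges $A$ and $B$ (since $y \in N^+(x)$ implies $y \to x^*$, so $y^* \in N^+(x^*) = N^-(x)$, and dually), and that the cross edges between $A$ and $B$ obey exactly the defining rule of $T^\wedge$, which is immediate from $N^\pm(v^*) = N^\mp(v)$. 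Hence $D \isom T^\wedge$.
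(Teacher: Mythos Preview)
Your proof is correct and follows the same overall strategy as the paper: establish that every vertex has a unique non-neighbour, obtain a fixed-point-free involution $v\mapsto v^*$ with $N^\pm(v^*)=N^\mp(v)$, and then read off the $T^\wedge$ structure from the two blocks $\{x\}\cup N^+(x)$ and $\{x^*\}\cup N^-(x)$.

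The tactical difference lies in how the involution property is secured. The paper works only with the forward target $x^\perp$ and, after proving its uniqueness, \emph{constructs} an explicit induced $2$-arc $x^\perp b x$ (via a short chase through $N^+(y)\cap N^-(x^\perp)$ and a common predecessor $b$), thereby showing $(x^\perp)^\perp=x$ directly. You instead introduce the dual object $x^{**}$ (the common source of induced $2$-arcs ending at~$x$), first pin down $N^-(x^*)=N^+(x)$ and $N^+(x^{**})=N^-(x)$, and then force $x^*=x^{**}$ by exhausting all candidates for a predecessor of~$x^{**}$, each eliminated either by a tournament-neighbourhood clash or by the first/last-edge uniqueness hypothesis combined with $|T|\geq 2$. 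Both routes are short; the paper's is a direct construction while yours is an elimination argument, and yours makes the identities $N^\pm(x^*)=N^\mp(x)$ explicit earlier, which streamlines the final verification of the cross edges. The remaining steps (uniqueness of the non-neighbour via the distance argument, and the identification with $T^\wedge$) are essentially the same in both proofs.
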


\begin{proof}
Let $x\in VD$.
We first show that
\begin{txteq}\label{eq_TInOne2Arc1}
there exists a unique vertex $x^\perp$ such that every induced $2$-arc that starts at~$x$ ends at $x^\perp$.
\end{txteq}
Suppose (\ref{eq_TInOne2Arc1}) does not hold.
Then we find two distinct $2$ arcs $xyz$ and $xuv$ in~$D$.
By assumption, we have $y\neq u$.
Since $y$ and~$u$ lie in the tournament $D^+(x)$, they are adjacent.
So we may assume $yu\in ED$.
Because there is a unique induced $2$-arc whose second edge is~$uv$, we know that $y$ and~$v$ are adjacent.
As $x$ and~$v$ are not adjacent, $v$ cannot lie in~$D^-(y)$, so we have $v\in N^+(y)$.
But then the edge $xy$ lies on the two induced $2$-arcs $xyz$ and $xyv$.
This contradiction to the assumption shows~(\ref{eq_TInOne2Arc1}).

Next, we show
\begin{equation}\label{eq_TInOne2Arc2}
(x^\perp)^\perp=x.
\end{equation}
Let $xyx^\perp$ be an induced $2$-arc.
Let $a\in N^+(y)\cap N^-(x^\perp)$.
Since $xya$ cannot be an induced $2$-arc by assumption, $a$ and~$x$ are adjacent.
This edge must be $xa$ because of $D^+(a)\isom T$.
So there exists $b\in VD$ with $x,a\in N^+(b)$
Since $xax^\perp$ is an induced $2$-arc, the edge $ax^\perp$ cannot lie on a second induced $2$-arc $bax^\perp$.
Hence, $b$ and~$x^\perp$ are adjacent.
Note that $x^\perp$ does not lie in~$N^+(b)$ because of $D^+(b)\isom T$ and $x\in N^+(b)$.
So $x^\perp b\in ED$ and $x^\perp bx$ is an induced $2$-arc that shows~(\ref{eq_TInOne2Arc2}).

Let us show that
\begin{txteq}\label{eq_TInOne2Arc3}
$\diam(D)=2$ and $x^\perp$ is the only vertex in~$D$ that is not adjacent to~$x$.
\end{txteq}
Since $D$ contains induced $2$-arcs, its diameter is at least~$2$.
Let $xux^\perp$ and $x^\perp vx$ be induced $2$-arcs.
Any neighbour of~$x^\perp$ except for $u$ and~$v$ must be adjacent to either $u$ or~$v$ because of $D^+\isom T\isom D^-$, so its distance to~$x$ is at most~$2$.
Because of $D^+\isom T\isom D^-$, any two vertices $a,b$ with $d(a,b)=2$ must be the end vertices of an induced $2$-arc.
Hence, (\ref{eq_TInOne2Arc1}) and~(\ref{eq_TInOne2Arc2}) show that every neighbour of~$x^\perp$ must be adjacent to~$x$.
This shows~(\ref{eq_TInOne2Arc3}).

Now we are able to show $D\isom T^\wedge$.
Due to~(\ref{eq_TInOne2Arc3}), we know that $D$ is the union of $D_1:=D^+(x)+x$ and $D_2:=D^-(x)+x^\perp$.
Furthermore, we have $D^-(x)=D^+(x^\perp)$ because $x$ and~$x^\perp$ have no common successor and no common predecessor.
Let us define
\[
\varphi\colon D_1\to D_2, \quad y\mapsto y^\perp.
\]
Since $D_1$ and $D_2$ are tournaments, $y^\perp$ does not lie in~$D_1$ for any $y\in VD_1$, so $\varphi$ is well-defined.
Similarly, $\varphi$ is surjective.
Due to~(\ref{eq_TInOne2Arc1}) and~(\ref{eq_TInOne2Arc2}), we also have that $\varphi$ is injective.
Let $uv\in ED_1$.
Then $vu^\perp\in ED$ and $u^\perp v^\perp\in ED$ as $D^-$ is a tournament.
This shows that $\varphi$ is an isomorphism.
Let $a\in D_1$ and $b\in D_2$.
If $ab\in ED$, then $ba^\perp\in ED$ and, if $ba\in ED$, then $a^\perp b\in ED$.
Thus, we have shown $D\isom T^\wedge$.
\end{proof}

Now we determine $D$ in the case $T=T^\infty$ if $D^+\isom D^-$.

\begin{lem}\label{lem_TIn2}
If $D^+\isom T^\infty[I_n]\isom D^-$, then either $D\isom (T^\infty)^\wedge[I_n]$ or $D\isom T'[I_n]$ for some countable homogeneous tournament~$T'$.
\end{lem}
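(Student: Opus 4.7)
The plan is to apply Lemma~\ref{lem_TIn} first, so as to reduce to the case $n=1$: write $D\isom D'[I_n]$ with $D'$ a countable connected C-homogeneous digraph satisfying $D'^+\isom T^\infty\isom D'^-$, and show that $D'$ (hereafter just called $D$) is either a countable homogeneous tournament $T'$ or $D\isom(T^\infty)^\wedge$; the conclusion for the original digraph follows by taking the lexicographic product with $I_n$.

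I then split on whether $D$ is a tournament. If $D$ has no non-adjacent pair, every induced subdigraph on at least two vertices is connected (the underlying graph is complete), so C-homogeneity coincides with full homogeneity on~$D$. Theorem~\ref{thm_CountHomDigraphs} combined with $D^+\isom T^\infty$ and Lemma~\ref{lem_TIn0} (which rules out $S(2)$) forces $D\isom T^\infty$, giving the ``$T'[I_n]$'' alternative. Henceforth assume $D$ has a non-adjacent pair; the plan is to verify both hypotheses of Lemma~\ref{lem_TInOne2Arc} so as to conclude $D\isom(T^\infty)^\wedge$. The key preliminary fact I would establish first is that \emph{for any non-adjacent pair $(u,v)$ of $D$ we have $N^+(u)\cap N^+(v)=\emptyset=N^-(u)\cap N^-(v)$}: a common successor $w$ would embed the non-adjacent pair $\{u,v\}$ into the tournament $N^-(w)\isom T^\infty$, contradiction; the predecessor case is dual.

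Fix $xy\in ED$ and set $C:=N^+(y)\sm N(x)$, so that $|C|$ is the number of induced $2$-arcs with first edge~$xy$. By C-homogeneity applied to connected subdigraphs of the form $\{x,y\}\cup F$ with $F\sub C$, the set $C$ is a homogeneous subtournament of $N^+(y)\isom T^\infty$; hence $|C|\in\{0,1,3,\aleph_0\}$ (corresponding to $\es,I_1,C_3$, or one of $S(2),\rat,T^\infty$). By edge-transitivity $|C|$ is a digraph invariant, and the non-tournament assumption combined with connectedness gives $|C|\ge 1$. By the same argument on $D^-$, the number of induced $2$-arcs with last edge~$xy$ equals the dual invariant; these coincide once we know $|C|=1$.

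The main obstacle is to exclude $|C|\ge 2$. Suppose $|C|\ge 2$ and pick $z_1,z_2\in C$ with $z_1\to z_2$; consider the $C$-set $W:=N^+(z_2)\sm N(z_1)$ of the edge $z_1z_2$, again of size $|C|$. The preliminary fact applied to the non-adjacent pair $(x,z_2)$ rules out $x\to w$ for any $w\in W$ (else $w\in N^+(x)\cap N^+(z_2)=\es$), so every $w\in W$ satisfies $w\to x$ or $w\not\sim x$; analogous constraints from the pairs $(y,\cdot)$ and $(z_1,\cdot)$ further restrict the type of~$w$, while C-homogeneity applied to connected $5$-vertex extensions of $\{x,y,z_1,z_2\}$ permutes $W$ within each type and forces each type to be either empty or a homogeneous substructure. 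A case analysis on the possible types ultimately exhibits a non-adjacent pair lying inside some $N^+(v)$ or $N^-(v)$, contradicting the tournament property of these neighbourhoods. This forces $|C|=1$, so Lemma~\ref{lem_TInOne2Arc} applies and gives $D\isom(T^\infty)^\wedge$, as required.
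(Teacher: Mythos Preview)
Your overall architecture matches the paper's: reduce to $n=1$ via Lemma~\ref{lem_TIn}, dispose of the tournament case by observing that C-homogeneity is then full homogeneity, and in the non-tournament case aim for Lemma~\ref{lem_TInOne2Arc} by showing each edge is the first edge of exactly one induced $2$-arc. Your preliminary fact (non-adjacent pairs have no common successor and no common predecessor) is correct and is exactly what the paper exploits, and your observation that $C:=N^+(y)\sm N(x)$ is a homogeneous subtournament of $N^+(y)$ is a pleasant structural bonus that the paper does not state explicitly.

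The genuine gap is in the step you label ``the main obstacle''. You set up the set $W=N^+(z_2)\sm N(z_1)$ and announce that ``a case analysis on the possible types ultimately exhibits a non-adjacent pair lying inside some $N^+(v)$ or $N^-(v)$'', but you do not carry out that analysis, and it is not clear that iterating the $C$-construction to the next edge $z_1z_2$ leads anywhere without further ideas. The paper does not proceed this way: instead of moving forward along $z_1z_2$, it moves \emph{sideways} inside $N^+(y)$. Concretely, pick any $a\in N^+(x)\cap N^+(y)$ (this set is nonempty since $y$ has successors in the tournament $D^+(x)$). Your preliminary fact forces $z,z'\notin N^-(a)$ (else $x$ and $z$, resp.\ $x$ and $z'$, would be non-adjacent vertices in $N^-(a)$), and since $a,z,z'$ all lie in the tournament $N^+(y)$ this gives $z,z'\in N^+(a)$. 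Now the crucial use of $T^\infty$: by genericity of $N^+(y)\isom T^\infty$ there is $b\in N^+(y)$ with $ba,\,bz,\,z'b\in ED$. Since $b,x\in N^-(a)$ they are adjacent; but $bx\in ED$ would put the non-adjacent pair $x,z$ into $N^+(b)$, while $xb\in ED$ would put the non-adjacent pair $x,z'$ into $N^-(b)$. Either way you contradict that $D^\pm$ are tournaments, so $|C|\le 1$.

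In short: your proof is incomplete at precisely the decisive point. The missing idea is not a long case split on types of $w\in W$, but the one-line use of the \emph{genericity} of $T^\infty$ inside $N^+(y)$ to manufacture a vertex $b$ with prescribed relations to $a,z,z'$, after first pinning down $z,z'\in N^+(a)$ for some $a\in N^+(x)\cap N^+(y)$.
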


\begin{proof}
Let us assume that $n=1$ and that $D$ is not a homogeneous tournament.
As any induced subdigraph of a tournament is connected, C-homogeneity implies that $D$ is no tournament at all.

Since $D^+$ and $D^-$ are tournaments, we find between each two vertices $x$ and~$y$ of distance~$2$ an induced $2$-arc $xyz$ in~$D$.
Our aim is to apply Lemma~\ref{lem_TInOne2Arc}.
Therefore, we prove that
\begin{txteq}\label{eq_TIn2_1}
there is no $z'\neq z$ in~$VD$ such that $xyz'$ is an induced $2$-arc.
\end{txteq}
Let us suppose that we find a vertex $z'\neq z$ such that $xyz'$ is an induced $2$-arc.
Since $D^+(y)\isom T^\infty$, the vertices $z$ and $z'$ are adjacent, say $zz'\in ED$.
Let $a\in N^+(y)\cap N^+(x)$.
Because $D^-(a)$ is a tournament, neither $z$ nor~$z'$ lies in~$N^-(a)$.
Since $D^+(y)$ is a tournament, $a$ is adjacent to~$z$ and to~$z'$.
Thus, $z$ and~$z'$ lie in~$N^+(a)$.
In $D^+(y)\isom T^\infty$, we find a vertex $b$ with $ba,bz,z'b\in ED$.
Considering $D^-(a)$, we know that $b$ and~$x$ are adjacent, but neither $bx$ nor $xb$ is an edge of~$D$ since neither $D^+(b)$ can contain $x$ and~$z$ nor $D^-(b)$ can contain $x$ and~$z'$.
This contradiction shows~(\ref{eq_TIn2_1}).

By an analogous proof as above, there is precisely one induced $2$-arc whose second edge is~$xy$.
Thus, the assertion follows from Lemma~\ref{lem_TInOne2Arc}.
\end{proof}

It remains to determine $D$ in the case $T=\rat$.

\begin{lem}\label{lem_TIn3}
If $D^+\isom \rat[I_n]\isom D^-$, then $D$ is isomorphic to one of the following digraphs:
\begin{enumerate}[\rm (i)]
\item $\rat^\wedge[I_n]$;
\item $S(3)[I_n]$; or
\item $T'[I_n]$ for some countable homogeneous tournament~$T'$.
\end{enumerate}
\end{lem}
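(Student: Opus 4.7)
The plan is to deduce the classification from the case $n=1$. Since $D^+\isom\rat[I_n]\isom D^-$, Lemma~\ref{lem_TIn} supplies a countable connected C-homogeneous digraph $D'$ with $D'[I_n]\isom D$ and $D'^+\isom\rat\isom D'^-$. It therefore suffices to show that $D'$ is isomorphic to $\rat^\wedge$, to $S(3)$, or to a countable homogeneous tournament~$T'$. If $D'$ is itself a tournament, then every finite induced subdigraph of~$D'$ is connected, so the C-homogeneity of~$D'$ automatically upgrades to full homogeneity, and $D'$ is a countable homogeneous tournament; this yields case~(iii).

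Assume from now on that $D'$ is not a tournament. Then it contains non-adjacent vertices and, since $D'^+\isom\rat$ is connected, at least one induced $2$-arc. Because $D'$ is vertex-transitive and C-homogeneous, it is $1$-arc transitive, so the number $k$ of induced $2$-arcs whose first edge is a prescribed $xy\in ED'$ is independent of~$xy$ (and likewise for the dual count). If both these counts equal~$1$, then Lemma~\ref{lem_TInOne2Arc} gives $D'\isom\rat^\wedge$, which is case~(i).

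It remains to treat the case $k\geq 2$, where the goal is $D'\isom S(3)$. My plan is to promote C-homogeneity of~$D'$ to full homogeneity and then invoke Theorem~\ref{thm_CountHomDigraphs}: going through Cherlin's list and using the results of the previous sections, the countable homogeneous digraphs for which both $D^+$ and~$D^-$ are isomorphic to~$\rat$ are exactly the two tournaments $\rat$ and $S(2)$ together with $\rat^\wedge$ and $S(3)$; the tournaments are ruled out since $D'$ is not a tournament, and $\rat^\wedge$ is ruled out by $k\geq 2$, so only $S(3)$ remains. To upgrade C-homogeneity to homogeneity one must extend any isomorphism $\varphi\colon A\to B$ between disconnected finite induced subdigraphs of~$D'$; by induction on the number of components of~$A$ it is enough to fuse two components by absorbing a common predecessor (or successor). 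The key auxiliary claim is thus: under $k\geq 2$, any two non-adjacent vertices of~$D'$ admit both a common predecessor and a common successor. Granting this, pick a non-adjacent pair $a_1,a_2$ in distinct components of~$A$, together with common predecessors $u$ of $a_1,a_2$ and $v$ of $a_1\varphi,a_2\varphi$; in the spirit of Lemma~\ref{lem_2FinPOinN}, a maximality argument then extends $\varphi$ to an isomorphism $A+u\to B+v$ with strictly fewer components, and the induction closes by C-homogeneity on the resulting connected subdigraph.

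The main obstacle is the common-predecessor claim under $k\geq 2$. I would prove it along the lines of Lemma~\ref{lem_GenPO2Vertices}: given non-adjacent vertices $a,b$, pick a vertex $y$ with $a\in N^+(y)$ and an edge $xy$, split $N^+(y)$ into the three classes ``predecessor of~$x$'', ``successor of~$x$'', and ``non-neighbour of~$x$''---the last being non-empty precisely because $k\geq 2$---and exploit the added flexibility, together with $1$-arc transitivity, to route an induced $2$-arc through a suitable vertex terminating at~$b$. Once the common-predecessor claim and its dual are established, the classification follows as indicated above.
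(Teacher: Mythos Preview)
Your reduction to $n=1$ via Lemma~\ref{lem_TIn}, the tournament case, and the application of Lemma~\ref{lem_TInOne2Arc} when both $2$-arc counts equal~$1$ are all fine and agree with the paper. The problem is the remaining case.

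Your ``key auxiliary claim'' --- that under $k\geq 2$ any two non-adjacent vertices of~$D'$ have a common predecessor and a common successor --- is \emph{false} in the target digraph $S(3)$ itself. In $S(3)$, place $x$ at angle~$0$ and a non-neighbour $y$ at angle~$\pi$. The predecessors of~$x$ occupy the arc $(4\pi/3,2\pi)$ while the predecessors of~$y$ occupy the arc $(\pi/3,\pi)$; these are disjoint, so $x$ and~$y$ have no common predecessor. A symmetric computation shows they have no common successor either. What they \emph{do} have is a vertex $z$ with $xz,zy\in ED'$ (and another $z'$ with $yz',z'x\in ED'$): non-adjacent pairs in $S(3)$ are linked by induced $2$-arcs, never by common predecessors or successors. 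So the analogy with Lemma~\ref{lem_GenPO2Vertices} breaks down --- in $\PF$ any two vertices really do share a predecessor, but $\rat$ behaves very differently.

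This means the fusing step in your homogeneity argument cannot work as written. The paper's proof does eventually establish homogeneity of~$D'$, but only after a chain of structural facts: no directed triangle, existence of an induced~$C_4$, the property that every vertex outside a given~$C_4$ has both a predecessor and a successor on it with an edge between them, and finally that the non-neighbours of any vertex induce a tournament (so any disconnected finite induced subdigraph has exactly two components, each a copy of a sub-order of~$\rat$). The fusing is then done by a $2$-arc $a_1\to a\to a_2$ with $a$ on a suitable~$C_4$, and one checks that $A_1\sub D^-(a)$ and $A_2\sub D^+(a)$ using the absence of directed triangles. None of this structure is available from your sketch, and the common-predecessor shortcut does not exist.
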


\begin{proof}
As in the proof of Lemma~\ref{lem_TIn2}, we assume $n=1$ and that $D$ is not a (homogeneous) tournament.
If for every edge $xy$ there is precisely one induced $2$-arc whose first edge is~$xy$ and precisely one induced $2$-arc whose second edge is~$xy$, then Lemma~\ref{lem_TInOne2Arc} implies $D\isom \rat^\wedge$.
By symmetry, let us assume that $xy$ lies on two induced $2$-arcs $xyz$ and $xyz'$.

Considering $D^+(y)$, the vertices $z$ and~$z'$ are adjacent.
We may assume $zz'\in ED$.
Let $z''\in N^+(y)\cap N^+(x)$.
Note that $D^-(z'')\isom\rat$ implies that neither $z$ nor~$z'$ lies in~$N^-(z'')$.
But as $z,z',z''\in N^+(y)$, the vertex $z''$ is adjacent to~$z$ and to~$z'$.
Hence, we have $z''z\in ED$ and $z''z'\in ED$.
By C-homogeneity, we find an automorphism $\alpha$ of~$D$ that fixes $y$ and~$z'$ and maps $z''$ to~$z$.
Since $x\in N^-(z'')$ but $x\notin N(z')$, we conclude $x':=x\alpha\neq x$.
Note that $x$ and $x'$ must be adjacent as both vertices lie in~$D^-(y)$ but $x'x\notin ED$ because not both of the two non-adja\-cent vertices $x$ and~$z$ can lie in~$D^+(x')$.
Thus, we have $xx'\in ED$ and $xx'zz'$ is an induced $3$-arc.
Thus, we have shown that
\begin{txteq}\label{eq_TIn3_1}
the end vertices of any induced $2$-arc are also end vertices of an induced $3$-arc.
\end{txteq}

Let us show the following:
\begin{txteq}\label{eq_TIn3_2}
$D$ contains either an induced directed cycle or an induced directed double ray.
\end{txteq}
First, let us assume that there is an integer $m$ such that $D$ contains an induced $m$-arc but no induced $(m+1)$-arc.
Let $m$ be smallest possible.
Due to~(\ref{eq_TIn3_1}), we have $m\geq 3$.
Let $a_0\ldots a_m$ be an induced $m$-arc and $a_{m+1}\in VD$ such that $a_1\ldots a_{m+1}$ is also an induced $m$-arc.
To see that such a vertex $a_{m+1}$ exists, take an automorphism~$\alpha$ of~$D$ that maps $a_0\ldots a_{m-1}$ to $a_1\ldots a_m$, which exists by C-homogeneity, and set $a_{m+1}:=a_m\alpha$.
By the choice of~$m$, we know that $a_0\ldots a_{m+1}$ is not an induced $(m+1)$-arc.
If $a_0=a_{m+1}$, then $a_0\ldots a_m$ is an induced directed cycle.
So $a_0$ and $a_{m+1}$ are distinct but adjacent.
As $m\geq 2$, the vertices $a_0$ and $a_m$ are not adjacent.
Hence, $a_0$ cannot lie in the tournament $D^-(a_{m+1})$.
Thus, we have $a_{m+1}a_0\in ED$ and the vertices $a_0,\ldots, a_{m+1}$ form an induced directed cycle.

If no such $m$ exists, then $D$ contains an induced $n$-arc for every $n\in\nat$, as it contains an induced $3$-arc by~(\ref{eq_TIn3_1}).
Hence, $D$ contains an induced directed double ray: by C-homogeneity, we can enlarge every $n$-arc $a_1\ldots a_{n+1}$ to an $(n+2)$-arc $a_0\ldots a_{n+2}$ in a similar way we enlarged the $m$-arc in the previous case.
Continuing in this way we obtain an induced directed double ray, which shows~(\ref{eq_TIn3_2}).

Next, we show that
\begin{txteq}\label{eq_TIn3_2b}
$D$ contains no induced $4$-arc.
\end{txteq}
Let us suppose that $D$ contains an induced $4$-arc $a_0\ldots a_4$.
By~(\ref{eq_TIn3_1}) and C-homo\-geneity, we find a vertex $b$ such that $a_0ba_3$ in an induced $2$-arc.
Since $D^-(b)$ does not contain two non-adjacent vertices, we have $a_4b\notin ED$.
So either $a_0ba_4$ is an induced $2$-arc or $a_0ba_3a_4$ is an induced $3$-arc and we find by~(\ref{eq_TIn3_1}) a vertex $c$ such that $a_0ca_4$ is an induced $2$-arc.
For simplicity, set $c:=b$ if $a_0ba_4$ is an induced $2$-arc.
Considering $D^+(a_0)$ we know that $a_1$ and~$c$ are adjacent. As an edge $ca_1$ is a contradiction to $D^+(c)\isom \rat$, we have $a_1c\in ED$ and we conclude as before that $a_2$ and~$c$ are adjacent.
But an edge $a_2c$ implies that $D^-(c)$ contains the two non-adjacent vertices $a_0$ and~$a_2$ and an edge $ca_2$ implies that $D^+(c)$ contains the two non-adjacent vertices $a_2$ and~$a_4$.
This contradiction shows~(\ref{eq_TIn3_2b}).

A direct consequence of~(\ref{eq_TIn3_2b}) is that
\begin{txteq}\label{eq_TIn3_3}
$D$ contains neither an induced directed double ray nor an induced directed cycle of length at least~$6$.
\end{txteq}

The next step is to show that
\begin{txteq}\label{eq_TIn3_4}
$D$ contains no directed triangle.
\end{txteq}
Let $xy\in ED$.
For every $a\in N^-(y)$, we define\pagebreak
\begin{align*}
a^\rightarrow&=\{v\in N^+(y)\mid av\in ED\},\\
a^\leftarrow&=\{v\in N^+(y)\mid va\in ED\},\text{ and}\\
a^\perp&=\{v\in N^+(y)\mid a\text{ not adjacent to }v\}.
\end{align*}
Let $a_1\in a^\rightarrow$, $a_2\in a^\leftarrow$ and $a_3\in a^\perp$.
These three vertices form a transitive triangle as they lie in $D^+(y)\isom \rat$.
Since $D^+(a_2)$ is a tournament and $a\in N^+(a_2)$, we have $a_3a_2\in ED$ and, since $D^-(a_1)$ is a tournament and $a\in N^-(a_1)$, we have $a_1a_3\in ED$.
As $D[a_1,a_2,a_3]$ is transitive, we conclude $a_1a_2\in ED$.
So we have $a^\rightarrow\cup a^\perp\sub N^-(a_2)$ and $a^\leftarrow\cup a^\perp\sub N^+(a_1)$.

Let us suppose that $D$ contains some directed triangle.
Let $z,z'\in x^\perp$ with $zz'\in ED$, let $u\in x^\rightarrow$, and let $v\in x^\leftarrow$.
As $D$ contains a directed triangle, we find a vertex $w$ such that $D[w,y,u]$ is such a triangle.
As we have $w^\rightarrow\cup w^\perp\sub N^-(w')$ for every $w'\in w^\leftarrow$ and as $u\in w^\leftarrow$, we conclude $N^+(y)\cap N^+(u)\sub w^\leftarrow$.
In particular, we have $x^\perp\sub N^+(y)\cap N^+(u)\sub w^\leftarrow$.
In particular, we have $z'w\in ED$.
By C-homogeneity, we find an automorphism~$\alpha$ of~$D$ that fixes $y$ and~$z$ and maps~$v$ to~$z'$.
Then we have $x\alpha\neq x$, as $v\alpha=z'\in x^\perp$ but $v\notin x^\perp$.
Since $w$ and~$x\alpha$ lie in $D^-(y)$, they are adjacent to~$x$.
But neither of them lies in $N^+(x)$, because both lie in $N^+(z')$ and $D^-$ is a tournament.
Note that $z\in x^\perp\cap (x\alpha)^\perp$.
Thus, we have $x^\perp\not\sub (x\alpha)^\leftarrow$ and hence we do not find any automorphism of~$D$ that fixes $x$ and~$y$ and maps $w$ to~$x\alpha$.
This contradiction to C-homogeneity shows~(\ref{eq_TIn3_4}).

We know by~(\ref{eq_TIn3_2})--(\ref{eq_TIn3_4}) that the only induced directed cycles in~$D$ have length either $4$ or~$5$.
Next, we show that
\begin{txteq}\label{eq_TIn3_5}
$D$ contains a directed cycle of length~$4$.
\end{txteq}
If $D$ contains no induced directed cycle of length~$4$, then $D$ contains only induced directed cycles of length~$5$.
Let $a_1\ldots a_5a_1$ be such a cycle.
Due to~(\ref{eq_TIn3_1}), there is an induced $2$-arc $a_1ua_4$ in~$D$.
Since $a_1ua_4a_5a_1$ is not an induced directed cycle of length~$4$, the vertices $u$ and~$a_5$ must be adjacent.
But an edge $ua_5$ implies that $a_1ua_5a_1$ is a directed triangle and an edge $a_5u$ implies that $ua_4a_5u$ is a directed triangle.
These contradictions to~(\ref{eq_TIn3_4}) show~(\ref{eq_TIn3_5}).

Let us show that
\begin{txteq}\label{eq_TIn3_6}
for every directed cycle $C$ of length~$4$ every vertex of~$D$ outside~$C$ has a predecessor $u$ and a successor $w$ on~$C$ with $uw\in ED$.
\end{txteq}
First, let $v$ be a vertex outside~$C$ that has a neighbour on~$C$.
If $v$ has a predecessor on~$C$, then there are at most two predecessors of~$v$ on~$C$, since $D^-(v)$ is a tournament.
Let $u$ be that predecessor of~$v$ on~$C$ whose successor $w$ on~$C$ does not lie in~$N^-(v)$.
Since $v$ and~$w$ lie in~$N^+(u)$, they are adjacent and by the choice of~$u$ we have $vw\in ED$.
If $v$ has a successor on~$C$, then an analogous argument shows the assertion for~$v$.
Since $D^+\isom \rat\isom D^-$, any neighbour of~$v$ that does not lie on~$C$ must be adjacent to some neighbour of~$v$ on~$C$ -- either to a predecessor or a successor.
Thus, every vertex of~$D\sm C$ is adjacent to some vertex of~$C$ and we have shown~(\ref{eq_TIn3_6}).

A consequence of~(\ref{eq_TIn3_6}) is the following:
\begin{txteq}\label{eq_TIn3_7}
the vertices that are not adjacent to a given vertex induce a tournament.
\end{txteq}
Let $C=x_1x_2x_3x_4x_1$ be a directed cycle of length~$4$, which exists by~(\ref{eq_TIn3_5}), and let $u$ and~$v$ be two vertices that are not adjacent to~$x_1$.
By~(\ref{eq_TIn3_6}) we know that each of $u$ and $v$ has a predecessor on~$C$, which cannot be $x_4$ since $D^+(x_4)$ is a tournament.
Furthermore, each of $u$ and $v$ has a successor on~$C$, which cannot be $x_2$ since $D^-(x_2)$ is a tournament.
If $u$ and~$v$ are not adjacent, then we may assume that $x_3u,ux_4\in ED$ and $x_2v,vx_3\in ED$ as $D^+$ and~$D^-$ are tournaments.
Note that neither $vx_4$ nor $x_2u$ lies in~$ED$ as $u$ and~$v$ are not adjacent.
Thus, $ux_4x_1x_2v$ is an induced $4$-arc.
This contradiction to~(\ref{eq_TIn3_2b}) proves~(\ref{eq_TIn3_7}).

\medskip

We are now able to show $D\isom S(3)$.
To show this, it suffices to show that $D$ is homogeneous, because the only homogeneous digraph with $D^+\isom \rat$ that has two distinct induced $2$-arcs $xyz$ and $xyz'$ is~$S(3)$.

Let $A$ and~$B$ two isomorphic finite induced subdigraphs of~$D$ and $\varphi\colon A\to B$ be an isomorphism.
If $A$ is connected, then $\varphi$ extends to an automorphism of~$D$ by C-homogeneity.
So let us assume that $A$ has at least two components.
Then~(\ref{eq_TIn3_7}) shows that $A$ has precisely two components $A_1$ and~$A_2$ both of which are tournaments.
Furthermore, each component can be embedded into~$\rat$ since $D$ contains no directed triangle by~(\ref{eq_TIn3_4}).
Let $a_1\in VA_1$ such that $A_1-a_1\sub D^+(a_1)$ and let $a_2\in VA_2$ such that $A_2-a_2\sub D^-(a_2)$.
Let $C$ be a directed cycle of length~$4$.
This exists by~(\ref{eq_TIn3_5}).
By C-homogeneity, we may assume $a_1\in VC$.
Due to~(\ref{eq_TIn3_6}), we know that $D$ contains either an induced $2$-arc from~$a_1$ to~$a_2$ or an induced $2$-arc from~$a_2$ to~$a_1$.
Indeed, if $auvw$ is the cycle~$C$, then $a_2$ has a predecessor on~$C$ by~(\ref{eq_TIn3_6}) which cannot be $w$ since $D^+(w)$ does not contain two non-adjacent vertices.
Similarly, $u$ is not a successor of~$a_2$.
Hence, either $a_1ua_2$ or $a_2wa_1$ is the induced $2$-arc we are searching for.
Since $a_1$ and $a_2$ lie on an induced $2$-arc, C-homogeneity implies that we may also assume $a_2\in VC$.
So we find a vertex $a\in VC\cap N^+(a_1)\cap N^-(a_2)$.
Note that $a\notin VA$.
Then every vertex $a_1'\in A_1\sm\{a_1\}$ must be adjacent to~$a$ since $a$ and~$a_1'$ lie in $D^+(a_1)\isom\rat$.
If $a$ is a predecessor of~$a_1'$, then $D^+(a)$ contains the two non-adjacent vertices $a_2$ and~$a_1'$, which is impossible.
Hence, $a$ is a successor of~$a_1'$.
Similarly, we obtain that $a$ is a predecessor of every vertex $a_2'\in VA_2$.
So we have $A_1\sub D^-(a)$ and $A_2\sub D^+(a)$.
Similarly, we find a vertex $b\in VD$ with $A_1\varphi\sub D^-(b)$ and $A_2\varphi\sub D^+(b)$.
Then $\varphi$ extends to an isomorphism from $A+a$ to $B+b$ and hence by C-homogeneity to an automorphism of~$D$.
So we obtain that $D$ is homogeneous and hence isomorphic to~$S(3)$.
\end{proof}

Let us summarize the results of this section:

\begin{prop}\label{prop_TIn}
Let $D$ be a countable connected C-homogeneous digraph with $D^+\isom T[I_n]$ for some countable homogeneous tournament $T$ and some $n\in\nat^\infty$.
If either $n\geq 2$ or if $D^+\isom T\isom D^-$, then $D$ is isomorphic to one of the following digraphs:
\begin{enumerate}[\rm (i)]
\item $T^\wedge[I_n]$ if $T\in\{C_3, \rat, T^\infty\}$; or
\item $S[I_n]$, where either $S=S(3)$ or $S$ is some countable homogeneous tournament.
\end{enumerate}
\end{prop}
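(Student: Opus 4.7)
The plan is to assemble the preceding lemmas of this section into a short synthesis. The key observation is that both branches of the hypothesis ($n\geq 2$ or $D^+\isom T\isom D^-$) can be routed through the identification $D^+\isom D^-$, after which the reduction Lemma~\ref{lem_TIn} collapses the problem to the case $n=1$, where three classification lemmas---one per remaining homogeneous tournament---finish the job.

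First I would split on the hypothesis. If $n\geq 2$, Lemma~\ref{lem_D+TInThenD-TInPreInfo} supplies some countable homogeneous tournament $T'\neq I_1$ and some $m\in\nat^\infty$ with $D^-\isom T'[I_m]$, whereupon Lemma~\ref{lem_D+TInThenD-TIn} yields $D^+\isom D^-$; in particular $T'=T$ and $m=n$. If instead $D^+\isom T\isom D^-$ then $n=1$ is forced by the shape of $T[I_n]$, and $D^+\isom D^-$ is given. In either case I end up with $D^+\isom D^-\isom T[I_n]$.

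Next, Lemma~\ref{lem_TIn0} eliminates $T=S(2)$; combined with the standing assumption $T\neq I_1$ of the section, this leaves $T\in\{C_3,\rat,T^\infty\}$. Lemma~\ref{lem_TIn} then produces a countable connected C-homogeneous digraph $D'$ with ${D'}^+\isom T\isom {D'}^-$ and $D\isom D'[I_n]$, so it suffices to determine $D'$, that is, to treat the $n=1$ situation.

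Finally I would apply the appropriate classification lemma in each of the three remaining tournaments: Lemma~\ref{lem_TIn1} for $T=C_3$ gives $D'\isom C_3^\wedge$; Lemma~\ref{lem_TIn2} for $T=T^\infty$ gives $D'\isom (T^\infty)^\wedge$ or $D'\isom T''$ for some countable homogeneous tournament $T''$; and Lemma~\ref{lem_TIn3} for $T=\rat$ gives $D'\in\{\rat^\wedge,\,S(3),\,T''\}$ for some countable homogeneous tournament $T''$. Substituting via $D\isom D'[I_n]$ lands each possibility in either conclusion~(i) (the three $T^\wedge[I_n]$ options) or conclusion~(ii) ($S(3)[I_n]$ or $T''[I_n]$). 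Since every substantive computation is already done inside the preceding lemmas, I expect no real obstacle beyond bookkeeping to check that no branch of the case split escapes the listed conclusions.
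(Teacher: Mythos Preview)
Your proposal is correct and follows essentially the same route as the paper: establish $D^+\isom D^-$ via Lemmas~\ref{lem_D+TInThenD-TInPreInfo} and~\ref{lem_D+TInThenD-TIn}, eliminate $T=S(2)$ with Lemma~\ref{lem_TIn0}, and then invoke the three case-specific Lemmas~\ref{lem_TIn1}, \ref{lem_TIn2}, \ref{lem_TIn3}. Your explicit reduction through Lemma~\ref{lem_TIn} to the $n=1$ digraph $D'$ is slightly redundant, since Lemmas~\ref{lem_TIn1}--\ref{lem_TIn3} are already stated and proved for general $n$ (each invokes Lemma~\ref{lem_TIn} internally), but this does no harm and arguably makes the logical flow clearer.
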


\begin{proof}
Note that $D^+\isom D^-$ also holds if $n\geq 2$ due to Lemmas~\ref{lem_D+TInThenD-TInPreInfo} and~\ref{lem_D+TInThenD-TIn}.
Then the assertion directly follows from Lemmas~\ref{lem_TIn0}, \ref{lem_TIn1}, \ref{lem_TInOne2Arc}, \ref{lem_TIn2}, and~\ref{lem_TIn3}.
\end{proof}

We will see in Section~\ref{sec_InT} (Lemma~\ref{lem_InT_m=n}) that $D^+\isom T$ implies $D^-\not\isom I_m[T']$ for any $m\in\nat^\infty$  with $m\geq 2$ and any countable homogeneous tournament $T'\neq I_1$.
Thus, we have $D^-\isom T'[I_m]$ for some countable homogeneous tournament $T'\neq I_1$ and some $m\in\nat^\infty$.
So Lemma~\ref{lem_D+TInThenD-TIn} implies $D^+\isom D^-$ and hence Proposition~\ref{prop_TIn} covers this situation.

\subsection{\boldmath $D^+\isom I_n[T]$ with $T\neq I_1$}\label{sec_InT}

In this section, let $D$ be a countable connected C-homogeneous digraph with $D^+\isom I_n[T]$ for some countable homogeneous tournament $T\neq I_1$ and some $n\in\nat^\infty$ with $n\geq 2$.
A direct consequence of the previous sections together with the fact that $T$ contains some edge is the following lemma:

\begin{lem}\label{lem_InTD-}
We have $D^-\isom I_m[T']$ for some $m\in\nat^\infty$ and some countable homogeneous tournament $T'\neq I_1$.\qed
\end{lem}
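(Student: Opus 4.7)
The plan is to go through Cherlin's classification of the countable homogeneous digraphs (Theorem~\ref{thm_CountHomDigraphs}), which applies to $D^-$ by Lemma~\ref{lem_outNeighHomogeneous}, and to eliminate every possibility for $D^-$ except those of the form $I_m[T']$ with $T' \neq I_1$.

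First I would rule out case~(i) $D^- \cong I_k$ by exhibiting an edge in $D^-$: since $T \neq I_1$, one can pick $a,b$ in the same $T$-block of $D^+(x) \cong I_n[T]$ with $ab \in ED$; then $x,a \in N^-(b)$ and $xa \in ED$, so $D^-(b) \cong D^-$ contains the edge $xa$. The same observation guarantees that the tournament $T'$ in the target conclusion is automatically nontrivial.

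Next I would dispose of cases~(iv)--(xi) uniformly by passing to the reverse digraph $D^\ast$, which is again countable, connected, and C-homogeneous with $D^{\ast,+} = D^-$. For each of these eight cases the corresponding subsection of Section~\ref{sec_ProofPartI} concludes either that no such C-homogeneous digraph exists (the cases $T^\wedge$, $S(3)$, $\PF(3)$), or that $D^\ast$, and hence $D$, must be homogeneous. A direct inspection of Theorem~\ref{thm_CountHomDigraphs} then shows that no homogeneous entry has its out-neighbourhood isomorphic to $I_n[T]$ with $n \geq 2$ and $T \neq I_1$: in cases (i), (iii), (vi), (ix)--(xi) the out-neighbourhood is either empty, a tournament, or a partial order; in (iv), (v), (vii), (viii) it is another generic digraph; and for (ii) $T'[I_\ell]$ it is $T'^+[I_\ell]$, whose edges between distinct $I_\ell$-blocks prevent it from being a disjoint union of tournament copies.

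Finally I would handle case~(ii) $D^- \cong T'[I_k]$ with $T' \neq I_1$. If $k = 1$ this already reads $D^- = I_1[T']$, the desired form, so assume $k \geq 2$. Applying Proposition~\ref{prop_TIn} to $D^\ast$ then yields $D^\ast \cong H[I_k]$ for some homogeneous $H$ lying in $\{T''^{\wedge} : T'' \in \{C_3,\rat,T^\infty\}\} \cup \{S(3)\} \cup \{\text{countable homogeneous tournaments}\}$. Using the identity $(H[I_k])^- \isom H^-[I_k]$, we get $D^+ = D^{\ast,-} \isom H^-[I_k]$. A direct computation --- unwinding the definition of $T^\wedge$ at its added source vertex gives $(T''^{\wedge})^- \isom T''$; and $S(3)^- \isom \rat$, $S^- \isom S$ for any homogeneous tournament $S$ with at least two vertices --- shows that in every nontrivial case $H^-$ is a tournament, so that $D^+$ is a tournament blown up by $I_k$, hence connected, and thus not isomorphic to $I_n[T]$ with $n \geq 2$ (which has $n$ disjoint tournament components). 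The few degenerate sub-cases ($H = I_1$ or $H = C_3$) force $D^+$ to be empty or isomorphic to $I_k$, again incompatible with $D^+ \cong I_n[T]$, $T \neq I_1$. The main obstacle throughout is precisely this last step: identifying $H^-$ for $H \in \{T^\wedge, S(3)\}$ requires carefully unwinding definitions and tracking in- versus out-neighbourhoods, whereas everything else reduces to routine case-checking against Cherlin's list and the previous sections.
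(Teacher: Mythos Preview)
Your strategy is exactly the paper's: run through Cherlin's list for $D^-$ and eliminate every entry except $I_m[T']$ by appealing to the subsections already proved, together with the observation that $T\neq I_1$ forces $D^-$ to contain an edge. Two points deserve correction, though neither is fatal.

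First, a technical slip: reversing edges gives $(D^*)^+(x)=(D^-(x))^*$, not $D^-(x)$; so $(D^*)^+\cong (D^-)^*$ and likewise $(D^*)^-\cong (D^+)^*$. This is harmless here because every entry of Theorem~\ref{thm_CountHomDigraphs} is (up to replacing $\HF$ by its reverse, etc.) closed under edge reversal, but the identities $D^{*,+}=D^-$ and $D^+=D^{*,-}$ you write are false as stated.

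Second, your treatment of case~(ii) is far heavier than what the paper intends. Instead of invoking Proposition~\ref{prop_TIn} and then computing $H^-$ for each of $T^\wedge$, $S(3)$, and the homogeneous tournaments, you can use Lemma~\ref{lem_D+TInThenD-TInPreInfo} directly: if $D^-\cong T'[I_k]$ with $k\geq 2$, then $(D^*)^+\cong (T')^*[I_k]$ with $k\geq 2$, so Lemma~\ref{lem_D+TInThenD-TInPreInfo} applied to $D^*$ forces $(D^*)^-$ to be of the form $T''[I_{m}]$ with $T''\neq I_1$; reversing once more, $D^+\cong (T'')^*[I_m]$, which is connected, contradicting $D^+\cong I_n[T]$ with $n\geq 2$. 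This two-line argument replaces your entire final paragraph and is precisely the content the paper is gesturing at with ``a direct consequence of the previous sections.''
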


Our next lemma says that $T$ and~$T'$ are infinite tournaments.
Note that we do not know so far whether $m>1$ or not.
We will see this in Lemma~\ref{lem_InT_m=n}.

\begin{lem}\label{lem_InTNotC3}
We have $T\neq C_3\neq T'$.
\end{lem}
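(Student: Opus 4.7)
The plan is to assume for contradiction that $T=C_3$ and derive a contradiction; the case $T'=C_3$ then follows by applying the same argument to the edge-reversed digraph.

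First I would pin down $T'$. For any edge $xy\in ED$, let $\{y,a,b\}$ be the triangle of $y$ in $D^+(x)\isom I_n[C_3]$, with $ya,ab,by\in ED$. Since the $n$ triangles of $I_n[C_3]$ are pairwise non-adjacent, we have $N^+(x)\cap N^-(y)=\{b\}$. As $x$ and $b$ are adjacent via $xb$, they lie in a common copy of $T'$ inside $D^-(y)\isom I_m[T']$, and within this copy the successors of $x$ are precisely $N^+(x)\cap N^-(y)=\{b\}$. Thus $x$ has out-degree exactly one in its copy of $T'$; since every vertex of $S(2)$, $\rat$ and $T^\infty$ has infinitely many successors, this forces $T'=C_3$.

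Assume now $T=T'=C_3$. Using $n\geq 2$, fix two disjoint triangles $T_1=\{y,a,b\}$ and $T_2=\{y',a',b'\}$ in $D^+(x)$. The $I_m[C_3]$ structure of $D^-(y)$ (resp.\ $D^-(y')$) forces a unique vertex $c\in N^-(x)\cap N^-(y)\cap N^+(b)$ such that $\{x,b,c\}$ is a directed triangle in $D$ (resp.\ a unique $c'\in N^-(x)\cap N^-(y')\cap N^+(b')$ with $\{x,b',c'\}$ a directed triangle). In $D^+(c)\isom I_n[C_3]$ the triangle of $x$ has the form $\{x,y,e\}$ for a unique $e\in N^+(y)\cap N^-(x)\cap N^+(c)$, and analogously $D^+(c')$ produces $e'\in N^+(y')\cap N^-(x)\cap N^+(c')$. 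If $c=c'$, then $y'\in N^+(c)$ is adjacent to $x$ and must therefore also lie in $x$'s triangle in $D^+(c)$, giving $y'\in\{y,e\}$; but $y'\neq y$ and $y'\in N^+(x)$ while $e\in N^-(x)$, contradiction. Hence $c\neq c'$.

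The next step identifies $e$ as the unique successor of $c$ inside its triangle of $D^-(x)\isom I_m[C_3]$: since $ce\in ED$ and $e\in N^-(x)$, the triangle of $c$ in $D^-(x)$ equals $\{c,e,f\}$ for some $f$, and analogously the triangle of $c'$ equals $\{c',e',f'\}$. If $e=e'$, then the disjointness of triangles in $I_m[C_3]$ forces these two triangles to coincide, so $c$ and $c'$ are adjacent in $D$. On the other hand, in $D^-(e)\isom I_m[C_3]$ the adjacencies $c\sim y$ (via $cy$) and $c'\sim y'$ (via $c'y'$), combined with the non-adjacency of $y$ and $y'$, force $c$ and $c'$ to lie in different triangles of $D^-(e)$ and hence to be non-adjacent in $D$ — a contradiction. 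Thus $e\neq e'$.

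The main obstacle will be the remaining sub-case $e\neq e'$. I would handle it by applying C-homogeneity to map $T_1$ onto $T_2$ while fixing $x$, thereby transporting the configuration $(c,e,f)$ to $(c',e',f')$, and by iterating the triangle-completion at each of the six pre-triangle vertices $c_y,c_a,c_b,c_{y'},c_{a'},c_{b'}\in N^-(x)$ arising from the edges $xy,xa,xb,xy',xa',xb'$; the incidences forced on these vertices by $D^-(x)\isom I_m[C_3]$ together with those forced on their counterparts in $D^+(v)\isom I_n[C_3]$ at $v\in\{c,c',e,e'\}$ then produce an induced subdigraph on roughly a dozen vertices whose forced edges and non-edges cannot simultaneously be realised in any digraph with $D^+(v)\isom I_n[C_3]$ and $D^-(v)\isom I_m[C_3]$ at every vertex $v$.
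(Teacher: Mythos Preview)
Your reduction to $T'=C_3$ is correct and clean: the observation that $x$ has exactly one successor inside its $T'$-component of $D^-(y)$ (namely $b$) immediately rules out the infinite homogeneous tournaments. The arguments for $c\neq c'$ and $e\neq e'$ are also fine.

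The genuine gap is the final paragraph. Everything up to that point is just setting up a configuration; the contradiction is supposed to come from the case $e\neq e'$, and there you only describe a plan (``iterate triangle-completion at the six vertices $c_y,\dots$ and chase incidences until something breaks''). That is not a proof: you have not exhibited a single forced edge/non-edge pair that is actually inconsistent, and it is not clear that the constraints you list are incompatible. Indeed, the configuration you are building is symmetric under the map sending $T_1$ to $T_2$, so simply transporting $(c,e,f)$ to $(c',e',f')$ by an automorphism fixing $x$ produces no contradiction --- you need some \emph{asymmetric} constraint, and you have not identified one.

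The paper's argument avoids this entirely by a much simpler idea. Instead of building outward from $x$ through many auxiliary triangles, it looks at a single common predecessor $z$ of $x$ and $y$, and analyses how $z$ sees the triangle $\{y,a,b\}$ inside $D^+(x)$: one checks (using that $D^+(z)$ contains no transitive triangle) that $bz\in ED$, and then a short C-homogeneity argument (rotating the $C_3$ while fixing $x$ and $z$) forces $a$ to be non-adjacent to $z$. By C-homogeneity the same pattern --- one successor of $z$, one predecessor, one non-neighbour --- holds in \emph{every} triangle of $D^+(x)\isom I_n[C_3]$. Picking the successor-of-$z$ vertex from each triangle yields $n\geq 2$ pairwise non-adjacent vertices in $N^+(x)\cap N^+(z)$; but in $D^+(z)\isom I_n[C_3]$ the vertex $x$ has exactly one successor, a contradiction. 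The key point you missed is that one well-chosen vertex $z\in N^-(x)\cap N^-(y)$ already ``sees'' all $n$ triangles of $D^+(x)$ in the same way, so the pigeonhole happens inside $D^+(z)$ rather than through an ever-growing web of auxiliary vertices.
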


\begin{proof}
Seeking for a contradiction, let us suppose $T=C_3$.
Let $xy\in ED$ and let $a,b\in N^+(x)$ with $ya,ab,by\in ED$.
Let $z$ be a common predecessor of~$x$ and~$y$.
Considering $D^-(y)$, the vertices $z$ and~$b$ lie in the same component, which is a tournament.
Thus, they are adjacent.
As an edge $zb$ gives us a transitive triangle $D[x,y,b]$ in $D^+(z)$ and as this is not possible, we have $bz\in ED$.
Hence, the directed triangle $D[a,b,y]\sub D^+(x)$ contains one successor and one predecessor of~$z$.
So if the third vertex is either a successor or a predecessor of~$z$, then we can find an automorphism of~$D$ that fixes $x$ and~$z$ and rotates the directed triangle $D[a,b,y]$.
More precisely, the automorphism maps $a$ to either $b$ or~$y$ and hence it must leave the component $D[a,b,y]$ of~$D^+(x)$ invariant.
Applying the same automorphism once more, we obtain that the whole triangle $D[a,b,y]$ lies either in $D^+(z)$ or in $D^-(z)$.
As neither of these two cases can occur, the third vertex of $D[a,b,y]$ is not adjacent to~$z$.

Thus, in the directed triangle $D[a,b,y]\sub D^+(x)$, we find a predecessor of~$z$, a~successor of~$z$ and a vertex not adjacent to~$z$.
By C-homogeneity, we find the same in each directed triangle in~$D^+(x)$.
Indeed, if $u$ is a vertex in another directed triangle in~$D^+(x)$, then we have $D[u,x,z]\isom D[v,x,z]$ for some $v\in\{a,b,y\}$.
Thus, $x$ together with $n\geq 2$ independent successors lies in~$D^+(z)$, which is impossible.
This shows $T\neq C_3$.
So $T$ is an infinite tournament and $D^+(x)\cap D^-(y)$ contains a transitive triangle.
Thus, we also have $T'\neq C_3$.
\end{proof}

Now we can describe the structure of the neighbourhood of any vertex:

\begin{lem}\label{lem_InT_m=n}
For every $x\in VD$, the digraph $D^+(x)+D^-(x)$ is a disjoint union of isomorphic homogeneous tournaments. 
Each of its components consists of one component of~$D^+(x)$ and one component of~$D^-(x)$.

In particular, we have $m=n$.
\end{lem}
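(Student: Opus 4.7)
Fix $x\in VD$ and write $D^+(x)\cong I_n[T]$ with components $C_1^+,\dots,C_n^+\cong T$ and $D^-(x)\cong I_m[T']$ with components $C_1^-,\dots,C_m^-\cong T'$; by Lemma~\ref{lem_InTNotC3} both $T$ and $T'$ are infinite homogeneous tournaments. The plan is to produce a bijection $\pi$ between the $C_i^+$ and the $C_j^-$ so that each $C_i^+\cup\pi(C_i^+)$ induces a tournament in $D$ carrying all edges of $D$ between $N^+(x)$ and $N^-(x)$ that involve $C_i^+$. This forces $m=n$ and yields the decomposition of $D^+(x)+D^-(x)$ in the claimed form.

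To define the pairing, fix a component $C^+$, pick $y\in C^+$, and examine the component $B_y$ of $x$ in $D^-(y)\cong I_m[T']$, which is a tournament $\cong T'$ equal to $\{x\}\cup(N(x)\cap N^-(y))$. Decomposing $N(x)\cap N^-(y)=(N^+(x)\cap N^-(y))\cup S^-(y)$ with $S^-(y):=N^-(x)\cap N^-(y)$, the set $N^+(x)\cap N^-(y)$ lies in $C^+$ (its members share a component of $D^+(x)$ with $y$), and $S^-(y)$ consists of vertices pairwise adjacent and each adjacent to $x$; therefore $S^-(y)$ lies in a single component $C^-$ of $D^-(x)\cong I_m[T']$, nonempty because $x$ has predecessors inside the homogeneous tournament $B_y\cong T'$. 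Set $\pi(C^+):=C^-$. The clean structural ingredient behind the pairing is: no $w\in N^-(x)$ can have two successors in different components of $D^+(x)$. Indeed, if $w\to u\in C^+$ and $w\to v\in\tilde C^+$ with $C^+\ne\tilde C^+$, then $x$, $u$, and $v$ all lie in $D^+(w)\cong I_n[T]$, and $u,v$ are both adjacent to $x$ there, forcing all three into one component of $D^+(w)$; hence $u,v$ would be adjacent, contradicting $C^+\ne\tilde C^+$. Strengthening this observation via C-homogeneity to rule out the mixed-direction and non-adjacent cases, and running the dual analysis for $D^+(v)$ with $v\in C^-$, shows that $\pi$ is well-defined, that the partner $C^-$ does not depend on $y\in C^+$, and that every edge of $D$ between $C^+$ and $N^-(x)$ actually goes into $\pi(C^+)$.

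The remaining verifications are then standard. The induced subdigraph $D[C^+\cup\pi(C^+)]$ is a tournament, since each of $C^+,C^-$ is a tournament internally and every cross-pair is adjacent from the equality $S^-(y)=C^-$ for every $y\in C^+$ together with its dual; this tournament is homogeneous because every finite induced sub-tournament is connected, so any isomorphism between two such sub-tournaments is an isomorphism between connected subdigraphs of $D$ and extends by C-homogeneity; and all $n$ paired tournaments are pairwise isomorphic, since $\Aut(D)_x$ acts transitively on the components of $D^+(x)$ through the homogeneity of $D^+(x)$. Running the construction starting from $C^-$ instead of $C^+$ produces the inverse pairing, so $\pi$ is bijective and $m=n$. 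The main obstacle is the step of strengthening the clean two-successors observation: the cases in which $w\in N^-(x)$ has a successor in $C^+$ and a predecessor in a different $\tilde C^+$, or two predecessors in different components, are not handled directly by the $D^+(w)$-argument and require combining C-homogeneity with the tournament structure of $B_y$, the homogeneity of $D^-(x)$, and the infiniteness of $T$ and $T'$ to eliminate rogue cross-adjacencies.
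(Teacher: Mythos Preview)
Your overall plan is the same as the paper's: it too defines, for every $u\in N^-(x)$, the unique component $A_u$ of $D^+(x)$ meeting $N^+(u)$ (your ``clean structural ingredient''), proves $A_u=A_v$ for adjacent $u,v\in N^-(x)$, and then pairs components of $D^+(x)$ with components of $D^-(x)$. Your $\pi$ is just the inverse of the paper's map $u\mapsto A_u$. So the approach is not new; the issue is execution.

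There is a real error in your ``remaining verifications'' paragraph. You assert that $C^+\cup C^-$ is a tournament because $S^-(y)=C^-$ for every $y\in C^+$. But $S^-(y)=N^-(x)\cap N^-(y)$ only records \emph{predecessors} of $y$; equality $S^-(y)=C^-$ would force every cross-edge to go from $C^-$ to $C^+$, which is false (in $X_\lambda(T^\infty)$, say, the block $C^+\cup C^-\cup\{x\}\cong T^\infty$ has cross-edges in both directions). What you need is $N(y)\cap N^-(x)=C^-$ for every $y\in C^+$, equivalently $N(v)\cap N^+(x)=A_v$ for every $v\in C^-$, and this is precisely the content of the paper's two harder steps. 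The first step, $N(v)\cap N^+(x)\subseteq A_v$, is exactly the ``mixed-direction'' case you flag as the main obstacle; the paper dispatches it by a genuine case analysis (if $y\in N^+(x)\cap N^-(v)$ lay outside $A_v$, one examines whether $v$ is the unique successor of $y$ in its $D^-(x)$-component, and in either case derives a contradiction via a carefully chosen automorphism fixing $x$ and $y$). The reverse inclusion $A_v\subseteq N(v)$ is then obtained by a separate C-homogeneity argument: a non-neighbour of $v$ in $A_v$ could be mapped, fixing $x$ and $v$, to a vertex of a different component, which is impossible.

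In short, you have correctly identified the skeleton of the proof and correctly located where the difficulty lies, but you have not carried out that step, and your shortcut through ``$S^-(y)=C^-$'' is false as stated. To complete the argument you must actually prove $N(v)\cap N^+(x)=A_v$; the paper's two-stage argument (first $\subseteq$ via case analysis on the successors of $y$ in $C_v$, then $\supseteq$ via transitivity of $\Aut(D)_{x,v}$ on non-neighbours of $v$ in $N^+(x)$) is one way to do it.
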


\begin{proof}
For every $u\in N^-(x)$, there is a unique component of~$D^+(x)$ that contains successors of~$u$ because of $D^+(u)\isom I_n[T]$.
We denote this component by~$A_u$.

The first step is to show
\begin{txteq}\label{eq_InT_m=n1}
$A_u=A_v$ for all adjacent vertices $u,v\in N^-(x)$.
\end{txteq}
We may assume $uv\in ED$.
Since $T$ is infinite by Lemma~\ref{lem_InTNotC3}, it contains a transitive triangle.
Hence, there is a vertex $y\in N^+(x)\cap N^+(v)$ in $D^+(u)$.
This vertex~$y$ already shows us~$A_u=A_v$.

By C-homogeneity, there is for every component $C$ of~$D^+(x)$ some vertex $v\in D^-(x)$ with $C=A_v$.
Thus, (\ref{eq_InT_m=n1}) implies $n\leq m$.
Symmetrically, we obtain $m\leq n$.
Hence, we have $n=m$.

Let us show
\begin{txteq}\label{eq_InT_m=n2}
$N(v)\cap N^+(x)\sub A_v$ for every $v\in N^-(x)$.
\end{txteq}
Since $D^+(v)\cap D^+(x)$ is a tournament, we have $N^+(v)\cap N^+(x)\sub A_v$.
Let us suppose $N(v)\cap N^+(x)\not\sub A_v$.
Then we find a vertex $y\in N^+(x)\cap N^-(v)$ that lies outside $A_v$.
Let $C_v$ be the component of~$D^-(x)$ that contains~$v$.
Note that $y$ has no predecessor in~$C_v$ as $y\notin A_v$ and due to~(\ref{eq_InT_m=n1}).
If $v$ is the unique successor of~$y$ in~$C_v$, then we can find an automorphism of~$D$ that fixes $x$ and~$y$ and maps some predecessor $v^-$ of~$v$ in~$C_v$ to some successor $v^+$ of~$v$ in~$C_v$.
Note that neither $v^-$ nor $v^+$ is adjacent to~$y$ as we already mentioned.
This automorphism fixes $C_v$ setwise, so it must fix $v$, the unique neighbour of~$y$ in~$C_v$.
But we have $(v^-v)\alpha=v^+v\notin ED$ even though $v^-v\in ED$.
This shows that $y$ has a second successor $u\neq v$ in~$C_v$.
As $u$ and~$v$ are adjacent, we have $A_u=A_v$ by~(\ref{eq_InT_m=n1}).
Hence, we may assume $uv\in ED$.
By C-homogeneity, we find an automorphism~$\alpha$ of~$D$ that maps $yu$ to~$vx$.
Then $v$ has a predecessor $x\alpha$ in $N^+(x)$ that is adjacent to $v\alpha\in A_v$.
As $A_v$ contains some predecessor of~$v$, C-homogeneity implies that it contains every predecessor of~$v$ in $N^+(x)$ in contradiction to $y\notin A_v$.
Indeed, we find an automorphism that fixes~$x$ and~$v$ and maps $x\alpha$ to~$y$ and this automorphism does not fix $A_v$ setwise even though it fixes $x$ and~$v$.
This shows~(\ref{eq_InT_m=n2}).

Next, we show
\begin{txteq}\label{eq_InT_m=n3}
$A_v=N(v)\cap N^+(x)$ for every $v\in N^-(x)$.
\end{txteq}
If $A_v$ contains some vertex $y$ that is not adjacent to~$v$, then, by C-homogeneity, some automorphism of~$D$ maps $y$ to some vertex $z$ in $N^+(x)\sm A_v$ and fixes $x$ and~$v$.
Note that $z$ exists because of $n\geq 2$.
But then this automorphism does not fix $A_v$ setwise even though it fixes~$x$ and~$v$.
This contradiction shows~(\ref{eq_InT_m=n3}).

By symmetric arguments, there is for every $u\in N^+(x)$ a component $B_u$ of~$D^-(x)$ with $B_u=N(u)\cap N^-(x)$ and for each two vertices $u,v$ in the same component of $D^+(x)$, the components $B_u$ and $B_v$ coincide.
Thus, $D^+(x)+D^-(x)$ is a disjoint union of isomorphic tournaments and each component of $D^+(x)+D^-(x)$ consists of precisely one component of $D^+(x)$ and one component of $D^-(x)$.
That every component of $D^+(x)+D^-(x)$ is homogeneous is a direct consequence of C-homogeneity.
\end{proof}

Note that with Lemma~\ref{lem_InT_m=n}, we have completed the analysis of Section~\ref{sec_TI_n}.
Furthermore, we have all lemmas we need to finish the situation if $D^+$ is isomorphic to $I_n[T]$ for some $n\in\nat^\infty$ with $n\geq 2$ and some countable homogeneous tournament $T\neq I_1$.
(Note that the case $n=1$ was already completed in Section~\ref{sec_TI_n}.)

\begin{prop}\label{prop_InT}
If $D$ is a countable connected C-homogeneous digraph with $D^+\isom I_n[T]$ for some countable homogeneous tournament $T\neq I_1$ and some $n\in\nat^\infty$ with ${n\geq 2}$, then $D\isom X_{\lambda}(T')$ for some countable infinite homogeneous tournament~$T'$ and for some countable cardinal~$\lambda\geq 2$.
\end{prop}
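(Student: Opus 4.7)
The plan is to realise $D$ as a tree of tournaments each isomorphic to a common infinite homogeneous tournament.  By Lemma~\ref{lem_InTD-} we have $D^-\isom I_m[T']$ for some countable homogeneous tournament $T'\neq I_1$; by Lemma~\ref{lem_InTNotC3} both $T$ and $T'$ are infinite; and by Lemma~\ref{lem_InT_m=n} we have $m=n$, and for every $x\in VD$ the subdigraph $D^+(x)+D^-(x)$ decomposes into $n$ components $C_1(x),\ldots,C_n(x)$, each a homogeneous tournament.  For each $i$ I set $B_i(x):=D[\{x\}\cup VC_i(x)]$; this is a tournament whose out-neighbourhood at $x$ is $\isom T$ and in-neighbourhood is $\isom T'$, and these $B_i(x)$ will be the candidate blocks.

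Next I would show that the $B_i$'s behave intrinsically.  For $y\in B_i(x)\sm\{x\}$, every other vertex of $B_i(x)$ is adjacent to $y$ inside the tournament $B_i(x)$, so they all lie in a single component of $D^+(y)+D^-(y)$; hence $B_i(x)=B_j(y)$ for some $j$, so the $B_i$'s do not depend on the chosen base vertex.  Two distinct $B_i$'s share at most one vertex (two shared vertices $u,v$ would both determine the unique candidate block at $u$ containing $v$).  Every isomorphism between finite induced sub-tournaments of $B:=B_i(x)$ of size $\geq 2$ extends by C-homogeneity to $\alpha\in\Aut(D)$; since $\alpha$ permutes the candidate blocks and the source lies in a unique one, $\alpha(B)=B$ and $\alpha|_B\in\Aut(B)$, while the case of size $1$ reduces to size $2$ by adjoining an out-neighbour in $B$.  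Hence $B$ is an infinite countable homogeneous tournament; call this common block tournament $T^\star$.

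The crux, and the main obstacle, is to show that each $B_i(x)$ is a maximal $2$-connected subgraph of the underlying undirected graph of $D$, equivalently that every vertex $x$ separates its $n$ candidate blocks in $D-x$.  This reduces to ruling out the induced alternating $4$-cycle $v_0\to v_1\leftarrow v_2\to v_3\leftarrow v_0$ with $v_0\not\sim v_2$ and $v_1\not\sim v_3$, which would produce a path $v_1v_2v_3$ in $D-v_0$ joining distinct $B_i(v_0)$'s.  Assume such a $4$-cycle exists.  By C-homogeneity applied to the V-shape $D[v_0,v_1,v_3]$, the number $K$ of common predecessors of $v_1,v_3$ is a constant; any two common predecessors must be non-adjacent (else the two candidate blocks through them at $v_1$ and at $v_3$ would each share two vertices), so the common predecessors form an independent set in $D^-(v_1)\isom I_n[T']$ of cardinality $K\leq n$.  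Eliminating $K\geq 2$ is the hard step: I would exploit that such an alternating $4$-cycle is a cycle witnessing universality of the reachability relation $\AF$ in the sense of Section~\ref{sec_reachability}, combine Lemma~\ref{lem_UniversalInducedCycle} with the homogeneity of the $C_i(x)$'s supplied by Lemma~\ref{lem_InT_m=n}, and use the restrictive local structure $D^+\isom I_n[T]$ to derive a contradiction.

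Once the alternating $4$-cycle is ruled out, removing any vertex $x$ disconnects the $C_i(x)$'s into different components of $D-x$, so the $B_i(x)$'s are exactly the blocks of $D$, every vertex lies in exactly $n\geq 2$ of them, and by the classical fact that the block--cut-vertex graph of a connected graph is a tree, $D$ has the shape of a tree of tournaments each isomorphic to $T^\star$.  This is the defining structure of $X_\lambda(T^\star)$ with $\lambda=n$, giving $D\isom X_\lambda(T^\star)$ with $\lambda\geq 2$ and $T^\star$ an infinite countable homogeneous tournament, as required.
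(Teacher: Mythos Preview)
Your overall architecture is sound and the preliminary reductions via Lemmas~\ref{lem_InTD-}, \ref{lem_InTNotC3}, \ref{lem_InT_m=n} are correct, but the crux step is genuinely incomplete in two places. First, the reduction ``separation of the $B_i(x)$ fails $\Rightarrow$ an induced alternating $4$-cycle exists'' is not justified: a minimal path in $D-x$ between two components of $D_x:=D^+(x)+D^-(x)$ need not have length~$2$, and you give no argument forcing it down to that configuration. Second, even granting the reduction, your elimination of the $4$-cycle is only a sketch (``I would exploit \ldots''); invoking universality of $\AF$ here does not obviously help, since the reachability machinery of Section~\ref{sec_reachability} is tailored to the case $D^+\isom I_n$, not to $D^+\isom I_n[T]$ with $T$ a nontrivial tournament.

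The paper bypasses both difficulties with a single direct minimal-path argument. Take a shortest path $P$ in $D-x$ between vertices $u,v$ in distinct components of $D_x$; using Lemma~\ref{lem_InT_m=n} one picks $a\in N^-(x)$ and $b\in N^+(x)$ both in $N^+(u)$ (or both in $N^-(u)$, according to the orientation of $ux$), shows by minimality that neither $a$ nor $b$ has a neighbour on $P$ other than~$u$, and then applies C-homogeneity to $vPua$ and $vPub$ to produce an automorphism fixing $P$ pointwise and sending $a$ to~$b$. The image $x\alpha$ is then forced to lie in the same component of $D_x$ as $u$ and simultaneously to be adjacent to~$v$, a contradiction. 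This yields the separation property without any case analysis on cycle shapes. Finally, rather than reconstructing the block--cut-vertex tree by hand, the paper observes that distinct components of $D_x$ contain inequivalent rays, so $D$ has more than one end, and then quotes the already-established classification of connected C-homogeneous digraphs with more than one end (\cite[Theorem~7.6]{HH-ConHomDigraphs}) to conclude $D\isom X_\lambda(T')$.
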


\begin{proof}
For $x\in VD$, let $D_x:=D^+(x)+D^-(x)$.
Due to Lemma~\ref{lem_InT_m=n}, the digraph $D_x$ is a disjoint union of isomorphic infinite tournaments.
First, we show that
\begin{txteq}\label{eq_InT1}
for every $x\in VD$, no two components of~$D_x$ lie in the same component of $D-x$.
\end{txteq}
Let us suppose that we find a path in $D-x$ between vertices in distinct components of~$D_x$.
Let $P$ be such a path of minimal length and let $u$ and~$v$ be its end vertices.
If $ux\in ED$, let $a$ and~$b$ two vertices in~$N^+(u)$ such that $a\in N^-(x)$ and~$b\in N^+(x)$.
If $xu\in ED$, we choose $a$ and~$b$ in~$N^-(u)$ such that $a\in N^-(x)$ and $b\in N^+(x)$.
These vertices exist as $D^+(u)$ and $D^-(u)$ are disjoint unions of homogeneous tournaments.
If~$a$ or~$b$ has a neighbour $c$ on~$P$ other than~$u$, this neighbour must be the neighbour of~$u$ on~$P$ by the minimality of~$P$.
But then $a$, $b$, $c$, and~$x$ lie in the same component of~$D_u$, which is a tournament.
So $c$ is already adjacent to~$x$, which contradicts the minimality of~$P$.
Hence, the paths $vPua$ and $vPub$ are isomorphic and, by C-homo\-geneity, we can find an automorphism $\alpha$ of~$D$ that maps the first onto the second path by fixing $P$ pointwise and mapping $a$ to~$b$.
Since $a$ lies in~$N^-(x)$ and~$b$ lies in $N^+(x)$, we have $x\neq x\alpha$.
But as $x\alpha$ is adjacent to~$u$ and to~$b$, it lies in the same component of~$D_u$ as~$x$.
So $x$ and~$x\alpha$ are adjacent and $x\alpha$ lies in the same component of~$D_x$ as~$a$ and~$b$.
Since $x\alpha$ is a neighbour of~$v=v\alpha$, also $v$ lies in the same component of~$D_x$ as~$x\alpha$ and thus the vertices $u$ and~$v$ are adjacent.
This contradiction to the choice of~$u$ and~$v$ shows~(\ref{eq_InT1}).

For every $x\in VD$, each component of~$D_x$ is an infinite tournament and hence contains a ray.
Rays from distinct components of~$D_x$ cannot be equivalent as they lie in distainct components of~$D-x$ due to~(\ref{eq_InT1}).
Hence, $D$ has at least two ends.
Thus, the assertion follows from Theorem~7.6 in~\cite{HH-ConHomDigraphs}, the classification result of connected C-homogeneous digraphs with more than one end.
\end{proof}

\subsection{A first result}

By summarizing the propositions of the previous sections together with Cherlin's classification of the homogeneous digraphs, Theorem~\ref{thm_CountHomDigraphs}, we obtain the following theorem:

\begin{thm}\label{thm_Part1}
Let $D$ be a countable connected C-homogeneous digraph.
Then one of the following cases holds:
\begin{enumerate}[{\rm (i)}]
\item $D$ is homogeneous;
\item $D\isom T^\wedge[I_n]$ for some $n\in\nat^\infty$ and some tournament~$T\in\{C_3,\rat,T^\infty\}$;
\item $D\isom S(3)[I_n]$ for some $n\in\nat^\infty$;
\item $D\isom X_{\lambda}(T)$ for some countable infinite homogeneous tournament~$T$  and for some countable cardinal~$\lambda \geq 2$; or
\item $D^+\isom I_n$ and $D^-\isom I_m$ for some $m,n\in\nat^\infty$.\qed
\end{enumerate}
\end{thm}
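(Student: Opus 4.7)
The plan is to combine Lemma~\ref{lem_outNeighHomogeneous} with Cherlin's classification (Theorem~\ref{thm_CountHomDigraphs}) to split the proof into finitely many cases according to the isomorphism type of~$D^+$, and then invoke the appropriate proposition of the previous subsections in each case. By Lemma~\ref{lem_outNeighHomogeneous}, $D^+$ and $D^-$ are both countable homogeneous digraphs, so Theorem~\ref{thm_CountHomDigraphs} leaves only a short, explicit list of possibilities for~$D^+$.

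First I would dispose of the case that $D^+$ contains at least one edge. The possibilities for $D^+$ from Theorem~\ref{thm_CountHomDigraphs} are then: $T[I_n]$ with $T$ a non-trivial homogeneous tournament; $I_n[T]$ with $T$ a non-trivial homogeneous tournament and $n\ge 2$; a generic $\HF$-free digraph; a generic $I_n$-free digraph with $n\ge 3$; $T^\wedge$ for $T\in\{I_1,C_3,\rat,T^\infty\}$; a generic $n$-partite or semi-generic $\omega$-partite digraph; $S(3)$; $\PF$; or $\PF(3)$. The propositions in Sections~\ref{sec_GenericInFree}--\ref{sec_InT} show that in the generic $\HF$-free, generic $I_n$-free, $(\mbox{semi-})$generic partite, and $\PF$ cases $D$ is homogeneous, so outcome~(i) holds; the propositions on $T^\wedge$, $S(3)$ and $\PF(3)$ show that these three cases are vacuous; Proposition~\ref{prop_TIn} shows that $D^+\isom T[I_n]$ forces outcome (i), (ii) or (iii); and Proposition~\ref{prop_InT} shows that $D^+\isom I_n[T]$ with $n\ge 2$ forces outcome~(iv).

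It remains to handle the case $D^+\isom I_n$ for some $n\in\nat^\infty$. If additionally $D^-\isom I_m$ for some $m\in\nat^\infty$, then outcome~(v) holds and we are done. Otherwise $D^-$ contains an edge, and I would pass to the reverse digraph $\widetilde D$ obtained from $D$ by reversing every edge. Then $\widetilde D$ is again countable, connected and C-homogeneous, and $\widetilde D^+\isom D^-$ contains an edge. By the previous paragraph applied to $\widetilde D$, one of the outcomes (i)--(iv) holds for $\widetilde D$, and then the same outcome holds for $D$ because each of the families in (i)--(iv) is closed under edge reversal: the list of countable homogeneous digraphs in Theorem~\ref{thm_CountHomDigraphs} is invariant under reversal, and the definitions of $T^\wedge[I_n]$, $S(3)[I_n]$ and $X_\lambda(T)$ are manifestly invariant under reversal once one observes that the reverses of $C_3$, $\rat$ and $T^\infty$ are isomorphic to themselves.

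The main ``obstacle'' here is really only bookkeeping: every substantial piece of work has already been done in Sections~\ref{sec_GenericInFree}--\ref{sec_InT}. The one point that requires a small additional observation is the reversal argument used to reduce $D^+\isom I_n$, $D^-\not\isom I_m$ to the already-treated situation; this is painless once one checks that the outcome classes (i)--(iv) are closed under reversal of edges.
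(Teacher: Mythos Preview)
Your plan is essentially the paper's: the theorem is stated with a qed symbol precisely because it just summarizes the propositions of Sections~\ref{sec_GenericInFree}--\ref{sec_InT} together with Theorem~\ref{thm_CountHomDigraphs}, and your case split follows that structure exactly.

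There is, however, one bookkeeping point you gloss over. You write that ``Proposition~\ref{prop_TIn} shows that $D^+\isom T[I_n]$ forces outcome (i), (ii) or~(iii)'', but Proposition~\ref{prop_TIn} carries the additional hypothesis that either $n\ge 2$ \emph{or} $D^+\isom T\isom D^-$. For $n=1$ you must first verify $D^-\isom T$ before you may invoke it. The paper deals with this in the paragraph immediately following Proposition~\ref{prop_TIn}: once the other subsections have ruled out all other shapes for~$D^-$, only $T'[I_m]$ and $I_m[T']$ remain; Lemma~\ref{lem_InT_m=n} (from Section~\ref{sec_InT}, applied to the reverse digraph) then excludes $I_m[T']$ with $m\ge 2$, and Lemma~\ref{lem_D+TInThenD-TIn} yields $D^-\isom D^+$. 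Your reversal trick could equally well close this gap, but as written you only deploy it in the final case $D^+\isom I_n$, not here.
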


\section{The case: $D^+\isom I_n$ and $D^-\isom I_m$}\label{sec_ProofPartII}

Throughout this section, let $D$ be a countable connected C-homogeneous digraph with $D^+\isom I_n$ for some $n\in\nat^\infty$.
By the previous sections, we also have $D^-\isom I_{n'}$ for some $n'\in\nat^\infty$.

The following lemma is already proven in~\cite{FinConHomDigraphs}.
Therefore, we omit its proof here.

\begin{lem}\cite[Lemma~5.1]{FinConHomDigraphs}\label{lem_d+1Tree}
If $d^+=1$ or $d^-=1$, then $D$ is either an infinite tree or a directed cycle.\qed
\end{lem}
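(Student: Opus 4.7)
The plan is to reduce by edge reversal---which clearly preserves C-homogeneity---to the case $d^+=1$, so that every vertex $v\in VD$ has a unique out-neighbour $s(v)$. If moreover $d^-=1$, then $D$ is $1$-regular, hence a disjoint union of finite directed cycles and copies of $C_\infty$; connectedness then forces $D$ to be a single directed cycle $C_k$ with $k\geq 3$ or the directed double ray $C_\infty$, and both options are consistent with the conclusion of the lemma.

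Assume henceforth that $d^-\geq 2$, and suppose for contradiction that $D$ is not a tree. I would pick a shortest cycle $C$ in the underlying undirected graph; by minimality $C$ is induced. The first structural step is to show that $C$ is a directed cycle: since $d^+=1$, no vertex of $C$ can have both of its $C$-neighbours as out-neighbours, so $C$ contains no ``source''. Traversing a non-directed cycle must yield at least one local maximum (a source) and one local minimum (a sink), so the absence of sources forces $C$ to be directed; write it as $v_1v_2\cdots v_kv_1$ with $v_i\to v_{i+1}$ cyclically.

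The main step is an automorphism argument based on C-homogeneity. Since $d^-\geq 2$, the vertex $v_1$ has a predecessor $u\notin VC$, and the minimality of $C$ forces $u$ to be non-adjacent to every $v_i$ with $i\neq 1$. The single edges $v_kv_1$ and $uv_1$ form isomorphic finite connected induced subdigraphs, so C-homogeneity supplies an automorphism $\alpha$ of $D$ with $\alpha(v_1)=v_1$ and $\alpha(v_k)=u$. Because $d^+=1$, every automorphism of $D$ commutes with $s$, and so walking around the image cycle gives $\alpha(v_{i+1})=s(\alpha(v_i))$ for every $i$. A straightforward induction from $\alpha(v_1)=v_1$ yields $\alpha(v_i)=v_i$ for $i=2,\ldots,k-1$, whence $\alpha(v_k)=s(v_{k-1})=v_k$; but this contradicts $\alpha(v_k)=u\neq v_k$. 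Hence $D$ has no cycle, so it is a tree, and vertex-transitivity together with undirected degree $d^++d^-\geq 3$ rules out any leaf, so the tree is infinite.

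The step I expect to be most delicate is the final automorphism chase: one must really use $u\notin VC$ (to guarantee $u\neq v_k$) and the uniqueness of successors to push the equality $\alpha(v_i)=v_i$ all the way around $C$. The source/sink count on $C$ and the extraction of $u$ from $d^-\geq 2$ are routine; the clean reduction to $d^+=1$ via edge reversal is likewise automatic from the symmetric definition of C-homogeneity.
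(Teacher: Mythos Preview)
Your proof is correct. The paper does not actually prove this lemma here: it cites \cite[Lemma~5.1]{FinConHomDigraphs} and explicitly writes ``we omit its proof'', so there is no in-paper argument to compare against. Your approach---reducing to $d^+=1$, observing that a shortest cycle must be directed because $d^+=1$ forbids sources, and then exploiting that any automorphism commutes with the unique-successor map $s$ to push $\alpha(v_1)=v_1$ all the way around the cycle---is clean and self-contained.

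One minor remark: the non-adjacency of $u$ to the $v_i$ with $i\neq 1$ is really a consequence of $d^+=1$ rather than of the minimality of~$C$ (any edge between $u$ and $v_i$ would give either $u$ or $v_i$ a second out-neighbour). In fact you do not even need this full non-adjacency for the argument: the C-homogeneity step only requires that $D[\{v_k,v_1\}]$ and $D[\{u,v_1\}]$ each consist of a single directed edge, and this is immediate from $d^+=1$ together with $u\notin VC$.
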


Since connected C-homogeneous digraphs with more than one end have already been classified~\cite{GM-CHomDigraphs,HH-ConHomDigraphs}, we assume for the remainder of this section that $D$ contains at most one end.

In~\cite[Lemmas 5.2 and 5.5]{FinConHomDigraphs}, the author showed that the reachability relation of every locally finite C-homogeneous digraph with at most one end and whose out-neighbourhood is independent is not universal.
If we consider such digraphs of arbitrary degree, this does no longer hold.
For example, the countable generic $2$-partite digraph is a C-homogeneous digraph with independent out-neighbourhood and with precisely one end and its reachability relation is universal.
In the following, we distinguish the two cases whether the reachability relation $\AF$ of~$D$ is universal or not.

\subsection{\boldmath Non-universal reachability relation}\label{sec_D+IndepAFBipartite}

Within this section, let $D$ be a countable connected C-homogeneous digraph with $D^+\isom I_n$ for some $n\in\nat^\infty$, with $D^-\isom I_{n'}$ for some $n'\in\nat^\infty$, with at most one end.
We assume that $\AF$ is not universal and, due to Lemma~\ref{lem_d+1Tree}, that $n,n'\geq 2$.
Hence, we obtain by Proposition~\ref{prop_CPW} that $\Delta(D)$ is bipartite.
That is the reason, why we turn our attention towards the classification of the C-homogeneous bipartite graphs.
The following lemma due to Gray and M\"oller~\cite{GM-CHomDigraphs} underlines our interest in the C-homogeneous bipartite graphs.

\begin{lem}\label{lem_GM4.3}\cite[Lemma 4.3]{GM-CHomDigraphs}
The digraph $\Delta(D)$ is a connected C-homoge\-neous bipartite digraph.\qed
\end{lem}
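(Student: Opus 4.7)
The plan is to combine Proposition~\ref{prop_CPW} with a lifting argument that uses the C-homogeneity of the ambient digraph $D$. First I would observe that $D$ is $1$-arc transitive, since every $1$-arc is a finite connected induced subdigraph on two vertices and C-homogeneity therefore transports any such configuration to any other. Proposition~\ref{prop_CPW} then applies, and the running hypothesis that $\AF$ is non-universal places us in case~(b), yielding at once that $\Delta(D)$ is connected, $1$-arc transitive and bipartite. All that remains to prove is C-homogeneity.

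Fix a reachability digraph $\Delta := \langle \AF(e_0)\rangle$ and let $\{X,Y\}$ be its canonical bipartition, with every edge of $\Delta$ directed from $X$ to $Y$. Any digraph automorphism of $\Delta$ automatically preserves $\{X,Y\}$ setwise, so the condition to verify is that any isomorphism $\varphi\colon A\to B$ between finite connected induced subdigraphs of $\Delta$ extends to an automorphism of $\Delta$. The strategy is: lift $\varphi$ to some $\alpha\in\Aut(D)$ by C-homogeneity of $D$, and then show that $\alpha$ stabilises $\Delta$ setwise. The second half is straightforward: because $\alpha$ preserves walks together with their alternation patterns, it permutes reachability classes; and since it sends the edge $e_0\in EA$ to $\varphi(e_0)\in EB\subseteq\AF(e_0)$, the class $\AF(e_0)$ is $\alpha$-invariant, so $\alpha|_\Delta$ is an automorphism of $\Delta$ extending $\varphi$.

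The main obstacle is the lifting step itself, because $A$ and $B$ are only assumed induced in $\Delta$, not in $D$: a priori a pair $u,v\in VA$ could be joined by a $D$-edge that does not lie in $\AF(e_0)$, in which case $\varphi$ would fail to be an isomorphism of induced subdigraphs of $D$. To handle this I would prove the structural claim that for all $u,v\in V\Delta$ with $uv\in ED$ one has $uv\in\AF(e_0)$. The idea is to pick an edge $e_u\in E\Delta$ incident with $u$, use connectedness of $\Delta$ to produce an alternating walk inside $\Delta$ from $e_u$ to some edge at $v$, and then concatenate with the direct edge $uv$; the $1$-arc transitivity of $\Delta$ lets one arrange the walk so that the parity at $u$ and at $v$ matches what is required for the concatenation to be alternating, and the bipartition $\{X,Y\}$ forces the orientation of $uv$ relative to the walk. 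This exhibits $uv$ on a common alternating walk with $e_0$, so $uv\in\AF(e_0)$ as required.

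Once the claim is established, $A$ and $B$ are induced subdigraphs of $D$, connected in $D$, and $\varphi$ is an isomorphism of them as $D$-subdigraphs; C-homogeneity of $D$ produces the desired $\alpha\in\Aut(D)$, and the easy second half of the strategy finishes the proof.
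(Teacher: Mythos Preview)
The paper does not supply its own proof here; the lemma is quoted from Gray and M\"oller with a bare \qed, so there is no in-paper argument to compare against. I can only assess your attempt on its own terms.

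Your overall architecture is the natural one, and the second half (that any $\alpha\in\Aut(D)$ carrying an edge of $\AF(e_0)$ to another such edge must stabilise $\Delta$, hence restricts to an automorphism of~$\Delta$) is correct as you state it.

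The gap is in your structural claim. Your sketch handles the cases $u\in X$ or $v\in Y$: then $uv$ has the same orientation at that endpoint as every $\Delta$-edge there, so appending $uv$ to an alternating $\Delta$-walk keeps it alternating and $uv\in\AF(e_0)$ follows. But when $u\in Y$ and $v\in X$ the argument breaks. Every $\Delta$-edge incident with~$u$ points \emph{into}~$u$ and every $\Delta$-edge incident with~$v$ points \emph{out of}~$v$, whereas $uv$ points out of~$u$ and into~$v$; no concatenation at either endpoint is alternating, and no rearrangement of the interior of the walk helps, since the orientation of a $\Delta$-edge at a given vertex is forced by which side of the bipartition that vertex lies on. Your phrase ``the bipartition $\{X,Y\}$ forces the orientation of $uv$ relative to the walk'' is thus exactly the assertion that this back-edge case does not arise --- which is what needs proof, not something available to assume. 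Such a back-edge would place $u$ and~$v$ in a second common reachability digraph $\Delta'\neq\Delta$, meeting~$\Delta$ on both sides; excluding this is essentially what Lemma~\ref{lem_DeltaIntersectSameSide} does later, but that lemma depends on knowing the isomorphism type of~$\Delta(D)$ via Theorem~\ref{thm_HHThm6.4}, hence on the present lemma, so forward reference would be circular. You need either a direct argument at this stage that a $Y\to X$ edge between $\Delta$-vertices contradicts the standing hypotheses of the section, or a proof strategy that does not pass through the claim that $A$ is $D$-induced.
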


By Lemma~\ref{lem_GM4.3}, we know that $G(\Delta(D))$ belongs to one of the five classes described in Theorem~\ref{thm_HHThm6.4}.
In the following, we will treat these five possibilities one by one.
Let us start with the case $G(\Delta(D))\isom C_{2m}$ for some $m\geq 2$, where we notice that $D$ must be locally finite as every vertex lies in at most two reachability digraphs:

\begin{lem}\label{lem_In_DeltaCycle}
If $G(\Delta(D))\isom C_{2m}$ for some $m\geq 2$, then $D$ is locally finite.\qed
\end{lem}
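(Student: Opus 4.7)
My plan is to derive a contradiction from assuming $d^+(D)=\infty$; the case $d^-(D)=\infty$ is entirely symmetric. Since $\AF$ is not universal, Proposition~\ref{prop_CPW} yields that $\Delta(D)$ is bipartite; combined with $G(\Delta(D))\isom C_{2m}$, this means that each reachability digraph $\langle\AF(e)\rangle$ is a finite bipartite digraph on $2m$ vertices whose underlying graph is a $2m$-cycle. In particular every vertex of $\langle\AF(e)\rangle$ has degree exactly~$2$ there, and by the bipartite orientation its two incident edges are either both out-edges or both in-edges. Hence each reachability class that meets a fixed vertex $v$ on its source side contributes exactly two out-edges at~$v$, so the number of such classes equals $d^+(D)/2$ --- infinite under our assumption.

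Fix such a $v$ and enumerate the infinitely many reachability digraphs through~$v$ as source by $B_1,B_2,\ldots$. The key step is a block-separation claim: for $i\neq j$ the pieces $B_i-v$ and $B_j-v$ lie in distinct components of $D-v$. Otherwise, some walk in $D-v$ would join an edge of $B_i$ to an edge of $B_j$; combining it with the two incident edges at~$v$ from $B_i$ and $B_j$ and, using C-homogeneity together with the chord-free structure of $C_{2m}$, straightening the composition into an alternating closed walk would place edges of $B_i$ and $B_j$ in a common reachability class, contradicting $B_i\neq B_j$.

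Granting the block-separation claim I iterate outward. By vertex-transitivity each $u\in VB_j\sm\{v\}$ that is a source in some reachability digraph is in the same situation as~$v$, hence lies in infinitely many further reachability digraphs through it; the block-separation claim applied at~$u$ ensures that those digraphs extend away from $B_j$ without folding back. This produces an infinite branching tree of reachability digraphs, in which the infinitely many $B_j-v$ sit in pairwise distinct infinite subdigraphs of $D-VB_1$. Each such subdigraph contains a ray, so $D$ has infinitely many pairwise inequivalent ends, contradicting the assumption that $D$ has at most one end. Hence $d^+(D)$, and symmetrically $d^-(D)$, is finite, so $D$ is locally finite.

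The main obstacle is the block-separation claim. Rigorously promoting a shortcut walk in $D-v$ into an alternating walk requires careful use of C-homogeneity --- one has to replace each ``bad'' two-edge segment at an interior vertex by a locally alternating replacement, exploiting that $C_{2m}$ has no chord so that no reachability digraph admits nontrivial shortcuts. This is the digraph analogue of the block-decomposition step in the Cameron--Praeger--Wormald framework; once it is in place, the end-counting contradiction is essentially mechanical.
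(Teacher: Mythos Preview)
Your argument rests on a basic misunderstanding of the reachability relation. You claim that ``each reachability class that meets a fixed vertex $v$ on its source side contributes exactly two out-edges at~$v$, so the number of such classes equals $d^+(D)/2$''. But any two out-edges $vw_1$ and $vw_2$ of~$v$ lie on the alternating walk $w_1vw_2$ (both $w_1$ and $w_2$ are in $N^+(v)$, so the walk alternates at~$v$), hence they are $\AF$-equivalent. Consequently \emph{all} out-edges of~$v$ lie in a single reachability class, and $v$ sits on the source side of exactly one reachability digraph, not infinitely many. Your enumeration $B_1,B_2,\ldots$ therefore does not exist, and the entire block-separation and end-counting machinery is built on nothing.

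Once you see this, the lemma is immediate and no contradiction argument is needed: the out-degree of~$v$ in~$D$ equals the degree of~$v$ in the (unique) reachability digraph in which it is a source, and since $G(\Delta(D))\isom C_{2m}$ that degree is~$2$. Likewise $d^-(v)=2$. This is exactly the one-line observation the paper makes before the \qed: every vertex lies in at most two reachability digraphs, and each is a finite cycle.
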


Thus, if $G(\Delta(D))$ is an even cycle, then we obtain this part of the classification from Theorem~2.1 of~\cite{FinConHomDigraphs}.
In the following, we assume $G(\Delta(D))\not\isom C_{2m}$ for any $m\in\nat$.
Since locally finite C-homogeneous digraphs have already been classified, we may assume in the following that either $d^+=\omega$ or $d^-=\omega$.
By reversing the directions of each edge if necessary, we may assume $d^+=\omega$.

For a reachability digraph $\Delta$ of~$D$, two vertices or a set of vertices  of~$\Delta$ \emph{lie on the same side of~$\Delta$} if their out-degree, and hence also their in-degree, in~$\Delta$ is the same.

\begin{lem}\label{lem_bipReachInterGeq2}
For each two reachability digraphs $\Delta_1$ and~$\Delta_2$ of~$D$ we have either ${\Delta_1\cap\Delta_2=\es}$ or $|V(\Delta_1\cap\Delta_2)|\geq 2$.
\end{lem}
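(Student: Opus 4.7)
The plan is to argue by contradiction. Suppose that two distinct reachability digraphs meet in a single vertex~$v$; I will derive that $D$ has more than one end, which contradicts the standing assumption at the start of this section. The first step is to identify $\Delta_1$ and $\Delta_2$: since every reachability digraph is bipartite (Proposition~\ref{prop_CPW}) and $v$ is incident to both out-edges (as $d^+=\omega$) and in-edges (as $d^-=n'\geq 2$), any two out-edges at~$v$ are linked by the alternating walk $u_1 v u_1'$ (the equivalence condition at~$v$ holds because $u_1,u_1'\in N^+(v)$), so all out-edges lie in one reachability digraph $\Delta^+_v$ in which~$v$ is a source; dually, all in-edges lie in a second reachability digraph $\Delta^-_v$, where~$v$ is a sink. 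These are necessarily distinct, and~$v$ belongs to no other reachability digraph, so $\{\Delta_1,\Delta_2\}=\{\Delta^+_v,\Delta^-_v\}$.

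The next step is to promote this by vertex-transitivity (which follows from C-homogeneity on the singleton): for every $w\in VD$ one has $V\Delta^+_w\cap V\Delta^-_w=\{w\}$. Combined with the preceding paragraph, this yields the following block-like decomposition of~$D$: every vertex of~$D$ belongs to exactly two reachability digraphs, any two distinct reachability digraphs share at most one vertex, and a vertex $w$ is shared by $\Delta$ and $\Delta'$ precisely when $\{\Delta,\Delta'\}=\{\Delta^+_w,\Delta^-_w\}$.

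Finally, I derive the contradiction. Each reachability digraph is connected (Proposition~\ref{prop_CPW}) and infinite, since $|V\Delta^+_v|\geq d^+=\omega$ and similarly for $\Delta^-_v$. Using the bipartite structure of $\Delta^+_v$ I build a ray~$R^+$ from~$v$ whose tail lies in $V\Delta^+_v\sm\{v\}$, and dually a ray~$R^-$ whose tail lies in $V\Delta^-_v\sm\{v\}$. The block-like structure forces $\{v\}$ to separate $R^+$ from $R^-$ in~$D$: any alternative path in $D-v$ would have to traverse a chain of reachability digraphs linked through single shared vertices not equal to~$v$, producing a cycle in the incidence structure between vertices and reachability digraphs, which must be shown to be impossible. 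Consequently $R^+$ and $R^-$ represent distinct ends of~$D$, contradicting the assumption that $D$ has at most one end.

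I expect the main obstacle to lie in the very last step, namely ruling out cycles in the incidence structure of vertices and reachability digraphs (equivalently, ruling out alternative paths from $V\Delta^+_v\sm\{v\}$ to $V\Delta^-_v\sm\{v\}$ in $D-v$). I anticipate exploiting the explicit list for $G(\Delta(D))$ in Theorem~\ref{thm_HHThm6.4}---the cycle case $C_{2m}$ having already been removed by Lemma~\ref{lem_In_DeltaCycle}---together with C-homogeneity to exclude a ``triangle'' of pairwise overlapping reachability digraphs and, more generally, any short cycle in this incidence structure.
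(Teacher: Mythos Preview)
Your overall strategy---show that if two reachability digraphs meet in a single vertex then $D$ has a cut vertex, hence more than one end---is different from the paper's, and the gap you flag at the end is not a technicality but the entire content of the argument. Under your hypotheses the reachability digraphs together with their shared vertices form a connected bipartite incidence structure in which each vertex has degree~$2$; your conclusion (that $v$ separates $R^+$ from $R^-$) is equivalent to this incidence structure being a \emph{tree}. Nothing you have written excludes a cycle of reachability digraphs $\Delta_1,\ldots,\Delta_m$ with $|\Delta_i\cap\Delta_{i+1}|=1$ (indices mod~$m$); such a cycle would give a path in $D-v$ from $V\Delta^+_v$ to $V\Delta^-_v$ and kill your end argument. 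Your proposed fix---use Theorem~\ref{thm_HHThm6.4} and C-homogeneity to rule out a ``triangle'' of overlapping reachability digraphs---does not address cycles of arbitrary length, and even for $m=3$ you have given no mechanism that produces a contradiction.

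The paper avoids this difficulty by running the implication the other way round. It uses the one-end hypothesis as an \emph{input}: since a one-ended vertex-transitive digraph has no cut vertex, for an edge $xy$ one can find a shortest path~$P$ in $D-y$ (or $D-\{y,\bar y\}$ in the $CP_\omega$ case) from a successor of~$y$ back to~$x$. The proof then works directly with this concrete path: using $d^+=\omega$ it selects successors of~$x$ with no unwanted neighbours on~$P$, applies C-homogeneity to swap them while fixing $P$, and thereby manufactures two distinct vertices lying in two common reachability digraphs---a direct contradiction to the assumed single-vertex intersections. The argument splits according to whether $C_3$ embeds in~$D$ and (in the $C_3$ case) whether $G(\Delta(D))$ is a semiregular tree, each subcase producing an explicit configuration; no global tree structure on the incidence graph is ever asserted or needed.
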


\begin{proof}
Let us suppose that the intersection of two distinct reachability digraphs $\Delta_1$ and $\Delta_2$ consists of precisely one vertex.
Since every vertex lies in precisely two reachability digraphs and since $D$ is vertex-transitive, each two distinct reachability digraphs either have trivial intersection or share precisely one vertex.

\smallskip

We distinguish the cases whether $C_3$ embeds into~$D$ or not.
First, we assume that $D$ contains no directed triangle.
Let $xy\in ED$ and $\Delta=\langle\AF(xy)\rangle$.
If $G(\Delta)\not \isom CP_k$, let $P$ be any path of minimal length from any successor~$u$ of~$y$ to~$x$ avoiding~$y$.
Such a path exists as the one-ended digraph $D$ cannot contain any cut-vertex.
If $G(\Delta)\isom CP_k$, let $P$ be any path of minimal length from any successor~$u$ of~$y$ to~$x$ that avoids~$y$ and the unique neighbour $\bar{y}$ of~$y$ in the bipartite complement of~$\Delta$.
As $k=d^+=\omega$, both of the two reachability digraphs $\langle\AF(yu)\rangle$ and $\Delta$ contain rays that avoid $y$ and~$\bar{y}$ and hence $y$ and~$\bar{y}$ separate neither these rays nor $u$ from~$x$.
Thus, we also know in this situation that $P$ exists.

By the minimality of~$P$, the only successor of~$y$ on~$P$ is~$u$.
If $y$ has a predecessor~$x'$ on~$P$, then $xyu$ and $x'yu$ are induced $2$-arcs, so we find an automorphism of~$D$ that maps one onto the other and we obtain a contradiction to the minimality of~$P$.
Thus, $y$ has no neighbour on~$P$ except for~$u$ and~$x$.
At most $|VP|$ vertices of~$\Delta$ that lie on the same side as~$y$ can have successors on~$P$, since any two such vertices with a common successor on~$P$ would lie in two common reachability digraphs.
Since $N^+(x)$ contains infinitely many vertices, all of which lie on the same side of~$\Delta$ as~$y$, we find one such vertex~$z$ that has no successor on~$P$.
If $G(\Delta)$ is either complete bipartite or the bipartite complement of a perfect matching, then every predecessor of~$z$ on~$P$ is also a predecessor of~$y$ by the assumption that in the case $G(\Delta)\isom CP_k$ the path $P$ does not contain $\bar{y}$.
Hence, $P$ contains predecessors of~$z$ only if $G(\Delta)$ is the generic bipartite graph or a tree $T_{k,\ell}$.
Note that any predecessor of~$z$ on~$P$ is a predecessor in~$\Delta$ of~$z$.
Thus, in these two cases we may have chosen $z$ among the infinitely many vertices of~$N^+(x)$ that have no predecessor on~$P$.
Let $v$ be the neighbour of~$u$ on~$P$.
Then both vertices $y$ and~$z$ have only one neighbour on~$vPx$, the vertex~$x$.
By C-homogeneity, we find an automorphism $\alpha$ of~$D$ that fixes $vPx$ and interchanges $y$ and~$z$.
Let $w=u\alpha$.

If $vu\in ED$, then $v$ and~$y$ lie on the same side of $\langle\AF(yu)\rangle$ and on this side lies also $y\alpha=z$ as $(vu)\alpha=vw$.
But then $y$ and~$z$ lie in two common reachability digraphs which contradicts the assumption.
Hence, we have $uv\in ED$ and $wv\in ED$.
The two $2$-arcs $xyu$ and $xzw$ induce a digraph that consists only of these two $2$-arcs:
as~$z$ and~$u$ are not adjacent, neither are $y=z\alpha$ and $w=u\alpha$.
Note that no successor of~$y$ can have $w$ or~$z$ as a predecessor because otherwise either $w$ or~$z$ lies in the two reachability digraphs $\langle\AF(yu)\rangle$ and either $\langle\AF(wv)\rangle$ or $\Delta$, which is impossible by assumption.
By the same assumption and similar as above, only finitely many successors of~$u$ have successors on the $1$-arc $zw$.
Since $d^+=\omega$, we find a vertex $u'\in N^+(y)$ that is adjacent to neither $w$ nor~$z$.
Note that $u'$ and~$x$ are not adjacent since $D$ contains no triangle.
Hence, we find by C-homogeneity an automorphism $\beta$ of~$D$ that fixes $D[x,y,z,w]$ pointwise and maps $u$ to~$u'$.
So $u'$ and~$w$ have a common successor $v\beta$ and thus $u$ and~$u'$ lie on the same side of $\langle\AF(uv)\rangle$ and of $\langle\AF(yu)\rangle$.
This contradiction shows the assertion in the situation that $C_3$ does not embed into~$D$.

\smallskip

Now we consider the case that $D$ contains a directed triangle.%
For every edge $xy$ those successors of~$y$ that are predecessors of~$x$ lie in two common reachability digraphs.
As the intersection of two distinct reachability digraphs contains at most one vertex, we obtain that
\begin{txteq}\label{eq_bipReachInterGeq2_1}
every edge lies on precisely one directed triangle.
\end{txteq}
We distinguish whether $G(\Delta(D))$ is a semi-regular tree or not.
First, we consider the case $G(\Delta(D))\isom T_{k,\ell}$ for some $k,\ell\in\nat^\infty$ with $k,\ell\geq 2$.
Let $x\in VD$ and let $P$ be a shortest path in $G-x$ between any two successors $y$ and~$z$ of~$x$.
Since $P$ must contain some edge that does not lie in $\langle\AF(xy)\rangle$ and since any two distinct reachability digraphs intersect in at most one vertex, $P$ contains some vertex outside $\langle\AF(xy)\rangle$.
Thus and by the assumption on the intersection of any two distinct reachability digraphs, $P$ has at least three edges.
Let $z_2,z_1,z$ be the last three vertices of~$P$.
Let $a$ be a third successor of~$x$.
This vertex exists as $d^+=\omega$.
By minimality of~$P$, it contains no neighbour of~$a$ as otherwise we find a shorter path between $a$ and either~$y$ or~$z$, since neither $a$ and~$y$ nor $a$ and~$z$ have a common predecessor, as they lie in only one common reachability digraph.
Hence, the connected subdigraphs $zxyPz_2$ and $axyPz_2$ are isomorphic and we find an automorphism $\alpha$ of~$D$ that fixes $xyPz_2$ and interchanges~$a$ and~$z$, as $D$ is C-homogeneous.
So we obtain that $D':=D[z,z_1,z_2,z_1\alpha,a]$ consists of four edges and, with $z_1':=z_1\alpha$, we have $zz_1\in ED$ if and only if $az'_1\in ED$ and the same for $z_1z_2$ and $z'_1z_2$.
Since the intersection of any two reachability digraphs contains at most one vertex, the path $D'$ is not an alternating walk.
Thus, $D'$ consists of two induced $2$-arcs.
If these are $z_2z_1z$ and $z_2z_1'a$, then $z_1$ and $z_1'$ lie in the intersection of the two reachability digraphs $\langle\AF(xz)\rangle$ and $\langle\AF(z_2z_1)\rangle$.
Thus, these $2$-arcs must be $zz_1z_2$ and $az'_1z_2$.
If~$x$ and~$z_1$ are adjacent, then the edge between them must be $z_1x$ since $N^+(x)$ is independent.
But then, we have $z_1'x\in ED$, too, and $D[x,z_1,z_2,z_1']$ is a cycle in $\langle\AF(xz)\rangle$, which is impossible.
Similarly, $x$ and~$z_1'$ are not adjacent.
Hence, the digraph $D[x,z,z_1,a,z'_1]$ consists of only the two induced $2$-arcs $xzz_1$ and $xaz'_1$ and we can proceed as in the case that $C_3$ does not embed into~$D$ to obtain a contradiction with the additional requirement that $u'$ is not adjacent to~$x$, which is possible as only one successor of~$z$ is adjacent to~$x$ by~(\ref{eq_bipReachInterGeq2_1}) and $d^+=\omega$.

It remains to consider the case that $G(\Delta(D))$ is not a semi-regular tree.
Due to the structure of $G(\Delta(D))$, both sides of each reachability digraph have the same cardinality.
As $d^+=\omega$, we also have ${d^-=\omega}$.
Let $x\in VD$ and $y$ and~$z$ be two vertices in~$N^+(x)$.
Let $u$ and~$v$ be the unique successors of~$y$ and~$z$, respectively, that lie on a common directed triangle with~$x$, see~(\ref{eq_bipReachInterGeq2_1}).
Since each edge lies on a unique (directed) triangle, every common successor $w\neq x$ of~$u$ and~$v$ is adjacent to neither $y$ nor~$z$.
As $d^-=\omega$ and due to~(\ref{eq_bipReachInterGeq2_1}), we find $a\in N^-(v)$ that is adjacent to neither $w$ nor~$x$.
An edge $au$ implies that $u$ and~$v$ lie in two common reachability digraphs and an edge $ua$ leads to a cycle $D[a,u,w,v]$ witnessing that $\AF$ is universal.
As both situations are impossible, $a$ and~$u$ are not adjacent.
Furthermore, $az$ cannot be an edge because then $D[a,v,y,x,z]$ is a cycle witnessing that $\AF$ is universal.
As this is not the case, we have $az\notin ED$.
Let us suppose that $za$ is an edge of~$D$.
Then by C-homogeneity, we find an automorphism $\alpha$ of~$D$ that fixes $w$ and maps $zu$ to~$av$ and $v$ to~$u$.
Note that $b:=a\alpha\neq z$ since $za\in ED$ but $ba=(az)\alpha\notin ED$.
As $bu\in ED$, the digraph $D[a,b,u,z]$ is a cycle witnessing that $\AF$ is universal.
This contradiction shows that $z$ and~$a$ are not adjacent.
So we find an automorphism $\beta$ of~$D$ that fixes $z,u,w,v$ and maps $y$ to~$a$, as $D$ is C-homogeneous.
Thus, $x\beta\neq x$ is a common predecessor of~$a$ and~$z$.
So $a$ lies in $\langle\AF(xy)\rangle$ on the same side as~$z$.
Thus, $a$ and~$y$ lie in two common reachability digraphs in contradiction to the assumption.
\end{proof}

Now we are able to complete the investigation if $G(\Delta(D))$ is a semiregular tree:

\begin{lem}\label{lem_InWithT_kl}
If $G(\Delta(D))\isom T_{k,\ell}$ for some $k,\ell\in\nat^\infty$ with $k,\ell\geq 2$, then $D$ either is locally finite or has more than one end.
\end{lem}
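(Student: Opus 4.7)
Assuming $D$ is not locally finite (so $d^+=\omega$ by our normalisation), I will exhibit at least two ends in $D$, contradicting the standing hypothesis that $D$ has at most one end. Since every reachability digraph has its out-side out-degree equal to $d^+=\omega$, the isomorphism $G(\Delta(D))\isom T_{k,\ell}$ forces $k=\omega$. A preliminary observation is that each vertex $v\in VD$ lies in exactly two reachability digraphs, $\Delta^+(v)$ and $\Delta^-(v)$, respectively containing all outgoing and all incoming edges of $v$, with $v$ on opposite sides of the two; a third reachability digraph through $v$ would meet one of these on the same side at $v$ and would then coincide with it since $\AF$ partitions~$ED$.

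Next I would fix $xy\in ED$ and set $\Delta=\langle\AF(xy)\rangle\isom T_{\omega,\ell}$. In $\Delta$ the vertex $y$ lies on the side of degree $\ell\geq 2$, so $\Delta-y$ has $\ell$ components, each an infinite subtree (every vertex on the opposite side has degree $k=\omega$ in $\Delta$) and hence carrying a ray of~$D$. I would then claim that the single vertex $y$ already separates at least two of these components inside~$D$, which immediately gives $D$ at least two ends and the desired contradiction.

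To prove the claim I would suppose for contradiction that a path $P$ in $D-y$ connects vertices in two distinct components of~$\Delta-y$. Some edge of $P$ must lie in a reachability digraph $\Delta'\neq\Delta$ sharing vertices with~$\Delta$, whence Lemma~\ref{lem_bipReachInterGeq2} gives $|V(\Delta\cap\Delta')|\geq 2$. Picking $u,v\in V(\Delta\cap\Delta')$, the unique $u$--$v$ paths $P_1\sub\Delta$ and $P_2\sub\Delta'$ inside the trees $\Delta,\Delta'$ must differ, since otherwise the edges of $P_1=P_2$ would belong to both reachability classes; hence $P_1\cup P_2$ is a cycle $C$ in~$D$ whose only non-alternating vertices are $u$ and~$v$. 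C-homogeneity can be used to translate $C$ so that one of its non-alternating vertices coincides with $y$, and then the infinite branching $k=\omega$ permits replacing one of the two arcs of~$C$ by a shorter alternating detour through a fresh branch of~$\Delta$ at~$x$, producing a strictly shorter cycle in~$D$ with a single non-alternating vertex---that is, a cycle witnessing universality of $\AF$ in the sense of Lemma~\ref{lem_CycleWitnesses}. This contradicts our standing hypothesis that $\AF$ is non-universal.

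The main obstacle is this final cycle-shortening step: the detour must be chosen carefully so as not to introduce new non-alternating vertices and to remain strictly shorter than~$C$. This is where both C-homogeneity (to realise the replacement as an automorphism image) and the infinite branching $k=\omega$ (to supply enough unused branches at~$x$) will be used essentially; once established, together with Lemma~\ref{lem_UniversalInducedCycle} we obtain an induced witness of universality, finishing the contradiction.
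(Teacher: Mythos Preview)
Your plan has a genuine gap at the cycle step, and it is precisely the step you flag as ``the main obstacle''. The cycle $C=P_1\cup P_2$ you build from the tree paths in~$\Delta$ and~$\Delta'$ fails to witness universality: since every vertex lies on opposite sides of its two reachability digraphs, at \emph{each} of the junction vertices $u$ and~$v$ the $\Delta$-edge and the $\Delta'$-edge point in opposite directions, so the cycle has a $2$-arc at \emph{both} $u$ and~$v$. Removing a single edge therefore never yields an alternating walk, and such a cycle is perfectly compatible with $\AF$ being non-universal (the $P_1$-edges lie in one $\AF$-class and the $P_2$-edges in another). Your ``shortening'' step is meant to collapse these two $2$-arc positions to one, but the sketch---translate $C$ by C-homogeneity, then reroute through a fresh branch at~$x$---does not explain how the reroute eliminates one $2$-arc without creating another, nor why it is strictly shorter; as stated it is not an argument.

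The paper's proof shows that this difficulty is real rather than cosmetic. Instead of trying to manufacture a universality witness, it first proves (using C-homogeneity and the infinite branching) that $V(\Delta\cap\Delta')$ always lies on a single side of~$\Delta$, then that an entire side of~$\Delta$ is contained in~$\Delta'$, then that $D$ must contain directed triangles, and from this that there are only three reachability digraphs in total. The final contradiction is structural: tracking how a directed triangle interacts with the tree components of these three digraphs forces an edge to jump between two components of $\Delta-e$, which is impossible in a tree. None of these intermediate facts is available to you at the point where you invoke the shortening, and the paper's length here suggests no shortcut is evident. If you want to pursue your route, you would at minimum need to first establish the ``same side'' statement for $\Delta\cap\Delta'$ and then argue carefully about parities in the two trees---but at that point you are essentially redoing the paper's argument.
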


\begin{proof}
Let us assume that $D$ is not locally finite.
By reversing the direction of each edge, we may assume $k=d^+=\omega$.
Let us suppose that $D$ has at most one end.
First, we show that
\begin{txteq}\label{eq_InWithT_kl1}
the intersection of two distinct reachability digraphs lies on the same side of each of them.
\end{txteq}
Let us suppose that this is not the case.
As $D$ is vertex-transitive, each two reachability digraphs with non-trivial intersection are a counterexample to~(\ref{eq_InWithT_kl1}).
Let $\Delta_1$ and~$\Delta_2$ be two distinct reachability digraphs with non-trivial intersection.
By Lemma~\ref{lem_bipReachInterGeq2}, their intersection contains at least two vertices.
Since $V(\Delta_1\cap\Delta_2)$ does not lie on the same side of~$\Delta_1$, we find two vertices $x,y\in V(\Delta_1\cap\Delta_2)$ of odd distance in~$\Delta_1$ such that $x$ has no successors in~$\Delta_1$.
Let $z$ be the predecessor of~$x$ on the unique $x$--$y$ path $P$ in~$\Delta_1$.
Since $d^+=\omega$, we find a successor $x'$ of~$z$ that does not lie on~$P$.
Then the digraph $x'zPy$ is isomorphic to~$P$ and, by C-homogeneity, we find an automorphism of~$D$ that fixes $zPy$ and maps $x$ to~$x'$.
So we conclude that $x'$ lies also in the same two reachability digraphs as~$y$.
Hence, the two vertices $x$ and~$x'$ of distance $2$ lie on the same side of~$\Delta_1$ and of~$\Delta_2$.
Inductively, all vertices of~$\Delta_1$ that lie on the same side of~$\Delta_1$ as~$x$, also lie in~$\Delta_2$.
In particular, this holds for some successor $y'$ of~$y$.
Hence, $\Delta_1$ and $\Delta_2$ share all vertices of~$D$.
For an edge $ab\in E\Delta_2$ the $a$--$b$ path in~$\Delta_1$ is an alternating walk.
Thus, $Q$ together with the edge $ab$ is a cycle witnessing that $\AF$ is universal.
This contradiction to the assumptions shows~(\ref{eq_InWithT_kl1}).

For the remainder of the proof, we fix two reachability digraphs $\Delta_1$ and $\Delta_2$ with non-trivial intersection such that the vertices in~$\Delta_1\cap\Delta_2$ have no successor in~$\Delta_1$.

With the same argument as in the proof of~(\ref{eq_InWithT_kl1}), just taking a path $P$ of even length, we obtain that
\begin{txteq}\label{eq_InWithT_kl2}
every vertex on the same side of~$\Delta_1$ as $V(\Delta_1\cap\Delta_2)$ lies in~$\Delta_2$.

The analogous property for~$\Delta_2$ holds as soon as $\ell\geq 3$.
\end{txteq}

For the remainder of the proof, let $x\in V\Delta_1\sm V\Delta_2$.
Next, we show that
\begin{txteq}\label{eq_InWithT_kl3}
no vertex of~$N^+(x)$ separates in~$\Delta_2$ any other two vertices of~$N^+(x)$.
\end{txteq}
To show this, we suppose that $y_1\in N^+(x)$ separates in~$\Delta_2$ the two vertices $y_2,y_3\in N^+(x)$.
By C-homogeneity and as $N^+(x)$ is independent, we find an automorphism of~$D$ that fixes $x$ and~$y_3$ and switches $y_1$ and~$y_2$.
This automorphism fixes $\Delta_2$ setwise and we obtain that $y_2=y_1\alpha$ separates in~$\Delta_2$ the vertices $y_1=y_2\alpha$ and $y_3=y_3\alpha$ which is clearly impossible.
This contradiction shows~(\ref{eq_InWithT_kl3}).

Let us show that
\begin{txteq}\label{eq_InWithT_kl4}
$D$ contains some directed triangle.
\end{txteq}
Let us suppose that $D$ contains no directed triangle.
Let $y\in N^+(x)$ and let $z_1,z_2\in N^+(y)$ such that $z_1$ is the neighbour of~$y$ in that component of~$\Delta_2-y$ that contains all other successors of~$x$.
Then the two $2$-arcs $xyz_1$ and $xyz_2$ are induced and we obtain an automorphism $\alpha$ of~$D$ that fixes $x$ and~$y$ and maps $z_1$ to~$z_2$, as $D$ is C-homogeneous.
Thus, $\alpha$ does not fix the unique component of $\Delta_2-y$ that contains all successors of~$x$.
This is impossible and hence we have shown~(\ref{eq_InWithT_kl4}).

Let $y\in N^+(x)$ and let $z\in N^+(y)$ such that $z$ lies in the unique component of~$\Delta_2-y$ that contains all successors of~$x$ but~$y$, see~(\ref{eq_InWithT_kl3}).
By the same argument as in the proof of~(\ref{eq_InWithT_kl3}) we obtain that
\begin{txteq}\label{eq_InWithT_kl5}
either $z$ is the only successor of~$y$ such that $D[x,y,z]$ is a directed triangle or $z$ is the only successors of~$y$ such that $D[x,y,z]$ is an induced $2$-arc.
\end{txteq}

If $D[x,y,z]$ is a directed triangle, then every edge of~$D$ lies on a unique directed triangle due to~(\ref{eq_InWithT_kl5}).
So the number of directed triangles that contain a given vertex is $d^+$ and it is also $d^-$.
Hence, we obtain $d^-=d^+=\omega$.
If $D[x,y,z]$ is an induced $2$-arc, then the edge $xy$ lies on infinitely many directed triangles as $D^+=\omega$ and by~(\ref{eq_InWithT_kl5}).
Thus, $x$ must have infinitely many predecessors and we obtain $d^-=d^+=\omega$ in this case, too.
Hence, we have $\ell\geq 3$ and the second part of~(\ref{eq_InWithT_kl2}) holds.
Thus, there are two reachability digraphs distinct from $\Delta_2$ that cover the vertices of~$\Delta_2$.
So the vertices of $\Delta_2-\Delta_1$ lie in a reachability digraph $\Delta_0\neq\Delta_1$. 
Since $C_3$ embeds into~$D$, we have
\[
\Delta_1-\Delta_2=\Delta_0\cap\Delta_1=\Delta_0-\Delta_2.\]
As $D$ is connected, we conclude that $\Delta_0,\Delta_1,$ and $\Delta_2$ are the only reachability digraphs of~$D$.

The next step is to show that $D[x,y,z]$ is not an induced $2$-arc:
\begin{equation}\label{eq_InWithT_kl6}
D[x,y,z]\isom C_3.
\end{equation}
If~(\ref{eq_InWithT_kl6}) does not hold, then $xyz$ is an induced $2$-arc and, by~(\ref{eq_InWithT_kl5}), unique with the property that $xy$ is its first edge.
Let $x'\in VD$ such that $yzx'$ is the unique induced $2$-arc with $yz$ as its first edge.
Then we have $x'\in V(\Delta_0\cap\Delta_1)$ and $x$ and~$x'$ lie on the same side of~$\Delta_1$.
Note that $xy$ already determines the vertex~$x'$.
So the stabilizer of the edge $xy$ must fix~$x'$.
Let $u$ be the first vertex on the unique $x$--$x'$ path in~$\Delta_1$ that is neither $x$ nor~$y$.
Let $v$ be another neighbour of~$x$, if $u$ is a neighbour of~$x$, and let $v$ be another neighbour of~$y$ otherwise.
Then we find an automorphism of~$D$ that fixes the edge $xy$ and maps $u$ to~$v$ which is clearly impossible as this automorphism does not fix~$x'$.
This shows~(\ref{eq_InWithT_kl6}).

Let us now show that $D[x,y,z]$ cannot be a directed cycle, either, which will be our desired contradiction.
To simplify notations, let $x_0=z$, $x_1=x$, $x_2=y$.
Let $F_i, G_i$ be the component of~$\Delta_i-x_ix_{i+1}$ that contains~$x_i, x_{i+1}$, respectively (we consider the indices modulo~$3$).
Let $u\in F_1\cap V(\Delta_1\cap\Delta_2)$.
Then we find a second vertex $v$ in~$F_1\cap V(\Delta_1\cap\Delta_2)$ that has distance $d_{\Delta_1}(x_2,u)$ to each of~$x_2$ and~$u$ because of $d^+\neq 2\neq d^-$, where $d_{\Delta_1}$ denotes the distance in~$\Delta_1$.
Let $w\in F_1$ be the unique vertex in~$F_1$ that has the same distance to each of $x_2,u,v$.
By C-homogeneity, we find an automorphism that fixes the unique $w$--$u$ path in~$\Delta_1$ and maps the unique $w$--$x_2$ path in~$\Delta_1$ onto the unique $w$--$v$ path in~$\Delta_1$ and vice versa.
As in the proof of~(\ref{eq_InWithT_kl3}), we obtain that $x_2$ does not separate $u$ and~$v$ in~$\Delta_2$.
So $u$ and~$v$ must lie in the same component $C$ of~$\Delta_2-x_2$.
Thus, all vertices $a$ of~$F_1\cap V(\Delta_1\cap \Delta_2)$ with $d_{\Delta_1}(a,x_2)=d_{\Delta_1}(x_2,u)$ lie in~$C$.
Let us suppose $C\sub F_2$.
Since there are infinitely many components of~$\Delta_2-x_2$ in~$F_2$, we find one neighbour $b_1$ of~$x_2$ in~$C$ and one neighbour $b_2$ in another component of~$F_2\cap V(\Delta_2-x_2)$.
Both digraphs $x_1x_2b_1$ and $x_1x_2b_2$ are induced $2$-arcs as neither $b_1$ nor~$b_2$ is~$x_0$ and due to~(\ref{eq_InWithT_kl5}).
By C-homogeneity, we find an automorphism $\alpha$ of~$D$ that fixes $x_1x_2$ and maps $b_1$ to~$b_2$.
Thus, $\alpha$ cannot fix $C$ setwise even though it fixes $F_1\cap V(\Delta_1\cap\Delta_2)$ setwise.
This contradiction shows $C\sub G_2$.
Thus, we have
\[F_1\cap V(\Delta_1\cap\Delta_2)\sub G_2\cap V(\Delta_1\cap\Delta_2).\]
By a symmetric argument, we obtain
\[F_1\cap V(\Delta_1\cap\Delta_2)= G_2\cap V(\Delta_1\cap\Delta_2).\]
Analogously, we obtain
\[F_i\cap V(\Delta_i\cap\Delta_{i+1})= G_{i+1}\cap V(\Delta_i\cap\Delta_{i+1})\]
for all $i$ and hence also
\[G_i\cap V(\Delta_i\cap\Delta_{i+1})= F_{i+1}\cap V(\Delta_i\cap\Delta_{i+1}).\]
Let $D[a,b,c]$ be a directed triangle with $a\in F_1\cap V(\Delta_0\cap\Delta_1)$ that is disjoint from $D[x,y,z]$.
Then we have
\[b\in F_1\cap V(\Delta_1\cap\Delta_2)=G_2\cap V(\Delta_1\cap\Delta_2)\]
and hence
\[c\in G_2\cap V(\Delta_2\cap\Delta_0)=F_0\cap V(\Delta_2\cap\Delta_0)\]
and
\[a\in F_0\cap V(\Delta_0\cap\Delta_1)=G_1\cap V(\Delta_0\cap\Delta_1).\]
So $ab$ is an edge in~$\Delta_1$ between vertices of distinct components of $\Delta-x_1x_2$, which is impossible.
This contradiction shows that $D$ has more than one end.
\end{proof}

Thus, we can go through the list of locally finite C-homogeneous digraphs, Theorem~2.1 in~\cite{FinConHomDigraphs}, and through the list of connected C-homogeneous digraphs with more than one end, Theorems 4.2 and 7.6 in~\cite{HH-ConHomDigraphs} and Theorem 6.2 in~\cite{GM-CHomDigraphs} by Gray and M\"oller, and obtain all possibilities if $G(\Delta(D))$ is a semi-regular tree.
Hence, in addition to $G(\Delta(D))\not\isom C_{2m}$ for any $m\in\nat$, we assume in the following $G(\Delta(D))\not\isom T_{k,\ell}$ for any $k,\ell\in\nat^\infty$.

\begin{lem}\label{lem_DeltaIntersectSameSide}
For each two distinct reachability digraphs $\Delta_1$ and $\Delta_2$ of~$D$, the set $V(\Delta_1\cap\Delta_2)$ lies on the same side of~$\Delta_1$.
\end{lem}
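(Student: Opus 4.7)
The plan is a contradiction argument using cycles that witness universality of $\AF$. Suppose $V(\Delta_1\cap\Delta_2)$ contains two vertices $x,y$ on opposite sides of $\Delta_1$. Because $\Delta_1$ is bipartite and connected, a shortest $x$--$y$ path $P$ in $\Delta_1$ automatically is an alternating walk (all paths in a bipartite digraph are alternating); likewise take a shortest $x$--$y$ path $W$ in $\Delta_2$, which is an alternating walk in $\Delta_2$. Both paths have length at least $1$, and since every edge of $D$ belongs to a unique reachability class, $E(P)\cap E(W)=\es$. Let $C$ denote the closed walk obtained by traversing $P$ from $x$ to $y$ and then $W$ backwards from $y$ to $x$; using Lemma~\ref{lem_bipReachInterGeq2} together with shortest-path minimality one may arrange $P,W$ to share only the vertices $x,y$, so that $C$ is a bona fide cycle in $D$.

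The next step is a dichotomy for $C$. At every internal vertex of $P$ the cycle is alternating (since $P$ is alternating in $\Delta_1$), and likewise at every internal vertex of $W$; so alternation of $C$ can fail only at $x$ or $y$. If alternation never fails, then $C$ is an alternating cycle, and removing any edge yields an alternating walk containing edges of both $\Delta_1$ and $\Delta_2$; but then all those edges lie in a single reachability class, contradicting $\Delta_1\neq\Delta_2$. Hence alternation fails at $x$ or $y$, producing a $2$-arc on $C$ at that vertex, and $C$ minus the appropriate edge incident to this vertex is an alternating walk. Thus $C$ is a cycle witnessing that $\AF$ is universal, provided the $2$-arc is induced.

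The remaining step is to ensure the $2$-arc is induced, so that Lemma~\ref{lem_UniversalInducedCycle} yields an induced cycle witnessing universality, contradicting the standing hypothesis that $\AF$ is not universal. Here one uses that by Lemmas~\ref{lem_In_DeltaCycle} and~\ref{lem_InWithT_kl} the remaining options for $G(\Delta(D))$ are $K_{k,\ell}$, $CP_k$ with $k\geq 3$, and the countable generic bipartite graph, combined with the hypotheses $d^+=\omega$ and $D^\pm\isom I_n$, $I_{n'}$ with $n,n'\ge 2$. These give enough freedom -- via C-homogeneity applied to induced connected subdigraphs -- to pick $W$ so that the neighbor of $x$ on $W$ is non-adjacent to the neighbor of $x$ on $P$ (and symmetrically at $y$), ensuring the $2$-arc at the failing endpoint is induced.

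The main obstacle is this last inducedness step, which requires a case analysis over the three possibilities for $G(\Delta(D))$. In the generic bipartite case it is essentially immediate from the extension property of the generic bipartite graph; in the $K_{k,\ell}$ and $CP_k$ cases one must use C-homogeneity and the independence of $N^+(\cdot)$ and $N^-(\cdot)$ more carefully, possibly replacing $y$ by another vertex on the same side of $\Delta_1$ to avoid forced chords arising from the rigid adjacency pattern of complete bipartite or $CP_k$ reachability digraphs.
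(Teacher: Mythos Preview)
Your cycle argument has a parity error that breaks the key step.  Recall that every vertex lies in exactly two reachability digraphs: one containing all its out-edges and the other containing all its in-edges.  Hence, if $x$ is on the ``source'' side of~$\Delta_1$ (so all edges of~$\Delta_1$ at~$x$ leave~$x$), then the in-edges of~$x$ lie in~$\Delta_2$, and $x$ is on the ``sink'' side of~$\Delta_2$.  Since $y$ is on the opposite side of~$\Delta_1$ from~$x$, the same reasoning shows that $y$ is on the opposite side of~$\Delta_2$ from~$x$ as well.  Consequently the edge of~$P$ at~$x$ points \emph{out} of~$x$ while the edge of~$W$ at~$x$ points \emph{into}~$x$, giving a $2$-arc at~$x$; and symmetrically the edge of~$P$ at~$y$ points into~$y$ while the edge of~$W$ at~$y$ points out of~$y$, giving a $2$-arc at~$y$.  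So alternation fails at \emph{both} endpoints, not at one or neither.  Your dichotomy therefore collapses: the ``alternating cycle'' case never occurs, and in the remaining case removing a single edge still leaves a $2$-arc at the other endpoint, so $C$ minus one edge is not an alternating walk.  In particular $C$ is not a cycle witnessing that~$\AF$ is universal in the sense of the paper, and nothing in this construction links the edge class of~$\Delta_1$ to that of~$\Delta_2$.

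The paper's proof avoids this obstruction by abandoning the uniform cycle approach and arguing case by case on~$G(\Delta(D))$.  The complete bipartite case is dispatched immediately: vertices on opposite sides of~$\Delta_1$ would be adjacent in~$\Delta_1$, forcing an edge in $\Delta_1\cap\Delta_2$, which is impossible.  In the generic bipartite case one uses that diameter is at most~$3$ together with C-homogeneity on induced alternating $3$-paths to propagate membership in~$\Delta_2$ to all of~$V\Delta_1$, and then obtains a genuine witnessing cycle (with a single $2$-arc) from an edge of~$\Delta_2$ together with the alternating $\Delta_1$-path between its endpoints.  The $CP_k$ case is the delicate one and requires a bespoke argument exploiting that the intersection must then consist of exactly the two matched vertices; the contradiction there comes from C-homogeneity applied to carefully chosen configurations, not from a witnessing cycle.
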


\begin{proof}
We may assume that $\Delta_1$ and $\Delta_2$ have non-trivial intersection.
Due to Lemma~\ref{lem_bipReachInterGeq2}, we have $|V(\Delta_1\cap\Delta_2)|\geq 2$.
Let us suppose that $V(\Delta_1\cap\Delta_2)$ does not lie on the same side of~$\Delta_1$.
Since $\Delta_1\cap\Delta_2$ contains no edge, $G(\Delta(D))$ is no complete bipartite graph.

If $G(\Delta(D))$ is the countable generic bipartite graph, then any two of its vertices have distance at most~$3$ in~$\Delta(D)$.
Since $V(\Delta_1\cap\Delta_2)$ does not lie on the same side of~$\Delta_1$, we find $x,y\in V(\Delta_1\cap\Delta_2)$ with $d_{\Delta_1}(x,y)=3$.
So any two vertices of distance three in~$\Delta_1$ lie in the intersection of two reachability digraphs by C-homogeneity, as we can extend them to an induced alternating path of length~$3$ within~$\Delta_1$.
This implies that all the vertices of~$\Delta_1$ lie in $\Delta_2$, which is impossible as we already saw in the proof of Lemma~\ref{lem_InWithT_kl}.
Thus, $G(\Delta(D))$ is not the countable generic bipartite graph.\looseness-1

So for the remainder of the proof, we may assume that $G(\Delta(D))\isom CP_k$ for some $k\in\nat^\infty$ with $k\geq 4$.
Since it suffices to consider the case $d^+=\omega$, we may assume $k=\omega$.
As ${\Delta_1\cap\Delta_2}$ contains two vertices of distinct sides of~$\Delta_1$ but no edge, it consists of precisely two vertices that are adjacent in the bipartite complement of~$\Delta_1$.
For the end vertices of any $2$-arc $x_1x_2x_3$, not necessarily induced, there is no $x'_2\in VD$ such that $x_1x'_2x_3$ is also a $2$-arc since otherwise $x_2$ and~$x'_2$ lie in two common reachability digraphs and on the same side of each of them, which is impossible.
In particular, every edge $y_1y_2$ lies on at most one directed triangle, since two directed triangle both of which contain $y_1y_2$ have different $2$-arcs from $y_2$ to~$y_1$.

Let $xy\in E\Delta_1$ with $y\in V\Delta_2$.
If $C_3$ embeds into~$D$, let $a$ be the unique vertex on a directed triangle with $xy$.
Otherwise, let $a$ be any successor of~$y$.
In both cases, let $a'$ (let $v$) be the unique neighbour of~$a$ (of~$y$, respectively) in the bipartite complement of~$\Delta_2$.
So we have $v\in V(\Delta_1\cap\Delta_2)$.
Since $k=\omega$ and each two distinct reachability digraphs have only two common vertices, we find a common successor~$u$ of~$x$ and~$v$ that is adjacent to neither~$a$ nor~$a'$.
Similar to the existence of~$u$, we find a vertex $b\in N^+(y)$ with $b\neq a$ such that $b$ and its unique neighbour~$b'$ in the bipartite complement of~$\Delta_2$ are adjacent to neither~$x$ nor~$u$.

Note that $\Delta_1$ contains rays avoiding $y$ and~$v$ and that the reachability digraph containing $a$ and~$a'$ that is distinct from~$\Delta_2$ contains rays avoiding $a$ and~$a'$.
As $D$ has at most one end, we find a path from each successor of~$a$ and each predecessor of~$a'$ to~$x$ such that the path avoids $a,a',b,b',y$, and~$v$.
Let $P$ be any such path of minimal length and let $c$ be its first vertex.
Note that if $C_3$ embeds into~$D$ then $P$ is the trivial path consisting only of~$x$.
By its minimality, $P$ contains no successor of~$b$ and no predecessor of~$b'$.
Indeed, if $P$ has such a vertex, then this is not~$c$, since neither $a$ and~$b$ nor $a'$ and~$b'$ lie in two common reachability digraphs and since $c\notin V\Delta_2$.
By C-homogeneity, we find an automorphism of~$D$ that fixes $xy$ and maps $b$ to~$a$ and~$b'$ to~$a'$.
This would contradict the minimality of~$P$.
Note that, if $P$ contains either a predecessor of~$b$ or a successor of~$b'$, then this is also a predecessor of~$a$ or a successor of~$a'$, respectively, and the analogue holds if $P$ contains either a predecessor of~$a$ or a successor of~$a'$.
Thus, if $ac\in ED$, we find an automorphism of~$D$ that fixes $P$ and $yxuv$ and maps $a'$ to~$b'$.
Then $yac$ and $ybc=(yac)\alpha$ are $2$-arcs with the same end vertices, which cannot exist as we already mentioned.
In the situation $ca'\in ED$, we obtain a similar contradiction by an automorphism that fixes $P$ and $yxuv$ and maps $a$ to~$b$, where we find the two $2$-arcs $ca'v$ and~$cb'v$.
\end{proof}

Now we are able to finish the situation for the cases that $G(\Delta(D))$ is either complete bipartite, or the bipartite complement of a perfect matching, or the countable generic bipartite graph.
Due to the previous classifications of C-homogeneous digraphs~\cite{GM-CHomDigraphs,FinConHomDigraphs,HH-ConHomDigraphs}, it suffices to describe those that have at most one end and are not locally finite.

\begin{lem}\label{lem_In_CP/K/Gen}
If $D$ has at most one end and is not locally finite, then it is isomorphic to one of the following digraphs:
\begin{enumerate}[\rm (i)]
\item $C_m[I_\omega]$ for some $m\in\nat^\infty$ with $m\geq 3$;
\item $Y_\omega$; or
\item $\RF_m$ for some $m\in\nat^\infty$ with $m\geq 3$.
\end{enumerate}
\end{lem}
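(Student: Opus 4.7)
The plan is to push the analysis of the reachability digraphs one step further: by the preceding lemmas we may assume $d^+=\omega$ (reversing all edges if necessary), and by Lemmas~\ref{lem_In_DeltaCycle} and~\ref{lem_InWithT_kl} the underlying graph $G(\Delta(D))$ must be $K_{k,\ell}$ for some $k,\ell\in\nat^\infty$ with $k,\ell\geq 2$, $CP_k$ for some $k\in\nat^\infty$ with $k\geq 3$, or the countable generic bipartite graph. Lemma~\ref{lem_DeltaIntersectSameSide} further says that any two distinct reachability digraphs meet only along a common side. The strategy is to exhibit a cyclic partition of $VD$ into independent classes indexed by $\ganz/m\ganz$ (or $\ganz$, which will be ruled out by one-endedness) with $m\geq 3$, such that between consecutive classes one sees exactly $\Delta(D)$ directed one way; matching the bipartite structure of $\Delta(D)$ against the three targets will then finish the proof.

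For every $v\in VD$ all out-edges at $v$ lie in a single reachability class (the walk $u,v,u'$ through two successors of $v$ is alternating), so there is a unique reachability digraph $\Delta^+(v)$ containing them, and symmetrically a unique $\Delta^-(v)$. Define $u\sim v\Longleftrightarrow\Delta^+(u)=\Delta^+(v)$; this is $\Aut(D)$-invariant, and Lemma~\ref{lem_DeltaIntersectSameSide} identifies the $\sim$-class of $v$ with the source side of $\Delta^+(v)$, which is in particular independent. For each $w\in N^+(v)$ the edge $vw$ lies in both $\Delta^+(v)$ and $\Delta^-(w)$, so $\Delta^-(w)=\Delta^+(v)$ and the class of $w$ is precisely the sink side of $\Delta^+(v)$. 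The key technical step is to show that all sink-side vertices of a single reachability digraph $\Delta$ share one common $\Delta^+$: given two such vertices $w_1,w_2$ with successors $x_1,x_2$, the abundance of common predecessors in $\Delta$ (available in each of the three admissible $G(\Delta(D))$, using $d^+=\omega$ and the fact that a common predecessor of $x_1,x_2$ produces an alternating walk $w_1,x_1,x',x_2,w_2$) forces $\Delta^+(w_1)=\Delta^+(w_2)$, so that $N^+$ of the whole class is again a single class. Iterating yields an enumeration $\ldots,V_{-1},V_0,V_1,\ldots$ with edges only from $V_i$ to $V_{i+1}$. An infinite linear chain would be split by any single $V_i$ into two infinite one-ended halves, giving $D$ at least two ends, contradicting the standing hypothesis; thus the enumeration is periodic with some smallest period~$m$. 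Non-universality of $\AF$ excludes $m=1$, and $m=2$ would force $\Delta^+(v)=\Delta^-(v)$, again making $\AF$ universal; hence $m\geq 3$.

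It remains to identify $D$ from the bipartite datum between consecutive classes, which is a copy of $\Delta(D)$ oriented from $V_i$ to $V_{i+1}$. If $G(\Delta(D))\isom K_{k,\ell}$, then (because $d^+=\omega$ forces $k=\ell=\omega$) every vertex of $V_i$ has every vertex of $V_{i+1}$ as a successor, so $D\isom C_m[I_\omega]$. If $G(\Delta(D))$ is the countable generic bipartite graph, then the induced bipartite digraph between $V_i$ and $V_{i+1}$ is the countable generic bipartite digraph with edges directed from $V_i$ to $V_{i+1}$, so by definition $D\isom\RF_m$. The main obstacle is the $CP_k$ case, where I must both show $m=3$ and recover the exact $Y_\omega$ structure (including the requirement that the directed tripartite complement of~$D$ be a disjoint union of $\omega$ copies of~$C_3$, and hence also $k=\omega$). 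Here each $v\in V_i$ has a unique non-successor $\bar v\in V_{i+1}$; the map $v\mapsto\bar v$ extends by C-homogeneity to an automorphism that permutes the classes cyclically, and chasing the triple $(v,\bar v,\bar{\bar v})$ together with the forced triangle structure on $\{v,\bar v,\bar{\bar v},\ldots\}$ must return to~$v$ after three steps; otherwise one produces an induced $2$-arc or independent set of three vertices that violates the fact that a $CP_\omega$-complement structure already occupies every pair of consecutive classes. Once $m=3$ is pinned down, the description of $Y_\omega$ and a back-and-forth argument against its defining properties identify $D\isom Y_\omega$, completing the lemma.
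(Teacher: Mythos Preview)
Your overall architecture matches the paper's, but there are two genuine problems.

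First, the ``key technical step'' does not go through as written. You want to show that two sink-side vertices $w_1,w_2$ of a reachability digraph $\Delta$ satisfy $\Delta^+(w_1)=\Delta^+(w_2)$, and you propose an alternating walk $w_1,x_1,x',x_2,w_2$ where $x'$ is a common predecessor of the successors $x_1,x_2$. But such an $x'$ is exactly what you do not yet have: $x_1$ and $x_2$ live in $\Delta^+(w_1)$ and $\Delta^+(w_2)$ respectively, and a common predecessor would already put them in the same reachability class. The ``abundance of common predecessors in $\Delta$'' you cite gives common predecessors of $w_1,w_2$, not of $x_1,x_2$, and a walk through such a predecessor $v$ (namely $x_1,w_1,v,w_2,x_2$) is \emph{not} alternating at $w_1$ or $w_2$. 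The paper closes this gap differently: it starts from Lemma~\ref{lem_bipReachInterGeq2}, which guarantees that two intersecting reachability digraphs already share at least two vertices on the same side, and then uses C-homogeneity explicitly (mapping one pair $b_1\leftarrow a\to b_2$ in that side onto any other) to spread the intersection over the whole side. You never invoke Lemma~\ref{lem_bipReachInterGeq2}, and without it (or an equivalent C-homogeneity argument) the step fails.

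Second, your attempt to exclude an infinite linear chain is wrong, and in fact this case must \emph{not} be excluded: the conclusions $C_m[I_\omega]$ and $\RF_m$ both allow $m=\omega$. Your argument (``split by any single $V_i$'') deletes an infinite set, which says nothing about ends; indeed $C_\infty[I_\omega]$ has exactly one end, since every layer is infinite and every finite deletion leaves all layers connected to their neighbours. The paper simply observes that $D_\ssim$ has in- and out-degree~$1$ and is connected, hence $D_\ssim\isom C_m$ for some $m\in\nat^\infty$ with $m\geq 3$, and keeps the infinite case.

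Finally, your treatment of the $CP_k$ case is too vague to stand as a proof. The paper argues that for $m\geq 4$ one obtains two non-isomorphic types of induced $2$-arcs $xyz$ (distinguished by whether the endpoints are both non-adjacent to the unique $y'\sim y$), contradicting C-homogeneity; and for $m=3$ it passes to the directed tripartite complement, which must be a disjoint union of directed cycles, and shows these cycles have length~$3$ by a short C-homogeneity argument on $\sim$-equivalent vertices (longer cycles would force $D$ to be locally finite). Your sketch via $v\mapsto\bar v$ does not supply either of these arguments.
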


\begin{proof}
Let us assume that $D$ has at most one end and is not locally finite.
Since $V(\Delta_1\cap\Delta_2)$ lies on the same side of~$\Delta_1$ by Lemma~\ref{lem_DeltaIntersectSameSide}, we may assume that the vertices in~$\Delta_1\cap\Delta_2$ have their predecessors in~$\Delta_1$ and their successors in~$\Delta_2$.
Let $\{A,B\}$ be the natural bipartition of~$V\Delta_1$ such that $V(\Delta_1\cap\Delta_2)\sub B$.
Since any two vertices in~$B$ have a common predecessor in~$A$, we conclude $B\sub V\Delta_2$ by C-homogeneity.
Indeed, we can map any two vertices in~$V(\Delta_1\cap\Delta_2)$ with a common predecessor onto any two vertices in~$B$ with a common predecessor, so any two vertices in~$B$ lie in two common reachability digraphs of~$D$ and hence $B\sub V\Delta_2$.
Thus, we have $B=V(\Delta_1\cap\Delta_2)$.
By an analogous argument, we obtain that every vertex on the same side of~$\Delta_2$ as~$B$ lies in~$B$.

Let $\sim$ be a relation on~$VD$ defined by
\begin{txteq}
$x\sim y\,\,\,\,:\,\Longleftrightarrow\,\,\,\, x$ and~$y$ lie on the same side of two reachability digraphs.
\end{txteq}
As we have just shown, $\sim$ is an equivalence relation on~$VD$, which is $\Aut(D)$-invariant.
Since each equivalence class is an independent set and since the reachability digraphs are bipartite, we conclude that $D_\ssim$ is a digraph.
Since every vertex of~$D$ lies in precisely two reachability digraphs, every vertex of~$D_\ssim$ has precisely one successor and one predecessor. Furthermore, $D_\ssim$ is connected.
Thus, we have $D_\ssim\isom C_m$ for some $m\in\nat^\infty$ with $m\geq 3$.
If $G(\Delta(D))\isom K_{k,\ell}$ for some $k,\ell\in\nat$, then we obtain $k=\ell$ because $B$ is one side of~$\Delta_1$ and one of~$\Delta_2$.
It is a direct consequence that $D\isom C_m[I_\omega]$ as $D$ is not locally finite.
Similarly, if $G(\Delta(D))$ is the countable generic bipartite graph, then we directly obtain $D\isom \RF_m$.
It remains to consider the case $G(\Delta(D))\isom CP_k$.
If $m\geq 4$, then we find two distinct types of induced $2$-arcs $xyz$: one whose end vertices are not adjacent to the same vertex $y'$ with $y'\sim y$ and one whose end vertices do not have this property.
Even though $D$ is C-homogeneous, we cannot map the first onto the second of these induced $2$-arcs by automorphisms of~$D$.
Thus, we have $m=3$.
Let $\overline{D}$ be the tripartite complement of~$D$.
Since the bipartite complement of each reachability digraph is a perfect matching, $\overline{D}$ is a disjoint union of directed cycles.
Let us suppose that the length of one of these cycles is more than~$3$.
Then it has length at least~$6$ and there are two $\sim$-equivalent vertices in~$\overline{D}$ that have distance~$3$ on that cycle.
Since these two $\sim$-equivalent vertices have a common predecessor, the same is true for any two $\sim$-equivalent vertices by C-homogeneity.
So each two $\sim$-equivalent vertices lie on a common directed cycle in~$\overline{D}$ and have distance $3$ on that cycle.
Hence, $\overline{D}$ consists of precisely one cycle of length at most~$9$ and $D$ is locally finite in contradiction to the assumption.
Thus, $\overline{D}$ is the disjoint union of directed triangles, which shows $D\isom Y_\omega$.\looseness-1
\end{proof}

Let us summarize the results of this section.
The following proposition follows directly from Proposition~\ref{prop_CPW} together with Lemmas~\ref{lem_GM4.3}, \ref{lem_In_DeltaCycle}, \ref{lem_InWithT_kl}, and~\ref{lem_In_CP/K/Gen}.

\begin{prop}
Let $D$ be a countable connected C-homogeneous digraph with $D^+\isom I_n$ for some $n\in\nat^\infty$ whose reachability relation is not universal.
If $D$ has at most one end and is not locally finite, then it is isomorphic to one of the following digraphs:
\begin{enumerate}[\rm (i)]
\item $C_m[I_\omega]$ for some $m\in\nat^\infty$ with $m\geq 3$;
\item $Y_\omega$; or
\item $\RF_m$ for some $m\in\nat^\infty$ with $m\geq 3$.\qed
\end{enumerate}
\end{prop}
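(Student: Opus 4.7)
The plan is to assemble the proposition as a direct consequence of the preceding lemmas together with Proposition~\ref{prop_CPW} and Theorem~\ref{thm_HHThm6.4}. First I would apply Proposition~\ref{prop_CPW} to the digraph~$D$: since $D$ is vertex-transitive and $1$-arc transitive (both properties follow from C-homogeneity once we know $D$ is not edgeless, which holds as $D$ is connected and non-trivial), and since $\AF$ is not the universal relation by hypothesis, the proposition forces $\Delta(D)$ to be a bipartite reachability digraph. Lemma~\ref{lem_GM4.3} then ensures that $\Delta(D)$ is moreover a connected C-homogeneous bipartite digraph, so by Theorem~\ref{thm_HHThm6.4} its underlying undirected graph $G(\Delta(D))$ must be one of: an even cycle $C_{2m}$ with $m\geq 2$, a semiregular tree $T_{k,\ell}$ with $k,\ell\geq 2$, a complete bipartite graph $K_{k,\ell}$ with $k,\ell\geq 1$, a bipartite complement $CP_k$ of a perfect matching with $k\geq 3$, or the countable generic bipartite graph.

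Next I would eliminate the first two possibilities using the hypotheses that $D$ has at most one end and is not locally finite. If $G(\Delta(D))\isom C_{2m}$, then Lemma~\ref{lem_In_DeltaCycle} gives that $D$ is locally finite, contradicting the assumption. If $G(\Delta(D))\isom T_{k,\ell}$, then Lemma~\ref{lem_InWithT_kl} gives that $D$ is either locally finite or has more than one end, again contradicting one of the hypotheses. Thus $G(\Delta(D))$ must be one of $K_{k,\ell}$, $CP_k$, or the countable generic bipartite graph.

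In each of these remaining three cases, Lemma~\ref{lem_In_CP/K/Gen} applies directly and yields that $D$ is isomorphic to one of $C_m[I_\omega]$ with $m\in\nat^\infty$, $m\geq 3$, or $Y_\omega$, or $\RF_m$ with $m\in\nat^\infty$, $m\geq 3$. Since these are precisely the three digraphs listed in the conclusion of the proposition, the proof is complete. No step in this assembly is difficult on its own --- the work was done in the preceding lemmas; the only thing to verify carefully is that the hypotheses of Proposition~\ref{prop_CPW} and of Lemma~\ref{lem_GM4.3} are satisfied, i.e.\ that $D$ is $1$-arc transitive. This is the one subtle point, but it is immediate from C-homogeneity applied to the one-edge induced subdigraphs, using that $D$ is connected so that it contains at least one edge.
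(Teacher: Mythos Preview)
Your proposal is correct and follows essentially the same approach as the paper: the paper states that the proposition follows directly from Proposition~\ref{prop_CPW} together with Lemmas~\ref{lem_GM4.3}, \ref{lem_In_DeltaCycle}, \ref{lem_InWithT_kl}, and~\ref{lem_In_CP/K/Gen}, and you have simply made explicit the role of Theorem~\ref{thm_HHThm6.4} in bridging Lemma~\ref{lem_GM4.3} to the case analysis. Your verification that $D$ is $1$-arc transitive is appropriate, and the elimination of the cycle and tree cases via the two cited lemmas matches the paper's intended logic exactly.
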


\subsection{\boldmath Universal reachability relation}\label{sec_D+IndepAFUniversal}

Within this section, let $D$ be a countable connected C-homo\-geneous digraph with $D^+\isom I_n$ for some $n\in\nat^\infty$, with $D^-\isom I_{n'}$ for some $n'\in\nat^\infty$ and with at most one end.
We assume $n,n'\geq 2$ and that $\AF$ is universal.
Due to Lemma~\ref{lem_CycleWitnesses}, some cycle in~$D$ witnesses that $\AF$ is universal.
By Lemma~\ref{lem_UniversalInducedCycle}, we may assume that this is an induced cycle.

\begin{lem}\label{lem_EvenCycleWitnessing}
If $D$ contains an induced cycle of odd length witnessing that $\AF$ is universal, then it contains an induced cycle of length~$4$ witnessing that $\AF$ is universal.
\end{lem}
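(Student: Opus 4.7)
The plan is to construct a (not necessarily induced) cycle of length $4$ witnessing that $\AF$ is universal, and then appeal to Lemma~\ref{lem_UniversalInducedCycle} to upgrade it to an induced $C_4$ witness. Note that no triangle can witness universality, since any $2$-arc contained in a triangle is automatically non-induced; hence the induced cycle produced by Lemma~\ref{lem_UniversalInducedCycle} (of length at most $4$) must have length exactly $4$.

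Write the given induced odd cycle as $C = x_0 x_1 \ldots x_{2k}$, with $k \geq 2$ and with $e$ the edge of $C$ such that $P := C - e$ is alternating. After reversing all edges of $D$ if necessary, I may assume $e = x_{2k} \to x_0$. The alternating structure of $P$ together with the initial edge $x_0 \to x_1$ forces the orientations $x_{2i} \to x_{2i-1}$ and $x_{2i} \to x_{2i+1}$ for $1 \leq i \leq k-1$, and $x_{2k} \to x_{2k-1}$, so that the even-indexed vertices of $P$ are sources and the odd-indexed ones are sinks. The unique induced $2$-arc contained in $C$ is then $\tau = x_{2k} \to x_0 \to x_1$, and $x_1, x_{2k}$ are non-adjacent in $D$ because $C$ is induced and $k \ge 2$.

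The core of the proof is to exhibit a vertex $v \ne x_0$ that is either a common predecessor or a common successor of $x_1$ and $x_{2k}$. Such a $v$ immediately yields the desired witness: in the common-predecessor case, the four vertices $x_0, x_1, v, x_{2k}$ together with the edges $x_0 \to x_1,\ v \to x_1,\ v \to x_{2k},\ x_{2k} \to x_0$ form a $4$-cycle; removing $e = x_{2k} \to x_0$ leaves the alternating walk $x_0 \to x_1 \leftarrow v \to x_{2k}$, while $\tau$ persists as an induced $2$-arc inside this $4$-cycle. The common-successor case is symmetric. Lemma~\ref{lem_UniversalInducedCycle} then produces the induced $C_4$ witness.

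To produce such a vertex $v$, I would exploit the hypotheses $|N^-(x_1)| \ge 2$ and $|N^-(x_{2k})| \ge 2$ together with C-homogeneity. The walk $P$ already supplies some of the required data: $x_2 \in N^-(x_1)$ is a second predecessor of $x_1$, and by $1$-arc-transitivity between the edges $x_2 \to x_3$ and (say) $x_{2k} \to x_{2k-1}$ we obtain an automorphism $\alpha$ of $D$ which moves this predecessor structure onto a predecessor structure near $x_{2k}$. Iterating an automorphism that fixes the incident edge at $x_{2k}$ and pushes the interior of $P$ inwards (using the fact that each $x_{2i+1}$ has independent predecessors $\{x_{2i}, x_{2i+2}\} \subseteq I_{n'}$) should collapse $P$ down to an alternating walk of length $2$ from $x_1$ to $x_{2k}$, whose midpoint is exactly the sought common predecessor (or successor). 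I would organize this as an induction on $k$: the base case $k=2$ is a direct case analysis on the adjacencies in $N^-(x_1)\cup N^+(x_1)\cup N^-(x_4)\cup N^+(x_4)$, and the inductive step uses the above automorphism trick to produce either the desired $v$ directly or a shorter induced odd cycle witness, contradicting minimality of $C$.

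The main obstacle is that $C$ itself does not contain any common predecessor or successor of $x_1$ and $x_{2k}$ (a direct check rules out every cycle vertex), so the vertex $v$ must come from outside $C$; producing it requires careful use of C-homogeneity applied not to $C$ itself but to the cherries $\{x_0, x_2\} \subseteq N^-(x_1)$ and the analogous cherries at $x_{2k}$. If this direct construction fails for some configuration, the fallback is to take $C$ of minimum length among induced odd witnesses and derive a contradiction from the non-existence of $v$ combined with $2$-arc transitivity and universality of~$\AF$.
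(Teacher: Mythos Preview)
Your target is exactly right: a vertex $v \neq x_0$ with $v \to x_1$ and $v \to x_{2k}$ already gives an \emph{induced} $4$-cycle $D[x_0,x_1,v,x_{2k}]$ witnessing universality, since the diagonal $\{x_0,v\}$ lies in the independent set $N^-(x_1)$ and the other diagonal $\{x_1,x_{2k}\}$ is non-adjacent because $C$ is induced of length $\geq 5$. So the detour through Lemma~\ref{lem_UniversalInducedCycle} is unnecessary. The genuine gap is the production of~$v$: your sketch (``iterating an automorphism'', ``induction on~$k$'', ``direct case analysis for $k=2$'') is not an argument. Mapping a single edge $x_2\to x_3$ to $x_{2k}\to x_{2k-1}$ via $1$-arc transitivity controls nothing about the rest of the configuration, and no concrete mechanism is given for ``collapsing $P$'' or for the base-case analysis.

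The paper supplies $v$ in one stroke, with no induction. With $x=x_{2k}$, $y=x_0$, $z=x_1$ the vertices of the unique induced $2$-arc, observe that $C-x$ and $C-y$ are both induced alternating paths on $2k$ vertices with one source-endpoint and one sink-endpoint; the reversal $x_i\mapsto x_{2k-i}$ is an isomorphism between them. By C-homogeneity this extends to $\alpha\in\Aut(D)$. From $x\to x_{2k-1}$ and $x\to y$ one gets $x\alpha\to x_{2k-1}\alpha=x_1=z$ and $x\alpha\to y\alpha=x_{2k}=x$, so $v:=x\alpha$ is a common predecessor of $x_1$ and $x_{2k}$; moreover $v\neq x_0$ since $v\in N^-(x_{2k})$ whereas $x_0\in N^+(x_{2k})$. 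This is precisely the vertex your framework needs, obtained by a single application of C-homogeneity to the two paths $C-x$ and $C-y$ rather than by any inductive collapse.
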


\begin{proof}
Let $C$ be an induced odd cycle witnessing that $\AF$ is universal.
Then $C$ contains a unique induced $2$-arc $xyz$.
The digraphs $C-x$ and $C-y$ are isomorphic induced alternating paths.
By C-homogeneity, we find an automorphism $\alpha$ of~$D$ that maps $C-x$ onto $C-y$.
Since $N^-(z)$ is independent and $x\alpha\in N^-(z)$, the digraph $D[x,y,z,x\alpha]$ is an induced cycle of length~$4$ witnessing that $\AF$ is universal.
\end{proof}

In the following, we fix an induced cycle $C$ of minimal length witnessing that $\AF$ is universal.
Due to Lemma~\ref{lem_EvenCycleWitnessing}, this cycle has even length.

\begin{lem}\label{lem_Dir4Cycle}
There is an isomorphic copy of~$C_4$ in~$D$.
\end{lem}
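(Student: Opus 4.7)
The plan is to first reduce to the case $|C|=4$ using the minimality of $C$, and then apply C-homogeneity to a ``fork'' inside the resulting zigzag $4$-cycle to produce the required directed $C_4$.

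Label $C=y_0y_1\cdots y_{2k-1}$ so that $e=y_{2k-1}y_0$ is the special edge and $C-e=y_0y_1\cdots y_{2k-1}$ is the alternating walk; after possibly reversing all orientations, we may assume $y_0\to y_1$. The alternating condition then forces the pattern $y_{2i}\to y_{2i+1}\leftarrow y_{2i+2}$ for $0\le i\le k-2$, and the induced $2$-arc on $C$ must arise at $e$, so $e=y_{2k-1}\to y_0$ and the $3$-arc $y_{2k-2}\to y_{2k-1}\to y_0\to y_1$ appears on $C$. Suppose for a contradiction that $|C|=2k\ge 6$. Pick any $u\in N^-(y_0)\setminus\{y_{2k-1}\}$, which exists since $n'\ge 2$; then $u$ and $y_{2k-1}$ are non-adjacent (both lie in the independent set $N^-(y_0)$), and $u\to y_1$ is impossible because it would put $u$ and $y_0$ into the independent set $N^-(y_1)$ while $u\to y_0$. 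A case analysis on whether $y_1\to u$ (giving a directed triangle on $\{y_0,y_1,u\}$) or $u$ and $y_1$ are non-adjacent, combined with a case analysis on the relation between $u$ and $y_{2k-2}$ and further invocations of C-homogeneity together with $n,n'\ge 2$, yields an induced zigzag $4$-cycle on $\{u,y_{2k-2},y_{2k-1},y_0\}$ (or a symmetric configuration) that witnesses $\AF$ is universal. Its length $4<2k$ contradicts the minimality of $C$, so $|C|=4$.

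With $|C|=4$, write $C=y_0y_1y_2y_3$ with edges $y_0\to y_1$, $y_2\to y_1$, $y_2\to y_3$, $y_3\to y_0$. The induced subdigraph on $\{y_1,y_2,y_3\}$ is a connected fork: $y_2\to y_1$, $y_2\to y_3$, and $y_1,y_3$ non-adjacent. By C-homogeneity, the isomorphism of this fork that swaps $y_1$ and $y_3$ and fixes $y_2$ extends to an automorphism $\alpha$ of $D$. Set $v:=\alpha(y_0)$. Applying $\alpha$ to the edges $y_3\to y_0$ and $y_0\to y_1$ yields $y_1\to v$ and $v\to y_3$. Since $y_0\to y_1$ forbids $y_1\to y_0$, we have $v\ne y_0$; loops are impossible, so $v\notin\{y_1,y_3\}$; and $v$ is non-adjacent to $\alpha(y_2)=y_2$, forcing $v\ne y_2$. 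Hence $y_3\to y_0\to y_1\to v\to y_3$ is a directed $C_4$ in $D$.

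The main obstacle is the first step, namely producing the shorter witnessing cycle when $|C|\ge 6$. The delicate part is to force $u\to y_{2k-2}$ for a suitable choice of $u$ so that $\{u,y_{2k-2},y_{2k-1},y_0\}$ becomes an induced zigzag $4$-cycle; handling the complementary sub-cases cleanly (in particular when $D$ contains a directed triangle) and coordinating with the independence constraints imposed by $D^+\isom I_n$ and $D^-\isom I_{n'}$ is where the real work lies.
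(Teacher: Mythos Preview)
Your second step is correct and is in fact the paper's entire argument, but your first step is both incomplete (you yourself flag it as ``where the real work lies'' and do not carry it out) and unnecessary.

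The paper does not reduce to $|C|=4$. Instead it applies your Step~2 idea directly to the given even cycle $C$ of arbitrary length. Let $xyz$ be the induced $2$-arc on~$C$. Removing the middle vertex $y$ deletes both edges $xy$ and $yz$; what remains, $C-y$, is the long path from $x$ to~$z$ around the cycle. This path is a sub-walk of the alternating walk $C-e$, hence is itself alternating, and it has even length $|C|-2$. An alternating path of even length always admits the reversal map (sending $v_i\mapsto v_{m-i}$) as a digraph automorphism, since the source/sink pattern at the vertices is symmetric. This automorphism swaps $x$ and~$z$; since $C$ is induced so is $C-y$, and by C-homogeneity it extends to $\alpha\in\Aut(D)$. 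Then from $xy,yz\in ED$ we get $z(y\alpha),(y\alpha)x\in ED$, and $D[x,y,z,y\alpha]$ is a directed $C_4$ (necessarily induced, as $D^+$ and $D^-$ are edgeless).

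So the ``fork'' you use when $|C|=4$ is just the special case of $C-y$; the same symmetry argument goes through verbatim for every even~$|C|$, and the problematic reduction step disappears.
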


\begin{proof}
Let $xyz$ be a $2$-arc on~$C$.
Since $C$ has even length, $C-y$ has a non-trivial automorphism: one that maps $x$ to~$z$ and vice versa.
As~$C$ is induced, we can extend this automorphism of~$C-y$ to an automorphism $\alpha$ of~$D$ by C-homogeneity and obtain that $D[x,y,z,y\alpha]$ is a directed cycle of length~$4$.
Note that any directed cycle of length~$4$ is induced since $D^+$ and $D^-$ are edgeless.
\end{proof}

Let $xy\in ED$, let $X:= N^-(x)\sm N^+(y)$, and let $Y:=N^+(y)\sm N^-(x)$.
Obviously, $X$~and~$Y$ are disjoint.
In the following, we investigate the subdigraph $\Gamma:=D[X\cup Y]$ of~$D$.

\begin{lem}\label{lem_GammaHom2Partite}
The subdigraph $\Gamma$ is a non-empty homogeneous $2$-partite digraph.
\end{lem}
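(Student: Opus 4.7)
The plan is to verify the three assertions---$2$-partiteness, non-emptiness, and homogeneity---in turn. That $\Gamma$ is $2$-partite is immediate from $D^+\isom I_n$ and $D^-\isom I_{n'}$: the set $X\sub N^-(x)$ is independent, the set $Y\sub N^+(y)$ is independent, and $X\cap Y=\es$ directly from the definitions, so every edge of $\Gamma$ runs between $X$ and $Y$.

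For non-emptiness I would invoke Lemma~\ref{lem_Dir4Cycle} to produce a directed $4$-cycle in $D$ and use $1$-arc transitivity (via C-homogeneity applied to an edge of such a cycle) to place it as $x\to y\to u\to v\to x$. Independence of $N^+(u)$ together with $v\to x$ forces $u\notin N^-(x)$, and independence of $N^+(x)$ together with $y\to u$ forces $u\notin N^+(x)$, so $u\in Y$; symmetrically $v\in X$, and the edge $uv$ lies in $\Gamma$.

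The heart of the proof is homogeneity, whose crucial input is the observation that the adjacency of any vertex in $X\cup Y$ to the pair $\{x,y\}$ is completely determined. For $a\in X$, the edge $y\to a$ is excluded by definition, and $a\to y$ would place both $x$ and $y$ into $N^+(a)$ together with the edge $x\to y$, contradicting $D^+\isom I_n$; hence every $a\in X$ satisfies $a\to x$ and is non-adjacent to $y$. By a symmetric argument every $b\in Y$ satisfies $y\to b$ and is non-adjacent to $x$. Given any bipartition-preserving isomorphism $\varphi\colon A\to B$ between finite induced sub-$2$-partite digraphs of $\Gamma$, I would extend $\varphi$ by $x\mapsto x$, $y\mapsto y$ to a map $\tilde\varphi\colon D[VA\cup\{x,y\}]\to D[VB\cup\{x,y\}]$. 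The uniform adjacency pattern just established ensures that $\tilde\varphi$ is an isomorphism of these induced subdigraphs of $D$, and both subdigraphs are connected via the edge $xy$ together with the edges from $A\cap X$ to $x$ and from $y$ to $A\cap Y$ (and similarly for $B$). C-homogeneity of $D$ then extends $\tilde\varphi$ to some $\alpha\in\Aut(D)$, and since $\alpha$ fixes $x$ and $y$ it preserves $N^-(x)$ and $N^+(y)$ setwise, hence preserves $X$ and $Y$, and so restricts to the required bipartition-preserving automorphism of $\Gamma$ extending $\varphi$.

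The main obstacle is precisely this uniform-adjacency observation: without it, an isomorphism of $\Gamma$ would carry no information about the behaviour of its vertices towards $x$ and $y$, so $\tilde\varphi$ would not even be well defined as an isomorphism of the enlarged subdigraphs. Fortunately the observation is forced by $D^+\isom I_n$, and its role is exactly to transfer the C-homogeneity of $D$ (applied to the connected enlargements $A\cup\{x,y\}$ and $B\cup\{x,y\}$) into the homogeneity of the possibly disconnected $2$-partite digraph $\Gamma$.
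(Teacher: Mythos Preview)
Your proof is correct and follows essentially the same route as the paper's: both arguments rest on the observation that every vertex of $X$ is non-adjacent to $y$ and every vertex of $Y$ is non-adjacent to $x$, so that any bipartition-preserving isomorphism between finite pieces of $\Gamma$ extends by fixing $x$ and $y$ to an isomorphism between connected subdigraphs of $D$, whence C-homogeneity applies. Your treatment of non-emptiness (placing a $C_4$ through $xy$ via $1$-arc transitivity and checking that the remaining two vertices land in $Y$ and $X$) just spells out what the paper compresses into a single sentence.
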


\begin{proof}
Let $A$ and~$A'$ be finite subdigraphs of $D[X]$ and let $B$ and~$B'$ be finite subdigraphs of $D[Y]$.
Because $V(B+B')\cap N^-(x)=\es$ and because $D^+(x)$ is edgeless, $x$ is adjacent to no vertex of~$B+B'$.
Similarly, because ${V(A+A')\cap N^+(y)=\es}$ and because $D^-(y)$ is edgeless, $y$ is adjacent to no vertex of~$A+A'$.
Hence, any isomorphism $\varphi$ from $A+B$ to $A'+B'$ extends to an isomorphism from $A+B+x+y$ to $A'+B'+x+y$, that fixes $x$ and~$y$, and thus by C-homogeneity it extends to an automorphism $\alpha$ of~$D$ with $X\alpha=X$ and $Y\alpha=Y$.
In particular, the restriction of~$\alpha$ to~$\Gamma$ is an automorphism of~$\Gamma$ that extends $\varphi$ and fixes both of~$X$ and~$Y$ setwise.
Thus, $\Gamma$ is homogeneous $2$-partite.
As $C_4$ embeds into~$D$, the subdigraph~$\Gamma$ is not empty.\looseness-1
\end{proof}

Having shown that $\Gamma$ is homogeneous $2$-partite, we can apply the classification of the countable such digraphs, Theorem~\ref{thm_ClassHom2Partite}.
So we can investigate the possible digraphs $\Gamma$ one by one, similar to the different possibilities for~$D^+$.
We start with the situation that $\Gamma$ is homogeneous bipartite and show that this cannot occur:

\begin{lem}\label{lem_GammaNotHomBip}
The subdigraph $\Gamma$ is not homogeneous bipartite.
\end{lem}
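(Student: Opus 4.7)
I argue by contradiction. Suppose $\Gamma$ is homogeneous bipartite. Since $\Gamma$ is a bipartite digraph, all of its edges are directed from one side of the bipartition to the other; after replacing $D$ by the edge-reversed digraph if necessary (which also satisfies our running hypotheses and merely swaps the roles of $X$ and $Y$), assume all edges of $\Gamma$ go from $Y$ to $X$. In particular, $\Gamma$ then contains no induced $2$-arc: any $2$-arc $a\to b\to c$ in $\Gamma$ would force $a\in Y$, $b\in X$, and then $b\to c$ would be an edge leaving $X$, contradicting the orientation.

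By Lemma~\ref{lem_Dir4Cycle}, $D$ contains a directed $C_4$, and by the $1$-arc transitivity of $D$ I may assume this $C_4$ contains the chosen edge $xy$, writing it as $x\to y\to z\to w\to x$. The cycle is induced because $D^+\isom I_n$ and $D^-\isom I_{n'}$ are edgeless: a chord $xz$ or $zx$ would place two adjacent vertices inside $N^+(x)$ or $N^-(x)$, and likewise a chord on $y,w$ would put two adjacent vertices inside $N^+(y)$ or $N^-(y)$. Therefore $z\in N^+(y)\setminus N^-(x)=Y$ and $w\in N^-(x)\setminus N^+(y)=X$, and $z\to w$ is an edge of $\Gamma$ from $Y$ to $X$, which is consistent with the standing hypothesis.

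The remaining task is to exhibit an induced $2$-arc lying entirely within $X\cup Y$, i.e.\ inside $\Gamma$, which we just observed is impossible. My plan is the following. Apply C-homogeneity to the induced $2$-arc $w\to x\to y$ and to the induced $2$-arc $x\to y\to z$, getting an automorphism $\phi$ of $D$ with $\phi(w)=x$, $\phi(x)=y$, $\phi(y)=z$. Set $z':=\phi(z)$. Then applying $\phi$ to the edges $y\to z$ and $z\to w$ shows $z'\in N^+(z)\cap N^-(x)$; routine neighbourhood arguments (if $y\to z'$ then $z,z'\in N^+(y)$ are adjacent; if $z'\to y$ then $z',z\in N^-(y)$ are adjacent) rule out $z'\in N^+(y)$, so $z'\in X$. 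Iterating this $\phi$-move, and combining it with the homogeneity of $\Gamma$ (whose automorphism group acts transitively on each side and extends to $\mathrm{Aut}(D)_{x,y}$ via Lemma~\ref{lem_GammaHom2Partite}), I would build a family of $C_4$s through $xy$ and locate two whose $z$-vertices or $w$-vertices are linked in $D$ in a way that is incompatible with the bipartite orientation: specifically, I expect to force an edge between vertices of $X$ and $Y$ pointing the wrong way, and hence an induced $2$-arc of $\Gamma$.

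The main obstacle is closing this argument: producing the forbidden edge (or $2$-arc) really uses the assumption that $\AF$ is universal. The rigid $Y\to X$ orientation means that alternating walks in $D$, whose existence is guaranteed by universality, cannot remain inside $\Gamma$; therefore they must leave $X\cup Y$ and return, and the returning edges, together with a careful invocation of Lemma~\ref{lem_UniversalInducedCycle} to replace an alternating witnessing cycle by an induced one, should be what eventually places two consecutive edges of $D$ inside $X\cup Y$. Making this last implication precise—showing that the interplay of the bipartite orientation of $\Gamma$ with universal reachability really forces an $X\to Y$ edge, rather than merely producing more $Y\to X$ edges—is the technical crux of the proof.
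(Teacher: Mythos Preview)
Your proposal is not a proof: you explicitly stop at ``the technical crux'' and say that making the final implication precise is the main obstacle. Everything you write before that point is correct (the bipartite orientation must be $Y\to X$ because of the directed $C_4$, so $\Gamma$ contains no $2$-arc; your vertex $z'=\phi(z)$ really does land in $X$), but all you have produced so far is another $Y\to X$ edge $z\to z'$, which is perfectly compatible with the hypothesis. You give no mechanism that would ever force an $X\to Y$ edge, and the vague appeal to universality of~$\AF$ plus Lemma~\ref{lem_UniversalInducedCycle} is not an argument: the witnessing cycle~$C$ need not sit inside $X\cup Y$ at all, so there is no reason its edges should contradict the $Y\to X$ orientation of~$\Gamma$. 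As written, the proposal is a setup followed by a hope.

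The paper's proof is quite different in structure and does not attempt a uniform argument. It invokes the classification of countable homogeneous bipartite graphs (Theorem~\ref{thm_GGK}) and rules out each of the four nonempty types for $G(\Gamma)$ separately: complete bipartite (forces no induced $3$-arc through $xy$, contradicting $|C|\ge 4$), perfect matching (a delicate argument with several $C_4$'s and two applications of C-homogeneity), complement of a perfect matching (another configuration-chasing argument), and the generic bipartite graph (embeds $C-b$ into $\Gamma$ and finds a shorter witnessing cycle). The arguments are genuinely different from case to case, which suggests that your hoped-for uniform shortcut may simply not exist; at minimum, you would need to explain why the four cases can be handled by a single construction, and nothing in your sketch does that.

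A minor point: your ``after replacing $D$ by the edge-reversed digraph if necessary'' is unnecessary and slightly misleading. The $Y\to X$ orientation is forced outright by the existence of a directed $C_4$ through $xy$, not obtained by a symmetry reduction.
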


\begin{proof}
Let us suppose that $\Gamma$ is homogeneous bipartite.
Since $D$ contains some directed cycle of length~$4$ by Lemma~\ref{lem_Dir4Cycle}, we conclude that the edges of~$\Gamma$ are directed from~$Y$ to~$X$.
We consider all possibilities of Theorem~\ref{thm_GGK} one by one.
Note that due to Lemma~\ref{lem_GammaHom2Partite} the digraph $\Gamma$ is not empty.
So there are only four remaining possibilities for~$\Gamma$.

If $G(\Gamma)$ is complete bipartite, then $xy$ cannot be the inner edge of any induced $3$-arc.
As $\Aut(D)$ acts transitively on the $1$-arcs, we conclude that $D$ contains no induced $3$-arc at all.
Since every induced cycle of even length at least $6$ that witnesses that $\AF$ is universalcontains an induced $3$-arc, $C$ has length~$4$.
But as $xy$ is the inner edge of some $3$-arc in a cycle isomorphic to~$C$, the digraph $\Gamma$ must contain some edges that are directed from~$X$ to~$Y$.
This contradiction shows that $G(\Gamma)$ is not complete bipartite.

If $G(\Gamma)$ is a perfect matching, then we know that every induced $2$-arc lies on a unique induced directed cycle of length~$4$.
Due to the previous case, we may assume $|X|\geq 2$.
So every edge lies on at least two directed cycles of length~$4$.
Let $xyuv$ and $xyab$ be two distinct directed cycles of length~$4$ and let $yuwz$ be another directed cycle of length~$4$ containing~$yu$.
Then neither $v$ nor~$u$ is adjacent to any of $a,b,w,z$ since $G(\Gamma)$ is a perfect matching and the same holds for the subdigraph defined by the edge $yu$ instead of~$xy$.
Note that $|C|>4$, since $|C|=4$ implies the existence of some edge from~$X$ to~$Y$.
Thus, the digraph $D[y,z,b,x]$ cannot be a cycle of length~$4$ witnessing that $\AF$ is universal.
Hence, we have $zb\notin ED$.
If $bz\in ED$, then $a$ is not adjacent to~$z$ since neither $zy$ lie in~$D^-(a)$ nor $bz$ lies in $D^+(a)$.
Thus, $yab$ lies on two distinct induced directed cycles of length $4$, once together with $z$ and once together with~$x$.
This is impossible as we already mentioned.
Thus, $b$ and~$z$ are not adjacent.
Hence, C-homogeneity implies the existence of an automorphism $\alpha$ of~$D$ that fixes $x,y,z$ and interchanges $b$ and~$v$.
Since every induced $2$-arc lies on a unique induced directed cycle of length~$4$, we conclude $a\alpha=u$ and $u\alpha=a$.
As $u=a\alpha$ and~$z=z\alpha$ are not adjacent, $a$ and~$z$ are not adjacent, too.
Since $w$ and~$v$ are not adjacent, the same is true for $b$ and~$w\alpha$.
If either $bw\in ED$ or $wb\in ED$, then either $D[x,b,w,u,v]$ or $D[z,w,b,a,w\alpha]$ is a cycle of length~$5$ witnessing that $\AF$ is universal.
By Lemma~\ref{lem_EvenCycleWitnessing}, we conclude $|C|=4$, a contradiction.
Thus, we know that $b$ and~$w$ are not adjacent.
So due to C-homogeneity, $D$ has an automorphism~$\beta$ that fixes $x,y,z,w$ and maps~$v$ to~$b$.
Since $\beta$ fixes $y,z,w$, it must also fix $u$, the unique vertex that forms with the $2$-arc $wzy$ an induced directed cycle of length~$4$.
But we have $(uv)\beta=ub\notin ED$ as previously mentioned, even though $uv$ is an edge of~$D$.
This contradiction shows that $G(\Gamma)$ is not a perfect matching.

If $G(\Gamma)$ is the complement of a perfect matching, then we may assume $|X|\geq 3$ as otherwise $G(\Gamma)$ is also a perfect matching, which we treated before.
Let $z,u,v\in X$ and let $z'$ be the unique vertex in~$Y$ that is not adjacent to~$z$.
Considering the edge $ux$ instead of~$xy$, we obtain a unique vertex $z''\in N^+(x)\sm N^-(u)$ that is not adjacent to~$z'$.
Let us show that $z''$ is adjacent to neither $z$ nor~$v$.
By the structure of~$\Gamma$ applied to the edge $ux$ instead of~$xy$, we find a vertex $u^-\in N^-(u)\sm N^+(x)$ that is a common successor of~$y$ and~$z''$.
Since $u^-\in Y$ abd $u^-\neq z'$, we have $u^-z\in ED$.
Hence, $xz''u^-z$ is a directed cycle of length~$4$ and we conclude that $z$ is not adjacent to~$z''$ since $N^+(z)$ and $N^-(z)$ are independent sets.
If $u^-v\in ED$, then the same argument applies for $v$ and~$z''$ and hence they are not adjacent.
As $\Gamma$ is bipartite, we do not have $vu^-\in ED$.
So let us assume that $u^-$ and~$v$ are not adjacent.
Let us suppose that $v$ and~$z''$ are adjacent.
Since $D^+(v)$ is edgeless, we do not have $vz''\in ED$, so we have $z''v\in ED$.
Then $D[z'',v,z',u,u^-]$ is a cycle of length~$5$ witnessing that $\AF$ is universal.
As above, we conclude $|C|=4$ by Lemma~\ref{lem_EvenCycleWitnessing} and the minimality of~$C$, which is impossible as $\Gamma$ is bipartite.
Thus, $v$ and~$z''$ are also not adjacent if $u^-$ and~$v$ are not adjacent.
We have shown that $z''$ is adjacent to neither $v$ nor~$z$.
Hence, C-homogeneity implies the existence of an automorphism $\alpha$ of~$D$ that fixes $u,x,y,z''$ and maps $z$ to~$v$.
Since $\alpha$ fixes $u,x,z''$, it must also fix the uniquely determined vertex in $N^-(u)\sm N^+(x)$ that is not adjacent to~$z''$, which is~$z'$.
But then $\alpha$ must also fix $z$, the unique vertex in~$X=N^-(x)\sm N^+(y)$ that is not adjacent to~$z'$, in contradiction to the definition of~$\alpha$.
This shows that $G(\Gamma)$ is not the complement of a perfect matching.

It remains to consider the case that $G(\Gamma)$ is the generic bipartite graph.
As mentioned earlier, we have $|C|\neq 4$ as otherwise $\Gamma$ must contain edges from $X$ to~$Y$.
Let $abcd$ be the induced $3$-arc in~$C$.
Then $C-b$ is an induced alternating path and hence embeds into~$\Gamma$.
Let $P$ be an isomorphic copy of $C-b$ in~$\Gamma$.
As $D$ is C-homogeneous, we find an automorphism $\alpha$ of~$D$ with $(C-b)\alpha=P$.
Since both end vertices of~$P$ have successors on~$P$, they lie in~$Y$.
As $G(\Gamma)$ is generic bipartite, the end vertices of~$P$ have a common successor $z$ in~$X$.
Then $D[a\alpha,b\alpha,c\alpha,z]$ is a cycle of length~$4$ witnessing that $\AF$ is universal.
This contradiction to the minimality of~$C$ shows that $\Gamma$ is not homogeneous bipartite.
\end{proof}

Since $\Gamma$ is not homogeneous bipartite, we find an edge $uv\in E\Gamma$ with $u\in X$ and $v\in Y$.
So $D[x,y,u,v]$ is a cycle witnessing that $\AF$ is universal and the minimality of~$C$ implies $|C|=4$.
In the remainder of this section, we will concentrate on arguments that involve the diameter of~$D$.
First, we show that $D$ is homogeneous if its diameter is~$2$:

\begin{lem}\label{lem_Diam2}
If $\diam(D)=2$, then $D$ is homogeneous.
\end{lem}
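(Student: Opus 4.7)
The plan is to show that every isomorphism $\varphi\colon A\to B$ between finite induced subdigraphs of~$D$ extends to an automorphism, by induction on the number of connected components of~$A$. The base case (when $A$ is connected) is immediate from C-homogeneity, so the real work is in the inductive step: given a disconnected~$A$, one wants to find a single vertex $v\in VD$ together with a vertex $v'\in VD$ so that $\varphi\cup\{(v,v')\}$ is an isomorphism $A+v\to B+v'$ and $A+v$ has strictly fewer components than~$A$.

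The main engine for producing such a $v$ will be the homogeneity of the auxiliary $2$-partite digraph $\Gamma=D[X\cup Y]$ established in Lemma~\ref{lem_GammaHom2Partite}: by Lemma~\ref{lem_GammaNotHomBip} and Theorem~\ref{thm_ClassHom2Partite}, $\Gamma$ is one of $CP_k'$, the countable generic $2$-partite digraph, or the generic orientation of the countable generic bipartite graph. Each of these realizes every finite consistent adjacency pattern between its two sides. I would first upgrade this local extension property, anchored at the edge $xy$, to a genuine global one in $D$: for any finite $F\subseteq VD$ and any admissible assignment to each $c\in F$ of one of the labels \emph{predecessor}, \emph{successor}, \emph{non-adjacent}, there exists a witness $v\in VD$ realizing that assignment. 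The $\diam(D)=2$ hypothesis enters here in a crucial way: every vertex lies in the set $X$ (respectively $Y$) of some edge of~$D$ via a vertex at distance at most~$2$, and vertex- and $1$-arc-transitivity together with C-homogeneity let me transport $F$ into the neighbourhood structure of a single edge where the $\Gamma$-extension property applies.

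With the global extension property in hand, the inductive step runs smoothly. Pick distinct components $A_1,A_2$ of~$A$, let $B_i=A_i\varphi$, and prescribe that $v$ be a common neighbour of some $a_1\in VA_1$ and some $a_2\in VA_2$ with a chosen adjacency pattern to each remaining vertex of $A$; apply the extension property to~$A$ and, in parallel, to~$B$ (transporting the pattern through~$\varphi$) to produce $v$ and $v'$. Then $A+v$ is strictly less disconnected than~$A$, $\varphi\cup\{(v,v')\}$ is an isomorphism $A+v\to B+v'$, and induction finishes the job.

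The principal obstacle is the translation step — moving from the homogeneity of $\Gamma$, which only governs which adjacency patterns can be found \emph{between} the slices $X$ and $Y$ of one fixed edge $xy$, to the statement that an arbitrary finite adjacency pattern over an arbitrary finite set $F\subseteq VD$ is realized in~$D$. Doing this cleanly will require carefully exploiting $\diam(D)=2$ together with $1$-arc-transitivity to place every finite $F$ into the picture of a single well-chosen edge, and ruling out incompatibilities arising from the directedness of edges within each partition class of~$\Gamma$; everything after that step is routine bookkeeping and induction.
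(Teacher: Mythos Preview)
Your plan diverges from the paper's and carries a real gap. The engine you propose---extracting from the classification of~$\Gamma$ a one-vertex extension property realising an arbitrary prescribed pattern of \emph{predecessor/successor/non-adjacent} over a finite set---already fails at the source when $\Gamma\cong CP_k'$: the underlying undirected graph of $CP_k'$ is complete bipartite, so no vertex on one side is non-adjacent to any vertex on the other. Thus $\Gamma$ does not ``realise every finite consistent adjacency pattern'' in that case, and the translation step you flag as the principal obstacle cannot be started. Even in the generic cases, lifting the $\Gamma$-level property to a global extension property over an arbitrary $F\subseteq VD$ amounts to proving something essentially equivalent to homogeneity of~$D$; you have not indicated how $\diam(D)=2$ alone bridges this.

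The paper avoids all of this by never invoking the classification of~$\Gamma$. It uses only $\diam(D)=2$ together with the induced $4$-cycle~$C$ (into which every induced path of length~$2$ embeds) to show, by a direct induction, that every finite independent set $A\subseteq VD$ lies in some $N^+(u)$ and in some $N^-(v)$. The key trick is then to choose $A$ maximal independent in the given finite subdigraph~$F$ and $B$ maximal independent in $F-A$: since $N^+(u)$ and $N^-(u)$ are themselves independent, any $u$ with $A\subseteq N^+(u)$ and $B\subseteq N^-(u)$ is \emph{automatically} non-adjacent to every vertex of $F$ outside $A\cup B$, so no ``non-adjacent'' label ever needs to be prescribed. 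This makes $F+u$ connected in one step and C-homogeneity finishes. If you want to salvage your outline, replace the $\Gamma$-based extension property by this independent-set argument.
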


\begin{proof}
First, let us show that
\begin{txteq}\label{eq_Diam2_1}
for every finite independent vertex set $A$, there are $u,v\in VD$ with $A\sub N^+(u)$ and $A\sub N^-(v)$.
\end{txteq}
We show~(\ref{eq_Diam2_1}) by induction:
If $|A|=2$, then we find a vertex $w$ with $A\sub N(w)$ because of $\diam(D)=2$.
Regardless which edges between $w$ and the elements of~$A$ lie in~$D$, we can use C-homogeneity and the cycle~$C$, into which every induced path of length~$2$ embeds, to conclude that some induced $2$-arc has the two elements of~$A$ as end vertices.
By the same reasons, we find some vertex $u$ with $A\sub N^+(u)$ and some vertex $v$ with $A\sub N^-(v)$.

Now, let us assume $|A|>2$.
First, we show the existence of some vertex with $A$ in its out-neighbourhood.
By induction, we find some $u\in VD$ and $a\in A$ with $A\sm\{a\}\sub N^+(u)$.
Let $a'\in N^+(u)\sm A$.
By induction, we find $z\in VD$ with $a,a'\in N^+(z)$ and such that all but at most two elements of~$A$ lie in~$N^+(z)$.
For all $b\in A\sm N(z)$, the first case $|A|=2$ gives us some $z_b\in VD$ with $b,z\in N^-(z_b)$.
Since $N^+(z)$ is independent, $z_b$ is adjacent neither to~$a$ nor to~$a'$.
Then the digraphs
\[D_1:=D[A\sm\{a\}\cup \{z_b\mid b\in A\sm N(z)\}\cup\{z,a'\}]\]
and
\[D_2:=D[A\cup \{z_b\mid b\in A\sm N(z)\}\cup\{z\}]\]
are isomorphic by an isomorphism $\varphi$ that maps $a'$ to~$a$ and fixes all other vertices.
By construction, $D_1$ and $D_2$ are connected, so $\varphi$ extends to an automorphism $\alpha$ of~$D$.
Since $(A\sm\{a\})\cup\{a'\}\sub N^+(u)$, we conclude $A\sub N^+(u\alpha)$.
By an analogous argument, we find some $v\in VD$ with $A\sub N^-(v)$.
Thus, we have shown~(\ref{eq_Diam2_1}).

Next, we show the following:
\begin{txteq}\label{eq_Diam2_2}
Let $A,B,A',B'$ be finite independent vertex sets of~$D$ such that some isomorphism $\varphi\colon D[A'\cup B']\to D[A\cup B]$ with $A'\varphi=A$ and $B'\varphi=B$ exists.
If $A$ is maximal independent in~$A\cup B$ and if $D$ has a vertex~$v$ with $A'\sub D^+(v)$ and $B'\sub D^-(v)$, then there exists some $u\in VD$ with $A\sub D^+(u)$ and $B\sub D^-(u)$.

\end{txteq}
If $D[A\cup B]$ is connected, then the assertion follows directly by C-homogeneity.
Since the case $B=\es$ is done by~(\ref{eq_Diam2_1}), we may assume $B\neq\es$.
By induction on~$|B|$ we find some vertex $v'\in VD$ with $A\sub N^+(v')$ and $B\sm\{b\}\sub N^-(v')$ for some $b\in B$.
Applying C-homogeneity, we may assume $A'=A$ and $B'\sm\{b'\}=B\sm\{b\}$.
Since $A$ is maximal independent in~$D[A\cup B]$, we know that $b$ has a neighbour $c$ in~$A\cup B$.
This neighbour is also a neighbour of~$b'$ with $b\in N^+(c)$ if and only if $b'\in N^+(c)$.
So $b$ and $b'$ are not adjacent as both lie either in $N^+(c)$ or in $N^-(c)$.
Let $Z$ be a vertex set containing precisely one vertex from each component of $D[A\cup B]$ that does not contain~$b$.
Then $Z\cup\{b,b'\}$ is an independent set and we find a vertex $z$ with $Z\cup\{b,b'\}\sub N^+(z)$ by~(\ref{eq_Diam2_1}).
Then the digraphs $D[A\cup B\cup\{z\}]$ and ${D[A\cup (B\sm\{b\})\cup\{b',z\}]}$ are isomorphic by an isomorphism $\psi$ that maps $b$ to~$b'$ and fixes all other vertices.
Since both digraphs are connected, $\psi$ extends to an automorphism $\alpha$ of~$D$.
Then we have $A\sub N^+(v\alpha)$ and $B\sub N^-(v\alpha)$, which shows~(\ref{eq_Diam2_2}).\looseness-1

To show that $D$ is homogeneous, let $F$ and $H$ be finite isomorphic induced subdigraphs of~$D$ and let $\varphi\colon F\to H$ be an isomorphism.
Let $A\sub VF$ be a maximal independent subset and let $B\sub VF\sm A$ be maximal independent, too.
By~(\ref{eq_Diam2_2}), we find a vertex $u$ with $A\sub N^+(u)$ and $B\sub N^-(u)$.
We have $N(u)\cap VF=A\cup B$ by maximalities of~$A$ and~$B$.
Analogously, we find $v$ with $A\varphi\sub N^+(v)$ and $B\varphi\sub N^-(v)$.
Then $F+u$ and $H+v$ are connected and isomorphic via an isomorphism~$\varphi'$ that extends~$\varphi$.
By C-homogeneity, $\varphi'$ extends to an automorphism of~$D$.
This shows that $D$ is homogeneous.
\end{proof}

The previous lemma enables us to prove that $D$ is homogeneous if $\Gamma$ is not the generic orientation of the countable generic bipartite graph:

\begin{lem}\label{lem_GammaIsCP'kOrGen2Partite}
If $\Gamma$ is either the generic $2$-partite digraph or $CP'_k$ for some $k\in\nat^\infty$, then $D$ is homogeneous.
\end{lem}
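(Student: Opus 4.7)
The plan is to handle the two cases for $\Gamma$ separately, deducing homogeneity of $D$ by different routes.

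In case $\Gamma\isom CP_k'$, the aim is to derive a contradiction by exploiting the rigid matching structure of $CP_k'$. The perfect matching from $Y$ to $X$ distinguishes, for each $u\in X$, a canonical partner $\pi(u)\in Y$: it is the unique non-successor of $u$ in $Y$ and simultaneously its unique predecessor in $Y$. Consequently, for each $u\in X$ the quadruple $(x,y,\pi(u),u)$ induces a directed $4$-cycle in $D$, while for $v\in Y\sm\{\pi(u)\}$ the quadruple $(x,y,v,u)$ induces a $4$-cycle witnessing the universality of $\AF$. Any automorphism of $D$ fixing the edge $xy$ must preserve the pairing $u\leftrightarrow\pi(u)$ setwise. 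I will apply C-homogeneity to a carefully chosen connected induced subdigraph on $\{x,y,u,u',v\}$ (with distinct $u,u'\in X$ and $v\in Y\sm\{\pi(u),\pi(u')\}$) to produce an automorphism swapping $u$ and $u'$ while fixing $v$ and, by enlarging the configuration with further vertices of $Y$ including $\pi(u)$, force either $\pi(u)=\pi(u')$ or an edge/non-edge inconsistency in the image, yielding the contradiction.

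In case $\Gamma\isom$ generic $2$-partite digraph, the aim is to show that $D$ itself is isomorphic to the countable generic $2$-partite digraph, which is homogeneous by Theorem~\ref{thm_CountHomDigraphs}\,(vii). The proof proceeds in four steps. First, show $D$ has no directed triangles: if $z\in N^-(x)\cap N^+(y)$ existed, the genericity of $\Gamma$ would supply an edge inside the successor or predecessor set of $z$, contradicting the fact that $D^+$ and $D^-$ are edgeless. Together with the absence of transitive triangles (immediate from $D^+\isom I_n\isom D^-$), $D$ is triangle-free. Second, eliminate odd cycles: if $D$ contained an odd cycle of minimal length $\geq 5$, then C-homogeneity applied to an induced sub-path together with the local structure would produce a shorter odd cycle, ultimately a triangle; hence $D$ is bipartite with bipartition $\{V_1,V_2\}$. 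Third, establish the generic $2$-partite extension property: for every disjoint finite $A,B\sub V_1$ there is $v\in V_2$ with $A\sub N^+(v)$ and $B\sub N^-(v)$, and symmetrically. This is proved by induction on $|A\cup B|$, using the local genericity of $\Gamma$ at some fixed edge together with C-homogeneity to transport the property to arbitrary finite subsets. Finally, a standard back-and-forth argument on the resulting amalgamation class identifies $D$ with the unique countable generic $2$-partite digraph.

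The principal obstacle is the third step of the second case: lifting the local genericity of $\Gamma$ (a priori formulated only for subsets of $N(x)\cup N(y)$ for one fixed edge $xy$) to a global extension property for arbitrary finite disjoint $A,B\sub V_1$. The argument will follow the inductive pattern of Lemmas~\ref{lem_GM13} and~\ref{lem_GM14}: connect a multi-component configuration by an auxiliary vertex so that the enlarged connected configuration embeds into some $\Gamma$, and then transport it back to $D$ via C-homogeneity. The contradiction in the first case is more delicate and forms the secondary challenge, since it requires choosing the connected subdigraph on which C-homogeneity is applied precisely enough to violate the canonical matching $\pi$.
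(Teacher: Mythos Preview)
Your plan diverges substantially from the paper's, and both branches contain genuine gaps.

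The paper's argument is short and uniform: in \emph{both} cases the underlying graph $G(\Gamma)$ is complete bipartite (for $CP_k'$ this is by definition; for the generic $2$-partite digraph every pair from different sides is adjacent). Using only this, one checks directly that the endpoints of every induced path of length~$3$ in~$D$ have distance at most~$2$, so $\diam(D)=2$, and then Lemma~\ref{lem_Diam2} immediately gives homogeneity. No case split beyond the three orientation types of a length-$3$ path is needed, and no identification of~$D$ with a specific homogeneous digraph is required.

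Your argument for the generic $2$-partite case fails at the first step. You claim that if $z\in N^-(x)\cap N^+(y)$ then the genericity of~$\Gamma$ would produce an edge inside $N^+(z)$ or $N^-(z)$. But $\Gamma$ lives on $X\cup Y$ with $X=N^-(x)\sm N^+(y)$ and $Y=N^+(y)\sm N^-(x)$, and since $N^-(x)$ and $N^+(y)$ are both independent and $z$ lies in each, one has $(X\cup Y)\cap N(z)=\es$. So $\Gamma$ carries no information whatsoever about $N^\pm(z)$, and the triangle-freeness step collapses. Without it, the subsequent bipartiteness and genericity steps have no foundation, and you yourself flag the global extension property as an unresolved obstacle.

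Your argument for $CP_k'$ aims at a contradiction (i.e.\ vacuity of the hypothesis) rather than proving homogeneity. Even granting that this case may in fact be vacuous, your sketch does not exhibit the contradiction: the phrase ``force either $\pi(u)=\pi(u')$ or an edge/non-edge inconsistency'' is a hope, not an argument, and you give no concrete configuration on which C-homogeneity is to be applied. The matching $\pi$ is indeed preserved by automorphisms fixing $xy$, but you have not shown why any available automorphism must violate it. The paper sidesteps all of this by never attempting to rule the case out: it simply proves $\diam(D)=2$ using only that $G(\Gamma)$ is complete bipartite, which holds for $CP_k'$ as well.
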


\begin{proof}
Up to isomorphism and/or reversing the direction of every edge, the only paths $abcd$ of length~$3$ in a digraph are of the form:
\begin{enumerate}[(a)]
\item $ab,bc,cd\in ED$;
\item $ab,bc,dc\in ED$;
\item $ba,bc,dc\in ED$.
\end{enumerate}
If we can show that in each of these three cases the end vertices $a$ and~$d$ have distance at most~$2$, then we have $\diam(D)=2$ and the assertion follows from Lemma~\ref{lem_Diam2}.
If in any of these three cases $a$ is adjacent to~$c$ or~$b$ is adjacent to~$d$, we can conclude $d(a,d)\leq 2$ directly.
So we may assume that this is not the case.
In case (a), we may assume $bc=xy$ as $\Aut(D)$ acts transitively on the $1$-arcs of~$D$.
Since $a$ and~$c$ are not adjacent, we have $a\in X$ and, since $b$ and~$d$ are not adjacent, we have $d\in Y$.
As $G(\Gamma)$ is a complete bipartite graph in both possibilities for~$\Gamma$, we obtain $d(a,d)=1$.
In cases (b) and (c), we may assume $c=x$, $b\in X$, and $a\in Y$ by C-homogeneity.
Then either $d\in N^-(x)\sm N^+(y)=X$ and $d(a,d)=1$ or $d\in N^-(x)\cap N^+(y)$ and $d(a,d)=2$ because of $a,d\in N(y)$.
This proves $\diam(D)=2$ and hence that $D$ is homogeneous.
\end{proof}

In the following, we assume due to Lemmas~\ref{lem_GammaNotHomBip} and~\ref{lem_GammaIsCP'kOrGen2Partite} and by Theorem~\ref{thm_GGK} that $\Gamma$ is the generic orientation of the countable generic bipartite graph.

\begin{lem}\label{lem_DiamLeq3}
We have $\diam(D)\leq 3$.
\end{lem}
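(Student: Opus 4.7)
The plan is to prove $d(x, w) \le 3$ for every pair $x, w \in VD$. By vertex-transitivity we may fix the reference edge $xy$ used to define $X$, $Y$, and $\Gamma$. I first observe the following reduction: since $X \sub N^-(x)$ and $Y \sub N^+(y)$ are both infinite and $X \cup Y \sub N(x) \cup N(y)$, any vertex $w$ with a neighbour in $X \cup Y$ automatically satisfies $d(x, w) \le 3$ -- a length-$2$ walk $w$--$v \to x$ when the neighbour $v$ lies in $X$, and a length-$3$ walk $x \to y \to v$--$w$ when $v \in Y$. Hence it suffices to show that every $w \in VD \sm (\{x\} \cup N(x))$ has a neighbour in $X \cup Y$.

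Suppose for contradiction that some such $w$ has $N(w) \cap (X \cup Y) = \es$, and fix a shortest $x$--$w$ walk $x u_1 u_2 \ldots u_k = w$ with $k \ge 3$. By $1$-arc transitivity we may assume $u_1 = y$, and by replacing $D$ with its opposite digraph if needed, we may further assume $u_2 \in Y$, so that the edge $y u_2$ lies inside $\Gamma$. The argument then centres on the vertex $u_3$: it is a neighbour of $u_2$ non-adjacent to both $x$ and $y$ (adjacency to $y$ would yield $d(x, u_3) \le 2$, and $u_3 \in N(x)$ would contradict shortness of the walk). I would use the genericity of $\Gamma$ to find infinitely many vertices $v \in V\Gamma$ with the same adjacency pattern to $u_2$ as $u_3$ has, and with additional prescribed non-adjacency to selected further vertices of $\Gamma$. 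Applying C-homogeneity to an appropriately chosen connected induced subdigraph containing $\{y, u_2\}$, one maps such a $v$ to $u_3$ by an automorphism $\alpha$ of $D$ fixing $y$ and $u_2$ setwise with their $\Gamma$-structure. The image under $\alpha$ of suitable $\Gamma$-neighbours of $v$ then produces a neighbour of $u_3$ in $X \cup Y$, giving $d(x, u_3) \le 2$, which contradicts the minimality of the walk.

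The main obstacle is choosing the correct auxiliary induced subdigraph on which to apply C-homogeneity. The naive choice $D[\{x, y, u_2, u_3\}]$ fails because any $v \in X$ satisfies $vx \in ED$ whereas $u_3 \notin N(x)$, so no automorphism fixing $x$ can send $v$ to $u_3$. Instead, the argument should replace $x$ by a carefully chosen vertex $\widetilde{x} \in Y \cup (N^+(y) \cap N^-(x))$ playing an analogous structural role, so that the resulting $4$-vertex subdigraphs $D[\{\widetilde x, y, u_2, v\}]$ and $D[\{\widetilde x, y, u_2, u_3\}]$ have matching isomorphism type realisable inside $\Gamma$ by its genericity. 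The one-end hypothesis is what guarantees that such a substitute $\widetilde x$ exists and that the resulting automorphism indeed places a neighbour of $u_3$ back in the original $X \cup Y$ rather than in some disjoint translate; without it, the iteration could in principle push the configuration arbitrarily far within distinct ends of $D$.
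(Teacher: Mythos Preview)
Your proposal is incomplete and the gap you yourself flag is not actually closed. You reduce to showing that a vertex $u_3$ at distance~$3$ from~$x$ has a neighbour in $X\cup Y$, and you propose to do this by finding $v\in V\Gamma$ with the same adjacency type to $u_2$ as $u_3$, then applying C-homogeneity to carry $v$ to~$u_3$. But as you note, $u_3\notin N(x)\cup N(y)$ while every $v\in V\Gamma$ is adjacent to $x$ or~$y$; so no automorphism fixing $x$ and~$y$ can send $v$ to~$u_3$. Your proposed repair via a substitute vertex $\widetilde{x}$ is not carried out: you do not specify which induced subdigraph you apply C-homogeneity to, you do not verify that the two $4$-vertex digraphs are isomorphic, and the appeal to the one-end hypothesis is not made precise (indeed the paper's proof does not use one-endedness here at all).

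The paper's argument is far shorter and avoids all of this. Suppose $\diam(D)\geq 4$ and take a shortest path $P=x_0x_1x_2x_3x_4$ between vertices at distance~$4$. Being a shortest path, $P$ is induced; being a path of length~$4$, it is $2$-partite. Since $\Gamma$ is the generic orientation of the countable generic bipartite graph, every finite $2$-partite digraph embeds into~$\Gamma$, so a copy of~$P$ sits inside~$\Gamma$. C-homogeneity gives an automorphism $\alpha$ of~$D$ with $P\alpha\sub\Gamma$. Because $P$ has even length, $x_0\alpha$ and $x_4\alpha$ lie on the same side of~$\Gamma$, hence are both in~$X$ or both in~$Y$; in either case they share the common neighbour $x$ or~$y$, so $d(x_0,x_4)\leq 2$, a contradiction. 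The point you missed is that one should embed the \emph{entire} offending path into~$\Gamma$, rather than try to drag an outside vertex into~$\Gamma$ one step at a time.
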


\begin{proof}
Seeking for a contradiction, let us suppose $\diam(D)\geq 4$.
Let $P=x_0\ldots x_4$ be a shortest (not necessarily directed) path between two vertices $x_0$ and~$x_4$ with $d(x_0,x_4)=4$.
Then $P$ embeds into~$\Gamma$, as every finite $2$-partite digraph embeds into~$\Gamma$.
Hence, we find an automorphism~$\alpha$ of~$D$ that maps~$P$ into~$\Gamma$.
Then either $x_0\alpha$ and $x_4\alpha$ lie in~$X$ or they lie in~$Y$.
In both cases, they have a common neighbour, either~$x$ or~$y$.
Thus, $x_0$ and~$x_4$ have a common neighbour.
This contradiction to $d(x_0,x_4)=4$ shows $\diam(D)\leq 3$.
\end{proof}

Since we already investigated the case $\diam(D)=2$, the only remaining situation is $\diam(D)=3$.
We shall prove that in this situation $D$ and $\Gamma$ are isomorphic.

\begin{lem}\label{lem_Diam3GenOrient}
If $\diam(D)\neq2$, then $D$ is the generic orientation of the countable generic bipartite graph.
\end{lem}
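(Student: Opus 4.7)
The proof splits into three parts: showing $D$ is $2$-partite, identifying the canonical bipartition, and verifying the genericity condition. The digraph $\Gamma = D[X \cup Y]$, being the generic orientation of the countable generic bipartite graph, supplies both the generic machinery and the bipartite structure that must extend to all of $D$ under C-homogeneity.

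First I would show $D$ is bipartite by excluding odd cycles, the critical substep being $T := N^+(y) \cap N^-(x) = \es$. If some $c \in T$ existed, then $c$ would be a common neighbour of $x$ and $y$, forcing the natural bipartition $\{X_0, Y_0\}$ of the underlying undirected graph (with $x \in X_0$ and $y \in Y_0$) to place $c$ in both classes. By $1$-arc transitivity and C-homogeneity, $T \neq \es$ implies every edge lies on a directed $3$-cycle; in particular $D[N(x)]$ contains edges, and being homogeneous $2$-partite (by an argument analogous to Lemma~\ref{lem_GammaHom2Partite}) it must be one of the digraphs of Theorem~\ref{thm_ClassHom2Partite}. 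Combining this rigid local structure with the generic $\Gamma$, I would choose $u \in X$ and $z \in Y$ with $uz \in E\Gamma$ and derive a $5$-vertex configuration $D[x,y,c,u,z]$ whose edge set is incompatible either with $\Gamma$'s bipartite nature or with the minimality of $|C| = 4$; odd cycles of length at least $5$ are excluded similarly, using $\diam(D) = 3$ to bound the configurations.

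With $D$ bipartite, I would set $X_0 := \{z \in VD : d(z,x) \text{ is even}\}$ and $Y_0 := \{z \in VD : d(z,x) \text{ is odd}\}$; by $\diam(D) = 3$ this partitions $VD$, and bipartiteness forces every edge to cross. One then verifies $\{x\} \cup Y \cup (N^-(y) \sm N(x)) \sub X_0$ and $\{y\} \cup X \cup (N^+(x) \sm N(y)) \sub Y_0$. For the genericity property, take disjoint finite $A, B, C \sub X_0$: by C-homogeneity applied to the connected subdigraph spanned by $\{x\} \cup A \cup B \cup C$, together with the richness of $Y$ inside $X_0$, one finds an automorphism $\alpha$ of $D$ that carries $A \cup B \cup C$ into $Y$; the genericity of $\Gamma$ then supplies $v' \in X \sub Y_0$ with $A\alpha \sub N^+(v')$, $B\alpha \sub N^-(v')$ and $v'$ non-adjacent to $C\alpha$, and the pullback $v := v' \alpha^{-1}$ is the required witness in $Y_0$. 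The symmetric case of subsets of $Y_0$ follows by running the argument at a reversed base edge. The main obstacle is the no-$3$-cycle argument in the first step: since a directed $3$-cycle contains no induced $2$-arc and hence is not itself a witness for $\AF$ universal, the contradiction cannot be obtained by producing a shorter witness, and must instead be engineered indirectly from the interplay between the triangle, the generic bipartite structure of $\Gamma$, and the minimality of $C$.
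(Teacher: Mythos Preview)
Your overall plan matches the paper's: first prove $D$ is $2$-partite, then embed finite configurations into $\Gamma$ via C-homogeneity and use the genericity of~$\Gamma$ to produce witnesses. The genericity step is essentially the paper's argument, except that the induced subdigraph on $\{x\}\cup A\cup B\cup C$ need not be connected; the paper fixes this by first enlarging to a finite connected $H\supseteq A\cup B\cup C$ before invoking C-homogeneity, and you should do the same.

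The genuine gap is in your first step. You correctly identify the no-triangle argument as the crux, but the proposed $5$-vertex configuration $D[x,y,c,u,z]$ does not yield a contradiction: its edge set is $\{xy,yc,cx,ux,yz,uz\}$ (all other pairs are non-adjacent by independence of $N^\pm$), and nothing here conflicts with $\Gamma$ being $2$-partite or with $|C|=4$, since a directed triangle is not a witness for $\AF$ universal. Nor does classifying $D[N(x)]$ as a homogeneous $2$-partite digraph supply the missing leverage on its own. Your treatment of longer odd cycles (``excluded similarly, using $\diam(D)=3$'') is likewise only a gesture.

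The paper's missing idea is a single uniform lemma that drives everything: \emph{the end vertices of any induced path of length~$3$ have distance~$3$}. This is proved by taking one length-$3$ geodesic (which exists since $\diam(D)=3$) and, for any other induced length-$3$ path $P_2$, using the cycle $C$ and C-homogeneity to re-orient the geodesic into a path $P_3\isom P_2$ with the \emph{same} end vertices; then C-homogeneity transports the distance-$3$ conclusion to~$P_2$. With this lemma in hand, the no-triangle argument is immediate: if $d\in N^+(y)\cap N^-(x)$, pick adjacent $a\in Y$, $b\in X$; then $D[a,b,x,d]$ is an induced path of length~$3$, so $d(a,d)=3$, yet $a,d\in N^+(y)$ gives $d(a,d)\le 2$. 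The same lemma shows $D_2(x)$ is independent (an edge there would produce an induced $3$-path back to~$x$) and, via embedding a length-$4$ path into~$\Gamma$, that $D_3(x)$ is independent. Bipartiteness with classes $\{x\}\cup D_2(x)$ and $D_1(x)\cup D_3(x)$ then falls out without any separate odd-cycle analysis.
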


\begin{proof}
By Lemma~\ref{lem_Diam2} and Lemma~\ref{lem_DiamLeq3}, we may assume $\diam(D)=3$.
Let $D_i(x)$ be the set of those vertices of~$D$ whose distance to~$x$ is~$i$.
The first observation in this proof is that
\begin{txteq}\label{eq_Diam3GenOrient1}
there are non-adjacent vertices $a\in D_1(x)$ and $b\in D_2(x)$.
\end{txteq}
Indeed, if all vertices $a\in D_1(x)$ and $b\in D_2(x)$ are adjacent, then every vertex in $VD=\{x\}\cup D_1(x)\cup D_2(x)\cup D_3(x)$ has distance at most $2$ to~$a$ and we obtain $\diam(D)=2$, a contradiction to our assumption.

Let us show that
\begin{txteq}\label{eq_Diam3GenOrient2}
the end vertices of any induced path of length~$3$ have distance~$3$.
\end{txteq}
Let $P_1$ be a path of length~$3$ whose end vertices have distance~$3$ and let $P_2$ be another induced path of length~$3$.
By using C-homogeneity and the cycle~$C$, we can modify $P_1$ and obtain a path $P_3$ with the same end vertices like~$P_1$ and such that $P_2$ and~$P_3$ are isomorphic.
Hence,~(\ref{eq_Diam3GenOrient2}) holds.

Next, we show that
\begin{txteq}\label{eq_Diam3GenOrient3}
$D$ contains no triangle.
\end{txteq}
Let us suppose that $D$ contains some triangle.
Since $N^+(x)$ and $N^-(x)$ are independent sets, this triangle is a directed triangle.
Let $a\in Y$, $b\in X$, $x$, and $d\in N^-(x)\cap N^+(y)$.
Then $D[a,b,x,d]$ is an induced path of length~$3$ as $N^+(y)$ and $N^-(x)$ are independent vertex sets.
Due to~(\ref{eq_Diam3GenOrient2}), we have $d(a,d)=3$, but $y$ is a common neighbour of~$a$ and~$d$.
This contradiction shows~(\ref{eq_Diam3GenOrient3}).

A direct consequence of~(\ref{eq_Diam3GenOrient3}) is that $D_1(x)$ is an independent set.
Let us show that
\begin{txteq}\label{eq_Diam3GenOrient4}
$D_2(x)$ is an independent set.
\end{txteq}
If this is not the case, then two vertices $a,b\in D_2(x)$ are adjacent.
Let $c$ be a common neighbour of~$b$ and~$x$.
By~(\ref{eq_Diam3GenOrient3}), we know that $a$ and~$c$ are not adjacent.
Hence $D[a,b,c,x]$ is an induced path of length~$3$.
So its end vertices have distance~$3$ by~(\ref{eq_Diam3GenOrient2}) in contradiction to the choice of~$a$.

We have almost proved that $D$ is $2$-partite.
The only edges that might be an obstacle for this are those with both its incident vertices in~$D_3(x)$.
So let us exclude such edges:
\begin{txteq}\label{eq_Diam3GenOrient5}
$D_3(x)$ is an independent set.
\end{txteq}
Let us suppose that some edge $ab$ has both its incident vertices in~$D_3(x)$.
Let $P$ be a path of length~$3$ from~$x$ to~$a$.
Due to (\ref{eq_Diam3GenOrient3}), $Pab$ is induced and its end vertices have distance~$3$.
As~$\Gamma$ is the generic orientation of the countable generic bipartite graph, we also find an isomorphic copy $P'$ of~$P$ in~$\Gamma$.
By C-homogeneity, we find an automorphism~$\alpha$ of~$D$ that maps~$P$ to~$P'$.
Since the end vertices of~$P'$ lie either both in~$X$ or both in~$Y$, they have a common neighbour, either~$x$ or~$y$, respectively, and thus they have distance~$2$.
Therefore, the distance between the end vertices of $P=P'\alpha\inv$ must be~$2$, too.
This contradiction to the choice of~$b$ shows~(\ref{eq_Diam3GenOrient5}).

As mentioned earlier, we obtain from~(\ref{eq_Diam3GenOrient3}), (\ref{eq_Diam3GenOrient4}), and (\ref{eq_Diam3GenOrient5}) that $D$ is a $2$-partite digraph with partition sets $U:=\{x\}\cup D_2(x)$ and $W:=D_1(x)\cup D_3(x)$.
Let $A,B$, and~$C$ be finite subsets of~$U$.
Then we find a finite set~$F\sub VD$ such that
\[H:=D[A\cup B\cup C\cup F]\]
is connected.
As $H\sub D$ is $2$-partite and~$\Gamma$ is the gerneric orientation of the countable generic bipartite graph, we find an isomorphic copy of~$H$ in~$\Gamma$.
By C-homogeneity, there is an automorphism $\alpha$ of~$D$ with $H\alpha\sub\Gamma$ such that either $(A\cup B\cup C)\alpha\sub X$ or $(A\cup B\cup C)\alpha\sub Y$.
As $\Gamma$ is the generic orientation of the countable generic bipartite graph, there is a vertex $v$ either in~$Y$ or in~$X$ with $A\alpha\sub N^+(v)$ and $B\alpha\sub N^-(v)$ and $C\alpha\cap N(v)=\es$.
Then $v\alpha\inv$ is a vertex we are searching for.
An analogous argument shows the existence of such a vertex if $A,B$, and~$C$ are finite subsets of~$W$.
Hence, we have shown that $D$ is the generic orientation of the countable generic bipartite graph.
\end{proof}

Let us summarize the results of this section:

\begin{prop}
Let $D$ be a countable connected C-homogeneous digraph whose reachability relation is universal.
If $D^+\isom I_n$ for some $n\in\nat^\infty$, then $D$ is either homogeneous or the generic orientation of the countable generic bipartite graph.\qed
\end{prop}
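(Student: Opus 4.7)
My plan is to assemble the lemmas of this section via Theorem~\ref{thm_ClassHom2Partite}. First I would reduce to $n,n'\geq 2$: Lemma~\ref{lem_d+1Tree} handles $\min(n,n')=1$, producing a tree or a directed cycle, and in each of those the reachability relation is easily seen to fail universality, contradicting the hypothesis. So I may assume $n,n'\geq 2$, and then Lemmas~\ref{lem_CycleWitnesses}, \ref{lem_UniversalInducedCycle} and~\ref{lem_EvenCycleWitnessing} justify fixing an induced even cycle $C$ of minimal length witnessing that $\AF$ is universal.

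Next I would fix an edge $xy \in ED$ and consider the $2$-partite subdigraph $\Gamma := D[X\cup Y]$ with $X = N^-(x)\sm N^+(y)$ and $Y = N^+(y)\sm N^-(x)$. Lemma~\ref{lem_GammaHom2Partite} says $\Gamma$ is a non-empty countable homogeneous $2$-partite digraph, so Theorem~\ref{thm_ClassHom2Partite} leaves four possibilities for $\Gamma$. Lemma~\ref{lem_GammaNotHomBip} rules out the homogeneous bipartite case, and Lemma~\ref{lem_GammaIsCP'kOrGen2Partite} shows that if $\Gamma \isom CP_k'$ or $\Gamma$ is the countable generic $2$-partite digraph, then $D$ is homogeneous, giving the first conclusion.

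Thus the only remaining case is that $\Gamma$ is the generic orientation of the countable generic bipartite graph. Here Lemma~\ref{lem_DiamLeq3} yields $\diam(D)\leq 3$. If $\diam(D)=2$, then Lemma~\ref{lem_Diam2} gives that $D$ is homogeneous; otherwise $\diam(D)=3$ and Lemma~\ref{lem_Diam3GenOrient} concludes that $D$ is itself the generic orientation of the countable generic bipartite graph. Together these cases exhaust the possibilities and establish the proposition.

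The substantial work is all buried inside the cited lemmas — in particular the case analysis of Lemma~\ref{lem_GammaNotHomBip} that eliminates each homogeneous bipartite option one by one using the minimality of $C$ and cycles witnessing universality, and the diameter analysis of Lemma~\ref{lem_Diam3GenOrient} that identifies $D$ with the generic orientation. The proof of the proposition itself is a routine assembly; the only subtle point I would verify carefully is the initial reduction to $n,n'\geq 2$, since universality of $\AF$ together with Lemma~\ref{lem_d+1Tree} must genuinely rule out the degenerate outcomes of that lemma (a tree or directed cycle admits no alternating walk joining two "opposite'' edges, so $\AF$ cannot be universal there).
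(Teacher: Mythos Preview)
Your proposal is correct and matches the paper's approach exactly: the proposition in the paper carries only a \qed, since it is purely a summary of the preceding lemmas in Section~\ref{sec_D+IndepAFUniversal}, and your assembly via Theorem~\ref{thm_ClassHom2Partite} together with Lemmas~\ref{lem_GammaHom2Partite}, \ref{lem_GammaNotHomBip}, \ref{lem_GammaIsCP'kOrGen2Partite}, \ref{lem_DiamLeq3}, \ref{lem_Diam2}, and~\ref{lem_Diam3GenOrient} is precisely the intended argument. Your explicit handling of the case $\min(n,n')=1$ via Lemma~\ref{lem_d+1Tree} is a sound addition, and your justification that neither a regular tree nor a directed cycle can have universal $\AF$ is correct.
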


\subsection{A second result}

By summarizing the propositions with Section~\ref{sec_ProofPartII}, we obtain the following theorem:

\begin{thm}\label{thm_Part2}
Let $D$ be a countable connected C-homogeneous digraph such that $D^+\isom I_n$ for some $n\in\nat^\infty$.
If $D$ has at most one end and is not locally finite, then it is isomorphic to one of the following digraphs:
\begin{enumerate}[\rm(i)]
\item a homogeneous digraph;
\item $C_m[I_\omega]$ for some $m\in\nat^\infty$ with $m\geq 3$;
\item $Y_\omega$;
\item $\RF_m$ for some $m\in\nat^\infty$ with $m\geq 3$; or
\item the generic orientation of the countable generic bipartite graph.\qed
\end{enumerate}
\end{thm}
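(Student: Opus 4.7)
The plan is to derive Theorem~\ref{thm_Part2} as a summary of the two main propositions of Section~\ref{sec_ProofPartII}, matching the dichotomy on the reachability relation $\AF$. So the proof will be short: the real work is already contained in the two preceding propositions, and what remains is to verify that their hypotheses cover every case left after the standing assumption ``$D$ has at most one end and is not locally finite''.

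First I would reduce to the situation where $n\geq 2$ and $n'\geq 2$, where $D^-\isom I_{n'}$. The existence of such an $n'$ follows from Theorem~\ref{thm_Part1} applied to the digraph obtained from $D$ by reversing every edge (this is again C-homogeneous with the same local-finiteness and end structure, and its positive out-neighbourhood is $D^-$, so it must fall into case (v) of Theorem~\ref{thm_Part1}, yielding $D^-\isom I_{n'}$). With this in hand, Lemma~\ref{lem_d+1Tree} shows that whenever $n=1$ or $n'=1$ the digraph $D$ is an infinite tree or a directed cycle; a directed cycle is locally finite, and an infinite regular tree which is not the directed double ray has more than one end. Hence under the standing hypotheses we may assume $n,n'\geq 2$.

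Next I would split on whether the reachability relation $\AF$ of $D$ is universal. If $\AF$ is not universal, the proposition at the end of Section~\ref{sec_D+IndepAFBipartite} (obtained from Proposition~\ref{prop_CPW} together with Lemmas~\ref{lem_GM4.3}, \ref{lem_In_DeltaCycle}, \ref{lem_InWithT_kl}, and~\ref{lem_In_CP/K/Gen}) tells us that $D$ is one of $C_m[I_\omega]$ for $m\in\nat^\infty$ with $m\geq 3$, $Y_\omega$, or $\RF_m$ for $m\in\nat^\infty$ with $m\geq 3$; these are cases (ii), (iii), (iv) of the theorem. If on the other hand $\AF$ is universal, the proposition at the end of Section~\ref{sec_D+IndepAFUniversal} tells us that $D$ is either homogeneous or the generic orientation of the countable generic bipartite graph; these are cases (i) and (v).

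Since these alternatives exhaust all possibilities permitted by the standing hypotheses, the theorem follows. There is no genuine obstacle here beyond bookkeeping: the substantive work (bipartite reachability analysis, exclusion of semi-regular trees, identification of $\Gamma$ via the classification of countable homogeneous $2$-partite digraphs, and the diameter-based recognition of the generic orientation) has already been carried out in the two subsections; what remains is only to confirm that the cases $n=1$ or $n'=1$ are correctly sent either into the locally finite classification~\cite{FinConHomDigraphs} or into the classification of digraphs with more than one end~\cite{HH-ConHomDigraphs}, and hence do not contribute to the list.
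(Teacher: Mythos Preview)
Your proposal is correct and matches the paper's approach exactly: the theorem is stated in the paper with a \qed and no explicit proof, being a direct summary of the two propositions closing Sections~\ref{sec_D+IndepAFBipartite} and~\ref{sec_D+IndepAFUniversal} after the reduction to $n,n'\geq 2$ via Lemma~\ref{lem_d+1Tree}. One cosmetic point: when you invoke Theorem~\ref{thm_Part1} on the reversed digraph, it need not fall into case~(v) alone---it could fall into case~(i) (homogeneous), but then $D$ itself is homogeneous and you are already done; the paper sidesteps this by simply asserting ``by the previous sections $D^-\isom I_{n'}$'' at the opening of Section~\ref{sec_ProofPartII}, which amounts to the same observation.
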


Theorems~\ref{thm_Part1} and~\ref{thm_Part2} together with~\cite[Theorem~2.1]{FinConHomDigraphs} and~\cite[Theorems~4.2 and~7.6]{HH-ConHomDigraphs} and~\cite[Theorem~6.2]{GM-CHomDigraphs} imply our main result, Theorem~\ref{thm_main}.

\bibliographystyle{amsplain}
\bibliography{Bibs}

\end{document}